\documentclass[12pt]{amsart}
\usepackage[utf8]{inputenc}
\usepackage{mathrsfs} 
\usepackage{amscd}
\usepackage[all,cmtip]{xy}
\usepackage{amssymb}
\usepackage{bm}
\usepackage{graphicx}
\usepackage[centertags]{amsmath}
\usepackage{amsfonts}
\usepackage{amsthm}
\linespread{1.18}
\usepackage{enumerate}
\usepackage{tocvsec2}
\usepackage{xcolor}
\usepackage[margin=1in]{geometry}

\newtheorem{theorem}{Theorem}[section]
\newtheorem*{theorem*}{Theorem}

\newtheorem{corollary}[theorem]{Corollary}
\newtheorem{lemma}[theorem]{Lemma}
\newtheorem{rem}[theorem]{Remark}

\newtheorem{proposition}[theorem]{Proposition}

\newtheorem{fact}[theorem]{Fact}

\theoremstyle{definition}

\newcommand{\ee}{\varepsilon}
\newcommand{\nn}{\mathbb{N}}
\newcommand{\rr}{\mathbb{R}}

\begin{document}
\title{Concerning $q$-summable Szlenk index}

\begin{abstract} For each ordinal $\xi$ and each $1\leqslant q<\infty$, we define the notion of $\xi$-$q$-summable Szlenk index. When $\xi=0$ and $q=1$, this recovers the usual notion of summable Szlenk index. We define for an arbitrary weak$^*$-compact set a transfinite, asymptotic analogue $\alpha_{\xi,p}$ of the martingale type norm of an operator. We prove that this quantity is determined by norming sets and determines $\xi$-Szlenk power type and $\xi$-$q$-summability of Szlenk index. This fact allows us to prove that the behavior of operators under the $\alpha_{\xi,p}$ seminorms passes in the strongest way to injective tensor products of Banach spaces.  Furthermore, we combine this fact with a result of Schlumprecht to prove that a separable Banach space with good behavior with respect to the $\alpha_{\xi,p}$ seminorm can be embedded into a Banach space with a shrinking basis and the same behavior under $\alpha_{\xi,p}$, and in particular it can be embedded into a Banach space with a shrinking basis and the same $\xi$-Szlenk power type. Finally, we completely elucidate the behavior of the $\alpha_{\xi,p}$ seminorms under $\ell_r$ direct sums. This allows us to give an alternative proof of a result of Brooker regarding Szlenk indices of $\ell_p$ and $c_0$ direct sums of operators.

\end{abstract}

\author{R.M. Causey}
\address{Department of Mathematics, Miami University, Oxford, OH 45056, USA}
\email{causeyrm@miamioh.edu}

\thanks{2010 \textit{Mathematics Subject Classification}. Primary: 46B03, 46B06; Secondary: 46B28, 47B10.}
\thanks{\textit{Key words}: Szlenk index, operator ideals, ordinal ranks.}

\maketitle

\tableofcontents

\addtocontents{toc}{\setcounter{tocdepth}{1}}

\section{Introduction}

Two of the most important renorming theorems in Banach space theory is Enflo's result \cite{E} that a superreflexive Banach space can be renormed to be uniformly convex, and Pisier's result \cite{P} that any superreflexive Banach space can be renormed to be uniformly convex (resp. uniformly smooth) with a power type modulus of uniform convexity (resp. uniform smoothness). Pisier's proof also gave an exact characterization of which power types were possible in terms of two isomorphic invariants of the space involving Walsh-Paley martingale difference sequences and their domination of (resp. domination by) the canonical $\ell_p$ basis. Beauzamy \cite{Beau} proved an operator version of Enflo's result: An operator is super weakly compact if and only if it can be renormed to be uniformly convex. However, there is no operator version of Pisier's result, since the automatic power type obtained by Pisier for Banach spaces has to do with the submultiplicative nature of Haar type/cotype ideal norms for Banach spaces, which are not submultiplicative for operators. The same submultiplicative behavior of ideal norms and seminorms for spaces which are not submultiplicative for operators is by now a well-observed phenomenon, occurring with Rademacher, gaussian, Haar, martingale, and the recently defined asymptotic basic and asymptotic type and cotype.  Since there are operators which have a given property but which do not have automatic power type (for example, operators which are super weakly compact without having non-trivial Haar type or cotype), it has become standard to investigate the notions of (Haar, Rademacher, etc.) subtype and subcotype of operators, defined by whether or not the operator exhibits the worst possible behavior with respect to a sequence of norms or seminorms. This practice has been undertaken by Beauzamy  \cite{Beau} for Radmacher subtype and subcotype to characterize when $\ell_1$ or $c_0$ is crudely finitely representable in an operator; by Hinrichs \cite{Hi} for gaussian subcotype to characterize when $c_0$ is crudely finitely representable in an operator; by Wenzel \cite{We} for martingale and Haar subtype and subcotype to characterize super weak compactness; and Draga, Kochanek, and the author  \cite{CDK} to characterize when an operator is asymptotically uniformly smoothable, when $\ell_1$ is asymptotically crudely finitely representable in an operator, and when $c_0$ is asymptotically crudely finitely representable in an operator. 

The notion of asymptotic uniform smoothability is also of significant interest, and is fundamentally connected to the Szlenk index of a Banach space. A Banach space admits an equivalent asymptotically uniformly smooth norm if and only if its Szlenk index does not exceed $\omega$. This was shown by Knaust, Odell, and Schlulmprecht \cite{KOS} for the separable case and Raja \cite{R} for the general case. Lancien, Prochazka, and Raja \cite{LPR} proved a result analogous to that of Knaust, Odell and Schlumprecht for separable Banach spaces having Szlenk index not exceeding $\omega^{\xi+1}$ for every countable ordinal $\xi$, and a non-separable, operator version was proved in \cite{CD} for every (not necessarily countable) ordinal $\xi$.   Furthermore, Godefroy, Kalton, and Lancien \cite{GKL} gave a precise renorming theorem for a separable Banach space in terms of the Szlenk power type of the Banach space. This was generalized to non-separable spaces and operators in \cite{C3}, as well as to higher ordinals in terms of the behavior of special convex combinations of the branches of $n$-leveled weakly null trees where each level has specified order. Further renorming results were established in \cite{AUF}, analogous to those of Pisier in \cite{P} were investigated in terms of special convex combinations of the branches of $\omega$-leveled weakly null trees where each level has specified order.  The downside of the renorming results from \cite{AUF} is that they are only produce non-trivial equivalent norms when an operator or a space has some power type behavior of the $\ee$-Szlenk indices. That is, for a fixed $\xi$, the results of \cite{AUF} produce non-trivial equivalent norms for a Banach space $X$ or an operator $A:X\to Y$ if $\omega^\xi k_\ee \leqslant Sz(X, \ee)\leqslant \omega^\xi (k_\ee+1)$ (resp. $\omega^\xi k_\ee\leqslant Sz(A, \ee)\leqslant \omega^\xi (k_\ee+1)$) where $k_\ee\in \nn$ satisfies $\lim_{\ee\to 0^+}k_\ee=\infty$ but for some $1\leqslant q<\infty$, $\sup_{\ee>0} \ee^q k_\ee<\infty$. As was explained in \cite{C3}, for $\xi=0$, there exists an operator $A$ with Szlenk index $\omega$  which does not have this power type behavior, and for any $\xi>0$, there is a Banach space $X$ with Szlenk index $\omega^{\xi+1}$ which does not have this power type behavior.   This incompleteness in the renorming theorems from \cite{AUF} is the primary motivation to define the notion of subtype and investigate the behavior of operators with respect to a sequence of seminorms. 

For each ordinal $\xi$, each $1\leqslant p\leqslant \infty$, each $n\in \nn$, and each operator $A:X\to Y$, we define the quantity $\alpha_{\xi, p,n}(A)$.  The assignments $\alpha_{\xi, p, n}$ will act as our sequence of ideal seminorms. We will be interested in when an operator satisfies $\sup_n \alpha_{\xi, p,n}(A)<\infty$, which is trivial in the case $p=1$. For $1<p\leqslant \infty$, the worst possible behavior would be for the operator $A$ to satisfy $\underset{n}{\lim\sup} \alpha_{\xi,p,n}(A)/n^{1-1/p}>0$, so our notion of subtype will be defined by $\lim_n \alpha_{\xi,p,n}(A)/n^{1-1/p}=0$.  For each ordinal $\xi$ and $1<p\leqslant \infty$, we let $\mathfrak{A}_{\xi,p}$ denote the class of operators for which $\sup_n \alpha_{\xi,p,n}(A)<\infty$. 

For each ordinal $\xi$ and each $1\leqslant q<\infty$, we define what it means for an operator $A:X\to Y$ weak$^*$-compact subset $K$ of a dual Banach space $X^*$ to have $\xi$-$q$-\emph{summable Szlenk index}, which generalizes to other values of $q$ and higher ordinals the important notion of summable Szlenk index defined and studied in \cite{GKL}.   In what follows, we let $\mathfrak{D}_\xi$ denote the class of operators with Szlenk index not exceeding $\omega^\xi$ and for $1<p\leqslant \infty$, $\mathfrak{T}_{\xi,p}$ denotes the class of $\xi$-$p$-asymptotically uniformly smoothable operators.

\begin{theorem} For every ordinal $\xi$ and $1<p\leqslant \infty$, there exists an ideal norm $\mathfrak{a}_{\xi,p}$ on the class $\mathfrak{A}_{\xi,p}$ making $(\mathfrak{A}_{\xi,p}, \mathfrak{a}_{\xi,p})$ a Banach ideal. Moreover, if $1/p+1/q=1$, the class $\mathfrak{A}_{\xi,p}$ coincides with the class of operators having $\xi$-$q$-summable Szlenk index.    

An operator $A$ has Szlenk index not exceeding $\omega^{\xi+1}$ if and only if for some (equivalently, every) $1<p\leqslant \infty$, $\alpha_{\xi,p,n}(A)/n^{1-1/p}=0$.

 Furthermore, for any $1<r<p\leqslant \infty$, $$\mathfrak{D}_\xi \subsetneq \mathfrak{A}_{\xi,r} \subsetneq \mathfrak{T}_{\xi,p}\subset \mathfrak{A}_{\xi,p} \subsetneq \mathfrak{D}_{\xi+1}.$$

\end{theorem}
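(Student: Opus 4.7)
The plan is to attack the four assertions separately, using the structural result (announced earlier in the introduction) that $\alpha_{\xi,p,n}(A)$ is determined by norming subsets of $B_{X^*}$ and that it encodes both $\xi$-Szlenk power type and $\xi$-$q$-summability. First I would define $\mathfrak{a}_{\xi,p}(A) := \sup_n \alpha_{\xi,p,n}(A)$ for $A \in \mathfrak{A}_{\xi,p}$. That $\mathfrak{a}_{\xi,p}$ is an ideal seminorm follows by verifying that each $\alpha_{\xi,p,n}$ is absolutely homogeneous, subadditive, and satisfies $\alpha_{\xi,p,n}(BAC) \leqslant \|B\| \alpha_{\xi,p,n}(A) \|C\|$ when $B,C$ are bounded; these are routine once $\alpha_{\xi,p,n}$ is expressed through the weak$^*$-compact set $A^*(B_{Y^*})$ and this set is monotone under the operators $B^*, C^*$. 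Positive definiteness follows from the last part of the chain: if $\mathfrak{a}_{\xi,p}(A) = 0$ then $A \in \mathfrak{D}_\xi$ is compact when $\xi = 0$, and in general strict positivity on nonzero operators comes from the explicit computation of $\alpha_{\xi,p,n}$ on one-dimensional operators. Completeness of $(\mathfrak{A}_{\xi,p}, \mathfrak{a}_{\xi,p})$ I would obtain in the standard way: given an $\mathfrak{a}_{\xi,p}$-Cauchy sequence, operator-norm convergence is free since $\|\cdot\| \leqslant C \mathfrak{a}_{\xi,p}(\cdot)$, and lower semicontinuity of $\alpha_{\xi,p,n}$ with respect to the strong operator topology on a bounded set (a weak$^*$-compactness argument applied to the defining sets) yields the limit in the ideal.

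The coincidence of $\mathfrak{A}_{\xi,p}$ with the class of $\xi$-$q$-summably Szlenk operators is essentially a translation: unpack the definition of $\alpha_{\xi,p,n}(A)$ on the weak$^*$-compact set $A^*(B_{Y^*})$, apply the "$\alpha_{\xi,p}$ is determined by norming sets" result, and compare with the summability condition written as $\sum \ee_i^q \leqslant C$ whenever $(\ee_i)$ is a sequence of gaps in the $\xi$-Szlenk derivation — the two conditions are dual through the Hölder pairing $1/p + 1/q = 1$. For the characterization of $Sz(A) \leqslant \omega^{\xi+1}$, I would show both implications via the Szlenk power type. If $Sz(A) \leqslant \omega^{\xi+1}$, then for every $\ee > 0$ there is $k_\ee < \infty$ with $Sz_{\xi,\ee}(A) \leqslant k_\ee$; an interpolation argument between $n$ and $k_\ee$, standard in the Szlenk-power-type literature, gives $\alpha_{\xi,p,n}(A) = o(n^{1-1/p})$. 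Conversely, if $\alpha_{\xi,p,n}(A)/n^{1-1/p} \to 0$, then for each $\ee > 0$ only finitely many $\xi$-derivations can produce gaps of size $\ee$, so $Sz(A, \ee) < \omega^{\xi+1}$ and hence $Sz(A) \leqslant \omega^{\xi+1}$.

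For the chain of inclusions I would dispose of each containment from the two characterizations just established. If $A \in \mathfrak{D}_\xi$, then $k_\ee = 0$ for every $\ee$, so $\sum \ee_i^r < \infty$ trivially, placing $A \in \mathfrak{A}_{\xi,r}$; the inclusion $\mathfrak{A}_{\xi,r} \subset \mathfrak{T}_{\xi,p}$ when $r < p$ is a Hölder-type upgrade from $r$-summability to $p$-asymptotic uniform smoothability, performed at the level of special weakly null convex combinations as in \cite{AUF}; the inclusion $\mathfrak{T}_{\xi,p} \subset \mathfrak{A}_{\xi,p}$ is immediate since $p$-asymptotic uniform smoothability bounds $\alpha_{\xi,p,n}(A)$; and $\mathfrak{A}_{\xi,p} \subset \mathfrak{D}_{\xi+1}$ follows from the Szlenk-index characterization just established together with the fact that $\sup_n \alpha_{\xi,p,n}(A) < \infty$ implies $\alpha_{\xi,p,n}(A)/n^{1-1/p} \to 0$.

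Strictness of each containment I would demonstrate by exhibiting witnesses built from $\ell_r$ direct sums, using the direct-sum computation the abstract promises; these are the familiar counterexamples at the level $\xi = 0$ (operators whose Szlenk index is exactly $\omega$ or $\omega^2$ but with prescribed power-type of $\xi$-Szlenk gaps), transported to general $\xi$ by taking $\omega^\xi$-fold iterations. The main obstacle will be the verification that $\mathfrak{a}_{\xi,p}$ really is complete: the quantity $\alpha_{\xi,p,n}$ is defined in terms of a transfinite asymptotic game on weak$^*$-compact sets, and passing to limits of operators requires the lower semicontinuity statement for $\alpha_{\xi,p,n}$ to be stable under the transfinite derivation process, which is the technical heart of the argument.
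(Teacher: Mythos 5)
Your plan for the ideal-norm part contains a genuine error. You propose $\mathfrak{a}_{\xi,p}(A):=\sup_n\alpha_{\xi,p,n}(A)$ and assert that positive definiteness "comes from the explicit computation of $\alpha_{\xi,p,n}$ on one-dimensional operators." But that computation gives zero: a rank-one operator is compact, hence has Szlenk index $1\leqslant\omega^\xi$, and by the characterization you yourself invoke this forces $\alpha_{\xi,p,n}(A)=0$ for all $n$. More generally $\alpha_{\xi,p}$ vanishes on all of $\mathfrak{D}_\xi$, so $\sup_n\alpha_{\xi,p,n}$ is only a seminorm with a large kernel and cannot satisfy the ideal-norm axiom $\iota(x\otimes y)=\|x\|\,\|y\|$. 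The paper repairs this by setting $\mathfrak{a}_{\xi,p}(A)=\|A\|+\alpha_{\xi,p}(A)$; with that definition the norm axioms are immediate once subadditivity and the two-sided ideal inequality for $\alpha_{\xi,p,n}$ are checked (which the paper does via the eventual-set combinatorics, since the infimum over maximal nodes is not subadditive pointwise). Relatedly, your completeness argument is over-engineered: no lower semicontinuity "stable under the transfinite derivation process" is needed, because each $\alpha_{\xi,p,n}$ is Lipschitz for the operator norm (it is a seminorm dominated by $n^{1-1/p}\|\cdot\|$), and the paper's completeness proof is a short computation from that.

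Two further points. First, the inclusion $\mathfrak{A}_{\xi,r}\subset\mathfrak{T}_{\xi,p}$ for $r<p$ is not "a Hölder-type upgrade"; it is a renorming theorem (one must produce an equivalent $\xi$-$p$-AUS norm from finiteness of the $\alpha_{\xi,r}$ seminorm), and the paper does not prove it here but cites Section 7 of the earlier power-type paper for it and for all the strictness assertions; your proposed witnesses via $\ell_r$ direct sums are plausible but would need the direct-sum computations of the final section, which in turn rest on the whole machinery. Second, the equivalence with $\xi$-$q$-summability and the $o(n^{1-1/p})$ characterization of $Sz(A)\leqslant\omega^{\xi+1}$ are in the right spirit, but the actual proofs run through the tree characterization of Theorem \ref{big theorem}(i), Proposition \ref{downtown abbey} (relating the lower bounds $b_i$ to non-emptiness of iterated derived sets), and the auxiliary quantities $\theta_{\xi,n}$ with their continuity in $\omega^\xi n$ (Proposition \ref{too easy}); "only finitely many derivations can produce gaps of size $\ee$" needs exactly that bridge to be made precise.
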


The quantity  $\alpha_{\xi,p}(\cdot)$ will be defined not only for operators, but for arbitrary, weak$^*$-compact subsets of the dual of a Banach space.  We prove the following regarding the quantities  $\alpha_{\xi,p}$  on weak$^*$-compact subsets of the dual of a Banach space. 

\begin{theorem}Let $X$ be a Banach space, $K\subset X^*$ weak$^*$-compact, $\xi$ an ordinal, and $1<p\leqslant \infty$.  Then $\alpha_{\xi,p}(K)=\alpha_{\xi,p}(\overline{\text{\emph{abs\ co}}}^{\text{weak}^*}(K))$. 

\label{arnold}
\end{theorem}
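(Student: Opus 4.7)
The inclusion $K\subseteq L:=\overline{\text{abs\ co}}^{\text{weak}^*}(K)$ gives $\alpha_{\xi,p}(K)\leqslant \alpha_{\xi,p}(L)$ by monotonicity of $\alpha_{\xi,p}$ in its set argument, so the whole content of the theorem lies in the reverse inequality, which I plan to attack by transfinite induction on $\xi$.

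The technical engine will be a slice comparison lemma of Hahn-Banach/bipolar type: for every $x\in B_X$ and every weak$^*$-open halfspace $H$ determined by $x$ that meets $L$, each element of $H\cap L$ is a weak$^*$-limit of absolutely convex combinations of points drawn from a slightly enlarged slice $H'\cap K$. From this I expect to derive that the Szlenk $\ee$-derivation propagates through the closed absolutely convex hull in the form
\begin{equation*}
s_\ee(L)\subseteq \overline{\text{abs\ co}}^{\text{weak}^*}\bigl(s_{\ee'}(K)\bigr)\qquad (0<\ee'<\ee),
\end{equation*}
and, iterating this transfinitely, an analogous statement for the $\omega^\xi$-th derivative. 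This derivation inclusion is the single fact that drives the induction.

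For the base case $\xi=0$ one inserts the slice comparison directly into the definition of $\alpha_{0,p}(K)$, which I read as a supremum of $p$-type quantities computed from finite families of slices of $K$; absolute convexity of $L$ is precisely what ensures that the signed coefficients arising in these $p$-quantities can be lifted from $L$ back to $K$ with loss only in an enlargement parameter that may be taken arbitrarily small. The successor step combines the displayed derivation inclusion with the induction hypothesis applied to $s_\ee(L)$, and limit ordinals follow from the description of the $\omega^\xi$-th Szlenk derivative as a nested intersection of weak$^*$-compact sets together with routine compactness.

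The step I expect to be the main obstacle is the coordinated replacement at the core of the base case: given an almost-extremal witness in $L$ — likely a tree or martingale-type object whose nodes are labelled by elements of $L$ and indexed by a weakly null tree in $X$ — one must simultaneously replace those nodes by elements of $K$ so as to preserve both the asymptotic tree condition and the $p$-summability estimate with only arbitrarily small loss. My preferred route is to first establish the reformulation of $\alpha_{\xi,p}$ purely in terms of norming sets and ordinal Szlenk derivations that is advertised in the abstract, since once that reformulation is in hand the slice comparison lemma makes the lifting essentially mechanical and the transfinite induction closes.
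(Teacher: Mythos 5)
Your proposal founders on its ``technical engine'': the derivation inclusion $s_\ee(L)\subseteq \overline{\text{abs\ co}}^{\text{weak}^*}\bigl(s_{\ee'}(K)\bigr)$ is false, and with it the entire inductive scheme collapses. Take $X=c_0$ and $K=\{0\}\cup\{e_n:n\in \nn\}\subset \ell_1=c_0^*$. This $K$ is weak$^*$-compact; each $e_n$ is weak$^*$-isolated in $K$ (since $e_m\to 0$ weak$^*$), so $s_\ee(K)=\{0\}$ for every $0<\ee<2$ and the right-hand side of your inclusion is $\{0\}$. But $L=\overline{\text{abs\ co}}^{\text{weak}^*}(K)=B_{\ell_1}$, and $s_\ee(B_{\ell_1})$ contains a ball of positive radius. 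More structurally, if your inclusion held it would yield $Sz(\overline{\text{abs\ co}}^{\text{weak}^*}(K))=Sz(K)$ in general, which is known to fail; the literature on Szlenk indices of convex hulls (\cite{LPR}, \cite{C}) exists precisely because the derivations $s_\ee$ do not propagate through hulls. The theorem you are proving is consistent with this failure only because $\alpha_{\xi,p}$ is not a derivation-by-derivation quantity: Corollary \ref{convex} records that $\textbf{p}_\xi$ genuinely can increase (from $0$ to $1$) under passage to the hull, even though $\alpha_{\xi,p}$ does not.

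The paper's proof avoids derivations entirely and needs no induction on $\xi$. Since $\alpha_{\xi,p,n}$ is defined through the support function $r_K(x)=\max_{x^*\in K}\text{Re\ }x^*(x)$, and $r_L=r_{\overline{\text{co}}^{\text{weak}^*}(L)}$ for every weak$^*$-compact $L$ by Hahn--Banach, the weak$^*$-closed \emph{convex} hull changes nothing at all. The only content is the absolute part: writing $\overline{\text{abs\ co}}^{\text{weak}^*}(K)=\overline{\text{co}}^{\text{weak}^*}(\mathbb{T}K)$ with $\mathbb{T}$ the unimodular scalars, one shows $\alpha_{\xi,p,n}(\mathbb{T}K)=\alpha_{\xi,p,n}(K)$ by taking a witnessing weakly null tree with functionals $\ee_t x^*_t\in\mathbb{T}K$, stabilizing the phases $\ee_t$ to a single $\ee\in\mathbb{T}$ up to a $\delta$-net via Lemma \ref{stabilize}, and then absorbing $\ee$ into the tree vectors $x_s\mapsto \ee x_s$; this is Theorem \ref{big theorem}$(iv)$, and the theorem follows by taking the supremum over $n$. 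Your closing instinct---to first establish that $\alpha_{\xi,p}$ is ``determined by norming sets''---points in the right direction, but the determination comes from the support-function form of the definition together with the combinatorial stabilization of Theorem \ref{big theorem}$(i)$, not from any compatibility of $s_\ee$ with convex hulls.
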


Combining Theorem \ref{arnold} with a result of Schlumprecht, we prove the following. 

\begin{theorem} Suppose $X$ is a  Banach space having a separable dual. Then there exists a Banach space $W$ with a shrinking basis such that for any countable ordinal and $1\leqslant p\leqslant \infty$, $\textbf{\emph{p}}_\xi(W)=\textbf{\emph{p}}_\xi(X)$ and if $\alpha_{\xi, p}(X)<\infty$, $\alpha_{\xi, p}(W)<\infty$.    

\label{schlumprecht}
\end{theorem}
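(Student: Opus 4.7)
The plan is to combine Schlumprecht's refinement of Zippin's embedding theorem with Theorem~\ref{arnold}. Since $X^*$ is separable, $X$ itself is separable, so Schlumprecht's theorem produces an isomorphic embedding $j\colon X\to W$ into a Banach space $W$ with a shrinking basis, together with fine control on how $B_{W^*}$ sits over $B_{X^*}$ via $j^*\colon W^*\to X^*$. Concretely, Schlumprecht's construction supplies a weak$^*$-compact, $1$-norming, absolutely convex subset $L$ of $B_{W^*}$ whose iterated Szlenk derivations at every scale $\varepsilon>0$ are controlled by those of $j^*(B_{W^*})\subset B_{X^*}$, and in fact by those of $B_{X^*}$ itself. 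This is exactly the hook that lets Szlenk-type invariants transfer from $X$ to $W$.

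With $L$ in hand, I would first observe that all of the invariants in the statement are monotone under containment and are computable from $\xi$-Szlenk derivations, so the comparison provided by Schlumprecht yields $\textbf{p}_\xi(L)\leqslant \textbf{p}_\xi(B_{X^*})=\textbf{p}_\xi(X)$ and $\alpha_{\xi,p}(L)\leqslant C\,\alpha_{\xi,p}(X)$ for each ordinal $\xi$ and $1\leqslant p\leqslant \infty$, with $C$ depending only on the embedding constants and the basis constant of $W$. Since $L$ is $1$-norming and absolutely convex, Hahn--Banach gives $\overline{L}^{\text{weak}^*}=B_{W^*}$, so in particular $B_{W^*}=\overline{\text{abs\ co}}^{\text{weak}^*}(L)$. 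Invoking Theorem~\ref{arnold} on the weak$^*$-compact set $L\subset W^*$ now gives $\alpha_{\xi,p}(W)=\alpha_{\xi,p}(B_{W^*})=\alpha_{\xi,p}(L)<\infty$ whenever $\alpha_{\xi,p}(X)<\infty$. The analogous argument for $\textbf{p}_\xi$, combined with the reverse inequality $\textbf{p}_\xi(X)\leqslant \textbf{p}_\xi(W)$ coming automatically from the embedding $X\hookrightarrow W$, yields equality of the $\xi$-Szlenk power types.

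The main obstacle is the bookkeeping that connects Schlumprecht's embedding to a set $L$ whose absolutely convex, weak$^*$-closed hull in $W^*$ is the full ball $B_{W^*}$. Schlumprecht's statement is customarily phrased in terms of Szlenk index (or the Szlenk function $\varepsilon\mapsto Sz(\cdot,\varepsilon)$), so I would need to check that his construction also respects the finer data encoded by $\alpha_{\xi,p}$; this amounts to verifying that the quantitative commutation between his embedding and the Szlenk derivation holds not only for single-scale indices but also for the $\xi$-asymptotic convex combinations that define $\alpha_{\xi,p,n}$. Once that transfer is secured at the level of the norming subset $L$, Theorem~\ref{arnold} is precisely the tool that upgrades information on $L$ to information on all of $B_{W^*}$, and the theorem follows.
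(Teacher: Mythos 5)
Your overall strategy --- Schlumprecht's embedding combined with the weak$^*$ absolutely convex hull invariance --- is the same as the paper's, but there is a genuine gap at exactly the step you flag and defer. Schlumprecht's theorem controls only iterated Szlenk derivations: it produces a set $\mathbb{B}^*\subset B_{Z^*}$ with $\overline{\text{abs\ co}}^{\text{weak}^*}(\mathbb{B}^*)=B_{Z^*}$ and a map $I^*$ with $I^*\bigl(s^{\omega^\xi}_{\ee_1}\cdots s^{\omega^\xi}_{\ee_n}(\mathbb{B}^*)\bigr)\subset s^{\omega^\xi}_{\ee_1/5}\cdots s^{\omega^\xi}_{\ee_n/5}(B^*)$ for a suitable $B^*\subset B_{X^*}$. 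There is no statement, and no evident proof, that this construction commutes with the weakly null trees and special convex combinations that define $\alpha_{\xi,p,n}$, so the inequality $\alpha_{\xi,p}(L)\leqslant C\,\alpha_{\xi,p}(X)$ you posit cannot be obtained by the ``direct verification'' you propose. The way to close the gap is not to verify such a commutation but to route through Theorem \ref{curse}: $\alpha_{\xi,p}(K)<\infty$ if and only if $K$ has $\xi$-$q$-summable Szlenk index ($1/p+1/q=1$), and $\textbf{p}_\xi(K)$ is determined by $\sup\{r:\alpha_{\xi,r}(K)<\infty\}$. Both $\xi$-$q$-summability and the power type are defined purely in terms of the derivations $s^{\omega^\xi}_{\ee}$, so they pass from $B_{X^*}$ to $B^*$ by inclusion, from $B^*$ to $\mathbb{B}^*$ by the displayed containment, and from $\mathbb{B}^*$ to $B_{Z^*}$ by Corollary \ref{convex} (the corollary of the hull-invariance Theorem \ref{arnold} you invoke). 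Translating back through Theorem \ref{curse} then yields the finiteness of $\alpha_{\xi,p}(Z)$ and the equality of power types.

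A second, smaller omission: Schlumprecht's construction yields a space $Z$ with a shrinking FDD, not a shrinking basis. One must then pass to $W=Z\oplus\bigl(\oplus_{n}H_n\bigr)_{c_0}$ via Pe\l czy\'{n}ski's technique and check that this sum does not degrade the estimates; the paper does this using $\alpha_{\xi,p}(H)\leqslant \alpha_{0,\infty}(H)\leqslant 1$ and $\textbf{p}_\xi(H)\leqslant 1$ for $H=(\oplus_n H_n)_{c_0}$, together with the ideal norm properties from Theorem \ref{ideal}. Neither point is fatal to your plan, but as written the proposal does not constitute a complete proof.
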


Given operators $A_0:X_0\to X_1$, $A_1:X_1\to Y_1$, there is an induced operator $A_0\otimes A_1:X_0\hat{\otimes}_\ee X_1\to Y_0\hat{\otimes}_\ee Y_1$ between the injective tensor products.   By the geometric Hahn-Banach theorem, $(A_0\otimes A_1)^*(B_{(Y_0\hat{\otimes }_\ee Y_1)^*})$ is the weak$^*$-closed, convex hull of $\{x^*_0\otimes x^*_1: x^*_0\in A^*_0B_{Y^*_0}, A^*_1B_{Y^*_1}\}$. This fact combined with Theorem \ref{arnold} and a combinatorial lemma yields the following.

\begin{theorem} Fix any $1\leqslant q<\infty$, any ordinal $\xi$,  any operators $A_i:X_i\to Y_i$, $i=0,1$. Then if $A_0$, $A_1$ have $\xi$-$q$-summable Szlenk index, so does $A_0\otimes A_1$, and $\textbf{\emph{p}}_\xi(A_0\otimes A_1) \leqslant \max\{\textbf{\emph{p}}_\xi(A_0), \textbf{\emph{p}}_\xi(A_1)\}$.    

In the non-trivial case in which  $A_0, A_1\neq 0$,  $A_0\otimes A_1$ has $\xi$-$q$-summable Szlenk index if and only if $A_0$, $A_1$ do and $\textbf{\emph{p}}_\xi(A_0\otimes A_1) = \max\{\textbf{\emph{p}}_\xi(A_0), \textbf{\emph{p}}_\xi(A_1)\}$.

\end{theorem}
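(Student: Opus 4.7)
The plan is to reduce the first assertion to a computation of $\alpha_{\xi,p}$ on $(A_0\otimes A_1)^*B_{(Y_0\hat{\otimes}_\ee Y_1)^*}$ and then to push all the work into a combinatorial lemma about sets of elementary tensors. Let $K_i:=A_i^*B_{Y_i^*}$ and $E:=\{x^*_0\otimes x^*_1 : x^*_0\in K_0,\ x^*_1\in K_1\}$. The adjoint description of an injective tensor product recalled in the excerpt identifies $(A_0\otimes A_1)^*B_{(Y_0\hat{\otimes}_\ee Y_1)^*}$ with $\overline{\text{abs co}}^{\text{weak}^*}(E)$, so Theorem~\ref{arnold} immediately gives
\[
\alpha_{\xi,p}\bigl((A_0\otimes A_1)^*B_{(Y_0\hat{\otimes}_\ee Y_1)^*}\bigr)=\alpha_{\xi,p}(E).
\]
Everything then reduces to bounding $\alpha_{\xi,p}(E)$ by a constant times $\max\{\alpha_{\xi,p}(K_0),\alpha_{\xi,p}(K_1)\}$.

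The forward direction is the combinatorial lemma alluded to in the excerpt, which I would state in the form
\[
\alpha_{\xi,p,n}(E)\leqslant\max\{\|K_0\|,\|K_1\|\}\cdot\max\{\alpha_{\xi,p,n}(K_0),\alpha_{\xi,p,n}(K_1)\},
\]
and prove by a tree argument. Using that $\|x^*_0\otimes x^*_1\|=\|x^*_0\|\|x^*_1\|$ in the dual of an injective tensor product, any weakly null tree (or weak$^*$-derivation) witness for the $\alpha$-quantity in $E$ can, by a Ramsey or pigeonhole selection at each level, be projected to a witness of the same Cantor--Bendixson rank in one of the two factor sets $K_0$ or $K_1$, with the inactive coordinate absorbed into the scaling $\|K_j\|$. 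Combined with the equivalence between finiteness of $\alpha_{\xi,p}$ and $\xi$-$q$-summability of Szlenk index given in the first theorem of the paper, this lemma gives $\xi$-$q$-summability of $A_0\otimes A_1$ together with the desired inequality $\textbf{p}_\xi(A_0\otimes A_1)\leqslant\max\{\textbf{p}_\xi(A_0),\textbf{p}_\xi(A_1)\}$.

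For the non-trivial reverse direction, assuming $A_0,A_1\neq 0$, I would argue by factorization. Pick $y_1\in X_1$ with $A_1y_1\neq 0$ and $\phi\in Y_1^*$ with $\phi(A_1y_1)=1$, and observe that the composition
\[
X_0\xrightarrow{\,x\mapsto x\otimes y_1\,}X_0\hat{\otimes}_\ee X_1\xrightarrow{\,A_0\otimes A_1\,}Y_0\hat{\otimes}_\ee Y_1\xrightarrow{\,\text{id}\otimes\phi\,}Y_0
\]
equals $A_0$. Since the class of operators with $\xi$-$q$-summable Szlenk index is an ideal and $\textbf{p}_\xi$ is monotone under factorization, this forces $\textbf{p}_\xi(A_0)\leqslant\textbf{p}_\xi(A_0\otimes A_1)$ and propagates summability from $A_0\otimes A_1$ to $A_0$. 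The symmetric argument handles $A_1$, and combined with the forward direction this yields equality of Szlenk power types.

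The main obstacle is the combinatorial lemma. Although $\|x^*_0\otimes x^*_1\|\leqslant\|x^*_0\|\|x^*_1\|$ is immediate, controlling the $\xi$-$q$-summable Szlenk behavior of $E$ requires navigating weak$^*$-compact tree indices simultaneously in both factors, and one must ensure that the level-by-level decoupling respects the transfinite hierarchy encoded in $\xi$ and does not inflate the $\ell_q$-summation constants used in the definition of $\alpha_{\xi,p,n}$. Once this combinatorial step is in place, the reduction via Theorem~\ref{arnold} and the factorization argument make the rest of the proof essentially formal.
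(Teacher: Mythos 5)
Your overall architecture matches the paper's: reduce to the set $E=[K_0,K_1]$ of elementary tensors via the Hahn--Banach description of $(A_0\otimes A_1)^*B_{(Y_0\hat{\otimes}_\ee Y_1)^*}$ and Theorem \ref{arnold} (in the form of Corollary \ref{convex}), and obtain the reverse implications by factorization through the ideal property (the maps $x\mapsto x\otimes y_1$ and $\mathrm{id}\otimes\phi$ recover $A_0$ from $A_0\otimes A_1$; that part of your argument is correct and is essentially how the paper's introduction intends the ``only if'' to be read). However, the entire content of the theorem sits in the combinatorial lemma you defer, and both the statement you propose for it and the sketch you give for its proof are problematic. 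First, the sketch: you propose to take a weakly null tree in $B_{X_0\hat{\otimes}_\ee X_1}$ witnessing a lower bound for $\alpha_{\xi,p,n}(E)$ and, ``by a Ramsey or pigeonhole selection at each level, project it to a witness \ldots in one of the two factor sets.'' There is no mechanism for this: elements of $X_0\hat{\otimes}_\ee X_1$ are not elementary tensors, and a weakly null collection there does not yield weakly null collections in $B_{X_0}$ or $B_{X_1}$ with the corresponding duality estimates against $K_0$ or $K_1$. The paper avoids this obstacle entirely by working on the dual side, with the transfinite Szlenk derivations of finite unions $\bigcup_{j\in J}[K_{0,j},K_{1,j}]$ of ``rectangles'' of elementary tensors (Lemma \ref{tek9} and Corollary \ref{tek10}), proved by induction on $\xi$ using \cite[Proposition 5.1]{summable} at successor steps and weak$^*$-compactness at limit steps.

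Second, the shape of your lemma is wrong in a way that reflects the same misconception. Even on the dual side, a nonempty iterated derivation $s^{\omega^\xi}_{\ee_1}\ldots s^{\omega^\xi}_{\ee_n}([K_0,K_1])\neq\varnothing$ does \emph{not} force all $n$ derivation steps into a single factor; rather there is a vector $(k_i)_{i=1}^n\in\{0,1\}^n$ distributing the steps between $K_0$ and $K_1$, which is why the paper's conclusion is the additive bound $\sum_{i=1}^n\ee_i^q\leqslant 4^qR^q(M_0^q+M_1^q)$ and why Lemma \ref{tek9} carries a union over all of $\{0,1\}^n$. A ``$\max$''-type bound of the form $\alpha_{\xi,p,n}(E)\leqslant C\max\{\alpha_{\xi,p,n}(K_0),\alpha_{\xi,p,n}(K_1)\}$ obtained by sending the whole witness to one factor is not what a correct argument produces (the correct qualitative conclusion --- finiteness of $\alpha_{\xi,p}(E)$, hence $\xi$-$q$-summability, hence $\textbf{p}_\xi\leqslant q$ --- does follow from the additive bound, but your route to it does not go through). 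To repair the proof you need the derivation-splitting lemma, proved by transfinite induction on the dual side; once that is in place the rest of your outline is sound.
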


We also give a complete description of the behavior of the $\alpha_{\xi, r}$ seminorms under finite $\ell_p$ direct sums of operators. 

\begin{theorem} Fix $1\leqslant p\leqslant \infty$. Then for any finite set $\Lambda$, any ordinal $\xi$, and any operators $A_\lambda:X_\lambda\to Y_\lambda$, $$\alpha_{\xi,r}\bigl(\oplus_{\lambda\in \Lambda}A_\lambda:(\oplus_{\lambda\in \Lambda}X_\lambda)_{\ell_p(\Lambda)}\to (\oplus_{\lambda\in \Lambda}Y_\lambda)_{\ell_p(\Lambda)}\bigr)= \max_{\lambda\in \Lambda} \alpha_{\xi,r}(A_\lambda)$$ if $1\leqslant r\leqslant p$ and $$\alpha_{\xi,r}\bigl(\oplus_{\lambda\in \Lambda}A_\lambda:(\oplus_{\lambda\in \Lambda}X_\lambda)_{\ell_p(\Lambda)}\to (\oplus_{\lambda\in \Lambda}Y_\lambda)_{\ell_p(\Lambda)}\bigr)^s = \sum_{\lambda\in \Lambda} \alpha_{\xi,r}(A_\lambda)^s$$ if $p<r\leqslant \infty$, where $1/r+1/s=1/p$.

\end{theorem}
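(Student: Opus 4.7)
The plan is to work entirely on the dual side, exploiting Theorem \ref{arnold} together with a coordinate decomposition of the adjoint image of the unit ball. Identifying $((\oplus_\lambda X_\lambda)_{\ell_p(\Lambda)})^*$ with $(\oplus_\lambda X_\lambda^*)_{\ell_{p'}(\Lambda)}$ for $1/p+1/p'=1$, the adjoint $(\oplus A_\lambda)^*$ carries the unit ball onto the weak$^*$-compact, absolutely convex set
$$K=\bigl\{(A_\lambda^* y_\lambda^*)_{\lambda\in\Lambda}:\textstyle\sum_\lambda\|y_\lambda^*\|^{p'}\leqslant 1\bigr\},$$
so $\alpha_{\xi,r}(\oplus_\lambda A_\lambda)=\alpha_{\xi,r}(K)$. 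Setting $K_\lambda=A_\lambda^*B_{Y_\lambda^*}$ and $t_\lambda=\|y_\lambda^*\|$, the set $K$ equals $\bigcup_{t\geqslant 0,\,\|t\|_{p'}\leqslant 1}\prod_\lambda t_\lambda K_\lambda$, a union of scaled coordinate boxes; this decomposition is the geometric input on which the whole argument rests.

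For the lower bound, the canonical isometric weak$^*$-to-weak$^*$ embedding $\iota_{\lambda_0}:X_{\lambda_0}^*\hookrightarrow(\oplus_\lambda X_\lambda^*)_{\ell_{p'}}$ carries $K_{\lambda_0}$ into $K$, giving $\alpha_{\xi,r}(A_{\lambda_0})\leqslant\alpha_{\xi,r}(K)$ and hence the $\max$ bound in both regimes. For the sharper $\ell_s$-sum bound when $p<r$, I would fix $t\geqslant 0$ with $\|t\|_{p'}\leqslant 1$ and, coordinate by coordinate, invoke weakly null trees inside $t_\lambda K_\lambda$ witnessing $t_\lambda\alpha_{\xi,r}(A_\lambda)$. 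Because those trees live in mutually disjoint $\ell_{p'}$-summands, their coordinate-wise concatenation is a single weakly null tree of the correct ordinal order sitting inside $K$, with branch value $\sum_\lambda t_\lambda\alpha_{\xi,r}(A_\lambda)$. Optimizing over $t$ by the H\"older duality between $\ell_{p'}$ and $\ell_s$, and matching the $\ell_r$-averaging built into $\alpha_{\xi,r}$, aligns the exponents through $1/r+1/s=1/p$ and produces the desired lower bound $(\sum_\lambda\alpha_{\xi,r}(A_\lambda)^s)^{1/s}$.

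For the upper bound the central tool is Theorem \ref{arnold}: $\alpha_{\xi,r}(K)=\alpha_{\xi,r}(\overline{\text{\emph{abs\ co}}}^{\text{weak}^*}(M))$ for any weak$^*$-compact $M$ whose weak$^*$-closed absolutely convex hull is $K$. For $p=\infty$ one takes $M=\bigcup_\lambda\iota_\lambda(K_\lambda)$, a coordinate-disjoint union, and the max formula follows from a pruning argument that eventually forces all nodes of a witnessing tree into a single coordinate. For general $p$ one takes $M=\bigcup_{\|t\|_{p'}=1,\,t\geqslant 0}\prod_\lambda t_\lambda K_\lambda$ and splits into two subcases: when $1\leqslant r\leqslant p$ the $\ell_{p'}$-scaling never lets a tree collect more than a single coordinate supplies, so the $\max$ still wins; when $p<r$ one needs a uniform estimate showing $\alpha_{\xi,r}(\prod_\lambda t_\lambda K_\lambda)$ is controlled by a H\"older-type combination of $(t_\lambda)$ and $(\alpha_{\xi,r}(K_\lambda))$ that integrates against $\sup_{\|t\|_{p'}\leqslant 1}$ to exactly $(\sum_\lambda\alpha_{\xi,r}(K_\lambda)^s)^{1/s}$.

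The main obstacle is this last uniform estimate in the $p<r$ regime: at each node of a witnessing tree inside a scaled coordinate box, the element decomposes into coordinate pieces of norms $t_\lambda$, and the $\ell_r$-averaging built into the transfinite derivation defining $\alpha_{\xi,r}$ has to be redistributed across coordinates via a H\"older inequality with the conjugate relation $1/r+1/s=1/p$. Carrying this out cleanly at the level of the transfinite derivation — and in particular verifying that the ordinal order of the tree passes through the coordinate decomposition without loss — is where the bulk of the technical work will lie.
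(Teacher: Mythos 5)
Your dual-side setup is fine as far as it goes: the identification $\alpha_{\xi,r}(\oplus_\lambda A_\lambda)=\alpha_{\xi,r}(K)$ with $K=\bigcup_{\|t\|_{p'}\leqslant 1,\,t\geqslant 0}\prod_\lambda t_\lambda K_\lambda$ is correct, and the lower bound $\max_\lambda\alpha_{\xi,r}(A_\lambda)\leqslant\alpha_{\xi,r}(K)$ via the coordinate embeddings is sound. But the proof has a genuine gap exactly where you flag it: the entire content of the theorem is the upper bound, and you defer it to ``a uniform estimate'' that you do not supply. Two specific problems. First, the union defining $K$ (or your generating set $M$) is indexed by a continuum of scaling vectors $t$; the only union-splitting tool available (Lemma \ref{stabilize}$(ii)$, that a finite union of subsets of $MAX(\Gamma_{\xi,n}.D)$ which is cofinal has a cofinal member) handles \emph{finite} unions only, so the ``pruning argument that forces all nodes into a single coordinate'' does not apply as stated, even after discretizing by a net, because the pieces of the discretized union are boxes, not single coordinates. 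Second, you propose to carry out the H\"older redistribution ``at the level of the transfinite derivation defining $\alpha_{\xi,r}$'' --- but $\alpha_{\xi,r}$ is not defined by a derivation; it is defined by suprema over weakly null trees in $B_X$ of the support function $r_K$ applied to weighted convex block combinations. The derivation only detects \emph{finiteness} of $\alpha_{\xi,p}$ (via $\xi$-$q$-summability, Theorem \ref{curse}), not its value, so no derivation-level argument can produce the exact equalities $\max_\lambda\alpha_{\xi,r}(A_\lambda)$ and $(\sum_\lambda\alpha_{\xi,r}(A_\lambda)^s)^{1/s}$ asserted here.

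What actually closes the gap in the paper is a primal-side stabilization: given a weakly null tree $(x_t)\subset B_{E_p}$ witnessing $\alpha<\alpha_{\xi,r,n}$, one first stabilizes, level by level, the coordinate norms $\|P_\lambda x_t\|\leqslant a^i_\lambda$ with $\|(a^i_\lambda)_\lambda\|_{\ell_p}\leqslant 1$ (Lemma \ref{kimmy}), then intersects the finitely many eventual sets on which each coordinate operator $A_\lambda$ obeys its own $\alpha_{\xi,r,n}(A_\lambda)$-bound against the rescaled tree $(P_\lambda x_s/a^i_\lambda)$, and finally applies the Minkowski/H\"older computation of Fact \ref{f2} to the matrix $(a^i_\lambda)$ to obtain the $\ell_\infty$ bound when $r\leqslant p$ and the $\ell_s$ bound when $r>p$. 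Your sketch never confronts the matrix $(a^i_\lambda)$ --- the fact that the coordinate distribution of the tree vectors can vary from level to level is precisely what makes the two regimes $r\leqslant p$ and $r>p$ split, and it is the step your proposal leaves open. The sharp lower bound in the $r>p$ case also needs more care than ``optimizing over $t$'': one concatenates blocks of levels supported in distinct coordinates and chooses the level weights via the duality of Fact \ref{f1} between $\ell_r$ and $\ell_s$ relative to $\ell_p$ (your pairing of $\ell_{p'}$ with $\ell_s$ does not produce the right exponent, since $1/p'+1/s=1/r'\neq 1$ in general).
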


We also investigate the behavior of the $\alpha_{\xi,r}$ seminorms under infinite $\ell_p$ and $c_0$ directs sums of operators. In what follows, suppose that $\Lambda$ is a non-empty set and $\{A_\lambda:X_\lambda\to Y_\lambda:\lambda\in \Lambda\}$ is a uniformly bounded collection of Asplund operators.  We offer a new proof of a result due to Brooker concerning the Szlenk index of diagonal operators between $\ell_p$ direct sums. The method of our proof is dual to Brooker's method of proof, working with upper estimates on convex combinations of weakly null trees in the spaces rather than derivations in the dual. This result has many cases, so we isolate the following single case, which we believe to be of the greatest interest.

\begin{theorem} Fix  $p\in \{0\}\cup (1, \infty)$ and suppose that at least one of the operators $(A_\lambda)_{\lambda\in \Lambda}$ is non-compact. Then if $D^p:(\oplus_{\lambda\in \Lambda} X_\lambda)_{\ell_p(\Lambda)}\to (\oplus_{\lambda\in \Lambda} Y_\lambda)_{\ell_p(\Lambda)}$ is the diagonal operator such that $D^p|_{X_\lambda}=A_\lambda$,  $Sz(D^p)=\sup_{\lambda\in \Lambda} Sz(A_\lambda)$ if and only if for every $\ee>0$, $\sup_{\lambda\in \Lambda}Sz(A_\lambda, \ee)<\sup_{\lambda\in \Lambda} Sz(A_\lambda)$, and otherwise $Sz(D^p)= \bigl(\sup_{\lambda\in \Lambda} Sz(A_\lambda)\bigr)\omega$. 

Here, if $p=0$, we replace the $\ell_p(\Lambda)$ direct sum with the $c_0(\Lambda)$ direct sum.

\label{direct}
\end{theorem}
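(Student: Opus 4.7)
Set $\beta := \sup_{\lambda \in \Lambda} Sz(A_\lambda)$ and, for each $\ee > 0$, $\beta_\ee := \sup_{\lambda \in \Lambda} Sz(A_\lambda, \ee)$, so $\beta = \sup_{\ee > 0} \beta_\ee$. The inequality $Sz(D^p) \geqslant \beta$ is immediate since each $A_\lambda$ factors through $D^p$ by norm-one coordinate injection and projection. Because each $A_\lambda$ is Asplund and at least one is non-compact, $\beta$ is a power of $\omega$ satisfying $\beta \geqslant \omega$, hence a limit ordinal. The plan is to split on the dichotomy: in Case (i), where $\beta_\ee < \beta$ for every $\ee > 0$, I will establish $Sz(D^p) \leqslant \beta$; in Case (ii), where $\beta_\ee = \beta$ for some $\ee > 0$, I will establish $Sz(D^p) \leqslant \beta\omega$ and $Sz(D^p) \geqslant \beta\omega$.

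For the upper bound I run the primal-side version of Brooker's argument using the $\alpha_{\xi,p}$ machinery. The set $D^{p*}(B_*)$ is the weak$^*$-closed absolute convex hull of the disjointly-supported, appropriately rescaled copies of $A^*_\lambda B_{Y^*_\lambda}$ placed on the $\lambda$-th coordinate (with $\ell_{p'}$ weighting, $\ell_1$ weighting when $p = 0$). By Theorem \ref{arnold}, the $\alpha_{\xi,p}$ invariants of $D^{p*}(B_*)$ are determined by those of its coordinate pieces, and the $\alpha$-to-Szlenk translation in the first theorem of the introduction converts these back to Szlenk derivation estimates. Quantitatively, every point of norm exceeding $\ee$ in $D^{p*}(B_*)$ has at most $k_\ee := \lfloor (2 \sup_\lambda \|A_\lambda\| / \ee)^{p'} \rfloor$ coordinates of norm exceeding $\ee / 2^{1/p'}$, and therefore $Sz(D^p, \ee) \leqslant \beta_{\ee'} \cdot k_\ee$ with $\ee' := \ee / 2^{1/p'}$. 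In Case (i), $\beta_{\ee'} < \beta$ and $\beta$ is a limit power of $\omega$ closed under multiplication by finite ordinals, so $\beta_{\ee'} \cdot k_\ee < \beta$, yielding $Sz(D^p) = \sup_\ee Sz(D^p, \ee) \leqslant \beta$. In Case (ii), $\beta_{\ee'} \leqslant \beta$, and the same estimate gives $Sz(D^p) \leqslant \sup_\ee \beta \cdot k_\ee = \beta\omega$.

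For the lower bound in Case (ii), fix $\ee > 0$ with $\beta_\ee = \beta$. Given $n \in \nn$ and ordinals $\gamma_1, \ldots, \gamma_n < \beta$, I select indices $\lambda_1, \ldots, \lambda_n$ (distinct when possible, otherwise reusing a single $\lambda$ with $Sz(A_\lambda, \ee) = \beta$) with $Sz(A_{\lambda_i}, \ee) > \gamma_i$. Placing $\ee$-derivation witnesses of $A^*_{\lambda_i} B_{Y^*_{\lambda_i}}$ on disjoint coordinates and rescaling by $n^{-1/p'}$ produces a non-empty $(\ee n^{-1/p'})$-derivation of $D^{p*}(B_*)$ of ordinal height $\gamma_1 + \cdots + \gamma_n$. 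Letting each $\gamma_i \nearrow \beta$ gives height $\beta \cdot n$ at level $\ee n^{-1/p'}$, and taking $\sup_n$ yields $Sz(D^p) \geqslant \sup_n \beta \cdot n = \beta\omega$.

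The main obstacle lies in the coordinate-wise reduction used in the upper bound, namely showing that a Szlenk derivation step on $D^{p*}(B_*)$ can be executed by deriving only a bounded (in $\ee$) number of coordinates at a time, with the other coordinates contributing negligibly. Working on the primal side with convex block combinations of weakly null trees, via Theorem \ref{arnold} and the $\alpha_{\xi,p}$ framework, replaces the intricate weak$^*$-slicing combinatorics of Brooker's original argument with cleaner tree selections and is what makes the bound $\beta_{\ee'} \cdot k_\ee$ tractable uniformly in the possibly infinite index set $\Lambda$.
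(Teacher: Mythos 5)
Your overall case structure and the easy bound $Sz(D^p)\geqslant\beta$ are fine, but there are two genuine gaps. The serious one is the upper bound: the inequality $Sz(D^p,\ee)\leqslant\beta_{\ee'}\cdot k_\ee$ \emph{is} the hard content of the theorem, and you deduce it from the observation that each element of the dual ball has at most $k_\ee$ coordinates of norm exceeding $\ee'$. That observation controls individual functionals, not derivations: a weak$^*$-slice of $D^{p*}(B_{F_p^*})$ of diameter greater than $\ee$ need not localize its oscillation on a fixed finite set of coordinates, and propagating such a localization through transfinitely many derivation steps is exactly the combinatorial difficulty you defer to ``Theorem \ref{arnold} and the $\alpha_{\xi,p}$ framework.'' Theorem \ref{arnold} cannot supply this: it states $\alpha_{\xi,p}(K)=\alpha_{\xi,p}(\overline{\text{abs\ co}}^{\text{weak}^*}(K))$ and contains no mechanism for decomposing $\alpha_{\xi,p}$ of a set into coordinate contributions; moreover your premise that $D^{p*}(B_{F_p^*})$ is the weak$^*$-closed absolutely convex hull of the disjointly supported copies of the $A_\lambda^*B_{Y_\lambda^*}$ is false for $p>1$ (that hull is the $\ell_1$-sum of the coordinate sets, strictly smaller than the $\ell_{p'}$-sum). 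The second gap is the lower bound when $\beta_\ee=\beta$: since $\beta$ is a power of $\omega$ it is additively indecomposable, so $\sup\{\gamma_1+\cdots+\gamma_n:\gamma_i<\beta\}=\beta$, not $\beta\cdot n$; and since each $Sz(A_\lambda,\ee)$ is a successor ordinal $\leqslant\beta$, no single factor contributes height $\beta$. Stacking $n$ disjointly supported witnesses therefore yields only $Sz(D^p,\ee n^{-1/p'})\geqslant\beta$. The correct route to $\beta\omega$ uses convexity: the nested nonempty weak$^*$-compact sets $s_\ee^\gamma(D^{p*}(B_{F_p^*}))$, $\gamma<\beta$, have nonempty intersection, so $Sz(D^p,\ee)>\omega^\zeta$ where $\beta=\omega^\zeta$, and Proposition \ref{convex body1}$(i)$ then gives $Sz(D^p,\ee/n)>\omega^\zeta n$.

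For comparison, the paper never proves a derivation estimate of the form $Sz(D^p,\ee)\leqslant\beta_{\ee'}k_\ee$. It obtains $Sz(D^p)\leqslant\beta\omega$ from Theorem \ref{button classic}, an induction on $n$ showing $\alpha_{\zeta,\gamma,n}(D^p_\Lambda)\leqslant\sup_\lambda\|A_\lambda\|+\sup_\lambda\alpha_{\zeta,\gamma}(A_\lambda)$ for $\gamma\leqslant p$, in which each weakly null tree is split as $P_\Upsilon+P_{\Lambda\setminus\Upsilon}$ for a finite $\Upsilon$ extracted from the first level and the two pieces are handled by eventual sets; combined with Remark \ref{snit} this gives $Sz(D^p)\leqslant\omega^{\zeta+1}$. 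Since $Sz(D^p)$ is a power of $\omega$ (Corollary \ref{convex body}), it is either $\omega^\zeta$ or $\omega^{\zeta+1}$, and the dichotomy is settled by Theorem \ref{db}, which translates the condition that $\sup_\lambda Sz(A_\lambda,\ee)<\omega^\zeta$ for every $\ee>0$ into $\inf\{\sup_\lambda\theta_{\eta,m}(A_\lambda):\eta<\zeta,\ m\in\nn\}=0$ and then invokes Corollary \ref{fir}. The recursive construction there, choosing nested finite supports $\Upsilon_1\subset\cdots\subset\Upsilon_n$ along the levels of a tree indexed by $\Gamma_{\eta,n^2}.D$, is the primal-side substitute for Brooker's dual slicing that your sketch gestures at but does not carry out; to complete your route you would have to reprove Brooker's derivation estimate from scratch.
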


We also  elucidate the values of $\alpha_{\xi, \gamma}(D^p)$  (and therefore compute $\textbf{p}_\xi(D^p)$) for all values of $p\in \{0\}\cup [1, \infty]$ and $1\leqslant \gamma\leqslant 1$ only in terms of the behaviors of $Sz(A_\lambda, \ee)$ and $\alpha_{\xi, \gamma}(A_\lambda)$, thus completely describing the behavior of the ideal norms and Szlenk power type of diagonal operators between $\ell_p$ direct sums in terms of the ideal norms and Szlenk power types of the summands.

\begin{theorem} Fix $p\in \{0\}\cup (1, \infty)$ and suppose $Sz(D_p)=\omega^{\xi+1}$ and $\sup_{\lambda\in \Lambda} Sz(A_\lambda)=\omega^\zeta>1$.  Then by Theorem \ref{direct}, $\zeta+1=\xi+1$ if and only if there exists $\ee>0$ such that $\sup_{\lambda\in \Lambda} Sz(A_\lambda, \ee)=\omega^\zeta$, and otherwise $\zeta=\xi+1$. 

If $\zeta+1=\xi+1$, then for any $1\leqslant r\leqslant \infty$, then $\alpha_{\xi,r}(D_p)<\infty$ if and only if either $p=0$ or $1\leqslant r\leqslant p<\infty$.

If $\zeta=\xi+1$, then for any $1\leqslant r\leqslant \infty$, then $\alpha_{\xi,r}(D_p)<\infty$ if and only of one of the three following conditions hold: \begin{enumerate}[(i)]\item $p=0$ and $\sup_{\lambda\in \Lambda} \alpha_{\xi,r}(A_\lambda)<\infty$. \item $1\leqslant r\leqslant p<\infty$ and $\sup_{\lambda\in \Lambda} \alpha_{\xi,r}(A_\lambda)<\infty$.  \item $1<p<r\leqslant \infty$, $\sum_{\lambda\in \Lambda} \alpha_{\xi,r}(A_\lambda)^s<\infty$, and for any $\ee>0$, there exists a finite subset $\Upsilon$ of $\Lambda$ such that $\sup_{\lambda\in \Lambda\setminus \Upsilon} Sz(A_\lambda, \ee)<\omega^\xi$.   Here, $s$ is defined by  $1/r+1/s=1/p$. \end{enumerate}

\end{theorem}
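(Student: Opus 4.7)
The plan is to deduce both cases from the finite direct sum theorem together with Theorem \ref{arnold} and a careful analysis of the weak$^*$-compact image $K = (D^p)^*(B_{Y^*})$, where $Y := (\oplus_\lambda Y_\lambda)_{\ell_p(\Lambda)}$ (interpreted as the $c_0$-sum when $p=0$). The set $K$ sits inside $(\oplus_\lambda Y^*_\lambda)_{\ell_{p'}(\Lambda)}$ and consists of vectors $(A^*_\lambda y^*_\lambda)_\lambda$ with $\sum_\lambda \|y^*_\lambda\|^{p'} \leqslant 1$. By Theorem \ref{direct}, both cases already give $Sz(D^p) = \omega^{\xi+1}$, so $D^p \in \mathfrak{D}_{\xi+1}$ by the first theorem of the introduction, and the only issue is the finiteness of $\alpha_{\xi,r}(D^p)$.

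For the necessity of the stated conditions, I would exploit the natural coordinate projections: for any finite $\Upsilon \subset \Lambda$, the operator $D^p_\Upsilon := P_\Upsilon D^p J_\Upsilon$, where $J_\Upsilon$ and $P_\Upsilon$ are the norm-one inclusion and projection onto the $\Upsilon$-coordinates, factors through $D^p$, hence $\alpha_{\xi,r}(D^p_\Upsilon) \leqslant \alpha_{\xi,r}(D^p)$. Applying the finite direct sum theorem to $D^p_\Upsilon$ and taking suprema over $\Upsilon$ yields $\sup_\lambda \alpha_{\xi,r}(A_\lambda) \leqslant \alpha_{\xi,r}(D^p)$ when $1\leqslant r\leqslant p$, and $\bigl(\sum_\lambda \alpha_{\xi,r}(A_\lambda)^s\bigr)^{1/s} \leqslant \alpha_{\xi,r}(D^p)$ when $p<r$. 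This supplies the necessity of the summability/boundedness parts of (i), (ii), (iii). For the additional finite-$\Upsilon_\ee$ clause in (iii), one argues contrapositively: if for some $\ee_0>0$ no finite $\Upsilon$ captures all $\lambda$ with $Sz(A_\lambda, \ee_0) \geqslant \omega^\xi$, then infinitely many $\lambda$ contribute a non-trivial $\ee_0$-derivation at level $\omega^\xi$ inside $K$, and the $\ell_{p'}$-direct-sum structure (with $1<p<r$) pumps these obstructions into a lower bound on $\alpha_{\xi,r}(D^p)$ forcing infinity.

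For sufficiency, I would replace $K$ by $\overline{\mathrm{abs\,co}}^{w^*}(K)$ using Theorem \ref{arnold} and argue via a finite/cofinite decomposition of $\Lambda$. When $1\leqslant r\leqslant p$ (including $p=0$), the rigidity of the $\ell_{p'}$ (resp.\ $\ell_1$) dual norm permits a max-type estimate on the $\alpha_{\xi,r,n}$ seminorms that passes from finite $\Upsilon$ to all of $\Lambda$; here the hypothesis $A_\lambda \in \mathfrak{D}_\xi$ combined with the uniform norm bound yields a uniform bound on $\alpha_{\xi,r}(A_\lambda)$, matching the finite case. In the $p<r$ regime of (iii), for each $\ee>0$ use the finite $\Upsilon_\ee$ from (iii) to handle the ``big'' contributions via the finite direct sum theorem, and use the uniform smallness of $Sz(A_\lambda, \ee)$ for $\lambda \in \Lambda\setminus\Upsilon_\ee$ together with the $\ell_s$-summability of $\alpha_{\xi,r}(A_\lambda)^s$ to dominate the tail. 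The main obstacle is executing this $\ee$-by-$\ee$ decomposition uniformly inside the definition of $\alpha_{\xi,r}$: the $\alpha_{\xi,r,n}$ seminorms must respect the $\ell_{p'}$-splitting of $K$ quantitatively, so that the error terms from the infinite tail genuinely vanish as $\ee \to 0^+$ and the decomposition is refined. This amounts to extending the combinatorial lemmas underlying the finite direct sum theorem to the infinite/transfinite setting uniformly in $\ee$ and in the tree level $n$.
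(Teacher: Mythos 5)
Your necessity skeleton is sound: factoring the finite sub-sums $D^p_\Upsilon$ through $D^p$ and invoking the finite direct sum theorem does give $\sup_\lambda\alpha_{\xi,r}(A_\lambda)\leqslant\alpha_{\xi,r}(D^p)$ when $r\leqslant p$ and $\bigl(\sum_\lambda\alpha_{\xi,r}(A_\lambda)^s\bigr)^{1/s}\leqslant\alpha_{\xi,r}(D^p)$ when $p<r$, which is exactly how the paper gets those halves. But the two hardest steps are asserted rather than proved. First, the sufficiency when $p=0$ or $r\leqslant p$: your claim that a ``max-type estimate \ldots passes from finite $\Upsilon$ to all of $\Lambda$'' is the entire difficulty, not a consequence of the rigidity of the dual norm. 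The operator $D^p$ is \emph{not} in the norm closure of its finite sub-sums unless $(\|A_\lambda\|)_\lambda\in c_0(\Lambda)$, and $\alpha_{\xi,r,n}$ is only norm-continuous, so no soft limiting argument applies. The paper proves this passage as a separate theorem (Theorem \ref{button classic}) by induction on the tree level $n$: one stabilizes a weakly null tree so that a single finite $\Upsilon$ nearly carries the first level, records for each higher level the sizes $(u_i,v_i)$ of $P_\Upsilon x_t$ and $P_{\Lambda\setminus\Upsilon}x_t$, applies the finite-sum bound to the $\Upsilon$-part and the inductive hypothesis to the tail, and recombines with Minkowski's inequality using $p\leqslant r$. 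Nothing in your proposal substitutes for this induction, and your closing paragraph concedes the point.

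Second, the necessity of the tail-Szlenk clause in (iii): ``the direct-sum structure pumps these obstructions into a lower bound'' needs an explicit construction. The paper's argument is: if $\inf_\Upsilon\theta_{\xi,1}(D^p_{\Lambda\setminus\Upsilon})>2\ee$ (this reformulation of your condition already requires Corollary \ref{fir}), one recursively builds a weakly null tree on $\Gamma_{\xi,n}.D$ whose $i$-th level is supported, up to a $\delta$-error, on $\Lambda\setminus\Upsilon_{t_{i-1}}$ for an increasing chain of finite sets $\Upsilon_{t_i}$; the resulting level-blocks $F_i$ are disjoint, so the $\ell_p$-structure gives $\|D^p\sum_i z_i^t\|\gtrsim\ee n^{1/p}$ against a coefficient normalization of $n^{1/r}$, forcing $\alpha_{\xi,r,n}(D^p)\gtrsim\ee n^{1/s}\to\infty$. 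On the other hand, the sufficiency in (iii) is easier than you fear: no uniform-in-$\ee$ extension of the combinatorics is needed, only the triangle inequality $\alpha_{\xi,r,n}(D^p)\leqslant\alpha_{\xi,r,n}(D^p_\Upsilon)+n\,\alpha_{\xi,r,1}(D^p_{\Lambda\setminus\Upsilon})$, with the first term controlled by the finite theorem and the second made $\leqslant\delta$ for each fixed $n$ by choosing $\Upsilon$ with $\theta_{\xi,1}(D^p_{\Lambda\setminus\Upsilon})<\delta/n$.
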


\section{Combinatorial necessities}

\subsection{Trees of peculiar importance}

We first define some trees which will be of significant importance for us. Given a sequence $(\zeta_i)_{i=1}^n$ of ordinals and an ordinal $\zeta$, we let $\zeta+(\zeta_i)_{i=1}^n=(\zeta+\zeta_i)_{i=1}^n$. Given a set $G$ of sequences of ordinals and an ordinal $\zeta$, we let $\zeta+G=\{\zeta+t: t\in G\}$. For each $\xi\in \textbf{Ord}$ and $n\in \nn$, we define a tree $\Gamma_{\xi,n}$ which consists of decreasing ordinals in the interval $[0, \omega^\xi n)$. We let $$\Gamma_{0, 1}=\{(0)\}.$$  If $\xi$ is a limit ordinal and $\Gamma_{\zeta,1}$ has been defined for every $\zeta<\xi$, we let $$\Gamma_{\xi,1}=\bigcup_{\zeta<\xi}(\omega^\zeta+\Gamma_{\zeta+1,1}).$$  If for some $\xi$ and every $n\in \nn$, $\Gamma_{\xi,n}$ has been defined such that the first member of each sequence in $\Gamma_{\xi,n}$ lies in the interval $[\omega^\xi(n-1), \omega^\xi n)$, we let $$\Gamma_{\xi+1, 1}=\bigcup_{n=1}^\infty \Gamma_{\xi, n}.$$  Finally, if $\Gamma_{\xi, 1}$ has been defined, we let $\Lambda_{\xi,1,1}=\Gamma_{\xi, 1}$ and for $1<n\in \nn$ and $1\leqslant i\leqslant n$, we let $$\Lambda_{\xi, n, i}=\bigl\{(\omega^\xi(n-1)+t_1)\smallfrown \ldots \smallfrown (\omega^\xi (n-i)+t_i):  t_i\in \Gamma_{\xi,1}, t_1, \ldots, t_{i-1}\in MAX(\Gamma_{\xi,1})\bigr\}.$$ We refer to the sets $\Lambda_{\xi,n,1}, \ldots, \Lambda_{\xi,n,n}$ as the \emph{levels} of $\Gamma_{\xi,n}$.

For a directed set $D$, an ordinal $\xi$, and $n\in \nn$, we let $$\Lambda_{\xi,n,i}.D = \{(\zeta_j, u_j)_{j=1}^k: (\zeta_i)_{i=1}^k\in \Lambda_{\xi,n}, u_i\in D\}.$$   We remark that for each $\zeta$, then for any directed set $D$, $(\omega^\zeta+\Gamma_{\zeta+1,1}).D$ is canonically identifiable with $\Gamma_{\zeta+1, 1}.D$. For any $\xi$ and any $n\in \nn$, $\Lambda_{\xi, n, 1}.D$ is canonically identifiable with $\Gamma_{\xi,1}.D$.  We often implicitly use these canonical identifications without giving them specific names. 

We last define what it means for a subset of $\Gamma_{\xi,n}.D$ to be a \emph{unit}. For any ordinal $\xi$ and any $n\in \nn$, $\Lambda_{\xi,n,1}.D$ is a unit.  If for some $n\in \nn$, every ordinal $\xi$, and every $1\leqslant k\leqslant n$, the units in $\Gamma_{\xi,k}.D$ are defined, we say a subset $U$ of $\Gamma_{\xi, n+1}.D$ is a unit if either $U=\Lambda_{\xi, n+1, 1}.D$ or if there exists $t\in MAX(\Lambda_{\xi, n+1, 1}.D)$ such that, if $$j:\{s\in \Gamma_{\xi,n+1}.D: t<s\}\to \Gamma_{\xi,n}.D$$ is the canonical identification, $j(U)$ is a unit in $\Gamma_{\xi,n}.D$

\subsection{Cofinal and eventual sets}

For a fixed directed set $D$, we now define sets $\Omega_{\xi, n}$. Each set $\Omega_{\xi,n}$ will be a subset of the power set of $MAX(\Gamma_{\xi,n}.D)$.  Given $\mathcal{E}\subset \Gamma_{0,1}.D$, we can write $$\mathcal{E}=\{(0, u): u\in D_0\}$$ for some $D_0\subset D$. Then we say $\mathcal{E}\in \Omega_{0,1}$ if  $D_0$ is cofinal in $D$. 

Now suppose that for a limit ordinal $\xi$ and every $\zeta<\xi$, $\Omega_{\zeta+1, 1}$ has been defined. For each $\zeta<\xi$, let $j_\zeta:(\omega^\zeta+\Gamma_{\zeta+1}).D\to \Gamma_{\zeta+1,1}.D$ be the canonical identification. Then a subset $\mathcal{E}\subset MAX(\Gamma_{\xi, 1})$ lies in $\Omega_{\xi, 1}$ if  there exists a cofinal subset $M$ of $[0, \xi)$ such that for every $\zeta\in M$, $j_\zeta(\mathcal{E}\cap MAX((\omega^\zeta+\Gamma_{\zeta+1}.D))\in \Omega_{\zeta+1, 1}$. 

Now suppose that for an ordinal $\xi$ and every $n\in \nn$, $\Omega_{\xi,n}$ has been defined. Then we say $\mathcal{E}\subset MAX(\Gamma_{\xi+1, 1}.D)$ is a member of $\Omega_{\xi+1, 1}$ if there exists a cofinal subset $M$ of $\nn$ such that for every $n\in \nn$, $\mathcal{E}\cap \Gamma_{\xi, n}.D\in \Omega_{\xi,n}$. 

Last, suppose that for an ordinal $\xi$, a natural number $n$, and each $1\leqslant i\leqslant n$, $\Omega_{\xi,i}$ has been defined. Suppose that $\mathcal{E}\subset MAX(\Gamma_{\xi,n+1}.D)$ is given.   For each $t\in MAX(\Lambda_{\xi,n,1})$, let $P_t=\{s\in \Gamma_{\xi, n+1}.D: t<s\}$,  let $j_t:P_t\to \Gamma_{\xi,n}.D$ be the canonical identification, and let $j:\Lambda_{\xi, n,1}.D\to \Gamma_{\xi,1}.D$ be the canonical identification.  Let $$\mathcal{F}=\{t\in MAX(\Lambda_{\xi,n+1,1}.D): j_t(\mathcal{E}\cap MAX(P_t))\in \Omega_{\xi,n}\}.$$  Then we say $\mathcal{E}\in \Omega_{\xi, n+1}$ if $j(\mathcal{F})\in \Omega_{\xi,1}$.  

We remark that an easy induction proof shows that $MAX(\Gamma_{\xi,n}.D)\in \Omega_{\xi,n}$ for every $\xi$ an $n$, and if $\mathcal{F}\subset \mathcal{E}\subset MAX(\Gamma_{\xi, n}.D)$ and $\mathcal{F}\in \Omega_{\xi,n}$, then $\mathcal{E}\in \Omega_{\xi,n}$.

We refer to the sets in $\Omega_{\xi,n}$ as \emph{cofinal in} $\Gamma_{\xi,n}.D$.   We say a subset $\mathcal{E}$ of $MAX(\Gamma_{\xi,n}.D)$ is \emph{eventual} if $MAX(\Gamma_{\xi,n}.D)\setminus \mathcal{E}$ fails to be cofinal.  Each unit $U\subset \Gamma_{\xi,n}.D$ is canonically identifiable with  $\Gamma_{\xi,1}.D$, and as such we can define what it means for a subset of $MAX(U)$ to be cofinal or eventual using the identification with $\Gamma_{\xi,1}.D$.

We next recall some results from \cite{AUF}.   For the following results, we say $d:\Gamma_{\xi,n}.D\to \Gamma_{\xi,n}.D$ is a \emph{level map} if \begin{enumerate}[(i)]\item for any $\varnothing<s<t\in \Gamma_{\xi,n}.D$, $d(s)<d(t)$, \item if $U\subset \Lambda_{\xi,n,i}.D$ is a unit, then there exists a unit $V\subset \Lambda_{\xi,n,i}.D$ such that $d(U)\subset V$. \end{enumerate}  Note that since $\Gamma_{\xi,1}.D$ is a single unit, $(ii)$ is vacuous in the case $n=1$. Given a level map $d:\Gamma_{\xi,n}.D\to \Gamma_{\xi,n}.D$, we say $e:MAX(\Gamma_{\xi,n}.D)\to MAX(\Gamma_{\xi,n}.D)$ is an \emph{extension} of $d$ if for any $t\in MAX(\Gamma_{\xi,n}.D)$, $d(t)\leqslant e(t)$. Since $\Gamma_{\xi,n}.D$ is well-founded, any level map $d$ admits some extension.  We define an extension of a monotone map in the same way we define an extension of a level map. 

We let $\Pi(\Gamma_{\xi,n}.D)=\{(s,t)\in \Gamma_{\xi,n}.D\times MAX(\Gamma_{\xi,n}.D): s\leqslant t\}$.

\begin{lemma} Suppose that $\xi$ is an ordinal, $n\in \nn$, $X$ is a Banach space, and $(x_t)_{t\in \Gamma_{\xi,n}.D}$ is weakly null.  \begin{enumerate}[(i)]\item If $\mathcal{E}\subset MAX(\Gamma_{\xi,n}.D)$ is cofinal, there exists a level map $d:\Gamma_{\xi,n}.D\to \Gamma_{\xi,n}.D$ with extension $e$ such that $e(MAX(\Gamma_{\xi,n}.D))\subset \mathcal{E}$ and $(x_{d(t)})_{t\in \Gamma_{\xi,n}.D}$ is weakly null.   \item For any $k\in \nn$, if $MAX(\Gamma_{\xi,n}.D)\supset \mathcal{E}=\cup_{i=1}^k \mathcal{E}_i\in \Omega_{\xi,n}$, then there exists $1\leqslant j\leqslant k$ such that $\mathcal{E}_j\in \Omega_{\xi,n}$. \item If $F$ is a finite set and $\chi:\Pi(\Gamma_{\xi,n}.D)\to F$ is a function, then there exist a level map $d:\Gamma_{\xi,n}.D\to \Gamma_{\xi,n}.D$ with extension $e$ and $\alpha_1, \ldots, \alpha_n\in F$ such that for any $1\leqslant i\leqslant n$ and any $\Lambda_{\xi,n,i}.D\ni s\leqslant \in MAX(\Gamma_{\xi,n}.D)$, $\alpha_i=F(d(s), e(t))$, and such that $(x_{d(t)})_{t\in \Gamma_{\xi,n}.D}$ is weakly null.   \item If $h:\Pi(\Gamma_{\xi, n}.D)\to \rr$ is bounded and if $\mathcal{E}\subset MAX(\Gamma_{\xi,n}.D)$ is cofinal, then for any $\delta>0$,  there exist $a_1, \ldots, a_n\in \rr$ and a level map $d:\Gamma_{\xi,n}.D\to \Gamma_{\xi,n}.D$ with extension $e$ such that $e(MAX(\Gamma_{\xi,n}.D))\subset \mathcal{E}$, for each $1\leqslant i\leqslant n$ and each $\Lambda_{\xi,n,i}.D\ni s\leqslant t\in MAX(\Gamma_{\xi,n}.D)$, $h(d(s), e(t))\geqslant a_i-\delta$, and for any $t\in MAX(\Gamma_{\xi,n}.D)$, $\sum_{\varnothing<s\leqslant e(t)} \mathbb{P}_{\xi,n}(s)h(s, e(t)) \leqslant \delta+\sum_{i=1}^n a_i$.   \end{enumerate}

\label{stabilize}

\end{lemma}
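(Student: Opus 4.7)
The plan is to prove (i) and (ii) together by transfinite induction on $\xi$ with an inner induction on $n$, and then derive (iii) and (iv) from them by iteration and discretization. For the base case $(\xi, n) = (0, 1)$, the canonical identification $\Gamma_{0,1}.D \cong D$ turns (i) into the observation that the identity on a cofinal $D_0 \subset D$ (extended arbitrarily on $D \setminus D_0$) is a level map whose extension lands in $\mathcal{E}$ and whose composition with a weakly null net remains weakly null. The pigeonhole (ii) at $(0,1)$ is the elementary fact that a cofinal subset of a directed set cannot be covered by finitely many non-cofinal subsets.

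For the inductive step I would handle each of the three recursive constructions of $\Omega_{\xi,n}$ in turn. In the limit case $(\xi, 1)$ with $\xi$ a limit, a cofinal $\mathcal{E}$ gives a cofinal $M \subset [0, \xi)$ on which each slice $j_\zeta(\mathcal{E} \cap MAX((\omega^\zeta + \Gamma_{\zeta+1,1}).D))$ is cofinal in $\Gamma_{\zeta+1,1}.D$; the inductive hypothesis yields level maps on each slice, which glue into a single level map on $\Gamma_{\xi,1}.D$. In the successor case $(\xi+1, 1)$, the decomposition $\Gamma_{\xi+1,1}.D = \bigcup_n \Gamma_{\xi,n}.D$ with cofinality on infinitely many $n$ permits the same gluing argument. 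In the step from $(\xi, n)$ to $(\xi, n+1)$, the ``good root'' set $\mathcal{F} \subset MAX(\Lambda_{\xi, n+1, 1}.D)$ is cofinal by the definition of $\Omega_{\xi, n+1}$, so the $(\xi, 1)$ case produces a level map on the first unit landing in $\mathcal{F}$, and the $(\xi, n)$ case handles each tail. Pigeonhole (ii) propagates through these same constructions in the same order.

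Part (iii) then follows by enumerating the levels $i = 1, \ldots, n$ and, at stage $i$, applying (ii) to the restriction of $\chi$ to pairs whose first coordinate lies on $\Lambda_{\xi,n,i}.D$ to extract a single color $\alpha_i$ attained cofinally; (i) is then invoked to realize the resulting cofinal subtree as the image of a level map preserving weak nullness. Composing the $n$ successive maps yields the desired $d$ and $e$. Part (iv) is a discretization of (iii): cover the bounded range of $h$ by finitely many intervals of length less than $\delta$, apply (iii) to the induced finite coloring, and let $a_i$ be the left endpoint of the interval stabilized on level $i$. The lower bound $h(d(s), e(t)) \geq a_i - \delta$ is then immediate, and the upper bound $\sum_{\varnothing < s \leq e(t)} \mathbb{P}_{\xi,n}(s) h(s, e(t)) \leq \delta + \sum_i a_i$ follows from $h(s, e(t)) \leq a_i + \delta$ on level-$i$ nodes together with the normalization of $\mathbb{P}_{\xi,n}$ across each level summing to a fixed known value.

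The main obstacle is the inductive construction of level maps that simultaneously respect the unit structure, send extensions into the prescribed cofinal $\mathcal{E}$, and preserve weak nullness of $(x_{d(t)})$. The unit definitions in the preceding subsections are engineered precisely to make the required gluings compatible, but the bookkeeping is most delicate at the step from $(\xi, n)$ to $(\xi, n+1)$: one must choose the extension for the first-unit construction from within $\mathcal{F}$ so that every tail admits a cofinal slice, and must verify that the composite map sends units into units. Once this case is handled cleanly, (iii) and (iv) become essentially mechanical, and the result recovers the corresponding statements from \cite{AUF}.
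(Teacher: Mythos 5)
A preliminary remark: the paper does not prove this lemma at all — it is explicitly \emph{recalled} from \cite{AUF} — so there is no in-paper argument to compare against, and I am assessing your outline on its own terms. Your plan for (i) and (ii), a transfinite induction tracking the three recursive clauses in the definitions of $\Gamma_{\xi,n}.D$ and $\Omega_{\xi,n}$, is the right skeleton. Your reduction of (iii) is under-specified but morally sound: note that (ii) is a pigeonhole for subsets of $MAX(\Gamma_{\xi,n}.D)$, whereas $\chi$ colours pairs $(s,t)$, so fixing a single colour $\alpha_i$ for \emph{all} level-$i$ nodes $s\leqslant t$ simultaneously requires a recursive construction along the tree (stabilizing $\chi(s,\cdot)$ on the maximal nodes above each chosen node and diagonalizing), not one application of (ii) per level.

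The genuine gap is in your derivation of (iv). The upper bound $\sum_{\varnothing<s\leqslant e(t)}\mathbb{P}_{\xi,n}(s)h(s,e(t))\leqslant \delta+\sum_{i=1}^n a_i$ concerns the weighted average over the \emph{entire} branch of $e(t)$ in the original tree, and this is essential to how the lemma is used: in the proof of Theorem \ref{big theorem}(i) it is precisely this full-branch average that is known to exceed $\alpha'$ (because $e(t)\in\mathcal{E}$), which is what forces $\sum_i b_i'$ to be large. Your argument controls $h$ only at the selected nodes $d(s)$, so what you actually bound is $\sum_{\varnothing<s\leqslant t}\mathbb{P}_{\xi,n}(s)h(d(s),e(t))$ — the average over the selected sub-branch, weighted by the coefficients of the \emph{preimages} — and, as the paper stresses in the discussion preceding Theorem \ref{big theorem}, the weights $\mathbb{P}_{\xi,n}$ do not transfer under level maps, so this does not control the required quantity. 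A correct proof must first stabilize the branch-average functionals $H_i(t)=\sum_{t_{i-1}<s\leqslant t_i}\mathbb{P}_{\xi,n}(s)h(s,t)$, which are functions of $t\in MAX(\Gamma_{\xi,n}.D)$ alone (so a discretization plus (i) and (ii) suffices), take $a_i$ to be the stabilized values — this yields the upper bound — and then produce a level map all of whose selected level-$i$ nodes satisfy $h(d(s),e(t))\geqslant a_i-\delta$. Since an average does not bound individual values from below, this last step needs a separate selection argument (the nodes falling more than $\delta$ below the branch average carry small $\mathbb{P}_{\xi,n}$-mass on each branch, and one must show that a set which is small on every branch cannot meet every image of a level map). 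That step is the heart of the lemma and is absent from your sketch.
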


\begin{rem}\upshape Items $(i)$ and $(ii)$ together yield that if $MAX(\Gamma_{\xi,n}.D)=\cup_{i=1}^k \mathcal{E}_i$, then there exist $1\leqslant j\leqslant k$ and a level map $d:\Gamma_{\xi,n}.D\to \Gamma_{\xi,n}.D$ with extension $e$ such that $(x_{d(t)})_{t\in \Gamma_{\xi,n}.D}$ is weakly null and $e(MAX(\Gamma_{\xi,n}.D))\subset \mathcal{E}_j$. A typical application of this result will be to have a real-valued function $h:MAX(\Gamma_{\xi,n}.D)\to C\subset \rr$, where $C$ is compact.  We may then fix $\delta>0$ and a finite cover $F_1, \ldots, F_k$ of $C$ by sets of diameter less than $\delta$. We then let $\mathcal{E}_i$ denote those $t\in MAX(\Gamma_{\xi,n}.D)$ such that $h(t)\in F_i$.  We may then find $d$, $e$, and $j$ as above and obtain $(x_{d(t)})_{t\in \Gamma_{\xi,n}.D}$ weakly null such that for every $t\in MAX(\Gamma_{\xi,1}.D)$, $h(e(t))\in F_j$.

Similarly, we will often apply $(iii)$ to a function $h_1:\Pi(\Gamma_{\xi,n}.D)\to C\subset \rr$, where $C$ is compact, by first covering $C$ by $F_1, \ldots, F_k$ of sets of diameter less than $\delta$. We then define $h(s,t)$ to be the minimum $j\leqslant k$ such that $h_1(s,t)\in F_j$.

\end{rem}

\section{Szlenk index}

Given a Banach space $X$,  a weak$^*$-compact subset $K$ of $X^*$, and $\ee>0$, we let $s_\ee(K)$ denote the set of those $x^*\in K$ such that for every weak$^*$-neighborhood $V$ of $x^*$, $\text{diam}(V\cap K)>\ee$.  We then define the transfinite derivations by $$s^0_\ee(K)=K,$$ $$s^{\xi+1}_\ee(K)= s_\ee(s^\xi_\ee(K)),$$ and if $\xi$ is a limit ordinal, $$s^\xi_\ee(K)=\bigcap_{\zeta<\xi} s^\zeta_\ee(K).$$ For convenience, we define $s_\ee^\xi(K)=K$ for each $\ee\leqslant 0$.  We let $Sz(K, \ee)$ be the minimum $\xi$ such that $s^\xi_\ee(K)=\varnothing$, assuming such an ordinal exists.  If no such ordinal exists, we write $Sz(K, \ee)=\infty$.   We let $Sz(K)=\sup_{\ee>0} Sz(K, \ee)$, with the agreement that $Sz(K)=\infty$ if $Sz(K, \ee)=\infty$ for some $\ee>0$.    Given an operator $A:X\to Y$, we let $Sz(A, \ee)=Sz(A^*B_{Y^*}, \ee)$ and $Sz(A)= Sz(A^*B_{Y^*})$.  Given a Banach space $X$, we let $Sz(X, \ee)=Sz(I_X, \ee)$ and $Sz(X)=Sz(I_X)$.

Given an ordinal $\xi$, $1\leqslant q<\infty$, and $M\geqslant 0$,  we say $K$ has $M$-$\xi$-$q$-\emph{summable Szlenk index} if for any $\ee_1, \ldots, \ee_n\geqslant 0$ such that $s^{\omega^\xi}_{\ee_1}\ldots s^{\omega^\xi}_{\ee_n}(K)\neq \varnothing$, $\sum_{i=1}^n \ee_i^q\leqslant M^q$. We say $K$ has $\xi$-$q$-\emph{summable Szlenk index} if it has $M$-$\xi$-$q$-summable Szlenk index for some $M\geqslant 0$.  Given an operator $A:X\to Y$, we say $A$ has $\xi$-$q$-summable Szlenk index if $A^*B_{Y^*}$ does, and we say the Banach space $X$ has $\xi$-summable Szlenk index if $I_X$ does.   The notion of $0$-$1$-summable Szlenk index has been previously defined in \cite{GKL}, and it is quite important to the non-linear theory of Banach spaces and renorming theory. For $\xi>0$ or $1<q<\infty$,  the notion of $\xi$-$q$-summable Szlenk index is new.

Suppose $X$ is a Banach space and $K\subset X^*$ is weak$^*$-compact and non-empty. If $Sz(K)\leqslant \omega^{\xi+1}$, then by weak$^*$-compactness, for every $\ee>0$, $Sz(K, \ee)\leqslant \omega^\xi n$ for some $n\in \nn$.   We may then let $Sz_\xi(K, \ee)$ be the smallest $n\in \nn$ such that $Sz(K, \ee)\leqslant \omega^\xi n$.    For this $K$, we define the $\xi$-\emph{Szlenk power type} $\textbf{p}_\xi(K)$ by $$\textbf{p}_\xi(K):=\underset{\ee\to 0^+}{\lim\sup} \frac{\log Sz(K, \ee)}{|\log(\ee)|}.$$  This value need not be finite. We note that if $Sz(K)\leqslant \omega^\xi$, $Sz_\xi(K, \ee)=1$ for all $\ee>0$, whence $\textbf{p}_\xi(K)=0$.For completeness, we write $\textbf{p}_\xi(K)=\infty$ if $Sz(K)>\omega^{\xi+1}$.

We remark that if $K$ has $M$-$\xi$-$q$-summable Szlenk index, then $\textbf{p}_\xi(K)\leqslant q$.  Indeed, for any $\ee>0$, if $n<Sz_\xi(K, \ee)$, $$\varnothing\neq s^{\omega^\xi n}_\ee(K)= s^{\omega^\xi}_\ee\ldots s^{\omega^\xi}_\ee(K),$$ and $\ee^q n=\sum_{i=1}^n \ee_i^q \leqslant M^q$. From this it follows that $$\log Sz_\xi(K, \ee) \leqslant \log(1+M^q/\ee^q),$$ whence $$\underset{\ee\to 0^+}{\lim\sup} \frac{\log Sz_\xi(K, \ee)}{|\log(\ee)|}\leqslant q.$$

We also recall the following fact.  This fact was shown by Lancien \cite{survey} in the case that $K=B_{X^*}$. 

\begin{proposition} \cite[Lemma $3.8$]{C3} Let $X$ be a Banach space, $K\subset X^*$ weak$^*$-compact and convex, $\xi$ an ordinal. \begin{enumerate}[(i)]\item If for some $\ee>0$,  $Sz(K, \ee)>\omega^\xi$, then $Sz(K, \ee/n)> \omega^\xi n$. \item If $\omega^\xi<Sz(K)\leqslant \omega^{\xi+1}$, then $Sz(K)=\omega^{\xi+1}$. \item If $Sz(K)>\omega^\xi$ then $\textbf{\emph{p}}_\xi(K)\geqslant 1$. \end{enumerate}

\label{convex body1}
\end{proposition}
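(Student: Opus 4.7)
Parts (ii) and (iii) follow quickly from (i), so the crux is (i). My approach rests on the following \emph{convex scaling lemma}: for any convex weak$^*$-compact $K \subset X^*$, any ordinal $\alpha$, any $\ee > 0$, and any $\lambda \in [0,1]$,
$$\lambda\, s^\alpha_\ee(K) + (1-\lambda)\, K \subset s^\alpha_{\lambda \ee}(K).$$
I would prove this by transfinite induction on $\alpha$. The case $\alpha = 0$ reduces to convexity of $K$. For the successor case $\alpha = \beta + 1$, given $x^* \in s^{\beta+1}_\ee(K)$, $y^* \in K$, and a weak$^*$-neighborhood $V$ of $z^* := \lambda x^* + (1-\lambda) y^*$, weak$^*$-continuity of the affine map $u \mapsto \lambda u + (1-\lambda) y^*$ yields a weak$^*$-neighborhood $U$ of $x^*$ with $\lambda U + (1-\lambda) y^* \subset V$. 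Since $x^* \in s_\ee(s^\beta_\ee(K))$, I may select $a, b \in U \cap s^\beta_\ee(K)$ with $\|a - b\| > \ee$; the inductive hypothesis then places both $\lambda a + (1-\lambda) y^*$ and $\lambda b + (1-\lambda) y^*$ in $V \cap s^\beta_{\lambda\ee}(K)$, with separation $> \lambda\ee$. The limit case is immediate from $s^\alpha_\ee(K) = \bigcap_{\beta < \alpha} s^\beta_\ee(K)$.

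Given the scaling lemma, I prove (i) by induction on $n$, with $n = 1$ trivial. For the step $n \to n+1$, the inductive hypothesis produces some $x^*_n \in s^{\omega^\xi n}_{\ee/n}(K)$. Applying the scaling lemma with $\alpha = \omega^\xi n$, threshold $\ee/n$ in place of $\ee$, and $\lambda = n/(n+1)$ gives
$$\tfrac{n}{n+1}\, x^*_n + \tfrac{1}{n+1}\, K \subset s^{\omega^\xi n}_{\ee/(n+1)}(K).$$
Applying $s^{\omega^\xi}_{\ee/(n+1)}$ to both sides (monotonicity in the set) and using the affine identity $s^{\omega^\xi}_\delta(c + r K) = c + r\, s^{\omega^\xi}_{\delta/r}(K)$ for $r > 0$,
$$s^{\omega^\xi(n+1)}_{\ee/(n+1)}(K) \supset \tfrac{n}{n+1}\, x^*_n + \tfrac{1}{n+1}\, s^{\omega^\xi}_{\ee}(K),$$
which is nonempty by the original hypothesis $s^{\omega^\xi}_\ee(K) \neq \varnothing$.

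Parts (ii) and (iii) then fall out. For (ii), the hypothesis $Sz(K) > \omega^\xi$ yields some $\ee_0 > 0$ with $Sz(K, \ee_0) > \omega^\xi$; by (i), $Sz(K, \ee_0/n) > \omega^\xi n$ for every $n \in \nn$, so $Sz(K) \geqslant \omega^{\xi+1}$, and the hypothesized upper bound forces equality. For (iii), if $Sz(K) > \omega^{\xi+1}$ the conclusion holds by the convention $\textbf{p}_\xi(K) = \infty$; otherwise choose $\ee_0 > 0$ with $Sz(K, \ee_0) > \omega^\xi$, so (i) gives $Sz_\xi(K, \ee_0/n) \geqslant n + 1$ for every $n$, whence
$$\frac{\log Sz_\xi(K, \ee_0/n)}{|\log(\ee_0/n)|} \geqslant \frac{\log(n+1)}{\log n + \log(1/\ee_0)} \longrightarrow 1,$$
giving $\textbf{p}_\xi(K) \geqslant 1$. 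The main obstacle is the successor step of the convex scaling lemma, which requires chasing a weak$^*$-neighborhood through the affine averaging map and simultaneously applying the inductive hypothesis to two separated witnesses; once that is secured, the rest is routine bookkeeping.
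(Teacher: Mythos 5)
Your proof is correct, and it follows essentially the same route as the cited source [C3, Lemma 3.8] (itself a generalization of Lancien's argument for $K=B_{X^*}$): the convex scaling inclusion $\lambda\, s^\alpha_\ee(K)+(1-\lambda)K\subset s^\alpha_{\lambda\ee}(K)$ proved by transfinite induction, followed by the iteration with $\lambda=n/(n+1)$ and the affine identity for derivations. The paper itself quotes the proposition without proof, so there is nothing further to compare; your deductions of (ii) and (iii) from (i) are the standard ones and are sound.
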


\begin{corollary} If $X$ is a Banach space and$K\subset X^*$ is a weak$^*$-compact, convex, non-empty set,  then either $Sz(K)=\infty$ or  there exists an ordinal $\xi$ such that $Sz(K)=\omega^\xi$.

\label{convex body}

\end{corollary}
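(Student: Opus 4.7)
My plan is to reduce the corollary to Proposition \ref{convex body1}(ii), which already handles the substantive statement: it asserts that if $\omega^\xi < Sz(K) \leqslant \omega^{\xi+1}$, then $Sz(K) = \omega^{\xi+1}$. The strategy is that any nonzero ordinal $\alpha$ is either itself a power of $\omega$ or lies strictly between two consecutive such powers, and in the latter case the cited proposition forces a contradiction.

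First I would assume $Sz(K) \neq \infty$ and set $\alpha = Sz(K)$. Since $K$ is nonempty, $s^0_\ee(K) = K \neq \varnothing$ for every $\ee>0$, so $\alpha \geqslant 1 = \omega^0$. Next I would establish the following ordinal dichotomy: for every $\alpha \geqslant 1$ there exists a largest ordinal $\xi$ with $\omega^\xi \leqslant \alpha$. Indeed, the class $\{\zeta \in \textbf{Ord} : \omega^\zeta \leqslant \alpha\}$ is downward closed and bounded above by $\alpha$, hence it is a set with a supremum $\xi$; this supremum is attained, since otherwise $\xi$ would be a limit ordinal and continuity of $\zeta \mapsto \omega^\zeta$ at limits would yield $\omega^\xi = \sup_{\zeta<\xi}\omega^\zeta \leqslant \alpha$, contradicting non-attainment. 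Maximality of $\xi$ then gives $\omega^\xi \leqslant \alpha < \omega^{\xi+1}$.

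Finally I would split into cases. If $\alpha = \omega^\xi$, the conclusion holds directly. Otherwise $\omega^\xi < \alpha \leqslant \omega^{\xi+1}$, and Proposition \ref{convex body1}(ii) applies and yields $\alpha = \omega^{\xi+1}$, contradicting $\alpha < \omega^{\xi+1}$. Hence the second case is vacuous, and $Sz(K) = \omega^\xi$.

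The main obstacle here is essentially negligible: all of the analytic content is already packed into Proposition \ref{convex body1}, and this corollary is a short piece of ordinal bookkeeping sitting on top of it. The only subtlety is verifying that $\{\zeta : \omega^\zeta \leqslant \alpha\}$ has a maximum, which uses the continuity of the exponential map at limit ordinals.
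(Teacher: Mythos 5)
Your argument is correct and is precisely the intended deduction: the paper gives no separate proof of this corollary, treating it as an immediate consequence of Proposition \ref{convex body1}(ii) via exactly the ordinal dichotomy you describe (locate the largest $\xi$ with $\omega^\xi\leqslant Sz(K)$ and rule out the strict inequality case). Nothing is missing.
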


\section{The quantities $\alpha_{\xi,p,n}(K)$}

Given a directed set $D$ and a collection $(x_t)_{t\in \Gamma_{\xi,n}.D}\subset X$, if $t\in MAX(\Gamma_{\xi,n}.D)$, there exist $\varnothing=t_0<\ldots <t_n=t$ such that $t_i\in MAX(\Lambda_{\xi, n,i}.D)$ for each $1\leqslant i\leqslant n$. We then let $$z^t_i=\sum_{t_{i-1}<s\leqslant t_i} \mathbb{P}_{\xi,n}(s)x_s \in \text{co}(x_s: t_{i-1}<s\leqslant t_i).$$   This notation should reference the underlying collection $(x_t)_{t\in \Gamma_{\xi,n}.D}$, but the notation will not cause confusion.

Let $X$ be a Banach space and let $K\subset X^*$ be weak$^*$-compact.    For $x\in X$, let $r_K(x)=0$ if $K=\varnothing$, and otherwise let $r_K(x)=\max_{x^*\in K}\text{Re\ }x^*(x)$.  For an ordinal $\xi$, $1\leqslant p\leqslant \infty$, and $n\in \nn$, we let $\alpha_{\xi,p,n}(K)$ be the infimum of those $\alpha>0$ such that for any directed set $D$, any $(a_i)_{i=1}^n \in \mathbb{K}^n$,  and any weakly null $(x_t)_{t\in \Gamma_{\xi, n}.D}\subset B_X$, $$\inf_{t\in MAX(\Gamma_{\xi,n}.D)} r_K(\sum_{i=1}^n a_i z^t_i) \leqslant \alpha \|(a_i)_{i=1}^n \|_{\ell_p^n}.$$ We let $\alpha_{\xi,p}(K)=\sup_n \alpha_{\xi,p,n}(K)$.   Let $\theta_{\xi,n}(K)$ be the infimum of those $\theta>0$ such that for any directed set $D$ and any weakly null $(x_t)_{t\in \Gamma_{\xi,n}.D}\subset B_X$, $$\inf_{t\in MAX(\Gamma_{\xi,n}.D)} r_K(\sum_{i=1}^n \frac{1}{n}z^t_i) \leqslant \theta.$$

\begin{rem}\upshape It is an easy consequence of  \cite[Theorem $2.2$]{CAlt} is that if $X$ is a Banach space, $K\subset X^*$ is weak$^*$-compact, $\xi$ is an ordinal, $n\in \nn$, $\ee>0$, $Sz(K, \ee)>\omega^\xi n$, and $D$ is any weak neighborhood basis at $0$ in $X$, then for any $0<\delta<\ee/4$, there exist a weakly null collection $(x_t)_{t\in \Gamma_{\xi,n}.D}\subset B_X$ and $(x^*_t)_{t\in MAX(\Gamma_{\xi,n}.D)}\subset K$ such that for each $\varnothing<s\leqslant t\in MAX(\Gamma_{\xi,n}.D)$, $\text{Re\ }x^*_t(x_s)\geqslant \delta$. In particular, $\theta_{\xi, n}(K)\geqslant \ee/4$. 

Conversely, it follows from \cite[Theorem $2.2$]{CAlt} and \cite[Corollary $5.3$]{C} that if $\theta_{\xi,1}(K)\geqslant \ee$, $s^{\omega^\xi}_{\ee_0}(K)\neq \varnothing$ for any $0<\ee_0<\ee$.

\label{alternative}
\end{rem}

\begin{rem}\upshape For later convenience, the definition of $\alpha_{\xi,p,n}$ considers all weakly null collections indexed by any directed set $D$. However, in the definition of $\alpha_{\xi,p,n}$ could be taken to include only weakly null collections indexed by $\Gamma_{\xi,n}.D_1$, where $D_1$ is some fixed weak neighborhood basis at $0$ in $X$. We will freely use this fact throughout. In order to see why this holds, fix a scalar sequence $(a_i)_{i=1}^n$ and a positive number $\alpha$. If we have $$\inf_{t\in MAX(\Gamma_{\xi,n}.D)} r_K(\sum_{i=1}^n a_i z^t_i)\leqslant \alpha \|(a_i)_{i=1}^n\|_{\ell_p^n}$$ for every directed set $D$ and every weakly null collection $(x_t)_{t\in \Gamma_{\xi,n}.D}\subset B_X$, then we obviously have it whenever $D=D_1$.   For the converse, if there exists a directed set $D$ and a weakly null collection $(x_t)_{t\in \Gamma_{\xi,n}.D}\subset B_X$ such that $\inf_{t\in MAX(\Gamma_{\xi,n}.D)}r_K(\sum_{i=1}^n a_i z^t_i)>\alpha \|(a_i)_{i=1}^n\|_{\ell_p^n}$, we may define a map $\phi:\Gamma_{\xi,n}.D_1\to \Gamma_{\xi,n}.D$ such that, with $u_t=x_{\phi(t)}$, $(u_t)_{t\in \Gamma_{\xi,n}.D_1}\subset B_X$ is weakly null and for every $t\in MAX(\Gamma_{\xi,n}.D_1)$, $$\sum_{i=1}^n a_i\sum_{\Lambda_{\xi,n,i}.D_1} \mathbb{P}_{\xi,n}(s)u_s= \sum_{i=1}^n a_i z^{\phi(t)}_i,$$ whence $$\inf_{t\in MAX(\Gamma_{\xi,n}.D_1)} r_K(\sum_{i=1}^n a_i\sum_{\Lambda_{\xi,n,i}.D_1} \mathbb{P}_{\xi,n}(s)u_s) \inf_{t\in MAX(\Gamma_{\xi,n}.D)}r_K(\sum_{i=1}^n a_i z^t_i)>\alpha \|(a_i)_{i=1}^n\|_{\ell_p^n}.$$

\label{super remark}
\end{rem}

The previous remark will be useful, for example, when considering direct sums.  If $A_1:X_1\to Y_1$ and $A_2:X_2\to Y_2$ are operators, we may wish to consider $\alpha_{\xi,p, n}(A_1\oplus A_2: X_1\oplus_r X_2\to Y_1\oplus_r Y_2)$. If $\alpha_1<\alpha_{\xi,p,n}(A_1)$, we will find a directed set $D_1$ and a weakly null collection $(x_t^1)_{t\in \Gamma_{\xi, n}.D_1}\subset B_{X_1}$ to witness that $\alpha_1<\alpha_{\xi,p,n}(A_1)$. Similarly, if $\alpha_2<\alpha_{\xi,p,n}(A_2)$, we will find a directed set $D_2$ and a weakly null collection $(x^2_t)_{t\in \Gamma_{\xi, n}.D_2}\subset B_{X_2}$ to witness that $\alpha_2<\alpha_{\xi, p,n}(A_2)$.  By the definition, we do not have control over $D_1$ or $D_2$, and the fact that $D_1$ need not be equal to $D_2$ is problematic. We will use Remark \ref{super remark} to deduce that, in both cases, we can take $D_1=D_2=D$, where $D$ is a fixed weak neighborhood basis at $0$ in $X_1\oplus_r X_2$.

One inconvenience of the definitions of $\alpha_{\xi, p,n}$ and $\theta_{\xi,n}$ is that they involve special convex combinations, and the convex coefficient on a vector depends upon its position in the tree.  Therefore even if we know that $$\alpha<\inf_{t\in MAX(\Gamma_{\xi,n}.D)} r_K(\sum_{i=1}^n a_i z_i^t),$$  if $d:\Gamma_{\xi,n}.D\to \Gamma_{\xi,n}.D$ is a level map such that $(x_{d(t)})_{t\in \Gamma_{\xi,n}.D}$ is weakly null, we do not know that $$\alpha < \inf_{t\in MAX(\Gamma_{\xi,n}.D)} r_K(\sum_{i=1}^n \sum_{t\geqslant s\in \Lambda_{\xi,n, i}.D} a_i \mathbb{P}_{\xi,n}(s) x_{d(x)}).$$  Initially this prevents us from using our combinatorial lemmas to stabilize certain quantities for members of a given tree $(x_t)_{t\in \Gamma_{\xi,n}.D}$ which was chosen to witness that $\alpha<\alpha_{\xi,p,n}(K)$.   But suppose that $(a_i)_{i=1}^n$ are non-negative reals and that we have numbers $b_1, \ldots, b_n$ such that $\sum_{i=1}^n a_i b_i>\alpha$, a weakly null collection $(x_t)_{t\in \Gamma_{\xi,n}.D}\subset B_X$, and a collection $(x^*_t)_{t\in MAX(\Gamma_{\xi,n}.D)}\subset K$ such that for every $1\leqslant i\leqslant n$ and $\Lambda_{\xi,n,i}.D\ni s\leqslant t\in MAX(\Gamma_{\xi,n}.D)$, $\text{Re\ }x^*_t(x_s)\geqslant b_i$.   Then for any level map $d:\Gamma_{\xi, n}.D\to \Gamma_{\xi,n}.D$, any extension $e$ of $d$, any $1\leqslant i\leqslant n$, and $\Lambda_{\xi,n,i}.D\ni s\leqslant t\in MAX(\Gamma_{\xi,n}.D)$, $\text{Re\ }x^*_{e(t)}(x_{d(s)})\geqslant b_i$.    Therefore for any $t\in MAX(\Gamma_{\xi,n}.D)$, $$r_K(\sum_{i=1}^n \sum_{e(t)\geqslant s\in \Lambda_{\xi,n,i}.D } a_i \mathbb{P}_{\xi,n}(s) x_{d(s)}) \geqslant x^*_{e(t)}(\sum_{i=1}^n \sum_{e(t)\geqslant s\in \Lambda_{\xi,n,i}.D } a_i \mathbb{P}_{\xi,n}(s) x_{d(s)}) \geqslant \sum_{i=1}^n a_i b_i>\alpha.$$  Therefore if we have a map $f:B_X\to M$ into a compact metric space $M$, we may apply Lemma \ref{stabilize} to the function $F:\Pi(\Gamma_{\xi,n}.D)\to M$ given by $F(s,t)=f(x_s)$ to deduce that for any $\delta>0$, there exist $\varpi_1, \ldots, \varpi_n\in M$,  a level map $d:\Gamma_{\xi,n}.D\to \Gamma_{\xi,n}.D$, and an extension $e$ of $d$ such that $d_M(F(d(s), e(t)), \varpi_i)=d_M(f(x_{d(s)}), \varpi_i)<\delta$ for each $1\leqslant i\leqslant n$ and each $\Lambda_{\xi,n,i}.D\ni s\leqslant t\in MAX(\Gamma_{\xi,n}.D)$, and the collections $(x_{d(t)})_{t\in \Gamma_{\xi,n}.D}$, $(x^*_{e(t)})_{t\in MAX(\Gamma_{\xi,n}.D)}\subset K$ can be used as described above to deduce that  $$\alpha<\inf_{t\in MAX(\Gamma_{\xi,n}.D)} r_K(\sum_{i=1}^n \sum_{e(t)\geqslant s\in \Lambda_{\xi,n,i}.D} a_i \mathbb{P}_{\xi,n}(s) x_{d(s)}).$$   Thus we can replace the collection $(x_t)_{t\in \Gamma_{\xi,n}.D}$ with $(x_{d(t)})_{t\in \Gamma_{\xi,n}.D}$ without losing the inequality coming from the definition of $\alpha_{\xi,p,n}$ or $\theta_{\xi,n}$ while stabilizing the function $f$ on each level of the collection.  This is the primary motivation for the next theorem.

\begin{theorem} Let $X$ be a Banach space, $K\subset X^*$ weak$^*$-compact, $1\leqslant p\leqslant \infty$. \begin{enumerate}[(i)]\item For $\alpha \in \rr$, $\alpha_{\xi,p,n}(K)>\alpha$ if and only if there exist non-negative scalars $(a_i)_{i=1}^n\in B_{\ell_p^n}$, a directed set $D$, a weakly null collection $(x_t)_{t\in \Gamma_{\xi,n}.D}\subset B_X$, and $\alpha'>\alpha$ such that $$\bigl\{ t\in MAX(\Gamma_{\xi,n}.D): r_K(\sum_{i=1}^n a_i z^t_i)>\alpha'\bigr\}$$ is cofinal if and only if there exist non-negative scalars $(a_i)_{i=1}^n\in B_{\ell_p^n}$, a directed set $D$, a weakly null collection $(x_t)_{t\in \Gamma_{\xi,n}.D}\subset B_X$, $(x^*_t)_{t\in MAX(\Gamma_{\xi,n}.D)}\subset K$, and  non-negative real numbers $b_1, \ldots, b_n$ such that $\alpha<\sum_{i=1}^n a_ib_i$ and for each $1\leqslant i\leqslant n$ and each $\Lambda_{\xi,n,i}.D\ni s\leqslant t\in MAX(\Gamma_{\xi,n}.D)$, $\text{\emph{Re}\ }x^*_t(x_s) \geqslant b_i$. \item $\alpha_{\xi,p,n}(K)=0$ for some $n\in \nn$ if and only if $\alpha_{\xi,p,n}(K)=0$ for all $n\in \nn$ if and only if $Sz(K)\leqslant \omega^\xi$. \item If $R\geqslant 0$ is such that $K\subset RB_{X^*}$, $\alpha_{\xi, p,n}(K)\leqslant n^{1-1/p}R$. \item $\alpha_{\xi,p,n}(K)= \alpha_{\xi,p,n}(\overline{\text{\emph{abs\ co}}}^{\text{\emph{weak}}^*}(K))$.   \end{enumerate}

\label{big theorem}

\end{theorem}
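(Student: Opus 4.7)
I propose to handle the four items in the order (iii), (i), (ii), (iv), with (i) and (iv) accounting for most of the combinatorial work. Item (iii) is immediate from H\"older: $\|z_i^t\|\leqslant 1$ on any weakly null tree $(x_t)\subset B_X$, so $\|\sum_{i=1}^n a_iz_i^t\|\leqslant \sum|a_i|\leqslant n^{1-1/p}\|(a_i)\|_{\ell_p^n}$, and $r_K(v)\leqslant R\|v\|$ whenever $K\subset RB_{X^*}$.

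For (i), the equivalence of $\alpha_{\xi,p,n}(K)>\alpha$ with the cofinal form having non-negative coefficients is largely formal. If the defining infimum exceeds $\alpha'>\alpha$ witnessed by some tree and complex scalars $(a_i)$, write $a_i=|a_i|e^{i\phi_i}$ and replace $x_t$ by $e^{i\phi_{\ell(t)}}x_t$, where $\ell(t)$ is the level of $t$; multiplication by unit scalars preserves weak-nullity, and this substitution converts the inequality into one with non-negative coefficients $|a_i|$ valid on all of $MAX(\Gamma_{\xi,n}.D)$. Conversely, Lemma~\ref{stabilize}(i) thins any cofinal witness back to a tree on which the inequality holds everywhere. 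For the second-to-third passage I choose near-optimal $x_t^*\in K$, let $h(s,t):=a_{\ell(s)}\text{Re\ }x_t^*(x_s)$, and apply Lemma~\ref{stabilize}(iv) to $h$ on the cofinal set $\mathcal{E}$. The lemma returns reals $(c_i)$, a level map $d$, and extension $e$ with $h(d(s),e(t))\geqslant c_i-\delta$ on each new level $i$ and $\sum_{\varnothing<s\leqslant e(t)}\mathbb{P}_{\xi,n}(s)h(s,e(t))\leqslant \delta+\sum c_i$. The left-hand side equals $\sum_i a_iX_i^{\mathrm{old}}(e(t))$, where $X_i^{\mathrm{old}}(t):=\sum_{s\leqslant t,\,s\in \Lambda_{\xi,n,i}.D}\mathbb{P}_{\xi,n}(s)\text{Re\ }x_t^*(x_s)$, and this exceeds $\alpha'$ on $\mathcal{E}$, forcing $\sum c_i>\alpha'-\delta$. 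Since $h(d(s),e(t))=a_i\text{Re\ }x_{e(t)}^*(x_{d(s)})$ on new level $i$, setting $b_i:=\max((c_i-\delta)/a_i,0)$ for $a_i>0$ and $b_i:=0$ otherwise yields the pointwise bound for the third form, with $\sum a_ib_i\geqslant \sum c_i-n\delta>\alpha$ for $\delta$ small. The third-to-second implication is immediate: $r_K(\sum a_iz_i^t)\geqslant \text{Re\ }x_t^*(\sum a_iz_i^t)=\sum_i a_iX_i^{\mathrm{old}}(t)\geqslant \sum_i a_ib_i>\alpha$.

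Part (ii) follows from (i) together with Remark~\ref{alternative}. Padding a $\Gamma_{\xi,1}.D$-tree with zero vectors on levels $2,\dots,n$ and taking the coefficient vector $(a_1,0,\dots,0)$ shows $\alpha_{\xi,p,n}(K)\geqslant \alpha_{\xi,p,1}(K)$, reducing the ``some $n$'' case to $n=1$. Directly from the definitions $\alpha_{\xi,p,1}(K)=\theta_{\xi,1}(K)$, and Remark~\ref{alternative} identifies $\theta_{\xi,1}(K)=0$ with $Sz(K)\leqslant \omega^\xi$. Conversely, $Sz(K)\leqslant \omega^\xi$ gives $\theta_{\xi,1}(K)=0$ and hence $\alpha_{\xi,p,n}(K)=0$ for every $n$ by the same padding argument.

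For (iv), the inequality $\alpha_{\xi,p,n}(K)\leqslant \alpha_{\xi,p,n}(\tilde K)$, where $\tilde K:=\overline{\text{abs\ co}}^{\text{weak}^*}(K)$, is immediate from $K\subset \tilde K$. For the reverse direction, the key point is that $\tilde K$ is balanced, so $r_{\tilde K}(v)=\sup_{x^*\in\tilde K}|x^*(v)|=\sup_{x^*\in K}|x^*(v)|$. Given $\alpha_{\xi,p,n}(\tilde K)>\alpha$ with witnessing tree and coefficients $(a_i)$, I pick $y_t^*\in K$ with $|y_t^*(\sum a_iz_i^t)|>\alpha\|(a_i)\|_{\ell_p^n}$, write this value as $r_te^{i\theta_t}$, and stabilize the phase: a finite $\delta$-cover of the unit circle combined with Lemma~\ref{stabilize}(ii) locates a cofinal subset on which $\theta_t$ lies in one arc centered at some $\theta$. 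Running the weighted stabilization of (i) on this subset, using the rotated function $a_{\ell(s)}\text{Re}(e^{-i\theta}y_t^*(x_s))$, yields (after replacing $(a_i)$ with $(a_ie^{-i\theta})$, which has the same $\ell_p^n$-norm) a new tree witnessing $\alpha_{\xi,p,n}(K)\geqslant \alpha-O(\delta)$; letting $\delta\to 0$ completes the proof. In the real case $\theta_t\in\{0,\pi\}$ and the ``$\pi$'' case is handled by passing to the tree $(-x_t)$. The main technical hurdle is the bookkeeping with the \emph{weighted} Lemma~\ref{stabilize}(iv): one must apply it to $a_{\ell(s)}\text{Re\ }x_t^*(x_s)$ rather than to $\text{Re\ }x_t^*(x_s)$ so the integral clause controls $\sum_i a_iX_i^{\mathrm{old}}(e(t))$ directly, and then divide by $a_i$ on indices where $a_i>0$ to extract the pointwise $b_i$ demanded by the third form.
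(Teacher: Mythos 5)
Your proposal is correct and follows essentially the same route as the paper: absorbing phases to reduce to non-negative coefficients, applying Lemma \ref{stabilize}(iv) to $h(s,t)=a_i\,\text{Re\ }x^*_t(x_s)$ to pass from the cofinal form to the pointwise ($b_i$) form, deducing (ii) from the $n=1$ case, and stabilizing unimodular phases over a finite net of the circle to reduce $\overline{\text{abs\ co}}^{\text{weak}^*}(K)$ to $K$ in (iv). One caveat: your parenthetical claim that Lemma \ref{stabilize}(i) alone thins a cofinal witness to a global one is not valid as stated, because the coefficients $\mathbb{P}_{\xi,n}(s)$ depend on position and are not preserved by level maps (precisely the issue the paper flags before the theorem); your argument survives because the implication circle closes through the level-map-stable pointwise form, but you should also, as the paper does, zero out the vectors on levels where $c_i-\delta<0$ so that the bound $\text{Re\ }x^*_t(x_s)\geqslant b_i=0$ actually holds there.
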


\begin{proof}$(i)$ First assume there exist $\alpha'>\alpha$, a directed set $D$,  non-negative scalars $(a_i)_{i=1}^n\in B_{\ell_p^n}$, and a weakly null collection $(x_t)_{t\in \Gamma_{\xi,n}.D}\subset B_X$ such that $$\mathcal{E}:=\{t\in MAX(\Gamma_{\xi,n}.D): r_K(\sum_{i=1}^n a_i z^t_i)>\alpha'\}$$ is cofinal. For every $t\in MAX(\Gamma_{\xi,n}.D)$, fix $x^*_t\in K$ such that $$\text{Re\ }x^*_t(\sum_{i=1}^n a_i z^t_i)=r_K(\sum_{i=1}^n a_i z^t_i).$$  Define $h:\Pi(\Gamma_{\xi,n}.D)\to \rr$ by $h(s,t)=\text{Re\ }x^*_t(a_i x_s)$, where $\Lambda_{\xi, n,i}\ni s\leqslant t\in MAX(\Gamma_{\xi,n}.D)$.   Fix a positive number $\delta$ such that $\delta(n+1)<\alpha'-\alpha$.  By Lemma \ref{stabilize}$(iv)$,   there exists a level map $d:\Gamma_{\xi,n}.D\to \Gamma_{\xi,n}.D$ with extension $e:MAX(\Gamma_{\xi,n}.D)\to \mathcal{E}$ and scalars $b_1', \ldots, b_n'\in \rr$ such that $(x_{d(t)})_{t\in \Gamma_{\xi,n}.D}$ is weakly null, for each $1\leqslant i\leqslant n$ and each $\Lambda_{\xi,n,i}.D\ni s\leqslant t\in MAX(\Gamma_{\xi,n}.D)$, $\text{Re\ }x^*_{e(t)}(a_i x_{d(t)}) \geqslant b'_i-\delta$, and for each $t\in MAX(\Gamma_{\xi,n}.D)$,  $$\alpha'< x^*_{e(t)}\bigl(\sum_{i=1}^n a_i z^{d(t)}_i\bigr) =\sum_{\varnothing<s\leqslant e(t)} \mathbb{P}_{\xi,n}(s)h(s, e(t))<\delta+\sum_{i=1}^n b_i'.$$  By relabeling, we assume $d(t)=t$ and $e(t)=t$. Furthermore, by replacing $x_t $ with $0$ for any $1\leqslant i\leqslant n$ such that $b'_i-\delta\leqslant 0$ and $t\in \Lambda_{\xi, n, i}.D$, we may assume $x_t=0$ for any such $t$ and $i$.  We then let $b_i=0$ provided $b_i'-\delta\leqslant 0$, and $b_i=(b_i'-\delta)/a_i$ if $b_i'-\delta>0$.  Note that the condition $b_i'-\delta>0$ implies $b_i>0$.  Then \begin{align*} \inf_{t\in MAX(\Gamma_{\xi,n}.D)} r_K(\sum_{i=1}^n a_i z^t_i) &  \geqslant \text{Re\ }x^*_t(\sum_{i=1}^n a_i z^t_i) \geqslant \sum_{i=1}^n \sum_{\Lambda_{\xi,n,i}\ni s\leqslant t} \mathbb{P}_{\xi,n}(s) \text{Re\ }x^*_t(a_i x_s) \\ & \geqslant \sum_{i=1}^n (b_i'-\delta) \geqslant \alpha'-(n+1)\delta>\alpha.\end{align*}

Now assume there exist a directed set $D$, non-negative scalars $(a_i)_{i=1}^n\in B_{\ell_p^n}$, and a weakly null collection $(x_t)_{t\in \Gamma_{\xi,n}.D}\subset B_X$ such that $$\inf_{t\in \Gamma_{\xi,n}.D} r_K(\sum_{i=1}^n a_i z^t_i)>\alpha.$$  Then for any $\alpha'$ such that $$\inf_{t\in \Gamma_{\xi,n}.D} r_K(\sum_{i=1}^n a_i z^t_i) >\alpha'>\alpha,$$  $$\{t\in MAX(\Gamma_{\xi,n}.D): r_K(\sum_{i=1}^n a_i z^t_i)>\alpha'\}=MAX(\Gamma_{\xi,n}.D)$$ is cofinal.

Obviously the condition in the previous paragraph implies that $\alpha_{\xi,p,n}(K)>\alpha$.   Conversely, suppose there exists $(a_i)_{i=1}^n\in \mathbb{K}^n$, a directed set $D$, and a weakly null collection $(x_t)_{t\in \Gamma_{\xi,n}.D}\subset B_X$ such that $$\inf_{t\in \Gamma_{\xi,n}.D} r_K(\sum_{i=1}^n a_i z^t_i)>\alpha \|(a_i)_{i=1}^n\|_{\ell_p^n}.$$  By positive homogeneity of $r_K$, we may assume $(a_i)_{i=1}^n\in S_{\ell_p^n}$.    For each $1\leqslant i\leqslant n$, fix a unimodular scalar $\ee_i$ such that $a_i/\ee_i=|a_i|$.  For each $1\leqslant i\leqslant n$ and each $t\in \Lambda_{\xi,n,i}.D$, let $x^\ee_i:= \ee_i x_t$.  For $t\in MAX(\Gamma_{\xi,n}.D)$, let  $$z^{\ee ,t}_i=\sum_{\Lambda_{\xi,n,i}.D\ni s\leqslant t} \mathbb{P}_{\xi,n}(s) x^\ee_s.$$   Then for each $t\in MAX(\Gamma_{\xi,n}.D)$, $$r_K(\sum_{i=1}^n a_i z^t_i) = r_K(\sum_{i=1}^n |a_i| z^{\ee,t}_i).$$  Since $(x^\ee_t)_{t\in \Gamma_{\xi,n}.D}\subset B_X$ is weakly null, this finishes $(i)$.

$(ii)$ The main theorem of \cite{CAlt} can be stated as: $\alpha_{\xi,p,1}(K)=0$ if and only if $Sz(A)\leqslant \omega^\xi$. The rest of $(ii)$ follows from the fact that that $(\alpha_{\xi,p,n}(K))_{n=1}^\infty$ is non-decreasing and the obvious fact that $\alpha_{\xi,p,n}(K)\leqslant n \alpha_{\xi,p,1}(K)$.

$(iii)$ This is obvious from H\"{o}lder's theorem. 

$(iv)$ We let $\mathbb{T}=\{\ee\in \mathbb{K}: |\ee|=1\}$ and $\mathbb{T}K=\{\ee x^*: \ee\in \mathbb{T}, x^*\in K\}$.   We first show that $\alpha_{\xi,p,n}(K)=\alpha_{\xi,p,n}(\mathbb{T}K)$.    Since $r_K\leqslant r_{\mathbb{T}K}$, $\alpha_{\xi,p,n}(K)\leqslant \alpha_{\xi,p,n}(\mathbb{T}K)$.   Fix $\alpha<\alpha_{\xi,n,p}(\mathbb{T}K)$, non-negative scalars $(a_i)_{i=1}^n\in B_{\ell_p^n}$, $b_1, \ldots, b_n\in \rr$ such that $\alpha<\sum_{i=1}^n a_i b_i$, a directed set $D$, a weakly null collection $(x_t)_{t\in \Gamma_{\xi,n}.D}\subset B_X$, and a collection $(\ee_t x^*_t)_{t\in MAX(\Gamma_{\xi,n}.D)}\subset \mathbb{T}K$ such that for every $1\leqslant i\leqslant n$ and every $\Lambda_{\xi,n,i}.D\ni s\leqslant t\in MAX(\Gamma_{\xi,n}.D)$, $$b_i\leqslant \text{Re\ }\ee_t x^*_t(x_s).$$    We may fix such constants, vectors, and functionals by $(i)$.  Here, the collection $$(\ee_t x^*_t)_{t\in MAX(\Gamma_{\xi,n}.D)}\subset \mathbb{T}K$$ is assumed to be written such that $|\ee_t|=1$ and $x^*_t\in K$ for all $t\in MAX(\Gamma_{\xi,n}.D)$. Fix $R>0$ such that $K\subset RB_{X^*}$ and $\delta>0$ such that $nR\delta +\alpha <\sum_{i=1}^n a_i b_i$. Fix a finite $\delta$-net $(\ee_i)_{i=1}^k$ of $\mathbb{T}$. For each $1\leqslant i\leqslant k$, let $\mathcal{E}_i=\{t\in MAX(\Gamma_{\xi,n}.D): |\ee_i-\ee_t|\leqslant \delta\}$.   By Lemma \ref{stabilize}, we may relabel and assume there exists a single $\ee\in \mathbb{T}$ such that $|\ee-\ee_t|\leqslant \delta$ for all $t\in MAX(\Gamma_{\xi,n}.D)$.  Let $c_i=b_i-R\delta$, so that $$\sum_{i=1}^n a_i c_i \geqslant \sum_{i=1}^n a_i b_i - nR\delta >\alpha.$$   We now note that for each $1\leqslant i\leqslant n$ and each $\Lambda_{\xi,n,i}.D\ni s\leqslant t\in MAX(\Gamma_{\xi,n}.D)$, $$\text{Re\ }x^*_t(\ee x_t) \geqslant \text{Re\ }\ee_t x^*_t(x_t) - |\ee-\ee_t|\|x_t\| \geqslant b_i-R\delta =c_i.$$  We now appeal to $(i)$ to deduce that $\alpha_{\xi,p,n}(K)>\alpha$.   This shows that $\alpha_{\xi,p,n}(K)=\alpha_{\xi,p,n}(\mathbb{T}K)$.

We now note that for any weak$^*$-compact set $L$ of $X^*$, $r_L= r_{\overline{\text{co}}^{\text{weak}^*}(L)}$, whence $$\alpha_{\xi,p,n}(L)=\alpha_{\xi,p,n}(\overline{\text{co}}^{\text{weak}^*}(L)).$$  If $L=\mathbb{T}K$, then $$\overline{\text{abs\ co}}^{\text{weak}^*}(K)= \overline{\text{co}}^{\text{weak}^*}(L),$$ and $$\alpha_{\xi,p,n}(\overline{\text{abs\ co}}^{\text{weak}^*}(K))=\alpha_{\xi,p,n}( \overline{\text{co}}^{\text{weak}^*}(L))=\alpha_{\xi,p,n}(L)=\alpha_{\xi,p,n}(\mathbb{T}K)= \alpha_{\xi,p,n}(K).$$

\end{proof}

\begin{rem}\upshape In Theorem \ref{big theorem}$(i)$, we showed that in order to check the values of $\alpha_{\xi, p,n}(K)$, it is sufficient to check only over sequences $(a_i)_{i=1}^n\in S_{\ell_p^n}$ of non-negative, real scalars. We now note that it is sufficient to check only over positive, real scalars by density of $\{(a_i)_{i=1}^n\in \ell_p^n: a_i>0\}$ in $\{(a_i)_{i=1}^n\in \ell_p^n: a_i\geqslant 0\}$.

\end{rem}

\begin{rem}\upshape We note that a weak$^*$-compact set $K\subset X^*$ is norm compact if and only if $\alpha_{0,p,1}(K)=0$ for some $1\leqslant p\leqslant \infty$ if and only if $\alpha_{0,p,1}(K)=0$ for every $1\leqslant p\leqslant \infty$ if and only if for any weakly null net $(x_\lambda)\subset B_X$, $r_K(x_\lambda)\to 0$. 

\label{daniel}
\end{rem}

The results contained in Theorem \ref{big theorem 2} have similar proofs to those contained in Theorem \ref{big theorem}, so we omit them.

\begin{theorem} Let $X$ be a Banach space, $K\subset X^*$ weak$^*$-compact. \begin{enumerate}[(i)]\item $\theta_{\xi,n}(K)=0$ for some $n\in \nn$ if and only if $\theta_{\xi,n}(K)=0$ for all $n\in \nn$ if and only if $Sz(K)\leqslant \omega^\xi$.  \item If $R\geqslant 0$ is such that $K\subset RB_{X^*}$, $\theta_{\xi,n}(K)\leqslant R$. \item $\theta_{\xi,n}(K)= \theta_{\xi,n}(\overline{\text{\emph{abs\ co}}}^{\text{\emph{weak}}^*}(K))$.  \item For $\theta \in \rr$, $\theta_{\xi,n}(K)>\theta$ if and only if there exist a directed set $D$, a weakly null collection $(x_t)_{t\in \Gamma_{\xi,n}.D}\subset B_X$,  $(x^*_t)_{t\in MAX(\Gamma_{\xi,n}.D)}\subset K$, and real numbers $b_1, \ldots, b_n$ such that $\theta n <\sum_{i=1}^n b_i$ and for each $1\leqslant i\leqslant n$ and each $\Lambda_{\xi,n,i}.D\ni s\leqslant t\in MAX(\Gamma_{\xi,n}.D)$, $\text{\emph{Re}\ }x^*_t(x_s) \geqslant b_i$.  \end{enumerate}

\label{big theorem 2}

\end{theorem}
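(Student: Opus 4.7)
The plan is to mirror the proof of Theorem \ref{big theorem}, specialized to the fixed uniform coefficient vector $(1/n,\ldots,1/n)$. Since (iv) is the workhorse characterization, I would establish it first and then deduce (i), (ii), (iii) from it together with the machinery already in place.

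For (iv), the reverse direction is a direct calculation: given $b_1,\ldots,b_n$ with $\sum b_i > \theta n$ and the stated lower bounds, one has
\[
r_K\Bigl(\sum_{i=1}^n \tfrac{1}{n} z^t_i\Bigr) \geqslant \text{Re\ }x^*_t\Bigl(\sum_{i=1}^n \tfrac{1}{n} z^t_i\Bigr) = \sum_{i=1}^n \tfrac{1}{n}\sum_{\Lambda_{\xi,n,i}.D\ni s\leqslant t} \mathbb{P}_{\xi,n}(s)\,\text{Re\ }x^*_t(x_s) \geqslant \tfrac{1}{n}\sum_{i=1}^n b_i > \theta,
\]
forcing $\theta_{\xi,n}(K)>\theta$. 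For the forward direction, I would choose $\theta' \in (\theta, \theta_{\xi,n}(K))$, a witnessing weakly null $(x_t)_{t\in \Gamma_{\xi,n}.D}\subset B_X$, and norming $x^*_t \in K$ with $\text{Re\ }x^*_t(\sum \tfrac{1}{n}z^t_i) = r_K(\sum \tfrac{1}{n}z^t_i) > \theta'$. Define $h:\Pi(\Gamma_{\xi,n}.D)\to \rr$ by $h(s,t)=\text{Re\ }x^*_t(x_s)$, fix $\delta>0$ with $(n+1)\delta<\theta'-\theta$, and apply Lemma \ref{stabilize}(iv) to produce a level map $d$ with extension $e$, reals $b'_1,\ldots,b'_n$, level-wise lower bounds $\text{Re\ }x^*_{e(t)}(x_{d(s)})\geqslant b'_i-\delta$, and an upper estimate $\sum_{s\leqslant e(t)} \mathbb{P}_{\xi,n}(s)h(s,e(t))\leqslant \delta+\sum b'_i$. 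Comparing with $\sum_{s\leqslant e(t)} \mathbb{P}_{\xi,n}(s)h(s,e(t))=n\cdot r_K(\sum \tfrac{1}{n}z^{e(t)}_i)>n\theta'$ gives $\sum b'_i>n\theta'-\delta$. Setting $b_i=\max\{b'_i-\delta,0\}$ and zeroing out the vectors on levels where $b'_i\leqslant \delta$ (so the lower bounds hold as stated) yields $\sum b_i \geqslant n\theta'-(n+1)\delta > n\theta$, as required.

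Part (ii) is immediate because each $z^t_i$ lies in $B_X$, hence so does their average. For (iii), I would run the two-step argument from the end of the proof of Theorem \ref{big theorem}(iv): first check $\theta_{\xi,n}(K)=\theta_{\xi,n}(\mathbb{T}K)$ by stabilizing a scalar $\ee\in\mathbb{T}$ along levels using a finite $\delta$-net of the unit circle and Lemma \ref{stabilize}(ii), then observe that $r_{\mathbb{T}K}=r_{\overline{\text{co}}^{\text{weak}^*}(\mathbb{T}K)}=r_{\overline{\text{abs\ co}}^{\text{weak}^*}(K)}$, so both sides of (iii) equal $\theta_{\xi,n}(\mathbb{T}K)$. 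For (i), Remark \ref{alternative} gives directly that $\theta_{\xi,1}(K)=0$ iff $Sz(K)\leqslant \omega^\xi$. Monotonicity of $n\mapsto \theta_{\xi,n}(K)$ handles one direction of the $n$-independence; for the other, if $\theta_{\xi,1}(K)>0$, then by (iii) I pass to $\overline{\text{abs\ co}}^{\text{weak}^*}(K)$, apply Proposition \ref{convex body1}(i) to get $Sz(\overline{\text{abs\ co}}^{\text{weak}^*}(K),\ee/n)>\omega^\xi n$ for some $\ee>0$, and use Remark \ref{alternative} again to produce weakly null trees of height $\omega^\xi n$ realizing $\theta_{\xi,n}(K)\geqslant \ee/(4n)>0$.

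The main obstacle, as in Theorem \ref{big theorem}, is the delicate bookkeeping in (iv): the $\delta$-loss incurred at each of the $n$ levels of the Lemma \ref{stabilize}(iv) stabilization must be dominated by the slack $\theta'-\theta$, which is what dictates the choice $(n+1)\delta<\theta'-\theta$ and the zeroing-out of vectors on negligible levels. Everything else is a cosmetic simplification of the proof of Theorem \ref{big theorem}, owing to the absence of general coefficients $a_i$.
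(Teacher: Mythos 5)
Your proposal is correct and follows essentially the approach the paper intends: the paper omits this proof entirely, stating only that it is ``similar to Theorem \ref{big theorem},'' and your adaptation (establishing (iv) via Lemma \ref{stabilize}(iv) with the uniform coefficients, then deducing (ii), (iii) by the $\mathbb{T}K$-stabilization, and (i) via Remark \ref{alternative}) is the faithful specialization. Your handling of part (i) is in fact slightly more careful than a literal transcription would be, since $(\theta_{\xi,n}(K))_n$ is non-increasing rather than non-decreasing, so the paper's one-line argument for $\alpha_{\xi,p,n}$ does not transfer verbatim and your detour through Proposition \ref{convex body1} and Remark \ref{alternative} (plus the monotonicity later recorded as Proposition \ref{too easy}, whose proof rests only on part (iv)) is exactly what is needed.
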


\begin{proposition} Let $X$ be a Banach space and let $K\subset X^*$ be weak$^*$-compact.  The assignment $\omega^\xi n\mapsto \theta_{\xi, n}(K)$ is continuous and non-increasing from $\{\omega^\xi n: \xi\in \textbf{\emph{Ord}}, n\in \nn\}$ into $\rr$.  That is, if $\omega^\zeta m\leqslant \omega^\xi n$ (which happens if and only if $\zeta\leqslant \xi$ and either $\zeta<\xi$ or $\zeta=\xi$ and $m\leqslant n$), then $\theta_{\xi, n}(K)\leqslant \theta_{\zeta, m}(K)$, and for any ordinal $\xi$, $$\theta_{\xi+1, 1}(K)=\inf\{\theta_{\xi,n}(K): n\in \nn\}$$ and if $\xi$ is a limit ordinal, $$\theta_{\xi,1}(K)= \inf\{\theta_{\zeta, n}(K): \zeta<\xi, n\in \nn\}=\inf\{\theta_{\zeta+1, 1}(K): \zeta<\xi\}.$$

\label{too easy}
\end{proposition}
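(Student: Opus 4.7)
The plan is to establish the three assertions---non-increasing monotonicity, the successor continuity at $\omega^{\xi+1}$, and the limit continuity at $\omega^\xi$---by exploiting the disjoint decompositions $\Gamma_{\xi+1,1} = \bigcup_{n\in \nn}\Gamma_{\xi,n}$ (disjoint because each sequence in $\Gamma_{\xi,n}$ has first member in $[\omega^\xi(n-1),\omega^\xi n)$) and, when $\xi$ is a limit ordinal, $\Gamma_{\xi,1} = \bigcup_{\zeta<\xi}(\omega^\zeta + \Gamma_{\zeta+1,1})$, together with the induced identifications of the canonical probability measures.

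For the successor continuity $\theta_{\xi+1,1}(K) = \inf_n \theta_{\xi,n}(K)$, the normalization $\sum_{s\leqslant t}\mathbb{P}_{\xi+1,1}(s) = 1$ forces $\mathbb{P}_{\xi+1,1}(s) = \frac{1}{n}\mathbb{P}_{\xi,n}(s)$ for $s\in \Gamma_{\xi,n}.D$, whence for $t\in MAX(\Gamma_{\xi,n}.D)\subset MAX(\Gamma_{\xi+1,1}.D)$ the crucial identity $z^t_1 = \frac{1}{n}\sum_{i=1}^n z^t_i$ holds (the left side computed in the $\Gamma_{\xi+1,1}$ structure, the right in $\Gamma_{\xi,n}$). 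For $\leqslant$, I would restrict any weakly null collection on $\Gamma_{\xi+1,1}.D$ to $\Gamma_{\xi,n}.D$ and apply the identity to bound the infimum by $\theta_{\xi,n}(K)$. For $\geqslant$, if $\theta_{\xi,n}(K)>\eta$ for every $n$, I would fix for each $n$ a weakly null witness on $\Gamma_{\xi,n}.D$ exceeding $\eta$ (using Remark \ref{super remark} to arrange a common directed set $D$) and glue them along the disjoint decomposition into a single weakly null collection on $\Gamma_{\xi+1,1}.D$; the identity again forces the glued infimum to exceed $\eta$, giving $\theta_{\xi+1,1}(K)\geqslant \eta$.

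The limit continuity is proved in the same way via $\Gamma_{\xi,1} = \bigcup_{\zeta<\xi}(\omega^\zeta + \Gamma_{\zeta+1,1})$; here probabilities inherit without rescaling because every path lies in a single component, so $z^t_1$ in $\Gamma_{\xi,1}$ equals $z^{t'}_1$ in $\Gamma_{\zeta+1,1}$ under the canonical identification. The second equality in the limit case follows by applying the successor continuity at each $\zeta+1<\xi$. Cross-$\xi$ monotonicity then follows from the chain $\theta_{\xi,n}(K)\leqslant \theta_{\xi,1}(K)\leqslant \theta_{\zeta+1,1}(K)\leqslant \theta_{\zeta,m}(K)$, combining within-$\xi$ monotonicity with the two continuities.

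The main remaining obstacle is the within-$\xi$ monotonicity $\theta_{\xi,n+1}(K)\leqslant \theta_{\xi,n}(K)$. The direct sub-tree argument---decompose $\Gamma_{\xi,n+1}.D$ at a first-level max $t_1$ into (level $1$) and the $\Gamma_{\xi,n}$-structured tail $\{s>t_1\}$, apply $\theta_{\xi,n}(K)$ to the tail, and invoke sublinearity of $r_K$---yields only the interpolation bound $\theta_{\xi,n+1}(K)\leqslant \frac{n}{n+1}\theta_{\xi,n}(K) + \frac{1}{n+1}\theta_{\xi,1}(K)$, which is insufficient by itself. To close the argument I would combine this with the Szlenk-derivation characterization implicit in Remark \ref{alternative} (and in the methods of \cite{CAlt}): namely, $\theta_{\xi,n}(K)$ is controlled, up to a constant factor, by $\sup\{\ee>0:s^{\omega^\xi n}_\ee(K)\neq \varnothing\}$, and the monotonicity of $n\mapsto \theta_{\xi,n}(K)$ then follows from the trivial inclusion $s^{\omega^\xi(n+1)}_\ee(K)\subset s^{\omega^\xi n}_\ee(K)$ of transfinite derivations.
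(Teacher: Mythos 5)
Your treatment of the successor and limit continuity statements (restriction along $\Gamma_{\xi+1,1}=\bigcup_n\Gamma_{\xi,n}$ with the rescaling $\mathbb{P}_{\xi+1,1}(s)=n^{-1}\mathbb{P}_{\xi,n}(s)$, gluing witnesses over a common weak neighborhood basis, and the canonical embeddings in the limit case) matches the paper's argument, as does deducing cross-$\xi$ monotonicity from the chain $\theta_{\xi,n}\leqslant\theta_{\xi,1}\leqslant\theta_{\zeta+1,1}\leqslant\theta_{\zeta,m}$. The gap is exactly where you suspect it: the within-$\xi$ monotonicity $\theta_{\xi,n+1}(K)\leqslant\theta_{\xi,n}(K)$. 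Your proposed repair via the Szlenk derivations does not close it. The comparison in Remark \ref{alternative} (and Proposition \ref{downtown abbey}) relates $\theta_{\xi,n}(K)$ to $\sup\{\ee>0: s^{\omega^\xi n}_\ee(K)\neq\varnothing\}$ only up to multiplicative constants (a factor of $4$ in one direction, and the converse direction is only stated for $n=1$), so the inclusion $s^{\omega^\xi(n+1)}_\ee(K)\subset s^{\omega^\xi n}_\ee(K)$ yields at best $\theta_{\xi,n+1}(K)\leqslant C\,\theta_{\xi,n}(K)$ for some $C>1$, not the exact non-increasing statement the proposition asserts. An up-to-constants comparison with a monotone quantity can never produce exact monotonicity.

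The missing idea is a symmetrization. Given a witness for $\vartheta<\theta_{\xi,n+1}(K)$ with level constants $b_1,\ldots,b_{n+1}$ satisfying $\sum_{i=1}^{n+1}\frac{b_i}{n+1}>\vartheta$, do not delete only the first level: for \emph{each} $i$ let $T_i=\{1,\ldots,n+1\}\setminus\{i\}$ and use a level map $\Gamma_{\xi,n}.D\to\Gamma_{\xi,n+1}.D$ that skips the $i$-th level to get $\sum_{j\in T_i}\frac{b_j}{n}\leqslant\theta_{\xi,n}(K)$. Since each index $j$ lies in exactly $n$ of the $n+1$ sets $T_i$,
$$\vartheta<\sum_{j=1}^{n+1}\frac{b_j}{n+1}=\frac{1}{n+1}\sum_{i=1}^{n+1}\sum_{j\in T_i}\frac{b_j}{n}\leqslant\theta_{\xi,n}(K),$$
with no loss of constants. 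This averaging over all $n+1$ ways of deleting one level is what replaces your interpolation bound $\theta_{\xi,n+1}\leqslant\frac{n}{n+1}\theta_{\xi,n}+\frac{1}{n+1}\theta_{\xi,1}$, which, as you correctly note, is useless because $\theta_{\xi,1}$ dominates $\theta_{\xi,n}$.
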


\begin{proof}  In order to see that $\omega^\xi n\mapsto \theta_{\xi,n}(K)$ is non-increasing, it is sufficient to show that for any ordinal $\xi$ and $n\in \nn$, \begin{enumerate}[(i)]\item $\theta_{\xi, n+1}(K)\leqslant \theta_{\xi,n}(K)$, \item $\theta_{\xi+1, 1}(K)\leqslant \theta_{\xi, n}(K)$, \item for any limit ordinal $\lambda>\xi$, $\theta_{\lambda, 1}(K)\leqslant \theta_{\xi+1, 1}(K)$. \end{enumerate}

Let us show (i). Suppose $\vartheta<\theta_{\xi, n+1}(K)$, a directed set $D$, a weakly null collection $(x_t)_{t\in \Gamma_{\xi, n+1}.D}\subset B_X$, and non-negative numbers $b_1, \ldots, b_{n+1}$ such that $\sum_{i=1}^{n+1} \frac{b_i}{n+1}>\vartheta$ and for each $1\leqslant i\leqslant n+1$ and $\Lambda_{\xi,n,i}.D\ni s\leqslant t\in MAX(\Gamma_{\xi, n+1}.D)$, $\text{Re\ }x^*_t(x_s)\geqslant b_i$.    Now for each $1\leqslant i\leqslant n+1$, let $T_i=\{1, \ldots, n+1\}\setminus \{i\}$.   We note that for any $1\leqslant i\leqslant n+1$, $\sum_{j\in T_i} \frac{b_i}{n}\leqslant \theta_{\xi, n}(K)$.  Indeed, by Lemma \ref{stabilize}, there exists a level map $d:\Gamma_{\xi, n}.D\to \Gamma_{\xi, n+1}.D$ such that $d(\Lambda_{\xi, n, j}.D)\subset \Lambda_{\xi, n+1, j}.D)$ for each $j<i$ and $d(\Lambda_{\xi,n,j}.D)\subset \Lambda_{\xi, n+1, j+1}.D$ for $j\geqslant i$.   Then if $e$ is any extension of $d$, $$\sum_{j\in T_i} \frac{b_j}{n} \leqslant \inf_{t\in MAX(\Gamma_{\xi, n}.D)} \text{Re\ }x^*_{e(t)}(\sum_{j=1}^n \sum_{e(t)\geqslant s\in \Lambda_{\xi, n,j}.D} n^{-1} x_{d(s)}) \leqslant \theta_{\xi,n}(K).$$  Then \begin{align*} \vartheta & < \sum_{j=1}^{n+1} \frac{b_j}{n+1} = \sum_{i=1}^{n+1} \sum_{j\in T_i} \frac{b_j}{(n+1)n} \\ & = \frac{1}{n+1}\sum_{i=1}^{n+1} \sum_{j\in T_i}\frac{b_j}{n} \leqslant \frac{1}{n+1}\sum_{i=1}^{n+1} \theta_{\xi,n}(K) \\ & = \theta_{\xi,n}(K). \end{align*} This yields (i).

Now if $\vartheta<\theta_{\xi+1, 1}(K)$, we may fix a weakly null collection $(x_t)_{t\in \Gamma_{\xi+1, 1}.D}\subset B_X$ such that $$\inf_{t\in MAX(\Gamma_{\xi+1, 1}.D)} r_K(\sum_{s\leqslant t}\mathbb{P}_{\xi+1, 1}(s) x_s)>\vartheta.$$   Now since $\mathbb{P}_{\xi+1,1}(s)= n^{-1}\mathbb{P}_{\xi, n}(s)$ for each $s\in \Gamma_{\xi,n}.D$, \begin{align*} \vartheta & < \inf_{t\in MAX(\Gamma_{\xi+1, 1}.D)} r_K(\sum_{s\leqslant t}\mathbb{P}_{\xi+1, 1}(s) x_s) \leqslant \inf_{t\in MAX(\Gamma_{\xi,n}.D)} r_K(\sum_{s\leqslant t} \mathbb{P}_{\xi+1, 1}(s) x_s) \\ & = \inf_{t\in MAX(\Gamma_{\xi,n}.D)} r_K(\sum_{i=1}^n \frac{1}{n}\sum_{t\geqslant s\in \Lambda_{\xi,n,i}.D} \mathbb{P}_{\xi,n}(s) x_s). \end{align*} Therefore $(x_t)_{t\in MAX(\Gamma_{\xi, n}.D)}$ witnesses that $\vartheta<\theta_{\xi,n}(K)$, which yields (ii).

For (iii), we argue as in (ii), noting that there is a canonical map $j:\Gamma_{\xi+1, 1}.D\to \Gamma_{\lambda, 1}.D$ such that $\mathbb{P}_{\xi+1, 1}(t)= \mathbb{P}_{\lambda, 1}(j(t))$ for any $t\in \Gamma_{\xi+1, 1}.D$.  Thus any collection witnessing that $\vartheta<\theta_{\lambda, 1}(K)$ has a subset witnessing that $\vartheta <\theta_{\xi+1, 1}(K)$.

It follows from what we have already shown that for any ordinal $\xi$, $$\theta_{\xi+1, 1}(K)\leqslant \inf\{\theta_{\xi, n}(K): n\in \nn\}.$$  Now if $\vartheta< \inf\{\theta_{\xi,n}(K): n\in \nn\}$,  for a fixed weak neighborhood basis $D$ in $X$ at $0$ and for each $n\in\nn$, we may fix $(x_t)_{t\in \Gamma_{\xi,n}.D}\subset B_X$ such that $$\vartheta<\inf_{t\in MAX(\Gamma_{\xi,n}.D)} r_K(\sum_{i=1}^n \sum_{t\geqslant s\in \Lambda_{\xi,n,i}.D} \frac{1}{n}\mathbb{P}_{\xi,n}(s) x_s) = \inf_{t\in MAX(\Gamma_{\xi+1,1}.D)\cap \Gamma_{\xi,n}.D} r_K(\sum_{s\leqslant t} \mathbb{P}_{\xi+1, 1}(s) x_s).$$ Then $(x_t)_{t\in \Gamma_{\xi+1, 1}.D}=(x_t)_{t\in \cup_{n=1}^\infty \Gamma_{\xi,n}.D}$ witnesses that $\vartheta\leqslant \theta_{\xi+1, 1}(K)$.   This yields that $$\theta_{\xi+1, 1}(K)= \inf\{\theta_{\xi, n}(K): n\in \nn\}.$$

It also follows from what we have shown that for a limit ordinal $\xi$, $$\theta_{\xi, 1}(K) \leqslant \inf \{\theta_{\zeta+1, 1}(K): \zeta<\xi\}= \inf \{\theta_{\zeta, n}(K): \zeta<\xi, n\in \nn\}.$$  We argue similarly to the previous paragraph to deduce the reverse inequality.  For $\vartheta< \inf\{\theta_{\zeta+1, 1}(K): \zeta<\xi\}$, we may fix a weak neighborhood basis $D$ at $0$ in $X$ and for each $\zeta<\xi$ a weakly null collection $(x'_t)_{t\in \Gamma_{\zeta+1, 1}.D}\subset B_X$ such that $$\vartheta<\inf_{t\in MAX(\Gamma_{\zeta+1, 1}.D)} r_K(\sum_{s\leqslant t} \mathbb{P}_{\zeta+1, 1}(s) x_s').$$  Now if $j_\zeta:(\omega^\zeta+\Gamma_{\zeta+1, 1}).D\to \Gamma_{\zeta+1, 1}.D$ is the canonical identification, $(x_{j(t)})_{t\in \Gamma_{\xi, 1}.D}$ witnesses that $\vartheta\leqslant \theta_{\xi, 1}(K)$.

\end{proof}

\begin{corollary} Let $\xi$ be an ordinal, $1\leqslant p,q\leqslant\infty$ with $1<p$ and $1/p+1/q=1$, $X$ a Banach space, and $K\subset X^*$ weak$^*$-compact.  \begin{enumerate}[(i)]\item $\underset{n}{\ \inf\ } \theta_{\xi,n}(K)= \underset{n}{\ \lim\sup\ } \alpha_{\xi,p,n}(K)/n^{1/q}$. \item $Sz(K)\leqslant \omega^{\xi+1}$ if and only if $\underset{n}{\inf}\text{\ }\theta_{\xi,n}(K)=0$ if and only if $\underset{n}{\lim\sup}\text{\ } \alpha_{\xi,p,n}(K)/n^{1/q}=0$.  \end{enumerate}

\label{p}

\end{corollary}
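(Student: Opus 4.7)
Statement (ii) follows immediately from (i): by Proposition~\ref{too easy}, $\inf_n \theta_{\xi,n}(K) = \theta_{\xi+1,1}(K)$, and by Remark~\ref{alternative}, $\theta_{\xi+1,1}(K)=0$ if and only if $Sz(K) \leq \omega^{\xi+1}$. Thus I focus on (i). The inequality $\inf_n \theta_{\xi,n}(K) \leq \limsup_n \alpha_{\xi,p,n}(K)/n^{1/q}$ is easy: the coefficients $a_i = 1/n$ have $\|(a_i)\|_{\ell_p^n} = n^{-1/q}$ and $\sum_i a_i z_i^t = n^{-1}\sum_{i=1}^n z_i^t$, which is precisely the convex combination appearing in the definition of $\theta_{\xi,n}(K)$, so the definition of $\alpha_{\xi,p,n}(K)$ gives $\theta_{\xi,n}(K) \leq \alpha_{\xi,p,n}(K)/n^{1/q}$ for every $n$.

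For the reverse inequality, I first establish subadditivity $\alpha_{\xi,p,k+m}(K)^q \leq \alpha_{\xi,p,k}(K)^q + \alpha_{\xi,p,m}(K)^q$. Using the dual characterization in Theorem~\ref{big theorem}(i), any admissible witness $(b_i)_{i=1}^{k+m}$ with tree $(x_t)$ and functionals $(x_t^*)$ splits into two admissible witnesses: $(b_i)_{i=1}^k$ on $\Gamma_{\xi,k}.D$ (restrict the tree to its first $k$ levels and extend each functional arbitrarily), and $(b_i)_{i=k+1}^{k+m}$ on $\Gamma_{\xi,m}.D$ (fix $t_k \in \mathrm{MAX}(\Lambda_{\xi,k+m,k}.D)$ and use the canonical identification of its descendants with $\Gamma_{\xi,m}.D$). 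Summing the resulting bounds $\sum_{i=1}^k b_i^q \leq \alpha_{\xi,p,k}(K)^q$ and $\sum_{i=k+1}^{k+m} b_i^q \leq \alpha_{\xi,p,m}(K)^q$ and passing to suprema gives the subadditive inequality. Fekete's lemma then yields that $\lim_n \alpha_{\xi,p,n}(K)^q/n$ exists and equals $\inf_n \alpha_{\xi,p,n}(K)^q/n$; in particular $\limsup = \lim$.

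To identify this limit with $\theta_{\xi+1,1}(K)^q$, fix $\epsilon > 0$ and pick $m_0$ with $\theta_{\xi,m_0}(K) < \theta_{\xi+1,1}(K)+\epsilon$ (Proposition~\ref{too easy}); the goal is to show $\alpha_{\xi,p,m_0}(K)^q/m_0 \leq (\theta_{\xi+1,1}(K)+O(\epsilon))^q$, which combined with $\inf_n \alpha_{\xi,p,n}(K)^q/n \leq \alpha_{\xi,p,m_0}(K)^q/m_0$ yields the desired bound on the limit. Given an admissible $(b_i)_{i=1}^{m_0}$ with tree $(x_t)$ and functionals $(x_t^*)$, I apply Lemma~\ref{stabilize}(iii) to $(s,t) \mapsto \mathrm{Re}\,x_t^*(x_s)$ against a finite net in $[0,\theta_{\xi,1}(K)]$ to stabilize the per-level values, and combine with the full ladder of sub-window bounds $\sum_{i \in B} b_i \leq |B|\,\theta_{\xi,|B|}(K)$ valid for every consecutive sub-window $B \subset \{1,\ldots,m_0\}$ (obtained by re-applying the admissibility splitting to each descendant sub-tree of length $|B|$) to force the extremal $(b_i)$ to be essentially uniform, giving $\|b\|_{\ell_q^{m_0}} \leq m_0^{1/q}\theta_{\xi,m_0}(K) + o(m_0^{1/q})$. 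The hard part will be making this reduction rigorous: a priori the extremal $(b_i)$ need not be uniform, and one must exploit the full sub-window ladder (not merely the top-level bound $\sum b_i \leq m_0\theta_{\xi,m_0}(K)$) to show that any non-uniform contribution to $\|b\|_{\ell_q^{m_0}}$ is genuinely $o(m_0^{1/q})$ uniformly over admissible witnesses, so that the leading term collapses to the uniform-case bound $m_0^{1/q}\theta_{\xi,m_0}(K)$.
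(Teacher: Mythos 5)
Your reduction of (ii) to (i), your proof of the easy inequality $\inf_n\theta_{\xi,n}(K)\leqslant \limsup_n\alpha_{\xi,p,n}(K)/n^{1/q}$, and the subadditivity $\alpha_{\xi,p,k+m}(K)^q\leqslant\alpha_{\xi,p,k}(K)^q+\alpha_{\xi,p,m}(K)^q$ obtained by splitting an admissible witness are all fine (the paper does not use Fekete, but that part is a legitimate alternative). However, the proof is not complete: the step you yourself flag as ``the hard part'' is the entire content of the reverse inequality, and the plan you sketch for closing it cannot work as stated. You fix $m_0$ with $\theta_{\xi,m_0}(K)<\theta_{\xi+1,1}(K)+\epsilon$ and aim to prove $\alpha_{\xi,p,m_0}(K)^q/m_0\leqslant(\theta_{\xi,m_0}(K)+O(\epsilon))^q$ \emph{for that} $m_0$. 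This is false in general for a fixed $m_0$: since $\alpha_{\xi,p,n}(K)$ is non-decreasing in $n$ and $\alpha_{\xi,p,1}(K)=\theta_{\xi,1}(K)$, one always has $\alpha_{\xi,p,m_0}(K)\geqslant\theta_{\xi,1}(K)$, so your claim forces $m_0\geqslant(\theta_{\xi,1}(K)/O(\epsilon))^q$; but the only a priori relation available is $m_0\theta_{\xi,m_0}(K)\geqslant\theta_{\xi,1}(K)$, so the first $m_0$ at which $\theta_{\xi,m_0}(K)$ drops below $\theta_{\xi+1,1}(K)+\epsilon$ need only be of order $\theta_{\xi,1}(K)/\epsilon$, which for $q>1$ is far too small. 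In other words, the ``non-uniform contribution'' of a few large $b_i$'s to $\|b\|_{\ell_q^{m_0}}$ is $O(1)$ but is \emph{not} small relative to $m_0^{1/q}\theta_{\xi,m_0}(K)$ unless $m_0$ is taken much larger than your choice permits; stabilizing via Lemma \ref{stabilize}(iii) does not help, since it stabilizes values within levels rather than equalizing them across levels.

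The missing idea — and the way the paper proves exactly this — is a counting argument on the large coordinates. Fix $\vartheta>\inf_m\theta_{\xi,m}(K)$ and $N_1$ with $\theta_{\xi,m}(K)<\vartheta$ for all $m\geqslant N_1$. For any admissible witness $(a_i)_{i=1}^n\in B_{\ell_p^n}$, $(b_i)_{i=1}^n$, the set $S=\{i\leqslant n: b_i\geqslant\vartheta\}$ satisfies $|S|<N_1$: otherwise, restricting the tree to the levels indexed by $S$ via a level map $\Gamma_{\xi,|S|}.D\to\Gamma_{\xi,n}.D$ produces a witness that $\theta_{\xi,|S|}(K)\geqslant\vartheta$, contradicting $|S|\geqslant N_1$. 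Hence $\sum_{i=1}^n a_ib_i\leqslant R\sum_{i\in S}a_i+\vartheta\sum_{i\notin S}a_i\leqslant RN_1+\vartheta n^{1/q}$, so $\alpha_{\xi,p,n}(K)/n^{1/q}\leqslant RN_1/n^{1/q}+\vartheta\to\vartheta$ as $n\to\infty$. This is the rigorous form of your assertion that the non-uniform contribution is $o(n^{1/q})$ — but it holds only in the limit $n\to\infty$, not at the first $m_0$ where $\theta_{\xi,m_0}$ becomes small, and once you have it, Fekete's lemma is unnecessary.
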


\begin{proof}$(i)$ First note that $\inf_n \theta_{\xi, n}(K)=\lim_n \theta_{\xi, n}(K)$, since $(\theta_{xi, n}(K))_{n=1}^\infty$ is non-increasing.

Since $\|(1/n)_{i=1}^n\|_{\ell_p^n}= 1/n^{1/q}$, it follows that for any weakly null $(x_t)_{t\in \Gamma_{\xi,n}}\subset B_X$, $$\inf_{t\in MAX(\Gamma_{\xi,n}.D)} r_K(\frac{1}{n}\sum_{i=1}^n z^t_i)\leqslant \alpha_{\xi,p,n}(K)/n^{1/q}.$$  Thus $\theta_{\xi,n}(K)\leqslant \alpha_{\xi,p,n}(K)/n^{1/q}$, and $$\underset{n}{\inf} \theta_{\xi,n}(K)\leqslant \underset{n}{\lim\sup} \alpha_{\xi,p,n}(K).$$  Now to obtain a contradiction, assume $$\underset{n}{\inf} \theta_{\xi,n}(K)<\vartheta< \theta<\underset{n}{\lim\sup} \alpha_{\xi,p,n}(K).$$ Let $R> 0$ be such that $K\subset RB_{X^*}$.  Fix $N_1\in \nn$ such that $\theta_{\xi,n}(K)<\vartheta$ for all $n\geqslant N_1$ and fix $N_2$ so large that $N_1\leqslant (\theta-\vartheta)N_2^{1/q}/R $.   Now fix $n\geqslant N_2$ such that $\alpha_{\xi,p,n}(K)/n^{1/q}>\theta$.   There exist a sequence of  non-negative scalars $(a_i)_{i=1}^n\in B_{\ell_p^n}$, scalars $b_1, \ldots, b_n\in [0,R]$ such that $\theta n^{1/q}< \sum_{i=1}^n a_i b_i$, a directed set $D$, a weakly null $(x_t)_{t\in \Gamma_{\xi,n}.D}\subset B_X$, and a collection $(x_t^*)_{t\in MAX(\Gamma_{\xi,n}.D)}\subset K$ such that for every $1\leqslant i\leqslant n$ and each $\Lambda_{\xi,n,i}.D\ni s\leqslant t\in MAX(\Gamma_{\xi,n}.D)$, $\text{Re\ }x^*_t(x_s)\geqslant b_i$. Now let $S=\{i\leqslant n: b_i\geqslant \vartheta\}$ and let $T=\{1, \ldots, n\}\setminus S$.  Then \begin{align*} \theta n^{1/q} & \leqslant R\sum_{i\in S} a_i + \vartheta\sum_{i\in T} a_i \\ & \leqslant R|S| + \|(a_i)_{i\in T}\|_{\ell_p^{|T|}} \vartheta |T|^{1/q} \\ & \leqslant R|S|+ \vartheta n^{1/q}.\end{align*} From this it follows that $|S|\geqslant (\theta-\vartheta)n^{1/q}/R\geqslant N_1$.

There exists a map $d:\Gamma_{\xi,|S|}.D\to \Gamma_{\xi,n}.D$ such that, if $S=\{s_1<\ldots <s_{|S|}\}$, $d(\Lambda_{\xi,|S|,i}.D)\subset \Lambda_{\xi, n, s_i}.D$, and such that $(x_{d(s)})_{s\in \Gamma_{\xi, |S|}.D}$ is weakly null.   Let $e:MAX(\Gamma_{\xi, |S|}.D)\to MAX(\Gamma_{\xi, n}.D)$ be any extension of $d$.  Then for any $(s,t)\in \Pi(\Gamma_{\xi,|S|}.D)$, $\text{Re\ }x^*_{e(t)}(x_{d(t)}) \geqslant \theta$.    The collection $(x_{d(t)})_{t\in \Gamma_{\xi, |S|}.D}$ yields that $\theta_{\xi, |S|}(K)\geqslant \theta$, a contradiction.

$(ii)$ By Proposition \ref{too easy} and Theorem \ref{big theorem 2}, $Sz(K)\leqslant \omega^{\xi+1}$ if and only if $\lim_n \theta_{\xi, n}(K)=\inf_n \theta_{\xi,n}(K)=\theta_{\xi+1, 1}(K)=0$.

The equivalence of the last two conditions follows from $(i)$.

\end{proof}

\begin{rem}\upshape From Corollary \ref{p}, it follows that if $\alpha_{\xi,p}(K)=\alpha<\infty$ for some $1<p\leqslant \infty$,   $Sz(K)\leqslant \omega^{\xi+1}$. Indeed, $\lim_n \alpha_{\xi,p,n}(K)/n^{1/q}\leqslant \lim_n \alpha/n^{1/q}=0$.

\label{snit}

\end{rem}

We will need the following. The first part of the proposition is contained in \cite[Proposition $3.5$]{C3}, although not stated in precisely this way. The second part of the following proposition follows immediately from the first.  

\begin{proposition} If $X$ is a Banach space, $K\subset X^*$ is weak$^*$-compact, $b_1, \ldots, b_n$ are non-negative scalars, and if there exists a directed set $D$, a weakly null collection $(x_t)_{t\in \Gamma_{\xi,n}.D}\subset B_X$, and $(x^*_t)_{t\in MAX(\Gamma_{\xi,n}.D)}\subset K$ such that for every $1\leqslant i\leqslant n$ and each $\Lambda_{\xi,n,i}.D\ni s\leqslant t\in MAX(\Gamma_{\xi,n}.D)$, $\text{\emph{Re\ }}x^*_t(x_s)\geqslant b_i$.  Then for any $0<\psi<1$, $$s^{\omega^\xi}_{\psi b_1}\ldots s^{\omega^\xi}_{\psi b_n}(K)\neq \varnothing.$$

In particular, if $\ee>0$, $m\in \nn$ are such that $Sz(K, \ee) \leqslant \omega^\xi m$ and $(x_t)_{t\in \Gamma_{\xi,n}.D}\subset B_X$, $(x^*_t)_{t\in MAX(\Gamma_{\xi,n}.D)}\subset K$, and $b_1, \ldots, b_n$ are as above, $$|\{i\leqslant n: b_i>\ee\}|\leqslant m.$$

\label{downtown abbey}

\end{proposition}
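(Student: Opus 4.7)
The first assertion is essentially a restatement in the $\Gamma_{\xi,n}.D$-tree language of \cite[Proposition 3.5]{C3}; one proves it by a transfinite inductive construction showing that a weak$^*$-cluster point of the functionals $x^*_t$ survives inside each successive derivation $s^{\omega^\xi}_{\psi b_j}$, the slack $\psi<1$ absorbing the fact that the functionals identified at a given level only approximate the value $b_j$. I would simply cite \cite{C3} for this step, which is the only technically demanding ingredient.

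Granted the first assertion, I would deduce the \emph{in particular} clause by contradiction. Suppose $Sz(K,\ee)\leqslant \omega^\xi m$ while $S:=\{i\leqslant n:b_i>\ee\}$ has cardinality $\ell\geqslant m+1$, and enumerate $S=\{s_1<\ldots<s_\ell\}$. Since each $b_{s_j}>\ee$, I can choose $\psi\in(0,1)$ so close to $1$ that $\psi b_{s_j}>\ee$ for every $j\leqslant \ell$. Exactly as in the proof of Corollary \ref{p}, there is a level map $d:\Gamma_{\xi,\ell}.D\to \Gamma_{\xi,n}.D$ with $d(\Lambda_{\xi,\ell,j}.D)\subset \Lambda_{\xi,n,s_j}.D$ such that $(x_{d(t)})_{t\in \Gamma_{\xi,\ell}.D}\subset B_X$ is weakly null, and for any extension $e$ of $d$ and any $\Lambda_{\xi,\ell,j}.D\ni s\leqslant t\in MAX(\Gamma_{\xi,\ell}.D)$ we have $\text{Re\ }x^*_{e(t)}(x_{d(s)})\geqslant b_{s_j}$. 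Applying the first assertion to this sub-tree with the scalars $(b_{s_j})_{j=1}^\ell$ yields
$$s^{\omega^\xi}_{\psi b_{s_1}}\cdots s^{\omega^\xi}_{\psi b_{s_\ell}}(K)\neq \varnothing.$$

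To close the argument I would invoke the standard monotonicity properties of the derivation: $s^\alpha_\ee(L)\subseteq s^\alpha_\ee(L')$ whenever $L\subseteq L'$, and $s^\alpha_{\ee_1}(L)\supseteq s^\alpha_{\ee_2}(L)$ whenever $\ee_1\leqslant \ee_2$. Since $\psi b_{s_j}>\ee$ for every $j$, iterating these from the innermost derivation outwards gives $s^{\omega^\xi}_{\psi b_{s_1}}\cdots s^{\omega^\xi}_{\psi b_{s_\ell}}(K)\subseteq s^{\omega^\xi \ell}_\ee(K)$, and $\omega^\xi \ell\geqslant \omega^\xi(m+1)>\omega^\xi m$ together with $Sz(K,\ee)\leqslant \omega^\xi m$ forces $s^{\omega^\xi \ell}_\ee(K)=\varnothing$, a contradiction, so $|S|\leqslant m$. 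The only substantive obstacle in the whole proposition is the first assertion; the deduction uses only the sub-tree extraction already executed in Corollary \ref{p} and the elementary properties of the Szlenk derivation.
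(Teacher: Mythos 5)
Your proposal is correct and takes essentially the same route as the paper, which likewise obtains the first assertion from \cite[Proposition $3.5$]{C3} and treats the \emph{in particular} clause as an immediate consequence. The only remark is that your subtree extraction is superfluous: since $s^{\omega^\xi}_{\psi b_i}(L)\subset L$ for every $i$, the nonemptiness of the full chain $s^{\omega^\xi}_{\psi b_1}\ldots s^{\omega^\xi}_{\psi b_n}(K)$ already yields $s^{\omega^\xi}_{\psi b_{s_1}}\cdots s^{\omega^\xi}_{\psi b_{s_\ell}}(K)\neq\varnothing$ by discarding the derivations with indices outside $S$, after which your monotonicity argument finishes the proof.
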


\begin{theorem} Let $X$ be a Banach space, $\xi$ an ordinal, and let $K\subset X^*$ be weak$^*$-compact. \begin{enumerate}[(i)]\item If $q=\max\{\textbf{\emph{p}}_\xi(K), 1\}$ and $1/p+1/q=1$, $$p=\sup\{r\in [1,\infty]:  \alpha_{\xi,r}(K)<\infty\}.$$   \item If $1\leqslant q<\infty$ and $1/p+1/q=1$, $K$ has $\xi$-$q$-summable Szlenk index if and only if $\alpha_{\xi, p}(K)<\infty$. \end{enumerate}

\label{curse}
\end{theorem}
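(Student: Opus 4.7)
The plan is to prove $(ii)$ first, combining Theorem \ref{big theorem}$(i)$ with Proposition \ref{downtown abbey} and a multi-threshold converse of it, and then to derive $(i)$ by reading $\textbf{p}_\xi(K)$ as a borderline summability exponent. For the forward direction of $(ii)$, assume $K$ has $M$-$\xi$-$q$-summable Szlenk index. By Theorem \ref{big theorem}$(i)$, in order to bound $\alpha_{\xi,p,n}(K)$ by $M$ it suffices to fix non-negative $(a_i)_{i=1}^n\in B_{\ell_p^n}$ together with a weakly null $(x_t)_{t\in\Gamma_{\xi,n}.D}\subset B_X$, $(x^*_t)\subset K$, and $b_1,\ldots,b_n\geqslant 0$ with $\text{Re\ }x^*_t(x_s)\geqslant b_i$ for each $\Lambda_{\xi,n,i}.D\ni s\leqslant t$, and to bound $\sum_i a_ib_i$. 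Proposition \ref{downtown abbey} yields $s^{\omega^\xi}_{\psi b_1}\cdots s^{\omega^\xi}_{\psi b_n}(K)\neq\varnothing$ for every $\psi\in(0,1)$, so $\sum_i(\psi b_i)^q\leqslant M^q$; sending $\psi\to 1^-$ gives $\|(b_i)\|_{\ell_q^n}\leqslant M$, and H\"older yields $\sum_i a_ib_i\leqslant M$. For the backward direction, assume $\alpha_{\xi,p}(K)=M<\infty$ and take $\ee_1,\ldots,\ee_n\geqslant 0$ with $s^{\omega^\xi}_{\ee_1}\cdots s^{\omega^\xi}_{\ee_n}(K)\neq\varnothing$. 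The key input is a multi-threshold converse of Proposition \ref{downtown abbey}: for each $\psi\in(0,1)$ there exist a directed set $D$, a weakly null $(x_t)_{t\in\Gamma_{\xi,n}.D}\subset B_X$, and $(x^*_t)\subset K$ with $\text{Re\ }x^*_t(x_s)\geqslant\psi\ee_i$ on level $i$. Given such a tree, choose the H\"older-extremal $(a_i)_{i=1}^n\in S_{\ell_p^n}$ so that $\sum_i a_i\ee_i=\|(\ee_i)\|_{\ell_q^n}$ (explicitly $a_i\propto\ee_i^{q-1}$ when $q>1$, and $a_i\equiv 1$ when $q=1$); Theorem \ref{big theorem}$(i)$ forces $\psi\|(\ee_i)\|_{\ell_q^n}\leqslant M$, and sending $\psi\to 1^-$ gives $\sum_i\ee_i^q\leqslant M^q$.

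For $(i)$, if $\textbf{p}_\xi(K)=\infty$ then $q=\infty$ and $p=1$; $\alpha_{\xi,r}(K)<\infty$ with $r>1$ would, via $(ii)$, force $\textbf{p}_\xi(K)\leqslant r'<\infty$, while $\alpha_{\xi,1}(K)\leqslant R<\infty$ by Theorem \ref{big theorem}$(iii)$, so $\sup=1=p$. Otherwise $q<\infty$. The inequality $\sup\leqslant p$ is immediate from $(ii)$: $\alpha_{\xi,r}(K)<\infty$ with $r>1$ gives $\textbf{p}_\xi(K)\leqslant r'$, hence $q\leqslant r'$ and $r\leqslant p$. For $\sup\geqslant p$, fix $r<p$ so that $r'>q\geqslant\textbf{p}_\xi(K)$ strictly; pick $s\in(\textbf{p}_\xi(K),r')$, and the definition of $\textbf{p}_\xi$ yields $\ee_0>0$ with $Sz_\xi(K,\ee)<\ee^{-s}$ for all $\ee<\ee_0$. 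Given $\ee_1\geqslant\cdots\geqslant\ee_n$ with $s^{\omega^\xi}_{\ee_1}\cdots s^{\omega^\xi}_{\ee_n}(K)\neq\varnothing$, the nesting of iterated derivations forces $s^{\omega^\xi k}_{\ee_k}(K)\neq\varnothing$ and hence $Sz_\xi(K,\ee_k)\geqslant k+1$, so indices with $\ee_k\leqslant\ee_0$ contribute $\sum_k\ee_k^{r'}\leqslant\sum_k(k+1)^{-r'/s}<\infty$ (since $r'/s>1$), while the finitely many $\ee_k>\ee_0$ (at most $Sz_\xi(K,\ee_0)$ of them) contribute a uniformly bounded amount. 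Thus $K$ is $\xi$-$r'$-summable, and $(ii)$ gives $\alpha_{\xi,r}(K)<\infty$, proving $\sup\geqslant p$.

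The main obstacle is the multi-threshold converse of Proposition \ref{downtown abbey} used in the backward direction of $(ii)$: Remark \ref{alternative} supplies only the constant-threshold version, and the generalization to arbitrary $\ee_1,\ldots,\ee_n$ along the levels of $\Gamma_{\xi,n}.D$ requires a transfinite induction on $\xi$ and $n$, building the tree one level at a time by weak$^*$-approximating extremal functionals in successive derivations and absorbing a $\psi$-multiplicative loss at each step. Once that converse is in hand, everything reduces to H\"older duality and the standard comparison between the power-type estimate $\textbf{p}_\xi(K)<s$ and summability of threshold sequences in iterated $s^{\omega^\xi}$-derivations.
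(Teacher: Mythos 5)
Your proof of part $(ii)$ is essentially the paper's: the same two ingredients (Theorem \ref{big theorem}$(i)$ plus Proposition \ref{downtown abbey} for one direction, and the tree-from-derivations converse plus the H\"older-extremal choice of $(a_i)_{i=1}^n$ for the other), organized as direct implications rather than the paper's contrapositives. The converse construction you flag as the main obstacle is exactly the point where the paper appeals to \cite[Lemma $3.4$]{CAlt}; note that the version actually available there carries a fixed multiplicative loss ($\ee_i/5$ rather than $\psi\ee_i$ for arbitrary $\psi<1$), but since $\xi$-$q$-summability is a ``for some $M$'' condition this only changes the constant and not the conclusion. Part $(i)$ is where you genuinely diverge. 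The paper proves $\beta\leqslant p$ by citing the proof of the main theorem of \cite{C3}, and proves $\beta\geqslant p$ by a direct argument: it decomposes the lower estimates $b_i$ into dyadic blocks, bounds the cardinality of each block via Proposition \ref{downtown abbey} and the power-type estimate $Sz_\xi(K,\ee)\leqslant \ee^{-s'}$, and assembles an explicit functional of controlled $\ell_{r'}$-norm to bound $\sum a_ib_i$. You instead route both inequalities through $(ii)$: finiteness of $\alpha_{\xi,r}(K)$ for $r>1$ gives $\xi$-$r'$-summability and hence $\textbf{p}_\xi(K)\leqslant r'$ (yielding $\sup\leqslant p$ without the external citation), while for $r<p$ you verify $\xi$-$r'$-summability directly from the power-type bound via the distributional estimate $Sz_\xi(K,\ee_k)\geqslant k+1$ for a sorted admissible sequence, summing $(k+1)^{-r'/s}$ with $r'/s>1$. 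This is correct — the monotonicity of $s_\ee(\cdot)$ in both the set and in $\ee$ justifies the reduction to sorted sequences and the claim $s^{\omega^\xi k}_{\ee_k}(K)\neq\varnothing$, and the finitely many large thresholds are controlled by $Sz_\xi(K,\ee_0)$ and the bound $\ee_k\leqslant 2R$ — and it is arguably cleaner and more self-contained than the paper's treatment, at the cost of being non-quantitative where the paper's dyadic construction produces an explicit bound $C_1$ on $\alpha_{\xi,r}(K)$.
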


\begin{proof}$(i)$ Let $\beta= \sup\{r\in [1, \infty]:  \alpha_{\xi, r}(K)<\infty\}$ and let $1/\alpha+1/\beta=1$.   The fact that $\beta\leqslant p$  follows from the proof of the main theorem of \cite{C3}.  For the reverse inequality, if $p=1$, $p\leqslant \beta$ and we have $\beta=p$.  So assume $1<p$.  Fix $1\leqslant r<p$ and let $2s=r+p$. In the proof, $r'$, $s'$ denote the conjugate exponents to $r,s$, respectively.  Let $C=\sum_{l=1}^\infty (2^{\frac{1}{r'}-\frac{1}{s'}})^l<\infty$.  Fix $R\geqslant 0$ such that $K\subset RB_{X^*}$.  It follows easily from the definition of $\textbf{p}_\xi(K)$ that there exists $\ee_1\in (0,1)$ such that for every $0<\ee\leqslant \ee_1$, $Sz_\xi(K, \ee)\leqslant 1/\ee^{s'}$. Let $m=1/\ee_1^{s'}$ and $C_1= Rm^{1/r'}+2^{1/s'} m^{1/r'}C$.     Now fix $n\in \nn$, non-negative scalars $(a_i)_{i=1}^n$, $\alpha<\alpha_{\xi,p,n}(K)$, $(x_t)_{t\in \Gamma_{\xi,n}.D}\subset B_X$, functionals $(x^*_t)_{t\in MAX(\Gamma_{\xi,n}.D)}\subset K$, and $b_1, \ldots, b_n\subset [0,R]$ such that $\sum_{i=1}^n a_i b_i>\alpha$ and for each $1\leqslant i\leqslant n$ and each $\Lambda_{\xi,n,i}.D\ni s\leqslant \in MAX(\Gamma_{\xi,n}.D)$, $\text{Re\ }x^*_t(x_s)\geqslant b_i$. For each $l\in \nn$, let $$S_l=\{i\leqslant n: b_i\in (\ee_1 2^{-\frac{l}{s'}}, \ee_1 2^{\frac{1-l}{s'}}]\}$$ and let $$S_0=\{i\leqslant n: b_i\in (\ee_1, R]\}.$$   

 By Proposition \ref{downtown abbey}, since $Sz_\xi(K, \ee_1 2^{-l/s'}) \leqslant 1/(\ee_1 2^{-l/s'})^{s'}=m 2^l$, it follows that $|S_l|\leqslant m 2^l$. Since $Sz(K, \ee_1)\leqslant m$, $|S_0|\leqslant m$.  Now let $$y^*_0= \sum_{j\in S_0} e^*_j$$ and for each $l\in \nn$, let $$y^*_l=\sum_{j\in S_l} 2^{-l/r'} e^*_j.$$    Then $\|y^*_0\|_{\ell_{r'}} = |S_0|^{1/r'} \leqslant m^{1/r'}$ and for each $l\in \nn$, $$\|y^*_l\|_{\ell_{r'}} = 2^{-l/r'} |S_l|^{1/r'} \leqslant 2^{-1/r'} m^{1/r'} 2^{l/r'} = m^{1/r'}.$$   Let $$y^*=Ry^*_0+ 2^{1/s'}\sum_{l\in \nn} (2^{\frac{1}{r'}-\frac{1}{s'}})^l y^*_l$$ and note that $$\|y^*\|_{\ell_{r'}} \leqslant Rm^{1/r'}+ 2^{1/s'}m^{1/r'}C=C_1.$$  Furthermore, \begin{align*} \alpha & < \sum_{i=1}^n a_i b_i  \leqslant \sum_{i\in S_0}a_i b_i+ \sum_{l=1}^\infty \sum_{i\in S_l} a_i b_i \leqslant y^*(\sum_{i=1}^n a_ie_i) \\ & \leqslant C_1\|(a_i)_{i=1}^n\|_{\ell_r}. \end{align*}  Therefore $\alpha_{\xi, r}(K)\leqslant C_1$.

$(ii)$ Assume that $K$ fails to have $\xi$-$q$-summable Szlenk index.  Then for any $M>0$ there exist $\ee_1, \ldots, \ee_n>0$ such that $s^{\omega^\xi}_{\ee_1}\ldots s^{\omega^\xi}_{\ee_n}(K)\neq \varnothing$ and $\sum_{i=1}^n (\ee_i/5)^q>M^q$.   Then arguing as in \cite[Lemma $3.4$]{CAlt}, we may fix a weak neighborhood basis $D$ at $0$ in $X$ and find $(x_t)_{t\in\Gamma_{\xi,n}.D}\subset B_X$, $(x^*_t)_{t\in MAX(\Gamma_{\xi,n}.D)}\subset K$ such that for each $1\leqslant i\leqslant n$ and each $\Lambda_{\xi,n,i}.D\ni s\leqslant t\in MAX(\Gamma_{\xi,n}.D)$, $\text{Re\ }x^*_t(x_s)\geqslant \ee_i/5$. Now fix non-negative numbers $a_1, \ldots, a_n$ such that $\|(a_i)_{i=1}^n\|_{\ell_p^n}=1$ and $\sum_{i=1}^n a_i \ee_i/5 = \bigl(\sum_{i=1}^n (\ee_i/5)^q\bigr)^{1/q}>M$. From this it follows that $$\alpha_{\xi,p, n}(K)\geqslant \inf_{t\in MAX(\Gamma_{\xi,n}.D)} \|\sum_{i=1}^n a_i z^t_i\| \geqslant \sum_{i=1}^n a_i\ee_i/5>M.$$  We therefore deduce that $\alpha_{\xi, p}(K)=\infty$.

Now if $\alpha<\alpha_{\xi,p, n}(K)$, there exist a sequence of positive scalars $(a_i)_{i=1}^n\in B_{\ell_p^n}$ and a weakly null collection $(x_t)_{t\in \Gamma_{\xi,n}.D}\subset B_X$ such that $$\alpha<\inf_{t\in MAX(\Gamma_{\xi,n}.D)}r_K(\sum_{i=1}^n a_i z^t_i).$$    Then there exist non-negative scalars $b_1, \ldots, b_n\in \rr$ such that $\alpha<\sum_{i=1}^n b_i$ and for each $1\leqslant i\leqslant n$ and each $\Lambda_{\xi,n,i}.D\ni s\leqslant t\in MAX(\Gamma_{\xi,n}.D)$, $\text{Re\ }x^*_t(a_i x_s)\geqslant b_i$. Fix $0<\psi<1$ such that $\alpha<\psi\sum_{i=1}^n b_i$.    Then by Proposition \ref{downtown abbey}, $s^{\omega^\xi}_{\psi b_1/a_1}\ldots s^{\omega^\xi}_{\psi b_n/a_n}(K)\neq \varnothing$.  Now \begin{align*} \alpha & < \psi \sum_{i=1}^n b_i = \psi\sum_{i=1}^n a_i (b_i/a_i) \\ & \leqslant \psi \bigl \|(a_i)_{i=1}^n\|_{\ell_p^n}\bigl(\sum_{i=1}^n (b_i/a_i)^q\bigr)^{1/q} \leqslant \psi\bigl(\sum_{i=1}^n (b_i/a_i)^q\bigr)^{1/q}. \end{align*} From this it follows that $K$ does not have $\alpha$-$\xi$-$q$-summable Szlenk index. Therefore if $\alpha_{\xi,p}(K)=\infty$, $K$ does not have $\xi$-$q$-summable Szlenk index.

\end{proof}

\begin{corollary} Let $X$ be a Banach space, $\xi$ an ordinal, $K\subset X^*$ weak$^*$-compact. \begin{enumerate}[(i)]\item For $1\leqslant q<\infty$, $K$ has $\xi$-$q$-summable Szlenk index if and only if $\overline{\text{\emph{abs\ co}}}^{\text{\emph{weak}}^*}(K))$ does. \item $\textbf{\emph{p}}_\xi(\overline{\text{\emph{abs\ co}}}^{\text{\emph{weak}}^*}(K))=\max\{\textbf{\emph{p}}_\xi(K), 1\}$. \end{enumerate}

\label{convex}

\end{corollary}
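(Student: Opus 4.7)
The plan is to deduce both parts from the invariance of the seminorms $\alpha_{\xi,r,n}$ under passage to the weak$^*$-closed absolutely convex hull, combined with the dictionary supplied by Theorem \ref{curse}. Writing $L := \overline{\text{abs co}}^{\text{weak}^*}(K)$, Theorem \ref{big theorem}(iv) gives
\[
\alpha_{\xi,r,n}(K) = \alpha_{\xi,r,n}(L) \quad \text{for every } r \in [1,\infty] \text{ and } n \in \nn,
\]
and hence after taking suprema over $n$, $\alpha_{\xi,r}(K) = \alpha_{\xi,r}(L)$ for each $r$. This single identity is the engine of the whole argument.

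For part (i), I fix $1 \leqslant q < \infty$ and let $p$ be its conjugate exponent. Theorem \ref{curse}(ii) asserts that $K$ has $\xi$-$q$-summable Szlenk index if and only if $\alpha_{\xi,p}(K) < \infty$, and the same equivalence holds with $K$ replaced by $L$. Since $\alpha_{\xi,p}(K) = \alpha_{\xi,p}(L)$, the two summability properties are equivalent. For part (ii), Theorem \ref{curse}(i) identifies each of $\max\{\textbf{\emph{p}}_\xi(K),1\}$ and $\max\{\textbf{\emph{p}}_\xi(L),1\}$ as the H\"older conjugate of $\sup\{r \in [1,\infty] : \alpha_{\xi,r}(\cdot) < \infty\}$; since the two suprema agree, $\max\{\textbf{\emph{p}}_\xi(L),1\} = \max\{\textbf{\emph{p}}_\xi(K),1\}$.

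The final step is to remove the $\max$ on the $L$-side. Here I use that $L$ is weak$^*$-compact and convex: by Proposition \ref{convex body1}(iii), whenever $Sz(L) > \omega^\xi$ one automatically has $\textbf{\emph{p}}_\xi(L) \geqslant 1$, so $\max\{\textbf{\emph{p}}_\xi(L),1\} = \textbf{\emph{p}}_\xi(L)$ and the desired equality follows. The only remaining case is $Sz(L) \leqslant \omega^\xi$, which, via Theorem \ref{big theorem}(ii) and the equality $\alpha_{\xi,r,n}(K) = \alpha_{\xi,r,n}(L)$, is equivalent to $Sz(K) \leqslant \omega^\xi$; in that degenerate regime both power types vanish and the statement is a boundary convention. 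I do not expect any serious obstacle: all the work has been front-loaded into Theorem \ref{big theorem}(iv), Theorem \ref{curse}, and the automatic lower bound for convex bodies from Proposition \ref{convex body1}(iii), so the corollary is a clean bookkeeping consequence of these results.
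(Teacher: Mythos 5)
Your proof is correct and follows essentially the same route as the paper: both arguments rest entirely on the identity $\alpha_{\xi,p,n}(K)=\alpha_{\xi,p,n}(\overline{\text{abs co}}^{\text{weak}^*}(K))$ from Theorem \ref{big theorem}$(iv)$ together with the translations in Theorem \ref{curse}. You are in fact slightly more explicit than the paper in invoking Proposition \ref{convex body1}$(iii)$ to remove the $\max$ on the convex-hull side and in flagging the degenerate regime $Sz(K)\leqslant\omega^\xi$, which the paper's one-line deduction of $(ii)$ leaves implicit.
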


\begin{proof} Let $1/p+1/q=1$.  By Theorem \ref{big theorem}, $\alpha_{\xi,p, n}(K)=\alpha_{\xi,p,n}(\overline{\text{abs\ co}}^{\text{\emph{weak}}^*}(K))$, whence $\alpha_{\xi,p}(K)<\infty$ if and only  if $\alpha_{\xi,p}(\overline{\text{abs\ co}}^{\text{weak}^*}(K))<\infty$. By Theorem \ref{curse}$(ii)$, the  former condition is equivalent to $K$ having $\xi$-$q$-summable Szlenk index and the latter is equivalent to $\overline{\text{abs\ co}}^{\text{weak}^*}(K)$ having $\xi$-$q$-summable Szlenk index.

We deduce $(ii)$ using $(i)$ together with Theorem \ref{curse}$(i)$.

\end{proof}

\section{Banach ideals}

In this section, we let $\textbf{Ban}$ denote the class of all Banach spaces over $\mathbb{K}$. We let $\mathfrak{L}$ denote the class of all operators between Banach spaces and for $X,Y\in \textbf{Ban}$, we let $\mathfrak{L}(X,Y)$ denote the set of operators from $X$ into $Y$. For $\mathfrak{I}\subset \mathfrak{L}$ and $X,Y\in \textbf{Ban}$, we let $\mathfrak{I}(X,Y)=\mathfrak{I}\cap \mathfrak{L}(X,Y)$. We recall that a class $\mathfrak{I}$ is called an ideal if \begin{enumerate}[(i)]\item for any $W,X,Y,Z\in \textbf{Ban}$, any $C\in \mathfrak{L}(W,X)$, $B\in \mathfrak{I}(X,Y)$, and $A\in \mathfrak{L}(Y,Z)$, $ABC\in \mathfrak{I}$, \item $I_\mathbb{K}\in \mathfrak{I}$, \item for each $X,Y\in \textbf{Ban}$, $\mathfrak{I}(X,Y)$ is a vector subspace of $\mathfrak{L}(X,Y)$. \end{enumerate}

We recall that an ideal $\mathfrak{I}$ is said to be  \begin{enumerate}[(i)]\item \emph{closed} provided that for any $X,Y\in \textbf{Ban}$, $\mathfrak{I}(X,Y)$ is closed in $\mathfrak{L}(X,Y)$ with its norm topology, \item \emph{injective} provided that for any $X,Y,Z\in \textbf{Ban}$, any $A:X\to Y$, and any isomorphic (equivalently, isometric) embedding $j:Y\to Z$ such that $jA\in \mathfrak{I}$, $A\in \mathfrak{I}$, \item \emph{surjective} provided that for any $W,X,Y\in \textbf{Ban}$, any $A:X\to Y$, and any surjection (equivalently, quotient map) $q:W\to X$ such that $Aq\in \mathfrak{I}$, $A\in \mathfrak{I}$. \end{enumerate}

If $\mathfrak{I}$ is an ideal and $\iota$ assigns to each member of $\mathfrak{I}$ a non-negative real value, then we say $\iota$ is an \emph{ideal norm} provided that \begin{enumerate}[(i)]\item for each $X,Y\in \textbf{Ban}$, $\iota$ is a norm on $\mathfrak{I}(X,Y)$, \item for any $W,X,Y,Z\in \textbf{Ban}$ and any $C\in \mathfrak{L}(W,X)$, $B\in \mathfrak{I}(X,Y)$, $A\in \mathfrak{I}(Y,Z)$, $\iota(ABC)\leqslant \|A\|\iota(B)\|C\|$, \item for any $X,Y\in \textbf{Ban}$, any $x\in X$, and any $y\in Y$, $\iota(x\otimes y)=\|x\|\|y\|$. \end{enumerate} 

If $\mathfrak{I}$ is an ideal and $\iota$ is an ideal norm on $\mathfrak{I}$, we say $(\mathfrak{I}, \iota)$ is a \emph{Banach ideal} provided that for every $X,Y\in \textbf{Ban}$, $(\mathfrak{I}(X,Y), \iota)$ is a Banach space.

Fix an ordinal $\xi$,  $1<p\leqslant \infty$, and let $1/p+1/q=1$. For an operator $A:X\to Y$, let $\mathfrak{a}_{\xi,p}(A)=\|A\|+\alpha_{\xi,p}(A)$.   Let $\mathfrak{A}_{\xi,p}$ denote the class of all operators $A$ with $\xi$-$q$-summable Szlenk index.

\begin{theorem} For any ordinal $\xi$ and $1<p\leqslant \infty$, the class $(\mathfrak{A}_{\xi,p}, \mathfrak{a}_{\xi,p})$ is a Banach ideal.

\label{ideal}
\end{theorem}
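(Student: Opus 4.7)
The plan is to verify three things in sequence: the ideal axioms for $\mathfrak{A}_{\xi,p}$, the ideal-norm axioms for $\mathfrak{a}_{\xi,p}$, and completeness of $(\mathfrak{A}_{\xi,p}(X,Y),\mathfrak{a}_{\xi,p})$ for each $X,Y\in \textbf{Ban}$. By Theorem \ref{curse}(ii), membership in $\mathfrak{A}_{\xi,p}$ is equivalent to $\alpha_{\xi,p}(A)<\infty$, so everything reduces to functional-analytic properties of the seminorm $\alpha_{\xi,p}$ together with the trivial behavior of $\|\cdot\|$.

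I would first handle the routine pieces. Positive homogeneity $\alpha_{\xi,p,n}(\lambda A)=|\lambda|\alpha_{\xi,p,n}(A)$ is immediate from $(\lambda A)^*B_{Y^*}=|\lambda|A^*B_{Y^*}$ (by balancedness) and $r_{\mu K}=\mu r_K$ for $\mu\geqslant 0$. Submultiplicativity $\mathfrak{a}_{\xi,p}(ABC)\leqslant \|A\|\mathfrak{a}_{\xi,p}(B)\|C\|$ reduces to $\alpha_{\xi,p,n}(ABC)\leqslant \|A\|\|C\|\alpha_{\xi,p,n}(B)$: from $(ABC)^*B_{Z^*}\subseteq \|A\|\cdot C^*B^*B_{Y^*}$ and the fact that any weakly null $(w_t)_{t\in\Gamma_{\xi,n}.D}\subseteq B_W$ induces a weakly null $(Cw_t/\|C\|)_{t}\subseteq B_X$ with $r_{C^*B^*B_{Y^*}}(\sum_i a_i z_i^t)=\|C\|\,r_{B^*B_{Y^*}}(\sum_i a_i \tilde z_i^t)$. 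The rank-one axiom $\mathfrak{a}_{\xi,p}(x^*\otimes y)=\|x^*\|\|y\|$ follows because $(x^*\otimes y)^*B_{Y^*}$ is a norm-compact subset of the one-dimensional space $\mathbb{K}x^*$, so $Sz((x^*\otimes y)^*B_{Y^*})\leqslant 1\leqslant \omega^\xi$ and Theorem \ref{big theorem}(ii) gives $\alpha_{\xi,p}(x^*\otimes y)=0$; non-degeneracy is automatic from the $\|A\|$ summand.

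The main obstacle will be the triangle inequality $\alpha_{\xi,p,n}(A+B)\leqslant \alpha_{\xi,p,n}(A)+\alpha_{\xi,p,n}(B)$. Supposing it fails strictly, Theorem \ref{big theorem}(i) produces non-negative $(a_i)_{i=1}^n\in B_{\ell_p^n}$, a directed set $D$, a weakly null $(x_t)_{t\in\Gamma_{\xi,n}.D}\subseteq B_X$, functionals $(y_t^*)_{t\in MAX(\Gamma_{\xi,n}.D)}\subseteq B_{Y^*}$, and $b_i\geqslant 0$ with $\sum_i a_ib_i>\alpha_{\xi,p,n}(A)+\alpha_{\xi,p,n}(B)$ such that $\text{Re\ }(A+B)^*y_t^*(x_s)\geqslant b_i$ for every $\Lambda_{\xi,n,i}.D\ni s\leqslant t\in MAX(\Gamma_{\xi,n}.D)$. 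Fixing $\delta>0$ and a finite $\delta$-cover $F$ of $[-\max(\|A\|,\|B\|),\max(\|A\|,\|B\|)]$, I would apply Lemma \ref{stabilize}(iii) (through the coloring device from the remark following Lemma \ref{stabilize}) first to $h_A(s,t):=\text{Re\ }A^*y_t^*(x_s)$, obtaining a level map $d_1$ with extension $e_1$ and constants $a_1',\ldots,a_n'$ with $|h_A(d_1(s),e_1(t))-a_i'|<\delta$ for $s\in\Lambda_{\xi,n,i}.D$, then iterate on the refined tree for $h_B$ to obtain $d_2,e_2$ and constants $a_1'',\ldots,a_n''$ with the analogous property. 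On the doubly-refined tree, the lower-bound halves of the two stabilizations combined with Theorem \ref{big theorem}(i) applied to $(A^*y_{e_1 e_2(t)}^*)\subseteq A^*B_{Y^*}$ and $(B^*y_{e_1 e_2(t)}^*)\subseteq B^*B_{Y^*}$ respectively force $\sum_i a_i\max(0,a_i'-\delta)\leqslant \alpha_{\xi,p,n}(A)$ and its $B$-analogue, whence $\sum_i a_ia_i'\leqslant \alpha_{\xi,p,n}(A)+\delta\sum_i a_i$ and similarly for the double-primes. The upper-bound halves of the stabilizations, together with the still-valid original inequality on the refined tree, force $b_i\leqslant (a_i'+\delta)+(a_i''+\delta)$, so $\sum_i a_ib_i\leqslant \alpha_{\xi,p,n}(A)+\alpha_{\xi,p,n}(B)+O(\delta)\sum_i a_i$; since the H\"{o}lder bound $\sum_i a_i\leqslant n^{1/q}\|(a_i)\|_{\ell_p^n}\leqslant n^{1/q}$ is uniform in $\delta$, sending $\delta\to 0^+$ contradicts the strict inequality. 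The delicate aspect here is precisely this coordination of the two stabilizations so that the level-wise upper and lower bounds combine additively against the $A+B$ inequality.

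With the ideal-norm axioms in hand, the ideal structure of $\mathfrak{A}_{\xi,p}$ is routine: the triangle inequality gives vector subspace closure, submultiplicativity yields $ABC$-closure, and $I_\mathbb{K}\in\mathfrak{A}_{\xi,p}$ because $B_\mathbb{K}$ is norm-compact. For completeness, given an $\mathfrak{a}_{\xi,p}$-Cauchy sequence $(A_m)$, the bound $\|\cdot\|\leqslant \mathfrak{a}_{\xi,p}$ produces an operator-norm limit $A$. Theorem \ref{big theorem}(iii) supplies the continuity estimate $\alpha_{\xi,p,n}(T-S)\leqslant n^{1-1/p}\|T-S\|$, rendering each $\alpha_{\xi,p,n}$ operator-norm continuous; fixing $\varepsilon>0$ and $N$ with $\mathfrak{a}_{\xi,p}(A_m-A_k)<\varepsilon$ for all $m,k\geqslant N$, letting $k\to\infty$ for each fixed $n$ yields $\alpha_{\xi,p,n}(A_m-A)\leqslant \varepsilon$ for every $n$ and every $m\geqslant N$, whence $\alpha_{\xi,p}(A_m-A)\leqslant \varepsilon$. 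This simultaneously gives $A-A_N\in\mathfrak{A}_{\xi,p}$ (so $A\in\mathfrak{A}_{\xi,p}$) and $\mathfrak{a}_{\xi,p}(A_m-A)\to 0$.
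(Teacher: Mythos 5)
Your proposal is correct, but the heart of it --- the triangle inequality for $\alpha_{\xi,p,n}$ --- is handled quite differently from the paper. The paper's argument is a short ``eventual sets'' trick: for $\alpha_A>\alpha_{\xi,p,n}(A)$ and $\alpha_B>\alpha_{\xi,p,n}(B)$, the sets $\{t:\|A\sum_i a_iz_i^t\|\leqslant\alpha_A\}$ and $\{t:\|B\sum_i a_iz_i^t\|\leqslant\alpha_B\}$ are both eventual (their complements cannot be cofinal by Theorem \ref{big theorem}$(i)$), hence they meet, and at any common point the estimate $\|(A+B)\sum_i a_iz_i^s\|\leqslant\alpha_A+\alpha_B$ holds; this bounds the infimum directly. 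Your route instead runs the argument by contradiction through a double application of Lemma \ref{stabilize}$(iii)$, stabilizing $h_A$ and $h_B$ separately and then playing the level-wise constants $a_i'$, $a_i''$ against the witnessing data for $A+B$. This works --- the composition of level maps and extensions preserves the lower-bound data, the additivity $h_{A+B}=h_A+h_B$ gives $b_i\leqslant a_i'+a_i''+2\delta$, and the bound $\sum_i a_i\max(0,a_i'-\delta)\leqslant\alpha_{\xi,p,n}(A)$ follows from Theorem \ref{big theorem}$(i)$ after zeroing out the levels where $a_i'-\delta<0$ (the same device used in the paper's own proof of Theorem \ref{big theorem}$(i)$; you should say this explicitly, since otherwise the hypothesis $b_i\geqslant 0$ of that theorem is not met). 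What you lose is economy: the paper's version needs only one appeal to cofinality and no stabilization at all. What you gain is that your argument makes the level-wise decomposition of the duality pairing explicit, and you also verify the rank-one normalization $\mathfrak{a}_{\xi,p}(x^*\otimes y)=\|x^*\|\|y\|$, which the paper leaves implicit. Your completeness and submultiplicativity arguments coincide with the paper's up to notation.
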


\begin{proof} Fix Banach spaces $X,Y$. We first show that $\alpha_{\xi,p}$ satisfies the triangle inequality. Fix $A,B:X\to Y$, a directed set $D$,  a weakly null collection $(x_t)_{t\in \Gamma_{\xi,n}.D}\subset B_X$, and a scalar sequence $(a_i)_{i=1}^n\in B_{\ell_p^n}$.  Fix $\alpha_A>\alpha_{\xi,p,n}(A)$ and $\alpha_B>\alpha_{\xi,p,n}(B)$. Then the sets $$\mathcal{E}_A=\{t\in MAX(\Gamma_{\xi,n}.D): \|A\sum_{i=1}^n a_iz_i^t\|\leqslant \alpha_A\}$$ and $$\mathcal{E}_B=\{t\in MAX(\Gamma_{\xi,n}.D): \|B\sum_{i=1}^n a_iz_i^t\|\leqslant\alpha_B\}$$ are eventual. From this it follows that $\mathcal{E}_A\cap \mathcal{E}_B\neq \varnothing$.  Then for any $s\in \mathcal{E}_A\cap \mathcal{E}_B$, $$\inf_{t\in MAX(\Gamma_{\xi,n}.D)} \|(A+B)\sum_{i=1}^n z_i^t\|\leqslant \|(A+B)\sum_{i=1}^n a_i z_i^s\|\leqslant \alpha_A+\alpha_B.$$  From this we deduce the triangle inequality for $\alpha_{\xi,n}$, and therefore for $\mathfrak{a}_{\xi,n}$.  From here it is easy to see that $\mathfrak{a}_{\xi,p}$ is a norm on $\mathfrak{L}(X,Y)$.

Now fix a Banach space $Z$ and fix $A:X\to Y$. For $B:Y\to Z$, a directed set $D$, $n\in \nn$, a scalar sequence $(a_i)_{i=1}^n\in B_{\ell_p^n}$, and a weakly null collection $(x_t)_{t\in \Gamma_{\xi,n}.D}\subset B_X$, $$\inf_{t\in MAX(\Gamma_{\xi,n}.D)} \|BA\sum_{i=1}^n a_i z^t_i\|\leqslant \|B\|\inf_{t\in MAX(\Gamma_{\xi,n})} \|A\sum_{i=1}^n z_i^t\|\leqslant \|B\|\alpha_{\xi,p,n}(A).$$  From this it follows that $\mathfrak{a}_{\xi,p}(BA)\leqslant \|B\|\mathfrak{a}_{\xi,p}(A)$.  For $B:Z\to X$,  a directed set $D$, $n\in \nn$, a scalar sequence $(a_i)_{i=1}^n\in B_{\ell_p^n}$, and a weakly null collection $(x_t)_{t\in \Gamma_{\xi,n}.D}\subset B_X$, \begin{align*} \inf_{t\in MAX(\Gamma_{\xi,n}.D)} \|AB\sum_{i=1}^n z^t_i\| &  \leqslant \inf_{b>\|B\|} b\inf_{t\in MAX(\Gamma_{\xi,n}.D)} \|A\sum_{i=1}^n b^{-1}Bz^t_i\| \leqslant \inf_{b>\|B\|} b\alpha_{\xi,p,n}(A) \\ & = \|B\|\alpha_{\xi,p,n}(A).\end{align*} From this it follows that $\mathfrak{a}_{\xi,p}(AB)\leqslant \|B\|\mathfrak{a}_{\xi,p}$. 

Since $\alpha_{\xi,p}(A)=0$ whenever $A$ is a compact operator, we  deduce that $\mathfrak{A}_{\xi,p}$ is an operator ideal and $\mathfrak{a}_{\xi,p}$ is an ideal norm. 

We last show completeness.  Fix a sequence $(A_k)_{k=1}^\infty$ of $\mathfrak{a}_{\xi,p}$-Cauchy operators from $X$ to $Y$. This is also a norm Cauchy sequence, which must converge in norm to some $A:X\to Y$.  We note that since $\alpha_{\xi,p,n}$ is Lipschitz continuous on $\mathfrak{L}(X,Y)$, \begin{align*} \alpha_{\xi,p}(A) &   =\underset{n}{\sup} \alpha_{\xi,p,n}(A) =  \underset{n}{\ \sup\ } \underset{k}{\ \lim\ } \alpha_{\xi,p,n}(A_k) \leqslant \underset{n}{\ \sup\ }\underset{k}{\ \lim\ }\alpha_{\xi,p}(A_k)= \underset{k}{\ \lim\sup\ }\alpha_{\xi,p}(A_k)<\infty\end{align*} and \begin{align*} \underset{l}{\ \lim\sup\ }\mathfrak{a}_{\xi,p}(A-A_l) & =\underset{l}{\ \lim\sup\ }\alpha_{\xi,p}(A-A_l)  = \underset{l}{\ \lim\sup\ }\underset{n}{\ \sup\ }\underset{k}{\ \lim\sup\ }\alpha_{\xi,p,n}(A_k-A_l) \\ & \leqslant \underset{l}{\ \lim\sup\ }\underset{n}{\ \sup\ }\underset{k}{\ \lim\sup\ }\alpha_{\xi,p}(A_k-A_l) \\ &  = \underset{l}{\ \lim\sup\ }\underset{k}{\ \lim\sup\ }\alpha_{\xi,p}(A_k-A_l)=0 .\end{align*}

\end{proof}

\begin{rem}\upshape We have already shown that $\mathfrak{D}_\xi\subset \mathfrak{A}_{\xi,p}\subset \mathfrak{D}_{\xi+1}$.  It is contained in Section $7$ of \cite{C3} that each of these inclusions is proper.  Furthermore, it is contained in Section $7$ of \cite{C3} that for any ordinals $\xi,\zeta$ and any $1<p,q\leqslant \infty$, $\mathfrak{A}_{\xi, p}\neq \mathfrak{A}_{\zeta,q}$ unless $(\xi, p)=(\zeta, q)$,  if $\xi<\zeta$ or if $\xi=\zeta$ and $p>q$, $\mathfrak{A}_{\xi,p}\subset \mathfrak{A}_{\zeta, q}$, and $\mathfrak{A}_{\xi,q}\subset \mathfrak{T}_{\xi,p}\subset \mathfrak{A}_{\xi,p}$.

\end{rem}

\begin{rem}\upshape It is easy to see that the class of $\xi$-asymptotically uniformly flattenable operators, denoted by $\mathfrak{T}_{\xi,\infty}$ in \cite{AUF}, is contained in $\mathfrak{A}_{\xi,p}$ for every $1<p\leqslant \infty$. It was shown there in \cite[Proposition $6.5$]{AUF} that for any ordinal $\xi$, there exists a Banach space $X$ such that  $\overline{\mathfrak{T}_{\xi,\infty}(X,X)}$ contains an operator which cannot be renormed to be $\xi$-$p$-AUS for any $1<p<\infty$. This example also shows that none of the classes $\mathfrak{A}_{\xi,p}$ is closed.

\end{rem}

Brooker \cite{Br} showed that for any ordinal $\xi$, the class of all operators with Szlenk index not more than $\omega^\xi$ is a closed operator ideal.  An argument similar to that of Theorem \ref{ideal} allows us to provide another proof of this fact which is  dual to Brooker's proof.     

\begin{theorem} For any ordinal $\xi$, the class $\mathfrak{D}_\xi$ of operators with Szlenk index not exceeding $\omega^\xi$ is a closed operator ideal. 

\label{martin}
\end{theorem}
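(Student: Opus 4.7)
The plan is to realize $\mathfrak{D}_\xi$ as the kernel of the single-index quantity $\alpha_{\xi,p,1}$ on $\mathfrak{L}(X,Y)$ for any fixed $1<p\leqslant \infty$, and then deduce the ideal and closure properties from properties of $\alpha_{\xi,p,1}$ which are already implicit in the proofs of Theorems \ref{big theorem} and \ref{ideal}. The key starting point is Theorem \ref{big theorem}(ii), which gives the characterization $A\in \mathfrak{D}_\xi\iff \alpha_{\xi,p,1}(A)=0$, independently of $p$.

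Next I would observe that the triangle inequality and the submultiplicative bounds established in the proof of Theorem \ref{ideal} for $\alpha_{\xi,p,n}$ are purely pointwise statements that never use finiteness of $\alpha_{\xi,p}(A)=\sup_n \alpha_{\xi,p,n}(A)$; they apply to each $\alpha_{\xi,p,n}$ on all of $\mathfrak{L}(X,Y)$, and in particular to $n=1$. Using them on operators with $\alpha_{\xi,p,1}=0$: if $A,B\in \mathfrak{D}_\xi$ then $\alpha_{\xi,p,1}(A+B)\leqslant \alpha_{\xi,p,1}(A)+\alpha_{\xi,p,1}(B)=0$, so $A+B\in \mathfrak{D}_\xi$; scalar multiples are immediate from the positive homogeneity built into the definition. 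The ideal property follows from $\alpha_{\xi,p,1}(ABC)\leqslant \|A\|\alpha_{\xi,p,1}(B)\|C\|$, applied to $C\in \mathfrak{L}(W,X)$, $B\in \mathfrak{D}_\xi(X,Y)$, $A\in \mathfrak{L}(Y,Z)$. Finally $I_\mathbb{K}$ is rank one, hence compact, so $Sz(I_\mathbb{K})=1\leqslant \omega^\xi$.

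For closedness, the crucial ingredient is Theorem \ref{big theorem}(iii) specialized to $n=1$, which gives $\alpha_{\xi,p,1}(A)\leqslant \|A\|$ (since $1^{1-1/p}=1$). Combined with the triangle inequality just invoked, this yields the $1$-Lipschitz estimate
\[
|\alpha_{\xi,p,1}(A)-\alpha_{\xi,p,1}(B)|\leqslant \alpha_{\xi,p,1}(A-B)\leqslant \|A-B\|
\]
on $\mathfrak{L}(X,Y)$. Hence if $(A_k)\subset \mathfrak{D}_\xi(X,Y)$ converges in norm to $A$, then $\alpha_{\xi,p,1}(A_k)=0$ for every $k$ forces $\alpha_{\xi,p,1}(A)=0$, and Theorem \ref{big theorem}(ii) gives $A\in \mathfrak{D}_\xi$.

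There is essentially no serious obstacle: the only thing to watch is that the triangle inequality and the two composition bounds from the proof of Theorem \ref{ideal} really are pointwise for each fixed $n$ and do not require $A$ or $B$ to have $\xi$-$q$-summable Szlenk index; this just needs to be spelled out in one line. Everything else is an application of the corresponding statement at the level of $\alpha_{\xi,p,1}$.
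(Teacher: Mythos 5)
Your proposal is correct and follows essentially the same route as the paper: the paper realizes $\mathfrak{D}_\xi$ as the kernel of the $1$-Lipschitz ideal seminorm $\theta_{\xi,1}$ (using Theorem \ref{big theorem 2} and the arguments of Theorem \ref{ideal}), and since $\theta_{\xi,1}=\alpha_{\xi,p,1}$ for every $p$, your argument via $\alpha_{\xi,p,1}$ is the same proof in different notation.
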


\begin{proof} Arguing as in Theorem \ref{ideal}, one proves that $\theta_{\xi,1}$ defines a seminorm on $\mathfrak{L}(X,Y)$ for each pair $X,Y$ of Banach spaces. Furthermore, $\theta_{\xi,1}(A)=0$ for all compact operators and $\theta_{\xi,1}(ABC)\leqslant \|A\|\theta_{\xi,1}(B)\|C\|$ for any Banach spaces $W,Z$ and any operators $C:W\to X$, $B:X\to Y$, and $A:Y\to Z$. Last, by Theorem \ref{big theorem 2}$(ii)$, $\theta_{\xi,1}$ is $1$-Lipschitz.  From these facts, it follows that the class of all operators $A$ with $\theta_{\xi,1}(A)=0$ is a closed operator ideal. But by Theorem \ref{big theorem 2}$(i)$, this is precisely the class of operators with Szlenk index not exceeding $\omega^\xi$.

\end{proof}

\begin{proposition} For any ordinal $\xi$ and any $1\leqslant p\leqslant \infty$, $\mathfrak{A}_{\xi,p}$ and $\mathfrak{D}_\xi$ are injective and surjective.

\label{ell1}

\end{proposition}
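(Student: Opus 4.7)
The plan is to reduce injectivity and surjectivity of both ideals to concrete transformation properties of the weak$^*$-compact set $A^*B_{Y^*}\subset X^*$ under the two natural dual operations. The tool that makes this work is that both membership properties are determined by this set: $A\in \mathfrak{D}_\xi$ iff $Sz(A^*B_{Y^*})\leqslant \omega^\xi$, and by Theorem \ref{curse}, $A\in \mathfrak{A}_{\xi,p}$ iff $A^*B_{Y^*}$ has $\xi$-$q$-summable Szlenk index, where $1/p+1/q=1$ (the case $p=1$ is trivial, as $\alpha_{\xi,1,n}(K)\leqslant R$ forces $\mathfrak{A}_{\xi,1}=\mathfrak{L}$). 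Crucially, both conditions are invariant under rescaling the set by a positive constant.

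For injectivity, given an isomorphic embedding $j:Y\to Z$ with $jA\in \mathfrak{I}$, first renorm $Z$ by an equivalent norm making $j$ isometric. This does not disturb membership in $\mathfrak{D}_\xi$ or $\mathfrak{A}_{\xi,p}$ because the old and new images $(jA)^*B_{Z^*}$ differ by bounded multiplicative factors, and the relevant invariants are scale-stable. With $j$ isometric, $j^*B_{Z^*}=B_{Y^*}$, so
\[
(jA)^*B_{Z^*} = A^*j^*B_{Z^*} = A^*B_{Y^*}
\]
literally as subsets of $X^*$, and the conclusion $A\in \mathfrak{I}$ is immediate.

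For surjectivity, given a quotient $q:W\to X$ with $Aq\in \mathfrak{I}$, observe that $q^*:X^*\to W^*$ is norm-isometric and weak$^*$-continuous. The key technical step is the commutation identity
\[
s_\epsilon^\zeta(q^*K) = q^*\bigl(s_\epsilon^\zeta(K)\bigr)
\]
for every weak$^*$-compact $K\subset X^*$, every $\epsilon>0$, and every ordinal $\zeta$, which I would prove by transfinite induction. The base case $\zeta=1$ uses that $q^*|_K:K\to q^*K$ is a continuous bijection between weak$^*$-compact Hausdorff spaces, hence a weak$^*$-homeomorphism that moreover preserves norms; thus if $V'$ is a weak$^*$-neighborhood of $q^*x^*$ in $W^*$, then $V:=(q^*)^{-1}(V')$ is weak$^*$-open in $X^*$, $V'\cap q^*K = q^*(V\cap K)$, and diameters agree, so the defining conditions for $s_\epsilon$ match on both sides. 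Successor steps reduce to the base case applied to the weak$^*$-compact $s_\epsilon^\zeta(K)$, and limit steps use injectivity of $q^*$. Iterating this through compositions $s^{\omega^\xi}_{\epsilon_1}\cdots s^{\omega^\xi}_{\epsilon_n}$ applied to $K=A^*B_{Y^*}$, and noting $(Aq)^*B_{Y^*}=q^*A^*B_{Y^*}$, we see that $A^*B_{Y^*}$ has Szlenk index at most $\omega^\xi$ (resp. $\xi$-$q$-summable Szlenk index with the same constant) iff $(Aq)^*B_{Y^*}$ does, giving $A\in \mathfrak{I}$.

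The main obstacle is the commutation identity, because weak$^*$-neighborhoods of $q^*x^*$ in $W^*$ need not be pushforwards of weak$^*$-neighborhoods of $x^*$ in $X^*$, and $q^*X^*$ sits only as a weak$^*$-closed subspace of $W^*$. The resolution is exactly to restrict attention to the weak$^*$-compact set $K$, where the continuous bijection $q^*|_K$ becomes a weak$^*$-homeomorphism and the norm-preserving property of $q^*$ lets the Szlenk derivation transport cleanly through the map.
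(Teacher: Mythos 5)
Your argument is correct, but it runs in the opposite direction from the paper's. You work entirely on the dual side: since $q^*$ is a weak$^*$-continuous linear isometry that is injective, its restriction to a weak$^*$-compact $K$ is a weak$^*$-homeomorphism onto $q^*K$ preserving norm distances, so the commutation identity $s^\zeta_\ee(q^*K)=q^*(s^\zeta_\ee(K))$ does hold by the transfinite induction you describe (the relative-topology reformulation of $s_\ee$ plus compactness handles the neighborhood-matching obstacle you flag, and injectivity of $q^*$ handles limit stages). Combined with $(Aq)^*B_{Y^*}=q^*A^*B_{Y^*}$ and $(jA)^*B_{Z^*}=A^*B_{Y^*}$ for isometric $j$, this transports the Szlenk index and the $\xi$-$q$-summability constant exactly, and Theorem \ref{curse}(ii) then converts this into membership in $\mathfrak{A}_{\xi,p}$; the reduction of the isomorphic case to the isometric one is justified by monotonicity of $s_\ee$ under inclusion together with the scaling identity $s_\ee(cK)=c\,s_{\ee/c}(K)$, which you should state explicitly rather than appeal to ``scale-stability.'' The paper instead stays on the primal side: injectivity is immediate because the seminorms are computed from $\|Ax\|$, and surjectivity is obtained from a lifting lemma (Proposition \ref{tech}) producing, for any weakly null tree in $B_X$, a weakly null lift in $bB_W$ with $b>2$, which yields the quantitative estimates $\alpha_{\xi,p,n}(A)\leqslant 2\alpha_{\xi,p,n}(Aq)$ and $\theta_{\xi,n}(A)\leqslant 2\theta_{\xi,n}(Aq)$. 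Your route is essentially Brooker's original derivation-based method, which the paper deliberately avoids in favor of tree arguments; it gives exact preservation of the dual derivation data but only membership in the ideals, whereas the paper's lifting argument gives direct two-sided control of the ideal seminorms themselves, which is the more useful statement if one cares about the ideal norm $\mathfrak{a}_{\xi,p}$ and not just the underlying class.
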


We need the following easy piece. 

\begin{proposition} Suppose $q:W\to X$ is a quotient map, $(x_\lambda)_{\lambda\in D_1}\subset B_X$ is weakly null, and $v$ is a weak neighborhood of $0$ in $W$.  Then for any $b>2$, there exist $\lambda\in D_1$ and  $w\in b B_W\cap v$ such that $qw=x_\lambda$. 

\label{tech}
\end{proposition}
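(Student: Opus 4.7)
The plan is to lift each $x_\lambda$ to a vector $\tilde w_\lambda\in W$ of norm slightly more than $1$ via the quotient property, and then correct $\tilde w_\lambda$ by subtracting an element $z$ of $Z:=\ker q$ of norm also not much more than $1$, chosen so that $\tilde w_\lambda-z$ lies in the prescribed weak neighborhood $v$ of $0$. The hypothesis $b>2$ is exactly what allows these two near-unit-ball contributions to the norm to fit. Concretely, I fix $\ee>0$ with $2(1+\ee)<b$ and, using that $q$ is a quotient map, pick $\tilde w_\lambda\in(1+\ee)B_W$ with $q\tilde w_\lambda=x_\lambda$ for each $\lambda\in D_1$.

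By Banach--Alaoglu, the bounded net $(\tilde w_\lambda)\subset(1+\ee)B_W\subset W^{**}$ has a weak$^*$ cluster point, so I pass to a subnet and assume $\tilde w_\lambda\to w^{**}$ in $(W^{**},\text{weak}^*)$ with $\|w^{**}\|\leqslant 1+\ee$. Since $q^{**}:W^{**}\to X^{**}$ is weak$^*$-to-weak$^*$ continuous and $q\tilde w_\lambda=x_\lambda\to 0$ weakly in $X$ (hence weak$^*$ in $X^{**}$), we get $q^{**}w^{**}=0$. Taking biduals in the short exact sequence $0\to Z\to W\to X\to 0$ identifies $\ker q^{**}$ with $Z^{**}$, and since $Z\hookrightarrow W$ is isometric the weak$^*$ topology on $Z^{**}$ coincides with the restriction of the weak$^*$ topology of $W^{**}$ to $Z^{**}$. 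Thus $w^{**}\in(1+\ee)B_{Z^{**}}$, and Goldstine's theorem produces a net $(z_\mu)\subset(1+\ee)B_Z$ with $z_\mu\to w^{**}$ in $\sigma(Z^{**},Z^*)$; equivalently, $f(z_\mu)\to w^{**}(f)$ for every $f\in W^*$.

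To finish, write $v\supseteq\{u\in W:|f_i(u)|<\eta,\ 1\leqslant i\leqslant n\}$ for some $\eta>0$ and $f_1,\ldots,f_n\in W^*$. First pick $\mu$ with $|f_i(z_\mu)-w^{**}(f_i)|<\eta/2$ for every $i$, and then, along the subnet, pick $\lambda\in D_1$ with $|f_i(\tilde w_\lambda)-w^{**}(f_i)|<\eta/2$ for every $i$. Setting $w:=\tilde w_\lambda-z_\mu$ gives $qw=q\tilde w_\lambda=x_\lambda$ (since $z_\mu\in\ker q$), $\|w\|\leqslant 2(1+\ee)<b$, and $|f_i(w)|<\eta$ for every $i$, so $w\in v\cap bB_W$ as required. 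The one mildly technical point is the identification $\ker q^{**}=Z^{**}$ together with the compatibility of the two weak$^*$ topologies, which is the standard bidualization of $0\to Z\to W\to X\to 0$; everything else is a short combination of Banach--Alaoglu, Goldstine's theorem, and elementary net manipulation.
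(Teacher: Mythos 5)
Your proof is correct. The overall strategy coincides with the paper's -- lift each $x_\lambda$ to $(1+\ee)B_W$, extract a weak$^*$-convergent subnet in $W^{**}$, and then subtract a correcting vector of norm at most $1+\ee$ lying (essentially) in $\ker q$ so that the difference lands in $v$ -- but the mechanism you use to produce that correcting vector is genuinely different. The paper stays at the level of $W$ and $X$: it uses Mazur's theorem to find a convex combination $x=\sum_{\lambda\in F}a_\lambda x_\lambda$ with $\|x\|<\delta$, lifts $x$ to a vector $u$ of norm less than $\delta$, and takes the correction to be $\sum_{\lambda\in F}a_\lambda u_\lambda-u$, with membership in $\frac12 v$ guaranteed by stabilizing the finitely many functionals defining $v$ along the subnet and using convexity. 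You instead identify the weak$^*$ limit $w^{**}$ as an element of $\ker q^{**}=Z^{\perp\perp}\cong Z^{**}$ (using $q^*X^*=Z^\perp$ and the weak$^*$-to-weak$^*$ continuity of $q^{**}$) and invoke Goldstine's theorem to approximate it from $(1+\ee)B_Z$ in the relevant weak$^*$ topology; the compatibility of $\sigma(Z^{**},Z^*)$ with the restriction of $\sigma(W^{**},W^*)$ to $Z^{\perp\perp}$, which you correctly flag as the one technical point, is exactly what makes the final triangle-inequality estimate against $f_1,\dots,f_n$ work. Your route buys a conceptually clean picture (the limit lives in the bidual of the kernel, so it can be approximated by actual kernel elements) at the cost of the bidualization of the exact sequence; the paper's route is more elementary, needing only Mazur and the metric lifting property of quotient maps, and for that reason is slightly more self-contained. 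Both give the same bound $2(1+\ee)<b$, which is why the hypothesis $b>2$ appears.
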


\begin{proof}  By replacing $v$ with a subset, we may assume $v$ is convex and symmetric. Fix $b>2$ and $\delta>0$ such that $\delta B_W\subset \frac{1}{2} v$ and $2+3\delta<b$. For each $\lambda$, fix $u_\lambda \in (1+\delta)B_W$ such that $qu_\lambda= x_\lambda$. By passing to a subnet, we may assume that $(u_\lambda)_{\lambda\in D_1}$ is weak$^*$-convergent to some $u^{**}\in (1+\delta)B_{W^{**}}$, and that $u_{\lambda_1}-u_{\lambda_2}\in \frac{1}{2}v$ for all $\lambda_1, \lambda_2\in D_1$.    Since $(x_\lambda)_{\lambda\in D_1}$ is weakly null, there exist a finite set $F\subset D_1$ and $x=\sum_{\lambda\in F}a_\lambda x_\lambda \in \text{co}(x_\lambda:\lambda\in F)$ such that $\|x\|<\delta$.   We may fix $u\in \delta B_W\subset \frac{1}{2}v$ such that $qu=x$.    Then let $w=u_{\lambda_1}-\sum_{\lambda\in F} a_\lambda u_\lambda + u$.   Then $$\|w\|\leqslant 1+\delta +\sum_{\lambda\in F} a_\lambda(1+\delta)+\delta=2+3\delta <b,$$ $$qw=x_{\lambda_1}-x+x=x_{\lambda_1},$$ and $$w=\sum_{\lambda\in F} a_\lambda( u_{\lambda_1}-u_\lambda)+w\in \frac{1}{2}v+\frac{1}{2}v=v.$$

\end{proof}

\begin{proof}[Proof of Proposition \ref{ell1}] Injectivity is easy, since  for any operator $A:X\to Y$ and any isometric isomorphism $j:Y\to Z$, clearly $\alpha_{\xi,p,n}(jA)=\alpha_{\xi, p,n}(A)$ and $\theta_{\xi,n}(jA)=\theta_{\xi,n}(A)$ for all ordinals $\xi$, all $1\leqslant p\leqslant \infty$, and all $n\in\nn$.

For surjectivity, we will show that if $A:X\to Y$ is an operator and $q:W\to X$ is a quotient map, then $\alpha_{\xi,p, n}(A)\leqslant 2 \alpha_{\xi, p,n}(Aq)$, and a similar argument will yield that $\theta_{\xi,n}(A)\leqslant 2 \theta_{\xi, n}(Aq)$ for any ordinal $\xi$, any $1\leqslant p\leqslant \infty$, and any $n\in \nn$. Fix $\alpha<\alpha_{\xi, p,n}(A)$, a directed set $D$, $(b_i)_{i=1}^n\in S_{\ell_p^n}$, and a weakly null collection $(x_t)_{t\in \Gamma_{\xi,n}.D}\subset B_X$ such that $\inf_{t\in MAX(\Gamma_{\xi,n}.D)} \|A\sum_{i=1}^n b_i z^t_i\|>\alpha$.  Fix $b>2$ such that $\inf_{t\in MAX(\Gamma_{\xi,n}.D)} \|A\sum_{i=1}^n b_i b^{-1}z^t_i\|>\alpha/2$.  Let $D_1$ be any weak neighborhood basis at $0$ in $W$.  We may now recursively apply Proposition \ref{tech} to deduce the existence of some collection $(w_t)_{t\in \Gamma_{\xi,n}.D_1}\subset b B_W$ and a length-preserving monotone map $\phi:\Gamma_{\xi,n}.D_1\to \Gamma_{\xi,n}.D$ such that \begin{enumerate}[(i)]\item if $\phi((\zeta_i, v_i)_{i=1}^k)=(\eta_i, u_i)_{i=1}^k$, then $\zeta_i=\eta_i$ for all $1\leqslant i\leqslant k$, \item $qw_t= x_{\phi(t)}$ for all $t\in \Gamma_{\xi, n}.D_1$. \end{enumerate} Indeed, if $t=s\smallfrown (\zeta, v)\in \Gamma_{\xi,n}.D_1$ and either $s=\varnothing$ or $\phi(s)$ has been defined to have property (i), we apply Proposition \ref{tech} to the weakly null net $(x_{\phi(s)\smallfrown (\zeta,u)})_{u\in D}$ to deduce the existence of some $u_1\in D$ and $w_t\in b B_W\cap v$ such that $qw_t= x_{\phi(s)\smallfrown (\zeta, u_1)}$. We then let $\phi(t)=\phi(s)\smallfrown (\zeta, u_1)$.   

Note that for any $t\in MAX(\Gamma_{\xi,n}.D_1)$ and any $1\leqslant i\leqslant |t|$, $\phi(t|_i)=\phi(t)|_i$ and $\mathbb{P}_{\xi,n}(\phi(t|_i))=\mathbb{P}_{\xi,n}(t|_i)$, whence $$q\sum_{i=1}^n b_i \sum_{t\geqslant s\in \Lambda_{\xi,n,i}.D_1} \mathbb{P}_{\xi,n}(s) b^{-1} w_s= \sum_{i=1}^n b_i \sum_{\phi(t)\geqslant s\in \Lambda_{\xi,n,i}.D} \mathbb{P}_{\xi,n}(s) b^{-1} x_s$$ and \begin{align*} \inf_{t\in MAX(\Gamma_{\xi,n}.D_1)} & \|Aq \sum_{i=1}^n b_i \sum_{t\geqslant s\in \Lambda_{\xi,n,i}.D_1} \mathbb{P}_{\xi,n}(s) b^{-1} w_s\| \\ & = \inf_{t\in MAX(\Gamma_{\xi,n}.D)} \|A\sum_{i=1}^n b_i \sum_{\phi(t)\geqslant s\in \Lambda_{\xi,n,i}.D} \mathbb{P}_{\xi,n}(s) b^{-1} x_s\|>\alpha/2 .\end{align*}     Since $(b^{-1}w_t)_{t\in \Gamma_{\xi,n}.D_1}$ is weakly null and $\alpha<\alpha_{\xi, p,n}(A)$ was arbitrary, we are done.

\end{proof}

\begin{rem}\upshape Note that if $u$ is any weak neighborhood of $0$ in $\ell_1$ and if $m\in \nn$, there exist $m<i<j$, $i,j\in\nn$ such that $\frac{1}{2}(e_i-e_j)\in u$.    From this we can easily deduce that $\theta_{\xi,n}(\ell_1)= 1$ for all ordinals $\xi$ and all $n\in \nn$, and $\alpha_{\xi,p,n}(\ell_1)= n^{1-1/p}$ for all ordinals $\xi$, all $1\leqslant p\leqslant \infty$, and all $n\in \nn$.  Indeed, for any weak neighborhood basis $D$ at $0$ in $\ell_1$, we may recursively define for $t\in \Gamma_{\xi,n}.D$ a vector $x_t= \frac{1}{2}(e_{i_t}-e_{j_t})$ such that for all $t\in \Gamma_{\xi,n}.D$, $i_t<j_t$, if $s,t\in \Gamma_{\xi,n}.D$ and $s<t$, $j_s<i_t$, and if $t=(\zeta_i, u_i)_{i=1}^n$, $x_t\in u_n$.  Then $(x_t)_{t\in \Gamma_{\xi,n}}\subset B_{\ell_1}$ is weakly null and $\|\sum_{i=1}^n a_i z^t_i\|_{\ell_1}= \sum_{i=1}^n |a_i|$ for all scalar sequences $(a_i)_{i=1}^n$.

From this, one can also deduce that if $A:X\to Y$ is an operator,  $Z$ is a subspace of $X$ which isomorphic to $\ell_1$, and if $A|_Z$ is an isomorphic embedding, then $\inf_{\xi\in \textbf{Ord}} \theta_{\xi, 1}(A)>0$. Indeed, we first fix isomorphisms $\alpha:\ell_1\to Z$, $\beta:A(Z)\to \ell_1$ such that $I_{\ell_1}= \beta A \alpha$, so that $\theta_{\xi, 1}(A|_Z:Z\to A(Y)) \geqslant \theta_{\xi,1}(I_{\ell_1})/\|\alpha\|\|\beta\| = 1/\|\alpha\|\|\beta\|$ by the ideal property. Now if $j:A(Z)\to Y$ is the inclusion, it is clear that $$\theta_{\xi,1}(A:X\to Y)\geqslant \theta_{\xi,1}(A|_Z:Z\to Y)= \theta_{\xi,1}(A|_Z:Z\to A(Z)).$$

\label{nonasp}

\end{rem}

\section{Applications of Corollary \ref{convex}}

In this section, we prove an embedding result together with a result concerning injective tensor products.

\begin{corollary} Fix a countable ordinal $\xi$ and $1\leqslant q<\infty$. Let $X$ be a separable Banach space. \begin{enumerate}[(i)]\item If $\textbf{\emph{p}}_\xi(X) \leqslant q$, then there exists a Banach space $W$ with a shrinking basis such that $\textbf{\emph{p}}_\xi(W)\leqslant q$ and $X$ embeds isomorphically into $W$. \item If $X$ has $\xi$-$q$-summable Szlenk index, then there exists a Banach space $W$ with shrinking basis having $\xi$-$q$-summable Szlenk index such that $X$ embeds isomorphically into $W$. \end{enumerate}

\end{corollary}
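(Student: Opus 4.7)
The plan is to deduce both parts directly from Theorem~\ref{schlumprecht}, using Theorem~\ref{curse} to translate between $\xi$-$q$-summable Szlenk index and finiteness of $\alpha_{\xi,p}$.

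First I would observe that in either hypothesis $X^*$ is separable, which is exactly what is needed to invoke Theorem~\ref{schlumprecht}. In case~(i), $\textbf{p}_\xi(X)\leqslant q<\infty$, and by the convention $\textbf{p}_\xi(K)=\infty$ whenever $Sz(K)>\omega^{\xi+1}$, this forces $Sz(X)\leqslant \omega^{\xi+1}$. Since $\xi$ is countable, $\omega^{\xi+1}<\omega_1$, so $X$ is Asplund and $X^*$ is separable. In case~(ii), the remark preceding Proposition~\ref{convex body1} shows that $\xi$-$q$-summable Szlenk index implies $\textbf{p}_\xi(X)\leqslant q$, and then the argument just given again yields separability of $X^*$.

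Now I would apply Theorem~\ref{schlumprecht} to produce a Banach space $W$ with a shrinking basis containing $X$ isomorphically, with $\textbf{p}_\xi(W)=\textbf{p}_\xi(X)$ and with $\alpha_{\xi,p}(W)<\infty$ whenever $\alpha_{\xi,p}(X)<\infty$, for every $1\leqslant p\leqslant \infty$. Part~(i) is then immediate, since $\textbf{p}_\xi(W)=\textbf{p}_\xi(X)\leqslant q$. For part~(ii), let $p$ be the conjugate exponent of $q$ (so $p=\infty$ when $q=1$, and $1<p<\infty$ otherwise). By Theorem~\ref{curse}(ii), $X$ has $\xi$-$q$-summable Szlenk index if and only if $\alpha_{\xi,p}(X)<\infty$; Theorem~\ref{schlumprecht} then yields $\alpha_{\xi,p}(W)<\infty$, and a second application of Theorem~\ref{curse}(ii) gives that $W$ has $\xi$-$q$-summable Szlenk index.

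The proof is short because the substantive work has been absorbed into Theorems~\ref{curse} and~\ref{schlumprecht}; the only genuine check is that the hypothesis forces $X^*$ to be separable, which follows immediately from the fact that finite $\xi$-Szlenk power type bounds $Sz(X)$ by a countable ordinal. I do not foresee any step requiring a separate argument beyond this bookkeeping.
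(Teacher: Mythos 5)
Your reduction is logically clean \emph{given} Theorem \ref{schlumprecht}, and your preliminary check that the hypotheses force $X^*$ to be separable (via $Sz(X)\leqslant \omega^{\xi+1}<\omega_1$, so $X$ is a separable Asplund space) is correct and is indeed needed. The translation between $\xi$-$q$-summability and $\alpha_{\xi,p}(\cdot)<\infty$ via Theorem \ref{curse}(ii) is also the right mechanism for part (ii).

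However, the argument is circular within the structure of the paper. Theorem \ref{schlumprecht} is one of the announced results of the introduction, and this corollary (together with its proof in Section 6) is precisely where that announcement gets established; there is no independent proof of Theorem \ref{schlumprecht} elsewhere. So you cannot invoke it here — you have deferred all of the substantive work to a statement whose only justification is the statement you are trying to prove. What the paper actually does is the following. It invokes Schlumprecht's embedding theorem from \cite{S}: after renorming, there is a Banach space $Z$ with a shrinking FDD containing an isometric copy of $X$, a weak$^*$-compact set $B^*\subset B_{X^*}$, a set $\mathbb{B}^*\subset B_{Z^*}$ with $\overline{\text{abs co}}^{\text{weak}^*}(\mathbb{B}^*)=B_{Z^*}$, and a map $I^*$ intertwining the derivations so that $I^*(s^{\omega^\xi}_{\ee_1}\cdots s^{\omega^\xi}_{\ee_n}(\mathbb{B}^*))\subset s^{\omega^\xi}_{\ee_1/5}\cdots s^{\omega^\xi}_{\ee_n/5}(B^*)$. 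The property ($\textbf{p}_\xi\leqslant q$, resp. $\xi$-$q$-summability) then passes from $B_{X^*}$ to $B^*$ by inclusion, from $B^*$ to $\mathbb{B}^*$ by the derivation estimate together with Theorem \ref{curse}, and from $\mathbb{B}^*$ to $B_{Z^*}$ by Corollary \ref{convex} — this last step is the real point of the section and the reason the corollary is labeled an application of Corollary \ref{convex}. Finally, one must pass from a shrinking FDD to a shrinking basis via Pe\l czy\'{n}ski's technique, taking $W=Z\oplus(\oplus_n H_n)_{c_0}$ and checking that the $c_0$-sum of finite-dimensional pieces does not spoil the power type or the summability (using $\alpha_{\xi,p}(H)\leqslant \alpha_{0,\infty}(H)\leqslant 1$ and the ideal/direct-sum properties). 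None of these steps appears in your proposal, and they constitute the entire content of the proof.
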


\begin{proof} Schlumprecht \cite{S} proved that if $X^*$ is separable, after renorming $X$, there exist a weak$^*$-compact set $B^*\subset B_{X^*}$, a Banach space $Z$ with shrinking FDD which contains a subspace isometric to $X$,   a subset $\mathbb{B}^*$ of $B_{Z^*}$, and a map $I^*:\mathbb{B}^*\to B^*$ such that \begin{enumerate}[(i)]\item $\overline{\text{abs\ co}}^{\text{weak}^*}(\mathbb{B}^*)= B_{Z^*}$, \item for any $n\in \nn$ and any $\ee_1, \ldots, \ee_n>0$, $I^*(s^{\omega^\xi}_{\ee_1}\ldots s^{\omega^\xi}_{\ee_n} (\mathbb{B}^*))\subset s^{\omega^\xi}_{\ee_1/5}\ldots s^{\omega^\xi}_{\ee_n/5}(B^*)$. \end{enumerate} The construction of the space $Z$ is the main construction of \cite{S}, while the estimate in $(ii)$ follows from \cite[Lemma $5.5$]{S}. Then for a countable $\xi$ and $1\leqslant q<\infty$, either property $\textbf{p}_\xi(\cdot)\leqslant q$ or having $\xi$-$q$-summable Szlenk index passes from $B_{X^*}$ to $B^*$, since $B^*\subset B_{X^*}$, from $B^*$ to $\mathbb{B}^*$ by $(ii)$ above and Theorem \ref{curse}, and from $\mathbb{B}^*$ to $B_{Z^*}$ by Corollary \ref{convex}.

Now if $(F_n)_{n=1}^\infty$ is the FDD of $Z$, by a technique of Pe\l cz\'{n}ski \cite{Pelc},  for each $n\in \nn$ we may fix a finite dimensional space $H_n$ such that $F_n$ is $2$-complemented in $E_n=F_n\oplus H_n$ and $E_n$ has a basis with basis constant not exceeding $2$.  Then if $H=(\oplus_{n=1}^\infty H_n)_{c_0}$ and if $W=Z\oplus H$, $W$ has a shrinking basis and has $\textbf{p}_\xi(W)\leqslant q$  (resp. $\xi$-summable Szlenk index) if $Z$ has this property.  Here we are using Theorem \ref{curse} together with Theorem \ref{ideal} and the fact that $\textbf{p}_\xi(H)\leqslant \textbf{p}_0(H)=1\leqslant q$ and $\alpha_{\xi, p}(H) \leqslant \alpha_{0,\infty}(H)\leqslant 1$.

\end{proof}

In \cite{DK}, the question was posed as to whether having summable Szlenk index passes to injective tensor products of Banach spaces. This question was answered in the affirmative in \cite{summable}. In Corollary \ref{tensor},  we generalize that result to $\xi$-$q$-summable Szlenk index.

Let us recall that the injective tensor product is the closed span  in $\mathfrak{L}(Y^*, X)$ of the operators $x\otimes y:Y^*\to X$, where $x\otimes y(y^*)= y^*(y)x$. For $i=0,1$, if $A_i:X_i\to Y_i$ is an operator, we may define the operator $A_0\otimes A_1:X_0\hat{\otimes}_\ee X_1\to Y_0\hat{\otimes}_\ee Y_1$. This operator is given by $A_0\otimes A_1 (u)= A_0 u A_1^*:Y^*_1\to Y_0$.  Given subsets $K_0\subset X^*_0$, $K_1\subset X^*_1$, we let $$[K_0, K_1]=\{x^*_0\otimes x^*_1: x^*_0\in K_0, x^*_1\in K_1\}\subset (X_0\hat{\otimes}_\ee X_1)^*.$$ We observe that the map $$X_0^*\otimes X^*_1\supset K_0\times K_1\ni (x^*_0, x^*_1)\mapsto x^*_0\otimes x^*_1\in [K_0, K_1]$$ is weak$^*$-weak$^*$ continuous if $K_0, K_1$ are bounded.     We recall that any non-zero ordinal $\xi$ can be uniquely written as $\xi=\omega^{\xi_1}+\ldots +\omega^{\xi_n}$ for some $\xi_1\geqslant \ldots \geqslant \xi_n$. 

\begin{lemma} Suppose that for $i=0,1$, $A_i:X_i\to Y_i$ is a non-zero operator. Let $R=\max\{\|A_0\|, \|A_1\|\}$.  For  any $\xi>0$,  any $\ee>0$, any finite set $J$, and any weak$^*$-compact sets $K_{0,j}\subset X^*_0$, $K_{1,j}\subset X_1^*$, $j\in J$, $$s_\ee^\xi\Bigl(\bigcup_{j\in J}[K_{0,j}, K_{1,j}]\Bigr)  \subset \bigcup_{j\in J, (k_i)_{i=1}^n\in \{0,1\}^n} [s^{\omega^{\xi_1}k_1+\ldots+ \omega^{\xi_n}k_n}_{\ee/4R}(K_{0,j}), s^{\omega^{\xi_1}(1-k_1)+\ldots +\omega^{\xi_n}(1-k_n)}_{\ee/4R}(K_{1,j})],$$ where $\xi=\omega^{\xi_1}+\ldots+\omega^{\xi_n}$, $\xi_1\geqslant \ldots \geqslant \xi_n$.

\label{tek9}
\end{lemma}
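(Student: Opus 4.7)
The plan is a transfinite induction on the ordinal $\xi > 0$. Throughout I use the facts, established by routine induction on $\alpha$, that $s_\ee^\alpha$ distributes over finite unions (so the union over $j \in J$ reduces at every step to the case $|J| = 1$) and that $s_\ee^{\alpha+\beta} = s_\ee^\beta \circ s_\ee^\alpha$. I also use the elementary norm inequality $\|u_0 \otimes u_1 - v_0 \otimes v_1\| \leqslant R\|u_0 - v_0\| + R\|u_1 - v_1\|$ whenever $u_i, v_i \in RB_{X_i^*}$, which is where the factor $R$ enters: in the intended application all the $K_{i,j}$ lie in $RB$.

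Fix a single $j$ and suppress it. For the base case $\xi = 1$, I show $s_\ee([K_0, K_1]) \subset [s_{\ee/4R}(K_0), K_1] \cup [K_0, s_{\ee/4R}(K_1)]$. Let $T:K_0 \times K_1 \to [K_0, K_1]$ be the bilinear map $T(u_0, u_1) = u_0 \otimes u_1$; it is weak$^*$-to-weak$^*$ continuous on the bounded set $K_0 \times K_1$, so a continuous surjection between compact Hausdorff spaces, hence a closed map. Suppose a nonzero $x^* = x_0^* \otimes x_1^*$ avoids the right-hand side: every factorization $x^* = u_0 \otimes u_1$ in $K_0 \times K_1$ satisfies $u_0 \notin s_{\ee/4R}(K_0)$ and $u_1 \notin s_{\ee/4R}(K_1)$. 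For each such factorization pick weak$^*$-neighborhoods $V_0 \ni u_0$, $V_1 \ni u_1$ with $\mathrm{diam}(V_i \cap K_i) \leqslant \ee/4R$; by the tensor-norm inequality, $T(V_0 \times V_1 \cap K_0 \times K_1)$ has norm-diameter $\leqslant \ee/2$ and contains $x^*$. Cover the weak$^*$-compact fiber $T^{-1}(x^*)$ by finitely many such products; let $W$ be their union and $F = (K_0 \times K_1) \setminus W$. Closedness of $T$ makes $T(F)$ closed in $[K_0, K_1]$, and $x^* \notin T(F)$ because $W \supset T^{-1}(x^*)$, so there is a weak$^*$-neighborhood $U$ of $x^*$ with $U \cap [K_0, K_1] \subset T(W)$. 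But $T(W)$ is a finite union of sets of norm-diameter $\leqslant \ee/2$ all passing through $x^*$, so $\mathrm{diam}(T(W)) \leqslant \ee$, contradicting $x^* \in s_\ee$. The degenerate case $x^* = 0$ is handled by a short direct argument.

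For the successor step $\xi = \eta + 1$ with $\eta > 0$, I apply the inductive hypothesis at $\eta$, distribute $s_\ee$ over the resulting finite union, and apply the base case to each piece $[s_{\ee/4R}^\alpha(K_0), s_{\ee/4R}^\beta(K_1)]$. Ordinal arithmetic $\alpha + 1 = \alpha + \omega^0$ identifies the output with splits of $\xi$ extending those of $\eta$ by a trailing $\omega^{\xi_n}$ with $\xi_n = 0$. For the limit step I reduce to a single Cantor summand: if $\xi$ has $n \geqslant 2$, write $\xi = (\omega^{\xi_1} + \cdots + \omega^{\xi_{n-1}}) + \omega^{\xi_n}$, apply the inductive hypothesis to the first part, and invoke the lemma at $\omega^{\xi_n} < \xi$ on each resulting piece. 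For $\xi = \omega^{\xi_1}$ with $\xi_1 > 0$, write $s_\ee^\xi$ as $\bigcap_m s_\ee^{\omega^\gamma m}$ if $\xi_1 = \gamma+1$, or as $\bigcap_{\gamma < \xi_1} s_\ee^{\omega^\gamma}$ if $\xi_1$ is a limit. For $x^* = x_0^* \otimes x_1^*$ in such an intersection, the inductive hypothesis supplies, at each stage of the cofinal index set, a factorization and a side (left or right) where the derivation order grows. Pigeonhole forces one side to be deep cofinally often; because $K_0, K_1 \subset RB$, the scaling ambiguity $u_0 = c x_0^*, u_1 = c^{-1} x_1^*$ forces $|c|$ into the compact interval $[\|x_1^*\|/R,\, R/\|x_0^*\|]$. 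Extracting a subnet simultaneously convergent in $c$ and cofinal in the index set, and using weak$^*$-closedness of each $s^\alpha_{\ee/4R}(K_i)$, produces the limiting factorization placing $x^*$ in the stated right-hand side.

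The main obstacle is the simultaneous extraction in the limit step: convergence of the scalar $c_\eta$ must coexist with cofinality of the index from which it is chosen. The reason this succeeds is precisely the boundedness of the $K_i$ by $R$, which compactifies the otherwise unbounded scalar ambiguity of the factorization. Everything else — distributivity of $s^\alpha_\ee$ over finite unions, the Cantor-form bookkeeping in the successor step, and the closed-map trick in the base case — is routine.
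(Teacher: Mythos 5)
Your overall route is the same as the paper's: induction on the Cantor normal form of $\xi$, a one-step derivation estimate for tensor-product sets with the constant $4R$ (which the paper imports as \cite[Proposition $5.1$]{summable} rather than reproving), and at limit stages a pigeonhole over the finitely many $(j,(k_i))$ followed by a weak$^*$-compactness extraction of a limiting factorization. Your covering argument for the base case and your scalar-ambiguity device for the limit extraction are both correct and are pleasant alternatives to citation and to the paper's cluster point of pairs in $(X_0\oplus X_1)^*$, respectively.

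There is, however, one genuine flaw: the claim that $s_\ee^\alpha$ distributes over finite unions, i.e.\ $s_\ee(A\cup B)\subset s_\ee(A)\cup s_\ee(B)$, is false, and you lean on it essentially — the finite union over $J$ cannot be discarded, because even starting from $|J|=1$ the inductive hypothesis returns a finite union, to which $s_\ee$ must be applied at the next successor step. For a counterexample to distributivity, take $X^*=\ell_2$, $A=\{0\}\cup\{\tfrac{7\ee}{10}e_{2n}:n\in\nn\}$ and $B=\{0\}\cup\{-\tfrac{7\ee}{10}e_{2n}:n\in\nn\}$: both are weakly compact with diameter $\tfrac{7\ee}{10}\sqrt{2}<\ee$, so $s_\ee(A)=s_\ee(B)=\varnothing$, yet every weak neighborhood of $0$ contains a pair $\pm\tfrac{7\ee}{10}e_{2n}$ at distance $\tfrac{7\ee}{5}>\ee$, so $0\in s_\ee(A\cup B)$. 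The fix is already latent in your own base-case argument: run the fiber-covering construction simultaneously for all $j$ with $x^*\in[K_{0,j},K_{1,j}]$ (and shrink the neighborhood to avoid the remaining $[K_{0,j},K_{1,j}]$, which are weak$^*$-compact). All the resulting images have norm-diameter at most $\ee/2$ and all contain the common point $x^*$, so their union still has diameter at most $\ee$; this proves the one-step estimate directly for finite unions of tensor-product sets, which is the form you must then propagate through the successor and limit steps, exactly as the paper does. With that substitution the argument is sound.
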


\begin{proof} To obtain a contradiction, assume the lemma does not hold, and let $\xi\geqslant 1$ be the minimum ordinal for which the statement fails. Write $\xi=\omega^{\xi_1}+\ldots +\omega^{\xi_n}$, $\xi_1\geqslant \ldots \geqslant \xi_n$.  We consider three cases. 

Case $1$: $\xi_n=0$.  Then $\omega^{\xi_n}=1$ and $\xi=\zeta+1$, where $\zeta=\omega^{\xi_1}+\ldots +\omega^{\xi_{n-1}}$ if $n>1$ and $\zeta=0$ if $n=1$.  Then by the inductive hypothesis applied to $\zeta$, \begin{align*} s_\ee^\xi\Bigl(&\bigcup_{j\in J}[K_{0,j}, K_{1,j}]\Bigr)    = s_\ee\Bigl(s^\zeta_\ee\Bigl(\bigcup_{j\in J}[K_{0,j}, K_{1,j}]\Bigr)\Bigr) \\ &  \subset s_\ee\Bigl(\bigcup_{j\in J, (k_i)_{i=1}^{n-1}\in \{0,1\}^{n-1}} [s^{\omega^{\xi_1}k_1+\ldots +\omega^{\xi_{n-1}}k_{n-1}}_{\ee/4R}(K_{0,j}), s^{\omega^{\xi_1}(1-k_1)+\ldots +\omega^{\xi_{n-1}}(1-k_{n-1})}_{\ee/4R}(K_{1,j})]\Bigr).\end{align*}  By \cite[Proposition $5.1$]{summable}, \begin{align*} s_\ee\Bigl(& \bigcup_{j\in J, (k_i)_{i=1}^{n-1}\in \{0,1\}^{n-1}} [s^{\omega^{\xi_1}k_1+\ldots \omega^{\xi_{n-1}}k_{n-1}}_{\ee/4R}(K_{0,j}), s^{\omega^{\xi_1}(1-k_1)+\ldots +\omega^{\xi_{n-1}}(1-k_{n-1})}_{\ee/4R}(K_{1,j})]\Bigr) \\ &  \subset \bigcup_{j\in J, (k_i)_{i=1}^n\in \{0,1\}^n} [s_\ee^{k_n}(s^{\omega^{\xi_1}k_1+\ldots \omega^{\xi_{n-1}}k_{n-1}}_{\ee/4R}(K_{0,j})),s^{1-k_n}( s^{\omega^{\xi_1}(1-k_1)+\ldots +\omega^{\xi_{n-1}}(1-k_{n-1})}_{\ee/4R}(K_{1,j})]) \\ & = \bigcup_{j\in J, (k_i)_{i=1}^n\in \{0,1\}^n} [s^{\omega^{\xi_1}k_1+\ldots \omega^{\xi_n}k_n}_{\ee/4R}(K_{0,j}), s^{\omega^{\xi_1}(1-k_1)+\ldots +\omega^{\xi_n}(1-k_n)}_{\ee/4R}(K_{1,j})],\end{align*}  a contradiction. Here we are using the convention that if $n=1$, the first union is taken only over $j\in J$ and $$s_{\ee/4R}^{\omega^{\xi_1}k_1 + \ldots +\omega^{\xi_{n-1}}k_{n-1}}(K_{0,j})=K_{0,j}$$ and $$s_{\ee/4R}^{\omega^{\xi_1}(1-k_1)+ \ldots +\omega^{\xi_{n-1}}(1-k_{n-1})}(K_{1,j})=K_{1,j}.$$

Case $2$: $\xi_n$ is a limit ordinal.  Fix $$u^*\in s^\xi_\ee\Bigl(\bigcup_{j\in J}[K_{0,j}, K_{1,j}]\Bigr) =\bigcap_{\zeta<\xi_n} s^{\omega^{\xi_1}+\ldots +\omega^{\xi_{n-1}}+\omega^\zeta}\Bigl(\bigcup_{j\in J} [K_{0,j}, K_{1,j}]\Bigr).$$  Then for every $\zeta<\xi_n$, there exist $(k^\zeta_i)_{i=1}^n\in \{0,1\}^n$ and $j_\zeta\in J$ such that $$x^*\in [s_\ee^{\omega^{\xi_1}k_1^\zeta+\ldots + \omega^{\xi_{n-1}}k_{n-1}^\zeta+\omega^\zeta k_n^\zeta}(K_{0, j_\zeta}), s_\ee^{\omega^{\xi_1}(1-k_1^\zeta)+\ldots +\omega^{\xi_{n-1}}(1-k_{n-1}^\zeta)+\omega^\zeta(1-k_n^\zeta)}(K_{1, j_\zeta})].$$ Then there exist a cofinal subset $M$ of $[0, \xi_n)$, $j\in J$, and $(k_i)_{i=1}^n\in \{0, 1\}^n$ such that $k_i=k_i^\zeta$ and $j_\zeta=j$ for all $\zeta\in M$.   Then \begin{align*} u^* & \in \bigcap_{\zeta\in M} [s^{\omega^{\xi_1}k_1+\ldots +\omega^{\xi_{n-1}}k_{n-1}+\omega^\zeta k_n}_\ee(K_{0,j}), s^{\omega^{\xi_1}(1-k_1)+\ldots +\omega^{\xi_{n-1}}(1-k_{n-1})+\omega^\zeta(1-k_n)}_\ee(K_{1,j})] .\end{align*} Now for each $\zeta\in M$,  we may fix $$x^*_{0, \zeta}\in s^{\omega^{\xi_1}k_1+\ldots +\omega^{\xi_{n-1}}k_{n-1}+\omega^\zeta k_n}_\ee(K_{0,j})$$ and $$x^*_{1, \zeta}\in s^{\omega^{\xi_1}(1-k_1)+\ldots +\omega^{\xi_{n-1}}(1-k_{n-1})+\omega^\zeta (1-k_n)}_\ee(K_{1,j})$$ such that $x^*_{0,\zeta}\otimes x^*_{1, \zeta}=u^*$.  Then if $$(x^*_0, x^*_1)\in \bigcap_{\mu<\xi_n} \overline{\{(x^*_{0, \zeta}, x^*_{1, \zeta}): \mu<\zeta\in M\} }^{\text{weak}^*} \subset (X_0\oplus X_1)^*,$$ $$u^*=x^*_0\otimes x^*_1\in [s^{\omega^{\xi_1}k_1+\ldots +\omega^{\xi_n}k_n}_\ee(K_{0,j}), s^{\omega^{\xi_1}(1-k_1)+ \ldots +\omega^{\xi_n}(1-k_n)}_\ee(K_{1,j})].$$  Since this holds for any $u^*\in s^\xi_\ee\Bigl(\bigcup_{j\in J}[K_{0,j}, K_{1,j}]\Bigr)$, we reach a contradiction.

Case $3$: $\xi_n$ is a successor ordinal, say $\xi_n=\zeta+1$.  Fix $$u^*\in s^\xi_\ee\Bigl(\bigcup_{j\in J}[K_{0,j}, K_{1,j}]\Bigr) =\bigcap_{m\in \nn} s^{\omega^{\xi_1}+\ldots +\omega_{\xi_{n-1}}+\omega^\zeta m}\Bigl(\bigcup_{j\in J} [K_{0,j}, K_{1,j}]\Bigr).$$  Then for every $m\in \nn$, there exist $j_m\in J$, $(k^m_i)_{i=1}^n\in \{0,1\}^{n-1}$ and $p_m, q_m\subset \{0, \ldots, m\}$ with $p_m+q_m=m$ such that $$x^*\in [s_\ee^{\omega^{\xi_1}k_1^m+\ldots + \omega^{\xi_{n-1}}k_{n-1}^m+\omega^\zeta p_m}(K_{0, j_\zeta}), s_\ee^{\omega^{\xi_1}(1-k_1^m)+\ldots +\omega^{\xi_{n-1}}(1-k_{n-1}^m)+\omega^\zeta q_m}(K_{1, j_\zeta})].$$ Then there exist a cofinal subset $M$ of $\nn$, $j\in J$, and $(k_i)_{i=1}^{n-1}\in \{0, 1\}^{n-1}$ such that $k_i=k_i^m$ and $j_m=j$ for all $m\in M$. Let $k_n=1$ if $\{p_m: m\in M\}$ is unbounded, and otherwise let $q_m=1$.   Then \begin{align*} u^* & \in \bigcap_{m\in M} [s^{\omega^{\xi_1}k_1+\ldots +\omega^{\xi_{n-1}}k_{n-1}+\omega^\zeta p_m}_\ee(K_{0,j}), s^{\omega^{\xi_1}(1-k_1)+\ldots +\omega^{\xi_{n-1}}(1-k_{n-1})+\omega^\zeta q_m}_\ee(K_{1,j})]. \end{align*} Now for each $m\in M$,  we may fix $$x^*_{0, m}\in s^{\omega^{\xi_1}k_1+\ldots +\omega^{\xi_{n-1}}k_{n-1}+\omega^\zeta p_m}_\ee(K_{0,j})$$ and $$x^*_{1, m}\in s^{\omega^{\xi_1}(1-k_1)+\ldots +\omega^{\xi_{n-1}}(1-k_{n-1})+\omega^\zeta q_m}_\ee(K_{1,j})$$ such that $x^*_{0,m}\otimes x^*_{1, m}=u^*$.  Then if $$(x^*_0, x^*_1)\in \bigcap_{l\in \nn} \overline{\{(x^*_{0, m}, x^*_{1, m}): l<m\in M\} }^{\text{weak}^*} \subset (X_0\oplus X_1)^*,$$ $$u^*=x^*_0\otimes x^*_1\in [s^{\omega^{\xi_1}k_1+\ldots +\omega^{\xi_n}k_n}_\ee(K_{0,j}), s^{\omega^{\xi_1}(1-k_1)+ \ldots +\omega^{\xi_n}(1-k_n)}_\ee(K_{1,j})].$$  Since this holds for any $u^*\in s^\xi_\ee\Bigl(\bigcup_{j\in J}[K_{0,j}, K_{1,j}]\Bigr)$, we reach a contradiction.

\end{proof}

Iterating the previous result immediately yields the following. 

\begin{corollary} Suppose that for $i=0,1$, $A_i:X_i\to Y_i$ is a non-zero operator. Let $R=\max\{\|A_0\|, \|A_1\|\}$.  For  any ordinals $\xi_1, \ldots, \xi_n$,  any $\ee_1, \ldots, \ee_n>0$, any finite set $J$, and any weak$^*$-compact sets $K_{0,j}\subset X^*_0$, $K_{1,j}\subset X_1^*$, $j\in J$, \begin{align*} s^{\omega^{\xi_1}}_{\ee_1}\ldots & s^{\omega^{\xi_n}}_{\ee_n}\Bigl(\bigcup_{j\in J}[K_{0,j}, K_{1,j}]\Bigr)  \\ & \subset \bigcup_{j\in J, (k_i)_{i=1}^n\in \{0,1\}^n} [s^{\omega^{\xi_1}k_1}_{\ee_1/4R}\ldots s^{\omega^{\xi_n}k_n}_{\ee_n/4R}(K_{0,j}), s^{\omega^{\xi_1}(1-k_1)}_{\ee_1/4R} \ldots s^{\omega^{\xi_n}(1-k_n)}_{\ee_n/4R}(K_{1,j})].\end{align*}

\label{tek10}

\end{corollary}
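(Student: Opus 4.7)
The plan is to prove Corollary \ref{tek10} by induction on $n$, at each step invoking Lemma \ref{tek9} only for a one-term Cantor normal form (that is, for the ordinal $\omega^{\xi_i}$). The base case $n=1$ is literally Lemma \ref{tek9} with $\xi=\omega^{\xi_1}$ and the single summand $\xi_1$, and it gives exactly the claimed inclusion.

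For the inductive step, assume the result for $n-1$. First I would apply the inductive hypothesis to the inner $(n-1)$-fold composition $s^{\omega^{\xi_2}}_{\ee_2}\ldots s^{\omega^{\xi_n}}_{\ee_n}$ applied to $\bigcup_{j\in J}[K_{0,j},K_{1,j}]$. This yields
\begin{align*}
s^{\omega^{\xi_2}}_{\ee_2}\ldots s^{\omega^{\xi_n}}_{\ee_n}\Bigl(\bigcup_{j\in J}[K_{0,j},K_{1,j}]\Bigr) \subset \bigcup_{(j,k')\in J'}[L_{0,j,k'},L_{1,j,k'}],
\end{align*}
where $J'=J\times\{0,1\}^{n-1}$, $k'=(k_i)_{i=2}^n$, and
\begin{align*}
L_{0,j,k'}&=s^{\omega^{\xi_2}k_2}_{\ee_2/4R}\ldots s^{\omega^{\xi_n}k_n}_{\ee_n/4R}(K_{0,j}),\\
L_{1,j,k'}&=s^{\omega^{\xi_2}(1-k_2)}_{\ee_2/4R}\ldots s^{\omega^{\xi_n}(1-k_n)}_{\ee_n/4R}(K_{1,j}).
\end{align*}
Each $L_{i,j,k'}$ is weak$^*$-compact as an iterated Szlenk derivation of a weak$^*$-compact set, so the right-hand side is again a finite union of sets of the form $[K_0,K_1]$ indexed by the finite set $J'$.

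Next I would apply $s^{\omega^{\xi_1}}_{\ee_1}$ to both sides. Since derivation is monotone in the argument set, the inclusion is preserved, and the problem reduces to bounding $s^{\omega^{\xi_1}}_{\ee_1}\bigl(\bigcup_{(j,k')\in J'}[L_{0,j,k'},L_{1,j,k'}]\bigr)$. I then invoke Lemma \ref{tek9} a second time, this time with the single-term ordinal $\xi=\omega^{\xi_1}$, the finite index set $J'$, and the weak$^*$-compact sets $L_{0,j,k'},L_{1,j,k'}$. The conclusion is an inclusion into $\bigcup_{(j,k')\in J',\,k_1\in\{0,1\}}[s^{\omega^{\xi_1}k_1}_{\ee_1/4R}(L_{0,j,k'}),s^{\omega^{\xi_1}(1-k_1)}_{\ee_1/4R}(L_{1,j,k'})]$. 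Substituting the definitions of $L_{0,j,k'}$ and $L_{1,j,k'}$ and reindexing via $k=(k_1,k_2,\ldots,k_n)\in\{0,1\}^n$ produces precisely the right-hand side in the statement of Corollary \ref{tek10}.

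There is no real obstacle here; the work is purely bookkeeping. The only point one must be slightly careful about is the order of the derivations in the final expression: because I always peel off the outermost $s^{\omega^{\xi_1}}_{\ee_1}$ and apply Lemma \ref{tek9} to it, the derivation $s^{\omega^{\xi_1}k_1}_{\ee_1/4R}$ is appended on the outside of the already-derived $L_{0,j,k'}$, so the composition $s^{\omega^{\xi_1}k_1}_{\ee_1/4R}s^{\omega^{\xi_2}k_2}_{\ee_2/4R}\ldots s^{\omega^{\xi_n}k_n}_{\ee_n/4R}(K_{0,j})$ appears in exactly the left-to-right order $\xi_1,\xi_2,\ldots,\xi_n$ required by the statement. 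This sidesteps any need to interchange non-commuting derivations with different $\ee$-parameters.
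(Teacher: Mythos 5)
Your proof is correct and is essentially the paper's argument: the paper simply says the corollary follows by "iterating the previous result," and your induction --- peeling off the outermost derivation and applying Lemma \ref{tek9} with the single-term ordinal $\omega^{\xi_1}$ to the finite union of bracket sets produced by the inductive hypothesis --- is precisely that iteration, with the weak$^*$-compactness of the derived sets and the ordering of the compositions handled correctly.
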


\begin{corollary} Fix an ordinal $\xi$ and $1\leqslant q<\infty$. Let $A_0:X_0\to Y_0$, $A_1:X_1\to Y_1$ be  operators and let $A_0\otimes A_1:X_0\hat{\otimes}_\ee X_1\to Y_0\hat{\otimes}_\ee Y_1$ be the induced operator. Then if $A_0, A_1$ have $\xi$-$q$-summable Szlenk index, so does $A_0\otimes A_1$. 
\label{tensor}
\end{corollary}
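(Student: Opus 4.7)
To prove Corollary \ref{tensor}, the plan is to combine the iterated derivation estimate of Corollary \ref{tek10} with the weak$^*$-closed absolute convex hull result of Corollary \ref{convex}. Let $K_0=A_0^*B_{Y_0^*}\subset X_0^*$ and $K_1=A_1^*B_{Y_1^*}\subset X_1^*$; both are weak$^*$-compact, and the hypothesis says that each $K_i$ has $M$-$\xi$-$q$-summable Szlenk index for some common $M\geqslant 0$. As noted in the discussion preceding Lemma \ref{tek9}, the Hahn-Banach theorem identifies
\[(A_0\otimes A_1)^*B_{(Y_0\hat{\otimes}_\ee Y_1)^*}=\overline{\text{abs\ co}}^{\text{weak}^*}([K_0,K_1]),\]
so by Corollary \ref{convex}(i) it is enough to verify that $[K_0,K_1]$ itself has $\xi$-$q$-summable Szlenk index.

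The trivial case $A_0=0$ or $A_1=0$ gives $A_0\otimes A_1=0$, so I may assume $A_0,A_1\neq 0$ and set $R=\max\{\|A_0\|,\|A_1\|\}$, noting $K_i\subset RB_{X_i^*}$. Fix $n\in\nn$ and $\ee_1,\ldots,\ee_n>0$ such that $s^{\omega^\xi}_{\ee_1}\cdots s^{\omega^\xi}_{\ee_n}([K_0,K_1])\neq \varnothing$. I would apply Corollary \ref{tek10} with singleton $J$, constant ordinals $\xi_i=\xi$, and $K_{0,*}=K_0$, $K_{1,*}=K_1$ to obtain some $(k_i)_{i=1}^n\in\{0,1\}^n$ with
\[[s^{\omega^\xi k_1}_{\ee_1/4R}\cdots s^{\omega^\xi k_n}_{\ee_n/4R}(K_0),\; s^{\omega^\xi(1-k_1)}_{\ee_1/4R}\cdots s^{\omega^\xi(1-k_n)}_{\ee_n/4R}(K_1)]\neq \varnothing.\]
Since $s^0_{\ee}$ is the identity, the iterated derivation of $K_0$ above involves only those indices $i$ with $k_i=1$, while that of $K_1$ involves only $k_i=0$. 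Non-emptiness of a bracket forces non-emptiness of each coordinate, so the $M$-$\xi$-$q$-summability hypothesis applied to each of $K_0$ and $K_1$ gives
\[\sum_{i:k_i=1}(\ee_i/4R)^q\leqslant M^q\qquad\text{and}\qquad \sum_{i:k_i=0}(\ee_i/4R)^q\leqslant M^q.\]
Adding the two estimates yields $\sum_{i=1}^n \ee_i^q\leqslant 2(4RM)^q$, and this bound is independent of $n$ and of the particular $\ee_i$.

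This shows that $[K_0,K_1]$ has $(4R\cdot 2^{1/q}M)$-$\xi$-$q$-summable Szlenk index, and Corollary \ref{convex}(i) then propagates this property to $\overline{\text{abs\ co}}^{\text{weak}^*}([K_0,K_1])=(A_0\otimes A_1)^*B_{(Y_0\hat{\otimes}_\ee Y_1)^*}$, as required. All of the real work sits in Lemma \ref{tek9} (and its iterate Corollary \ref{tek10}), which has already been proved by induction on $\xi$ together with the $\ee$-level splitting of \cite[Proposition $5.1$]{summable}, and in Corollary \ref{convex}, whose proof rests on Theorem \ref{big theorem}(iv) and Theorem \ref{curse}(ii); the only remaining step is the elementary bookkeeping above that splits the indices according to the binary vector $(k_i)$ and sums the two resulting $q$-power estimates.
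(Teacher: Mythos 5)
Your proposal is correct and follows essentially the same route as the paper's proof: apply Corollary \ref{tek10} to $[A_0^*B_{Y_0^*},A_1^*B_{Y_1^*}]$ to extract a binary vector $(k_i)_{i=1}^n$, sum the two $q$-power estimates coming from the summability of each factor, and pass to $(A_0\otimes A_1)^*B_{(Y_0\hat{\otimes}_\ee Y_1)^*}$ via the Hahn--Banach identification and Corollary \ref{convex}. The only cosmetic difference is that you use a single constant $M$ (giving the bound $4R\cdot 2^{1/q}M$) where the paper keeps separate constants $M_0,M_1$ and obtains $4R(M_0^q+M_1^q)^{1/q}$.
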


\begin{proof} If either operator is the zero operator, the result is trivial. Assume $A_0, A_1\neq 0$ and let $R=\max\{\|A_0\|, \|A_1\|\}$.  Suppose $A_0$ has $M_0$-$\xi$-$q$-summable Szlenk index and $A_1$ has $M_1$-$\xi$-$q$-summable Szlenk index. Let $K=[A^*_0B_{Y^*_0}, A^*_1 B_{Y^*_1}]$.   Suppose $\ee_1, \ldots, \ee_n>0$ are such that $s^{\omega^\xi}_{\ee_1}\ldots s^{\omega^\xi}_{\ee_n}(K)\neq \varnothing$. Then by Corollary \ref{tek10}, there exist $(k_i)_{i=1}^n\in \{0,1\}^n$ such that $s^{\omega^\xi k_1}_{\ee_1/4R}\ldots s^{\omega^\xi k_n}_{\ee_n/4R}(A^*_0B_{Y^*_0})\neq \varnothing$ and $s^{\omega^\xi (1-k_1)}_{\ee_1/4R}\ldots s^{\omega^\xi(1-k_n)}_{\ee_n/4R}(A^*_1B_{Y^*_0})\neq \varnothing$.  Then $$\sum_{i=1}^n k_i (\ee_i/4R)^q \leqslant M_0^q$$ and $$\sum_{i=1}^n (1-k_i)(\ee_i/4R)^q \leqslant M_1^q,$$ whence $$\sum_{i=1}^n \ee_i^q \leqslant 4^qR^q (M_0^q+M_1^q),$$ and $K$ has $4R(M_0^q+M_1^q)^{1/q}$-summable Szlenk index. Since $\overline{\text{co}}^{\text{weak}^*}(K)=(A_0\otimes A_1)^*B_{(Y_0\hat{\otimes}_\ee Y_1)^*}$ by the Hahn-Banach theorem, Corollaray \ref{convex} yields that $A_0\otimes A_1$ has $\xi$-$q$-summable Szlenk index.

\end{proof}

In light of Theorem \ref{curse}, Corollary \ref{tensor} also offers another proof that having $\xi$-Szlenk power type not exceeding $q$ also passes to injective tensor products, which was shown in \cite{DK} in the case $\xi=0$ and in the general case in \cite{AUF}.

\section{Direct sums}

In the final section, we are concerned with the behavior of the Szlenk index  and $\alpha_{\xi,p}$ under $\ell_p$ and $c_0$ direct sums.    More specifically, we will have some set $\Lambda$ and some collection $(A_\lambda:X_\lambda\to Y_\lambda)_{\lambda\in \Lambda}$ of Asplund operators such that $\sup_{\lambda\in \Lambda} \|A_\lambda\|<\infty$.  To avoid incredible inconvenience, for a subset $\Upsilon$ of $\Lambda$, we agree that $\ell_0(\Upsilon)$ shall mean $c_0(\Upsilon)$.   We may then define $E_p=(\oplus_{\lambda\in \Lambda}X_\lambda)_{\ell_p(\Lambda)}$ and $F_p=(\oplus_{\lambda\in \Lambda} Y_\lambda)_{\ell_p(\Lambda)}$ for $p\in \{0\}\cup [1, \infty]$.   We also define for each subset $\Upsilon\subset \Lambda$ the projections $P_\Upsilon:E_p\to E_p$, $Q_\Upsilon:F_p\to F_p$ by $P_\Upsilon(x_\lambda)_{\lambda\in \Lambda}= (1_\Upsilon(\lambda) x_\lambda)_{\lambda\in \Lambda}$ and $Q_\Upsilon(y_\lambda)_{\lambda\in \Lambda} = (1_\Upsilon(\lambda) y_\lambda)_{\lambda\in \Lambda}$.  For $\lambda\in \Lambda$, we let $P_\lambda= P_{\{\lambda\}}$ and $Q_\lambda= Q_{\{\lambda\}}$.  For each $p\in \{0\}\cup [1, \infty]$, we then identify $X_\lambda$ with $P_\lambda(E_p)$ and $Y_\lambda$ with $Q_\lambda(F_p)$.  

 Given $p,r\in \{0\}\cup [1, \infty]$ such that either $1\leqslant p\leqslant r\leqslant \infty$ or $r=0$ and $p<\infty$,  and $\Upsilon\subset \Lambda$, we wish to study the behavior of the Szlenk index and  $\alpha_{\xi,\gamma}$ seminorms of the operators $D^{p,r}_\Upsilon:E_p\to F_r$ given by $D^{p,r}_\Upsilon(x_\lambda)_{\lambda\in \Lambda}= (1_\Upsilon(\lambda) A_\lambda x_\lambda)_{\lambda\in \Lambda}$. Of course, we are interested in the cases in which the operator $D^{p,r}_\Lambda$ is Asplund. We first delineate the cases in which the operator $D^{p,r}_\Lambda$ is Asplund.    \begin{enumerate}\item $p=r=1$ and $(\|A_\lambda\|)_{\lambda\in \Lambda}\in c_0(\Lambda)$.  \item $p=r=\infty$ and $(\|A_\lambda\|)_{\lambda\in \Lambda}\in c_0(\Lambda)$. \item  $r=0$ and $p<\infty$.  \item $1\leqslant p\leqslant r\leqslant \infty$, $1<r$, and $p<\infty$.     \end{enumerate}

We first remark that for any finite subset $\Upsilon$ of $\Lambda$, $$\alpha_{\xi,\gamma, n}(D^{p,r}_\Upsilon)= \alpha_{\xi, \gamma, n}\Bigl(\oplus_{\lambda\in \Upsilon} A_\lambda:(\oplus_{\lambda\in \Upsilon} X_\lambda)_{\ell_p(\Upsilon)}\to (\oplus_{\lambda\in \Upsilon} Y_\lambda)_{\ell_r(\Upsilon)}\Bigr)$$ and $$\theta_{\xi, n}(D^{p,r}_\Upsilon)= \theta_{\xi, n}\Bigl(\oplus_{\lambda\in \Upsilon} A_\lambda:(\oplus_{\lambda\in \Upsilon} X_\lambda)_{\ell_p(\Upsilon)}\to (\oplus_{\lambda\in \Upsilon} Y_\lambda)_{\ell_r(\Upsilon)}\Bigr)$$ for any $1\leqslant \gamma\leqslant \infty$, any $n\in \nn$, and any ordinal $\xi$.  Therefore we may freely identify $D^{p,r}_\Upsilon$ with the operators between finite direct sums.

We isolate the following easy consequence of H\"{o}lder's inequality. 

\begin{fact} Suppose $1\leqslant r< \gamma \leqslant \infty$, and $1/\gamma+1/s=1/r$.    Then for any $n\in \nn$ and any scalar sequence $(\alpha_i)_{i=1}^n$,  $$\|(\alpha_i)_{i=1}^n\|_{\ell^n_s} = \sup \{\|(\alpha_i\beta_i)_{i=1}^n\|_{\ell_r^n}: (\beta_i)_{i=1}^n\in B_{\ell^n_\gamma}\}.$$   

\label{f1}
\end{fact}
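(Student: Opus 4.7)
The plan is to prove this by the standard Hölder duality argument: establish the upper bound via Hölder's inequality, then exhibit an explicit extremal $(\beta_i)_{i=1}^n$ achieving equality.

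For the upper bound $\sup\{\|(\alpha_i\beta_i)_{i=1}^n\|_{\ell_r^n}:(\beta_i)_{i=1}^n\in B_{\ell_\gamma^n}\}\leqslant \|(\alpha_i)_{i=1}^n\|_{\ell_s^n}$, I would apply Hölder's inequality to the exponents $s/r$ and $\gamma/r$, which are genuinely conjugate: multiplying $1/r=1/s+1/\gamma$ through by $r$ yields $r/s+r/\gamma=1$. This gives $\sum_{i=1}^n|\alpha_i\beta_i|^r\leqslant \bigl(\sum|\alpha_i|^s\bigr)^{r/s}\bigl(\sum|\beta_i|^\gamma\bigr)^{r/\gamma}$, so raising to the $1/r$ power produces the desired bound. (In the case $\gamma=\infty$, so that $s=r$, this reduces to the trivial bound $|\alpha_i\beta_i|\leqslant |\alpha_i|$.)

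For the matching lower bound, I would split on whether $\gamma$ is infinite. If $\gamma=\infty$, simply take $\beta_i\equiv 1\in B_{\ell_\infty^n}$ and note $s=r$, so equality is immediate. If $\gamma<\infty$ and $(\alpha_i)\neq 0$, I would define the extremal vector
\[
\beta_i:=\frac{|\alpha_i|^{s/\gamma}}{\|(\alpha_j)_{j=1}^n\|_{\ell_s^n}^{s/\gamma}}.
\]
A direct computation shows $\sum|\beta_i|^\gamma=\sum|\alpha_i|^s/\|(\alpha_j)\|_{\ell_s^n}^s=1$, so $(\beta_i)\in B_{\ell_\gamma^n}$. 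The only identity needed for the norm computation is $rs/\gamma=s-r$, which one obtains by multiplying $1/r=1/s+1/\gamma$ by $rs\gamma$. This gives $|\alpha_i\beta_i|^r=|\alpha_i|^s/\|(\alpha_j)\|_{\ell_s^n}^{sr/\gamma}$, whence summing and taking $r$-th roots yields $\|(\alpha_i\beta_i)\|_{\ell_r^n}=\|(\alpha_j)\|_{\ell_s^n}^{(s-sr/\gamma)/r}=\|(\alpha_j)\|_{\ell_s^n}$, as required. The case $(\alpha_i)=0$ is trivial on both sides.

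There is no real obstacle; the statement is classical, and the only mild bookkeeping is the separate treatment of the endpoint $\gamma=\infty$ together with the exponent arithmetic $rs/\gamma=s-r$.
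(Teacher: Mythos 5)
Your proof is correct, and it is exactly the standard H\"{o}lder duality argument the paper has in mind: the paper states this fact without proof, calling it an easy consequence of H\"{o}lder's inequality, and your upper bound via the conjugate exponents $s/r$, $\gamma/r$ together with the explicit extremizer $\beta_i=|\alpha_i|^{s/\gamma}/\|(\alpha_j)_{j=1}^n\|_{\ell_s^n}^{s/\gamma}$ (with the separate $\gamma=\infty$ endpoint) fills that in correctly.
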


\begin{fact} Fix $1\leqslant p,r, \gamma\leqslant \infty$ with $p\leqslant r$.   For any natural numbers $m,n$, any scalar sequence $(\alpha_i)_{i=1}^m$, any $(b_j)_{j=1}^n\in B_{\ell_\gamma^n}$, and any $(a_{ij})_{i=1, j=1}^{mn}\in B_{\ell_\infty^n(\ell_p^m)}$, \begin{displaymath}
   \bigl\|\bigl(\|(\alpha_i b_j a_{ij})_{j=1}^n\|_{\ell_\gamma^n}\bigr)_{i=1}^m\bigr\|_{\ell_r^m} \leqslant  \left\{
     \begin{array}{lr}
       \|(\alpha_i)_{i=1}^m\|_{\ell_\infty^m} & : \gamma\leqslant r\\
       \|(\alpha_i)_{i=1}^m\|_{\ell_s^m} & : \gamma>r \text{\ \emph{and}\ }\frac{1}{\gamma}+\frac{1}{s}=\frac{1}{r}.
     \end{array}
   \right.
\end{displaymath} 

\label{f2}
\end{fact}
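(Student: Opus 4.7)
\medskip

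The plan is to reduce both cases to a clean application of either Minkowski's integral inequality (for the mixed-norm swap) or generalized H\"older, then finish by exploiting the hypotheses $\|(a_{ij})_{i=1}^m\|_{\ell_p^m}\leqslant 1$ for every $j$ and $\|(b_j)_{j=1}^n\|_{\ell_\gamma^n}\leqslant 1$. Throughout I will factor out $|\alpha_i|$ from the inner $\ell_\gamma^n$-norm, so the quantity to control is $\bigl\|\bigl(|\alpha_i|\cdot\|(b_j a_{ij})_{j=1}^n\|_{\ell_\gamma^n}\bigr)_{i=1}^m\bigr\|_{\ell_r^m}$.

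In the case $\gamma\leqslant r$, I would first crudely replace each $|\alpha_i|$ by $\|(\alpha_i)_{i=1}^m\|_{\ell_\infty^m}$ and pull this scalar outside the outer $\ell_r^m$-norm. It then suffices to bound $\bigl\|\bigl(\|(b_j a_{ij})_{j=1}^n\|_{\ell_\gamma^n}\bigr)_{i=1}^m\bigr\|_{\ell_r^m}$ by $1$. Since $\gamma\leqslant r$, Minkowski's integral inequality (in discrete form) lets me swap the order of the norms at the cost of an inequality in the correct direction, yielding the upper bound $\bigl\|\bigl(\|(b_j a_{ij})_{i=1}^m\|_{\ell_r^m}\bigr)_{j=1}^n\bigr\|_{\ell_\gamma^n}$. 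This is nothing but $\bigl\|\bigl(|b_j|\cdot\|(a_{ij})_{i=1}^m\|_{\ell_r^m}\bigr)_{j=1}^n\bigr\|_{\ell_\gamma^n}$, and since $p\leqslant r$ we have $\|(a_{ij})_{i=1}^m\|_{\ell_r^m}\leqslant \|(a_{ij})_{i=1}^m\|_{\ell_p^m}\leqslant 1$ for every $j$, reducing everything to $\|(b_j)_{j=1}^n\|_{\ell_\gamma^n}\leqslant 1$.

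In the case $\gamma>r$, with $\frac{1}{\gamma}+\frac{1}{s}=\frac{1}{r}$, I would instead apply the generalized H\"older inequality in the outer $\ell_r^m$-norm to the product $|\alpha_i|\cdot \|(b_j a_{ij})_{j=1}^n\|_{\ell_\gamma^n}$, splitting the exponent as $\frac{1}{s}+\frac{1}{\gamma}=\frac{1}{r}$. This gives the upper bound $\|(\alpha_i)_{i=1}^m\|_{\ell_s^m}\cdot\bigl\|\bigl(\|(b_j a_{ij})_{j=1}^n\|_{\ell_\gamma^n}\bigr)_{i=1}^m\bigr\|_{\ell_\gamma^m}$, and the second factor is simply $\bigl(\sum_{i,j}|b_j|^\gamma|a_{ij}|^\gamma\bigr)^{1/\gamma}=\bigl(\sum_j |b_j|^\gamma \sum_i |a_{ij}|^\gamma\bigr)^{1/\gamma}$. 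Because $p\leqslant r<\gamma$ and $\|(a_{ij})_{i=1}^m\|_{\ell_p^m}\leqslant 1$ (so in particular $|a_{ij}|\leqslant 1$), the inclusion $\ell_p^m\hookrightarrow\ell_\gamma^m$ yields $\sum_i |a_{ij}|^\gamma\leqslant \sum_i |a_{ij}|^p\leqslant 1$ for each $j$, and the remainder collapses to $\|(b_j)_{j=1}^n\|_{\ell_\gamma^n}\leqslant 1$.

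There is no real obstacle here: the only subtlety is picking the right tool per case (Minkowski swap when the inner exponent is smaller than the outer; generalized H\"older when the exponents factor correctly as $\frac{1}{s}+\frac{1}{\gamma}=\frac{1}{r}$), and verifying that the coefficient matrix $(a_{ij})$ gives the estimate $\sum_i |a_{ij}|^\gamma\leqslant 1$ which relies on $p\leqslant \gamma$ together with $|a_{ij}|\leqslant 1$. All remaining steps are routine manipulations of mixed $\ell^p$-norms.
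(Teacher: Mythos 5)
Your proof is correct and follows essentially the same route as the paper: in the case $\gamma\leqslant r$ the paper also factors out $\|(\alpha_i)\|_{\ell_\infty^m}$, swaps the mixed norms by Minkowski, and uses $\|(a_{ij})_{i=1}^m\|_{\ell_r^m}\leqslant\|(a_{ij})_{i=1}^m\|_{\ell_p^m}\leqslant 1$; in the case $\gamma>r$ the paper bounds $\bigl\|\bigl(\|(b_ja_{ij})_{j=1}^n\|_{\ell_\gamma^n}\bigr)_{i=1}^m\bigr\|_{\ell_\gamma^m}\leqslant 1$ exactly as you do and then invokes its Fact \ref{f1}, which is just the dual (supremum) formulation of the generalized H\"older inequality you apply directly.
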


\begin{proof} Suppose $\gamma\leqslant r$.  Let $\alpha=\|(\alpha_i)_{i=1}^m\|_{\ell_\infty^m}$.    Then using Minkowski's inequality, \begin{align*} \bigl\|\bigl(\|(\alpha_i b_j a_{ij})_{j=1}^n\|_{\ell_\gamma^n}\bigr)_{i=1}^m\bigr\|_{\ell_r^m} & \leqslant \alpha \bigl\|\bigl(\|( b_j a_{ij})_{j=1}^n\|_{\ell_\gamma^n}\bigr)_{i=1}^m\bigr\|_{\ell_r^m} \\ & \leqslant \alpha \bigl\|\bigl(|b_j|\|(a_{ij})_{i=1}^m\|_{\ell_\gamma^n}\bigr)_{j=1}^n\bigr\|_{\ell_r^m}  \\ & \leqslant \alpha \|(b_j)_{j=1}^n\|_{\ell_\gamma^n} \leqslant \alpha. \end{align*}

Now assume that $\gamma>r$.  Then \begin{align*} \bigl\|\bigl( \|(b_ja_{ij})_{j=1}^n\|_{\ell_\gamma^n}\bigr)_{i=1}^m \bigr\|_{\ell_\gamma^m} & = \bigl\|\bigl(b_j \|(a_{ij})_{i=1}^m\|_{\ell_\gamma^m}\bigr)_{j=1}^n\bigr\|_{\ell_\gamma^n} \\ & \leqslant \bigl\|\bigl(b_j \|(a_{ij})_{i=1}^m\|_{\ell_p^m}\bigr)_{j=1}^n\bigr\|_{\ell_\gamma^n} \\ & \leqslant \|(b_j)_{j=1}^n\|_{\ell_\gamma^n} \leqslant 1. \end{align*} Since $\frac{1}{\gamma}+\frac{1}{s}=\frac{1}{r}$, Fact \ref{f1} yields that $$\bigl\|\bigl(\|(\alpha_i b_j a_{ij})_{j=1}^n\|_{\ell_\gamma^n}\bigr)_{i=1}^m\bigr\|_{\ell_r^m} \leqslant \|(\alpha_i)_{i=1}^m\|_{\ell_s^m}.$$

\end{proof}

\begin{theorem} Suppose $\Upsilon\subset \Lambda$ is a finite set and $R\geqslant 0$ is such that $\|A_\lambda\|\leqslant R$ for all $\lambda\in \Lambda$.   Fix $1\leqslant p,r, \gamma\leqslant \infty$ with $p\leqslant r$ and an ordinal $\xi$.  \begin{enumerate}[(i)]\item Let $1/r+1/s=1$.  If $\lambda\in\Lambda$ is such that $\theta_{\zeta, m}(D^{p,r}_\lambda)\leqslant \ee$, then for any directed set $D$, any $n\in \nn$, and sequence $(b_i)_{i=1}^n$ of non-negative scalars, any weakly null $(x_t)_{t\in \Gamma_{\zeta,n}.D}\subset B_{E_p}$,  any sequence of positive scalars $(a_i)_{i=1}^n\in B_{\ell_p(\Upsilon)}$ such that $\|P_\lambda x_t\|\leqslant a_i$ for all $1\leqslant i\leqslant n$, and any $\ee'>\ee\|(b_i a_i)_{i=1}^n\|_{\ell_1^n}+ R m^{1/s}\|(b_i a_i)_{i=1}^n\|_{\ell_r^n}$, $$\{t\in MAX(\Gamma_{\zeta, n}.D): \|D^{p,r}_\lambda \sum_{i=1}^n b_i z^t_i\|\leqslant \ee'\}$$ is eventual.    \item Let $s$ be such that $1/r+1/s=1$. If $\zeta$ is an ordinal and $m\in \nn$ are such that $\theta_{\zeta, m}(A_\lambda)\leqslant \ee$ for all $\lambda\in \Upsilon$, then for any directed set $D$, any $n\in \nn$,  any weakly null $(x_t)_{t\in \Gamma_{\zeta,m}.D}\subset B_{E_p}$, and any non-negative scalars $(b_i)_{i=1}^n$,  $$\inf_{t\in MAX(\Gamma_{\xi,m}.D)} \|D^{p,r}_\Upsilon \sum_{i=1}^n b_i z^t_i \|\leqslant \ee \|(b_i)_{i=1}^n\|_{\ell_1^n}+ Rm^{1/s}\|(b_i)_{i=1}^n\|_{\ell_r^n}$$ and  and for any $\ee_1> \ee \|(b_i)_{i=1}^n\|_{\ell_1^n}+Rm^{1/s} \|(b_i)_{i=1}^n\|_{\ell_r^n}$, $$\{t\in MAX(\Gamma_{\xi,m}.D): \|D^{p,r}_\Upsilon \sum_{i=1}^m b_i z^t_i\|\leqslant \ee_1 \}$$ is eventual.    \item  \begin{displaymath}
   \alpha_{\xi, \gamma}(D^{p,r}_\Upsilon) =  \left\{
     \begin{array}{lr}
       \|(\alpha_{\xi, \gamma}(A_\lambda))_{\lambda\in \Upsilon}\|_{\ell_\infty(\Upsilon)} & : \gamma\leqslant r\\
       \|(\alpha_{\xi, \gamma}(A_\lambda))_{\lambda\in \Upsilon}\|_{\ell_s(\Upsilon)}  & : \gamma>r \text{\ \emph{and}\ }\frac{1}{\gamma }+\frac{1}{s}=\frac{1}{r}.
     \end{array}
   \right.
\end{displaymath}  \end{enumerate}

\label{30r}

\end{theorem}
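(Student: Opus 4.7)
The plan is to prove (i)--(iii) in sequence, with (i) and (ii) yielding a $\theta$-version whose $\alpha$-analogue drives the upper bound in (iii). For (i), given a weakly null $(x_t)_{t\in \Gamma_{\zeta,n}.D}\subset B_{E_p}$ with coordinate bounds $\|P_\lambda x_t\|\leqslant a_i$ on each level $i$, I would rescale to $u_t := a_i^{-1}P_\lambda x_t$ (with $u_t=0$ when $a_i=0$), producing a weakly null tree in $B_{X_\lambda}$. Since $D^{p,r}_\lambda = j_\lambda A_\lambda P_\lambda$ factors through norm-one maps, $\theta_{\zeta,m}(D^{p,r}_\lambda)=\theta_{\zeta,m}(A_\lambda)$. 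Theorem \ref{big theorem 2}$(iv)$ and Proposition \ref{downtown abbey} then extract constants $b_1',\ldots,b_n'\geqslant 0$ witnessed by functionals such that at most $m$ of the $b_i'$ exceed $\ee$. Splitting $\sum_{i=1}^n b_i a_i b_i'$ accordingly gives the small-index contribution $\ee\|(b_ia_i)\|_{\ell_1^n}$ and, via H\"older over the at most $m$ large indices, the large-index contribution $Rm^{1/s}\|(b_ia_i)\|_{\ell_r^n}$, producing the claimed eventual estimate.

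For (ii), I would apply (i) simultaneously across all $\lambda\in \Upsilon$ after a common stabilization. Lemma \ref{stabilize} applied to the finite family of continuous maps $f_\lambda(x)=\|P_\lambda x\|$ produces a level map $d$ stabilizing $\|P_\lambda x_{d(t)}\|$ to a value $\rho_i^\lambda\in [0,1]$ on each level $i$ for every $\lambda\in \Upsilon$. Since $x_{d(t)}\in B_{E_p}$, one has $\|(\rho_i^\lambda)_{\lambda\in\Upsilon}\|_{\ell_p(\Upsilon)}\leqslant 1$ for each $i$. Applying (i) with $a_i=\rho_i^\lambda$ for each $\lambda\in \Upsilon$ and intersecting the finitely many resulting eventual sets (which is legitimate by Lemma \ref{stabilize}$(ii)$) yields a single $t$ witnessing all coordinate bounds simultaneously; combining via the $\ell_r(\Upsilon)$-norm of $F_r$ gives the stated inequality.

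For the upper bound in (iii), I would repeat the scheme of (ii) but invoke the $\alpha$-definition in place of $\theta$. For each $\lambda$ and each $\alpha_\lambda'>\alpha_{\xi,\gamma}(A_\lambda)$, the rescaled tree $(P_\lambda x_{d(t)}/\rho_i^\lambda)_t\subset B_{X_\lambda}$ paired with scalars $(b_i\rho_i^\lambda)_{i=1}^n$ gives an eventual set of $t$ with $\|A_\lambda\sum_i b_iP_\lambda z_i^{d(t)}\|\leqslant \alpha_\lambda'\|(b_i\rho_i^\lambda)_i\|_{\ell_\gamma^n}$. Intersecting over $\lambda\in\Upsilon$ and summing via $\ell_r(\Upsilon)$ yields
\[
\inf_{t\in MAX(\Gamma_{\xi,n}.D)} \|D^{p,r}_\Upsilon \textstyle\sum_i b_i z_i^t\|_{F_r} \leqslant \bigl\|(\alpha_\lambda'\|(b_i\rho_i^\lambda)_i\|_{\ell_\gamma^n})_{\lambda\in \Upsilon}\bigr\|_{\ell_r(\Upsilon)},
\]
and Fact \ref{f2}, applied with $\alpha_\lambda'$ as $\alpha_i$, $b_i$ as $b_j$, and $\rho_i^\lambda$ as $a_{ij}$ (the column constraint $\|(\rho_i^\lambda)_\lambda\|_{\ell_p(\Upsilon)}\leqslant 1$ being exactly what is available), bounds the right-hand side by $\max_\lambda \alpha_\lambda'$ when $\gamma\leqslant r$ and by $\|(\alpha_\lambda')_\lambda\|_{\ell_s(\Upsilon)}$ when $\gamma>r$. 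Letting $\alpha_\lambda'\downarrow \alpha_{\xi,\gamma}(A_\lambda)$ produces the upper bound in both cases.

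The lower bound splits into two cases. When $\gamma\leqslant r$, it is immediate: any weakly null tree in $B_{X_\lambda}$ embeds isometrically into $B_{E_p}$ via $X_\lambda\hookrightarrow E_p$, and $D^{p,r}_\Upsilon$ restricts to $A_\lambda$ after composition with the isometric inclusion $Y_\lambda\hookrightarrow F_r$, giving $\alpha_{\xi,\gamma,n}(D^{p,r}_\Upsilon)\geqslant \alpha_{\xi,\gamma,n}(A_\lambda)$ for every $\lambda\in \Upsilon$ and hence the $\ell_\infty$ estimate. When $\gamma>r$, the principal obstacle, I would enumerate $\Upsilon=\{\lambda_1,\ldots,\lambda_m\}$, fix $\alpha_{\lambda_k}'<\alpha_{\xi,\gamma}(A_{\lambda_k})$, choose a common $n$ and (via Remark \ref{super remark}) a common directed set $D$ furnishing for each $k$ a weakly null $(y_t^{\lambda_k})_{t\in \Gamma_{\xi,n}.D}\subset B_{X_{\lambda_k}}$, non-negative scalars $(a_i^{\lambda_k})_{i=1}^n\in S_{\ell_\gamma^n}$, and constants $b_i^{\lambda_k}\geqslant 0$ together with functionals witnessing $\sum_i a_i^{\lambda_k}b_i^{\lambda_k}>\alpha_{\lambda_k}'$ via Theorem \ref{big theorem}$(i)$. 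I would then splice these $m$ trees into a single weakly null tree in $B_{E_p}$ of height $nm$ by identifying $\Gamma_{\xi,n}.D$ with a subtree of $\Gamma_{\xi,nm}.D$ above maximal elements of the preceding $(k-1)n$ levels, placing the $k$th tree at levels $(k-1)n+1,\ldots,kn$ inside the $\lambda_k$-coordinate of $E_p$. For weights $(c_k)_{k=1}^m$ of non-negative reals in $B_{\ell_\gamma^m}$, the scalars $d_{(k-1)n+i}:=c_ka_i^{\lambda_k}$ satisfy $\|(d_j)\|_{\ell_\gamma^{nm}}\leqslant \|(c_k)\|_{\ell_\gamma^m}$, and by construction the $\lambda_k$-coordinate of $D^{p,r}_\Upsilon\sum_j d_jz_j^t$ has norm exceeding $c_k\alpha_{\lambda_k}'$, whence
\[
\alpha_{\xi,\gamma,nm}(D^{p,r}_\Upsilon)\geqslant \sup_{\|(c_k)\|_{\ell_\gamma^m}\leqslant 1}\|(c_k\alpha_{\lambda_k}')_k\|_{\ell_r(\Upsilon)}=\|(\alpha_{\lambda_k}')_{k=1}^m\|_{\ell_s(\Upsilon)},
\]
the last equality being the H\"older-duality identity for $1/\gamma+1/s=1/r$. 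Letting $\alpha_{\lambda_k}'\uparrow \alpha_{\xi,\gamma}(A_{\lambda_k})$ finishes the proof. The principal technical difficulty throughout is tree bookkeeping: synchronizing the finitely many stabilizations in (ii) and in (iii)'s upper bound, and verifying that the spliced tree in the lower bound is genuinely weakly null in $E_p$ rather than merely in each $X_{\lambda_k}$ separately.
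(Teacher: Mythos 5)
Your proposal follows the paper's proof essentially step for step: (i) by splitting indices into those with large and small witnessing constants and bounding the number of large ones by $m$; (ii) by stabilizing the coordinate norms $\|P_\lambda x_t\|$ over the finite set $\Upsilon$ and intersecting the resulting eventual sets from (i); the upper bound in (iii) by the same stabilization run with the $\alpha$-definition in place of $\theta$, followed by Fact \ref{f2}; and the lower bound by splicing coordinate-supported trees of height $n$ into one of height $nm$ with outer weights chosen via the duality in Fact \ref{f1}. The one misstep is in (i): Proposition \ref{downtown abbey} bounds the number of indices with large witnessing constants only under a hypothesis of the form $Sz(K,\ee)\leqslant \omega^\zeta m$, whereas your hypothesis is $\theta_{\zeta,m}(D^{p,r}_\lambda)\leqslant \ee$; converting between the two via Remark \ref{alternative} costs constant factors and would spoil the precise bound $\ee\|(b_ia_i)_{i=1}^n\|_{\ell_1^n}+Rm^{1/s}\|(b_ia_i)_{i=1}^n\|_{\ell_r^n}$. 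The fix is what the paper does: if $m$ of the rescaled constants $c_i/a_i$ exceeded $\ee$, restricting the tree to those $m$ levels and averaging would exhibit $\theta_{\zeta,m}(D^{p,r}_\lambda)>\ee$ directly via Theorem \ref{big theorem 2}$(iv)$, so fewer than $m$ indices are large, and the rest of your computation (including the bound $c_i/a_i\leqslant R$ needed for the H\"older step over the large indices) goes through unchanged.
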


Before we prove the theorem, we prove a single instance of a stabilization technique, variants of which will be used several times in the remainder of this work.   

\begin{lemma}  If $\alpha<\alpha_{\xi, \gamma, n}(D^{p,r}_\Upsilon)$, there exist a sequence of  non-negative scalars $(b_i)_{i=1}^n \in S_{\ell_\gamma(\Upsilon)}$, real numbers $c_1, \ldots, c_n$ such that $\sum_{i=1}^n b_i c_i>\alpha$, vectors $( a^i_\lambda)_{\lambda\in \Upsilon}$, $i=1, \ldots n$ having positive coefficients such that  $\|( a^i_\lambda)_{\lambda\in\Upsilon}\|_{\ell_p(\Upsilon)}\leqslant 1$,  a directed set $D$,  a weakly null collection $(x_t)_{t\in \Gamma_{\xi,n}.D}\subset B_{E_p}$, and functionals $(y^*_t)_{t\in MAX(\Gamma_{\xi,n}.D)}\subset B_{F_r^*}$ such that for each $1\leqslant i\leqslant n$, \begin{enumerate}[(i)]\item for each $\lambda\in \Upsilon$ and $t\in \Lambda_{\xi,n,i}.D$, $\|P_\lambda x_t\|\leqslant a^i_\lambda$,   \item for each $\Lambda_{\xi,n,i}.D\ni s\leqslant t\in MAX(\Gamma_{\xi,n}.D)$, $\text{\emph{Re\ }}y^*_t(D^{p,r}_\Upsilon x_s)\geqslant c_i$, \item for any $\ee_\lambda>\alpha_{\xi, \gamma ,n}(A_\lambda)\|(a^i_\lambda b_i)_{i=1}^n\|_{\ell_\gamma^n}$, $$\{t\in MAX(\Gamma_{\xi, \gamma, n}.D): \|D^{p,r}_\lambda \sum_{j=1}^n b_j z^t_j\|\leqslant \ee_\lambda\}$$ is eventual.  \end{enumerate}

\label{kimmy}
\end{lemma}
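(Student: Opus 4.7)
The plan is to chain Theorem \ref{big theorem}(i) with the simultaneous stabilization recipe described immediately before Theorem \ref{big theorem} (itself a packaging of Lemma \ref{stabilize}). First, I would apply Theorem \ref{big theorem}(i) to the weak$^*$-compact set $K=(D^{p,r}_\Upsilon)^* B_{F_r^*}$, together with the density remark that allows restriction to strictly positive coefficients, obtaining positive scalars $(b_i)_{i=1}^n\in S_{\ell_\gamma^n}$, a directed set $D$, a weakly null $(x_t)_{t\in \Gamma_{\xi,n}.D}\subset B_{E_p}$, functionals $(y^*_t)_{t\in MAX(\Gamma_{\xi,n}.D)}\subset B_{F_r^*}$ coming from Hahn--Banach applied to $D^{p,r}_\Upsilon x_s$, and real numbers $c_1,\ldots,c_n$ with $\text{Re\ }y^*_t(D^{p,r}_\Upsilon x_s)\geqslant c_i$ on level $i$ and $\sum_i b_ic_i>\alpha_0$, where $\alpha<\alpha_0<\alpha_{\xi,\gamma,n}(D^{p,r}_\Upsilon)$ is an intermediate threshold chosen to leave room for later rescalings.

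Second, I would stabilize finitely many bounded real-valued functions simultaneously on the tree. Specifically, package the vertex-functions $t\mapsto(\|P_\lambda x_t\|)_{\lambda\in\Upsilon}\in[0,1]^\Upsilon$ and the MAX-functions $t\mapsto(G_\lambda(t))_{\lambda\in\Upsilon}\in[0,R]^\Upsilon$, where $G_\lambda(t):=\|D^{p,r}_\lambda\sum_j b_j z^t_j\|$ and $R:=\sup_\lambda\|A_\lambda\|$, into a single compact-valued function and invoke the remark following Lemma \ref{stabilize}. This yields a level map $d$ with extension $e$ and fixed targets $v^i_\lambda\in[0,1]$, $w_\lambda\in[0,R]$ such that, after relabeling $x_{d(t)}\mapsto x_t$ and $y^*_{e(t)}\mapsto y^*_t$, one has $\bigl|\,\|P_\lambda x_t\|-v^i_\lambda\bigr|<\delta$ on level $i$ and $\bigl|\,G_\lambda(t)-w_\lambda\bigr|<\delta$ on $MAX(\Gamma_{\xi,n}.D)$, for any prescribed $\delta>0$. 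Set $a^i_\lambda=v^i_\lambda+\delta$: this produces strictly positive upper bounds for $\|P_\lambda x_t\|$ on level $i$, and since $(v^i_\lambda)_{\lambda\in\Upsilon}\in B_{\ell_p(\Upsilon)}$, rescaling every $x_t$ by a single factor $\beta<1$ close to $1$ absorbs the $\delta|\Upsilon|^{1/p}$ excess and forces $\|(a^i_\lambda)_{\lambda\in\Upsilon}\|_{\ell_p(\Upsilon)}\leqslant 1$, while $c_i$ becomes $\beta c_i$ with $\sum_i b_i\beta c_i>\alpha$ provided $\delta$ is small enough relative to $\alpha_0-\alpha$. The preservation argument before Theorem \ref{big theorem} guarantees that the functional inequality in (ii) survives the level-map refinement, so (i) and (ii) are secured.

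For (iii), fix $\lambda\in\Upsilon$ and normalize via $v^\lambda_s:=P_\lambda x_s/a^j_\lambda$ for $s\in\Lambda_{\xi,n,j}.D$; this lies in $B_{X_\lambda}$ because $a^j_\lambda$ is an upper bound for $\|P_\lambda x_s\|$ on level $j$, and the family $(v^\lambda_t)_{t\in\Gamma_{\xi,n}.D}$ is weakly null. Pushing the coefficients through the convex combinations gives $P_\lambda\sum_j b_j z^t_j=\sum_j(a^j_\lambda b_j)\,\tilde z^{\lambda,t}_j$, so $G_\lambda(t)=\|A_\lambda\sum_j(a^j_\lambda b_j)\tilde z^{\lambda,t}_j\|$. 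The definition of $\alpha_{\xi,\gamma,n}(A_\lambda)$ applied to $(v^\lambda_t)$ with coefficients $(a^j_\lambda b_j)_{j=1}^n$ yields $\inf_t G_\lambda(t)\leqslant\alpha_{\xi,\gamma,n}(A_\lambda)\|(a^j_\lambda b_j)_{j=1}^n\|_{\ell_\gamma^n}$, and combined with the stabilization estimate $G_\lambda(t)\leqslant w_\lambda+\delta\leqslant\inf_tG_\lambda+2\delta$ this shows that $G_\lambda(t)$ does not exceed the announced bound once the initial $a^i_\lambda$ are inflated by an additional $O(\delta)$ chosen so that $\alpha_{\xi,\gamma,n}(A_\lambda)\|(a^j_\lambda b_j)_{j=1}^n\|_{\ell_\gamma^n}$ dominates the $2\delta$-slack. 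For any $\ee_\lambda$ strictly above this bound the set in (iii) then coincides with all of $MAX(\Gamma_{\xi,n}.D)$ after refinement, hence is trivially eventual.

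The principal obstacle is the coupled bookkeeping: shrinking $\beta$ reduces $c_i$, inflating $a^i_\lambda$ enlarges the $\ell_p(\Upsilon)$-norm of $(a^i_\lambda)_{\lambda\in\Upsilon}$, and the stabilization of each $G_\lambda$ carries an additive $\delta$-error that must be absorbed into the bound of (iii). Choosing $\delta$ sufficiently small relative to $\alpha_0-\alpha$ and fixing the inflation of $a^i_\lambda$ before invoking the definition of $\alpha_{\xi,\gamma,n}(A_\lambda)$ resolves the tension, but the ordering of the steps must be written out with care so that no earlier inequality is invalidated.
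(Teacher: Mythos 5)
Your proposal follows essentially the same route as the paper: extract $(b_i)$, $(c_i)$, the tree and the functionals from Theorem \ref{big theorem}$(i)$, shrink the vectors by a factor close to $1$ so that strictly positive level-wise upper bounds $(a^i_\lambda)_{\lambda\in\Upsilon}$ for $\|P_\lambda x_t\|$ can be stabilized inside $B_{\ell_p(\Upsilon)}$ via Lemma \ref{stabilize}, and then obtain $(iii)$ by renormalizing $P_\lambda x_s/a^j_\lambda\in B_{X_\lambda}$ and feeding the coefficients $(a^j_\lambda b_j)_j$ into the definition of $\alpha_{\xi,\gamma,n}(A_\lambda)$. The paper packages the first two steps slightly differently (a fixed factor $\phi$ and a finite set $F\subset B_{\ell_p(\Upsilon)}$ of positive vectors dominating $\phi B_{\ell_p(\Upsilon)}$ coordinatewise, so that Lemma \ref{stabilize} applies to a finite-valued function directly), but this is the same idea as your $\delta$-stabilization plus inflation plus rescaling.

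The one step that does not survive scrutiny is your treatment of $(iii)$. You claim that, after stabilizing $G_\lambda(t)=\|D^{p,r}_\lambda\sum_j b_j z^t_j\|$ to within $\delta$ of its infimum, the set in $(iii)$ becomes \emph{all} of $MAX(\Gamma_{\xi,n}.D)$ for every $\ee_\lambda$ strictly above $\alpha_{\xi,\gamma,n}(A_\lambda)\|(a^j_\lambda b_j)_{j}\|_{\ell_\gamma^n}$. This is false for $\ee_\lambda$ within $2\delta$ of that threshold ($\delta$ having been fixed earlier to protect $\sum_i b_ic_i>\alpha$), and your proposed repair --- inflating the $a^i_\lambda$ by $O(\delta)$ so that the threshold ``dominates the $2\delta$-slack'' --- cannot work in general: if $\alpha_{\xi,\gamma,n}(A_\lambda)=0$ the threshold is $0$ no matter how the $a^i_\lambda$ are inflated, and in any case the quantifier in $(iii)$ ranges over all $\ee_\lambda$ above the threshold computed from the \emph{final} $a^i_\lambda$. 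None of this is needed: $(iii)$ asserts only eventuality, and that is immediate from Theorem \ref{big theorem}$(i)$ applied to the weakly null collection $(P_\lambda x_s/a^j_\lambda)\subset B_{X_\lambda}$ --- if the complementary set were cofinal for some $\ee_\lambda$ above the threshold, positive homogeneity would force $\alpha_{\xi,\gamma,n}(A_\lambda)>\ee_\lambda/\|(a^j_\lambda b_j)_j\|_{\ell_\gamma^n}$, a contradiction. Drop the stabilization of $G_\lambda$ entirely and the argument closes.
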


\begin{proof} We first fix $0<\phi<1$ such that $\alpha<\phi \alpha_{\xi, \gamma, n}(D^{p,r}_\Upsilon)$.  We fix $(x'_t)_{t\in \Gamma_{\xi,n}.D}\subset B_{E_p}$, $(y^{',*}_t)_{t\in MAX(\Gamma_{\xi,n}.D)}\subset B_{F_r^*}$, $(b_i)_{i=1}^n\in S_{\ell_\gamma^n}$, non-negative scalars $(c_i)_{i=1}^n$ such that $\sum_{i=1}^n b_i c_i>\alpha/\phi$ and for any $1\leqslant i\leqslant n$ and $\Lambda_{\xi,n,i}.D\ni s\leqslant t\in MAX(\Gamma_{\xi,n}.D)$, $\text{Re\ }y^{',*}_t(x'_s) \geqslant c_i$.   Now fix a finite subset $F$ of $B_{\ell_p(\Upsilon)}$ such that for every $(a'_\lambda)_{\lambda\in \Upsilon}\in \phi B_{\ell_p(\Upsilon)}$, there exists $(a_\lambda)_{\lambda\in \Upsilon}\in F$ such that $|a'_\lambda|\leqslant a_\lambda$, and such that for each $(a_\lambda)_{\lambda\in \Upsilon}\in F$ and $\lambda\in \Upsilon$, $a_\lambda>0$.       Now define $f:\Gamma_{\xi,n}.D\to F$ by letting $f(t)$ be any member $(a_\lambda)_{\lambda\in \Upsilon}$ of $F$ such that for each $\lambda\in \Lambda$, $\phi \|P_\lambda x_t'\|\leqslant a_\lambda$.   By Lemma \ref{stabilize}, there exist a level map $d:\Gamma_{\xi,n}.D\to \Gamma_{\xi,n}.D$, an extension $e$ of $d$, and for each $1\leqslant i\leqslant n$ some $(a^i_\lambda)_{\lambda\in \Upsilon}\in F$ such that $\phi\|P_\lambda x_{d(t)}'\|\leqslant a^i_\lambda$ for all $t\in \Lambda_{\xi,n,i}.D$ and $\lambda\in\Upsilon$.  Now let $x_t=\phi x'_{d(t)}$ and $y^*_t=y^{',*}_{e(t)}$ for $t\in MAX(\Gamma_{\xi,n}.D)$.     We now note that for any $\ee_\lambda> \alpha_{\xi, \gamma ,n}(A_\lambda)\|(a^i_\lambda b_i)_{i=1}^n\|_{\ell_\gamma^n}$, \begin{align*} \{t\in MAX(\Gamma_{\xi, \gamma, n}.D) & : \|D^{p,r}_\lambda \sum_{j=1}^n b_j z^t_j\|\leqslant \ee_\lambda\} \\ & =\{t\in MAX(\Gamma_{\xi, \gamma, n}.D): \|D^{p,r}_\lambda \sum_{j=1}^n \sum_{t\geqslant s\in \Lambda_{\xi,n,j}.D} b_j a^j_\lambda \mathbb{P}_{\xi,n}(s) \frac{P_\lambda x_s}{a^j_\lambda}\|\leqslant \ee_\lambda\}  \end{align*} must be eventual.

\end{proof}

\begin{proof} [Proof of Theorem \ref{30r}]$(i)$  If it were not true, after relabeling, we can assume there exist $(y^*_t)_{t\in MAX(\Gamma_{\zeta, n}.D)}\subset B_{F^*_r}$, $c_1, \ldots, c_n\in [0,\infty)$ such that $\sum_{i=1}^n b_i c_i > \ee\|(b_i a_i)_{i=1}^n\|_{\ell_1^n}+ R m^{1/s}\|(b_i a_i)_{i=1}^n\|_{\ell_r^n}$ and for each $1\leqslant i\leqslant n$ and $\Lambda_{\zeta, n, i}.D\in s\leqslant t\in MAX(\Gamma_{\zeta,n}.D)$, $\text{Re\ }y^*_t(P_\lambda x_s)\geqslant c_i$.    Now let $T=\{i\leqslant n: c_i/a_i>\ee\}$ and $S=\{1, \ldots, n\}\setminus T$.   We note that $|T|<m$. If it were not so, we could find some $l_1<\ldots<l_m$, $l_i\in T$, and a level map $d:\Gamma_{\zeta, m}.D\to \Gamma_{\zeta, n}.D$ such that $d(\Lambda_{\zeta, m, i}.D)\subset \Lambda_{\zeta, n, l_i}.D$ and such that $(x_{d(t)})_{t\in \Gamma_{\zeta, m}.D}$ is weakly null.   Now if $x'_t=a_{l_i}^{-1}x_{d(t)}$ for $t\in \Lambda_{\xi, m, i}.D$, $(x'_t)_{t\in \Gamma_{\zeta, m}.D}$ is weakly null and if $e$ is any extension of $d$, it $$\ee\geqslant \theta_{\zeta, m}(D^{p,r}_\lambda)>\inf_{t\in MAX(\Gamma_{\zeta, m}.D)} m^{-1}\text{Re\ }y^*_{e(t)}\sum_{i=1}^m \sum_{t\geqslant s\in \Lambda_{\zeta, m,i}.D} \mathbb{P}_{\zeta, m}(s) x_s \geqslant \frac{\min \{c_i/a_i: i\in T\}}{m}>\ee,$$ a contradiction. Note also that for any $1\leqslant i\leqslant n$, if we fix $\Lambda_{\zeta, n,i}.D\ni s\leqslant t\in MAX(\Gamma_{\zeta, n}.D)$, $$c_i/a_i \leqslant \text{Re\ }y^*_t(D^{p,r}_\lambda x_s/a_i) \leqslant \|y^*_t\|\|A_\lambda\|\|P_\lambda x_s\|/a_i \leqslant R.$$

Then \begin{align*} \ee\|(b_i a_i)_{i=1}^n\|_{\ell_1^n}+ R m^{1/s}\|(b_i a_i)_{i=1}^n\|_{\ell_r^n} & < \sum_{i=1}^n b_i c_i = \sum_{i=1}^n a_i b_i \frac{c_i}{a_i} \\ & = \sum_{i\in S} a_ib_i\frac{c_i}{a_i} + \sum_{i\in T} a_ib_i\frac{c_i}{a_i} \\ & \leqslant \ee\sum_{i=1}^n a_ib_i + R\sum_{i\in T} a_ib_i \\ & \leqslant \ee\|(a_ib_i)_{i=1}^n\|_{\ell_1^n}+ R\|(a_ib_i)_{i\in T}\|_{\ell_1(T)} \\ & \leqslant \ee\|(a_ib_i)_{i=1}^n\|_{\ell_1^n}+ Rm^{1/s}\|(a_ib_i)_{i\in T}\|_{\ell_r(T)} \\ & \leqslant \ee\|(a_ib_i)_{i=1}^n\|_{\ell_1^n}+ Rm^{1/s}\|(a_ib_i)_{i=1}^n\|_{\ell_r^n},\end{align*} a contradiction.

 $(ii)$ Fix a sequence $(b_i)_{i=1}^n$ of non-negative scalars and $\ee_1> \ee\|(b_i)_{i=1}^n\|_{\ell_1^n}+ Rm^{1/s}\|(b_i)_{i=1}^n\|_{\ell_r^n}$. To obtain a contradiction, we assume that there exist a directed set $D$ and a weakly null collection $(x_t)_{t\in \Gamma_{\zeta,n}.D}\subset B_{E_p}$ such that $$\{t\in MAX(\Gamma_{\zeta,n}.D): \|D^{p,r}_\Upsilon\sum_{i=1}^n b_i z^t_i\|>\ee_1\}$$ is cofinal, in which case we may assume that there exist $\inf_{t\in MAX(\Gamma_{\zeta,n}.D)} \|D^{p,r}_\Upsilon\sum_{i=1}^n b_i z^t_i\|\geqslant \ee_1$ and $c_1, \ldots, c_n\in [0, R]$, $(y^*_t)_{t\in MAX(\Gamma_{\zeta,n}.D)}\subset B_{F_r^*}$ such that $\ee_1<\sum_{i=1}^n b_i c_i$ and for any $1\leqslant i\leqslant n$ and $\Lambda_{\zeta,n,i}.D\ni s\leqslant t\in MAX(\Gamma_{\zeta,n}.D)$, $\text{Re\ }y^*_t(x_s) = \text{Re\ }y^*_t(P_\Upsilon x_s) \geqslant c_i$. Fix $0<\phi<1$ such that $\ee_1<\sum_{i=1}^n \phi b_i c_i$.  Arguing as in Lemma \ref{kimmy}, we may stabilize, relabel, and assume that there exists $(a_{i\lambda})_{i=1, \lambda\in \Upsilon}^n \in B_{\ell_\infty^n(\ell_p(\Upsilon))}$ such that for each $1\leqslant i\leqslant n$, each $t\in \Lambda_{\zeta, n,i}.D$, and each $\lambda\in \Upsilon$, $\phi\|P_\lambda x_t\|\leqslant a_{i \lambda}$.  Now fix $\ee'>0$ such that $$\ee_1- n\ee' > \ee\|(b_i)_{i=1}^n\|_{\ell_1^n}+ Rm^{1/s}\|(b_i)_{i=1}^n\|_{\ell_r^n}.$$  For each $\lambda\in \Upsilon$, $$\mathcal{E}_\lambda:=\{t\in MAX(\Gamma_{\zeta, n}.D): \|D^{p,r}_\lambda\sum_{i=1}^n b_i \phi z^t_i\| \leqslant \ee'+ \ee \|(b_i a_{ij})_{i=1}^n\|_{\ell_1^n}+ Rm^{1/s}\|(b_i a_{ij})_{i=1}^n\|_{\ell_r^n}\}$$ is eventual by (i).   Then if $t\in \cap_{\lambda\in \Upsilon} \mathcal{E}_\lambda$, by Minkowski's inequality, \begin{align*} \ee_1 & \leqslant \|D^{p,r}_\Upsilon \sum_{i=1}^n b_i z^t_i\| \leqslant n\ee' + \ee\bigl\|\bigl(\|(b_i a_{i\lambda})_{i=1}^n\|_{\ell_1^n}\bigr)_{\lambda\in \Upsilon} \bigr\|_{\ell_r(\Upsilon)} + Rm^{1/s} \bigl\|\bigl(\|(b_i a_{i\lambda})_{i=1}^n\|_{\ell_r^n}\bigr)_{\lambda\in\Upsilon}\bigr\|_{\ell_r(\Upsilon)} \\ & \leqslant n \ee'  + \ee\bigl\|\bigl(b_i \|(a_{i\lambda})_{\lambda\in \Upsilon}\|_{\ell_r(\Upsilon)}\bigr)_{i=1}^n\bigr\|_{\ell_1^n} + Rm^{1/s}\bigl\|\bigl(b_i \|(a_{i\lambda})_{\lambda\in \Upsilon}\bigr)_{i=1}^n\|_{\ell_r(\Upsilon)}\bigr\|_{\ell_r^n} \\ & \leqslant  n \ee'  + \ee\|(b_i )_{i=1}^n\|_{\ell_1^n} + Rm^{1/s}\|(b_i)_{i=1}^n\|_{\ell_r^n}, \end{align*} a contradiction.

$(iii)$ If  $\alpha< \alpha_{\xi,\gamma, n}(D^{p,r}_\Upsilon)$, we may fix $(x_t)_{t\in \Gamma_{\xi,n}.D}\subset B_{E_p}$, $(a^i_\lambda)_{\lambda\in \Upsilon}\in B_{\ell_p^n}$, $i=1, \ldots, n$,  $(x^*_t)_{t\in MAX(\Gamma_{\xi,n}.D)}\subset B_{F_r^*}$ to satisfy the conclusions of Lemma \ref{kimmy}.   Now for each $\lambda\in \Upsilon$, fix $\ee_\lambda> \alpha_{\xi, \gamma ,n}(A_\lambda)\|(a^i_\lambda b_i)_{i=1}^n\|_{\ell_\gamma^n}$ and $t\in MAX(\Gamma_{\xi,n}.D)$ such that for each $\lambda\in \Upsilon$, $\|D^{p,r}_\lambda \sum_{i=1}^n b_i z^t_i\| \leqslant \ee_\lambda$. For each $\lambda\in \Upsilon$, the set $\mathcal{E}_\lambda$ of $t\in MAX(\Gamma_{\xi,n}.D)$ satisfying this inequality is eventual, so $\cap_{\lambda\in \Upsilon} \mathcal{E}_\lambda\neq \varnothing$.    Then \begin{align*} \alpha & <\inf_{s\in MAX(\Gamma_{\xi,n}.D)} \|D^{p,r}_\Upsilon \sum_{i=1}^n b_i z^s_i\| \leqslant  \|D^{p,r}_\Upsilon\sum_{i=1}^n b_i z^t_i\|  = \Biggl\|\Bigl(\|D^{p,r}_\lambda \|\sum_{i=1}^n b_i z^t_i\|\Bigr)_{\lambda\in \Upsilon} \Biggr\|_{\ell_r^\Upsilon} \\ & \leqslant \|(\ee_\lambda)_{\lambda\in \Upsilon}\|_{\ell_r(\Upsilon)}.\end{align*}  Since we may do this for any $\ee_\lambda>\alpha_{\xi, \gamma ,n}(A_\lambda)\|(a^i_\lambda b_i)_{i=1}^n\|_{\ell_\gamma^n}$, we deduce that $$\alpha \leqslant \Bigl\|\Bigl(\bigl\|(\alpha_{\xi, \gamma, n}(A_\lambda) a^i_\lambda b_i)_{i=1}^n\bigr\|_{\ell_\gamma^n} \Bigr)_{\lambda\in \Upsilon}\Bigr\|_{\ell_r(\Upsilon)}.$$  By Fact \ref{f2}, the last quantity cannot exceed $\|(\alpha_{\xi, \gamma, n}(A_\lambda))_{\lambda\in \Upsilon}\|_{\ell_\infty(\Upsilon)}$ if either $r=0$ or $\gamma\leqslant r$, and cannot exceed $\|(\alpha_{\xi, \gamma, n}(A_\lambda))_{\lambda\in \Upsilon}\|_{\ell_s(\Upsilon)}$ if $1\leqslant r<\gamma$ and $1/\gamma+1/s=1/r$. Here we are using the fact that $\ell_0(\Upsilon)=\ell_\infty(\Upsilon)$ since $\Upsilon$ is finite. 

For convenience, let us assume $\Upsilon = \{1, \ldots, m\}$. Fix $0<\phi<\varphi<1$ and $k\in \nn$ such that for each $i\in \Upsilon$, $\alpha_{\xi, \gamma, k}(A_i)\geqslant \varphi \alpha_{\xi, \gamma}(A_i)$.    Let $D$ be a weak neighborhood basis at $0$ in $X$. For each $i\in \Upsilon$, we may fix $(b^i_j)_{j=1}^k\in S_{\ell_\gamma^k}$ and $(x_t^i)_{t\in \Gamma_{\xi,k}.D}\subset B_{E_p}$ such that $x_t^i=P_i x^i_t$ for all $t\in \Gamma_{\xi,k}.D$ and $$\inf_{t\in MAX(\Gamma_{\xi,k}.D)} \|D^{p,r}_i \sum_{j=1}^k \sum_{t\geqslant s\in \Lambda_{\xi,k,j}.D} b_j^i \mathbb{P}_{\xi,k}(s) x^i_s\| \geqslant \phi\alpha_{\xi, \gamma}(A_i).$$    By Fact \ref{f1}, there exists $(c_i)_{i\in \Upsilon}\in S_{\ell_\gamma^m}$ such that $$\|(\alpha_{\xi, \gamma}(A_i) c_i)_{i=1}^m \|_{\ell_r^m}= \|(\alpha_{\xi, \gamma}(A_i))_{i=1}^m\|_{\ell_s^m}.$$  Now for $1\leqslant i\leqslant m$ and $1\leqslant j\leqslant k$, let $d_{(i-1)k+j}=c_ib^i_j$ and note that $\|(d_i)_{i=1}^{mk}\|_{\ell_\gamma^{mk}}=1$. For $1\leqslant i\leqslant m$,  $1\leqslant j\leqslant k$, and  $t\in \Lambda_{\xi, mk, (i-1)k+j}.D$, we may write $t=s_1\smallfrown (\omega^\xi(k(m-i))+s)$, where $s_1\in MAX(\Lambda_{\xi, mk, (i-1)k}.D)$ if $i>1$ and $s_1=\varnothing$ if $i=1$, and where $s\in \Gamma_{\xi, k}.D$.   We then let $x_t=u^i_s$.  Then $(x_t)_{t\in \Gamma_{\xi, mk}.D}\subset B_{E_p}$ is weakly null and \begin{align*} \inf_{t\in MAX(\Gamma_{\xi, mk}.D)} \|D^{p,r}_\Upsilon \sum_{i=1}^{mk} d_i z^t_i\| & \geqslant \phi\|(c_i\alpha_{\xi, \gamma}(A_i))_{i=1}^m\|_{\ell_r^m} = \phi\|(\alpha_{\xi, \gamma}(A_i))_{i=1}^m\|_{\ell^s_m}. \end{align*}    This yields that $\alpha_{\xi, \gamma}(D^{p,r}_\Upsilon)\geqslant \phi\|(\alpha_{\xi, \gamma}(A_i))_{i=1}^m\|_{\ell^s_m}$.  Since this holds for any $0<\phi<1$, we deduce that $\alpha_{\xi, \gamma}(D^{p,r}_\Upsilon)\geqslant \|(\alpha_{\xi, \gamma}(A_i))_{i=1}^m\|_{\ell_s^m}$.

\end{proof}

In the following, if for each $\lambda\in \Lambda$, $\alpha_\lambda\in [0, \infty]$, we let $\|(\alpha_\lambda)_{\lambda\in \Lambda}\|_{\ell_s(\Lambda)}=\infty$ if either $\alpha_\lambda=\infty$ for some $\lambda\in \Lambda$ or if $\alpha_\lambda\in [0, \infty)$ for all $\lambda$ and $(\alpha_\lambda)_{\lambda\in \Lambda}\notin \ell_s(\Lambda)$.

\begin{corollary} \begin{enumerate}[(i)]\item $D^{1,1}_\Lambda$ is Asplund if and only if $(A_\lambda)_{\lambda\in \Lambda}\in c_0(\Lambda)$, in which case $$Sz(D^{1,1}_\Lambda)= \sup_{\lambda\in\Lambda} Sz(A_\lambda)$$ and for any $1\leqslant \gamma\leqslant \infty$, $\alpha_{\xi, \gamma}(D^{1,1}_\Lambda)= \|(\alpha_{\xi, \gamma}(A_\lambda))_{\lambda\in \Lambda}\|_{\ell_s(\Lambda)}$, where $1/\gamma+1/s=1$.

\item $D^{\infty, \infty}_\Lambda$ is Asplund if and only if $(A_\lambda)_{\lambda\in \Lambda}\in c_0(\Lambda)$, in which case $$Sz(D^{\infty, \infty}_\Lambda)=\sup_{\lambda\in \Lambda} Sz(A_\lambda)$$ and for any $1\leqslant \gamma\leqslant \infty$, $\alpha_{\xi, \gamma}(D^{\infty, \infty}_\Lambda) = \|(\alpha_{\xi, \gamma}(A_\lambda))_{\lambda\in \Lambda}\|_{\ell_\infty(\Lambda)}$. 

\end{enumerate}

\label{tgs}

\end{corollary}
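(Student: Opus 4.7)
The plan is to treat (i) and (ii) in parallel, establishing Asplundness, the Szlenk index formula, and the $\alpha_{\xi,\gamma}$ identity in turn.

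For the Asplund characterization, suppose first $(\|A_\lambda\|)_\lambda \notin c_0(\Lambda)$; fix $\delta > 0$, distinct $\lambda_1, \lambda_2, \ldots$ with $\|A_{\lambda_n}\| > \delta$, and unit $x_n \in X_{\lambda_n}$ with $\|A_{\lambda_n} x_n\| > \delta$. In case (i) the closed span of $(x_n)$ in $E_1$, viewed coordinatewise, is isometric to $\ell_1$ and $D^{1,1}_\Lambda$ is bounded below by $\delta$ on it. In case (ii) the coordinate map $T:\ell_\infty \to E_\infty$ sending $(a_n)$ to the vector with $\lambda_n$-coordinate $a_n x_n$ is an isometric embedding (since $\|T(a)\|_{E_\infty}=\sup_n |a_n|$), and $D^{\infty,\infty}_\Lambda \circ T$ is bounded below by $\delta$; since $\ell_\infty$ contains $\ell_1$ isomorphically, one recovers an $\ell_1$-subspace on which $D^{\infty,\infty}_\Lambda$ is bounded below. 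In both cases Remark \ref{nonasp} gives $\theta_{\xi,1}(D^{p,p}_\Lambda) > 0$ for every ordinal $\xi$, hence $Sz(D^{p,p}_\Lambda)=\infty$. Conversely, when $(\|A_\lambda\|) \in c_0(\Lambda)$, the net $(D^{p,p}_\Upsilon)_{\Upsilon \text{ finite}}$ converges to $D^{p,p}_\Lambda$ in operator norm.

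For the Szlenk identity, the factorization $A_\lambda = Q_\lambda D^{p,p}_\Lambda P_\lambda$ yields the lower bound $Sz(D^{p,p}_\Lambda) \geq \sup_\lambda Sz(A_\lambda)$. By Corollary \ref{convex body}, each $Sz(A_\lambda)=\omega^{\zeta_\lambda}$; setting $\zeta = \sup_\lambda \zeta_\lambda$ places every $A_\lambda$ in $\mathfrak{D}_\zeta$, so by Theorem \ref{big theorem}(ii) one has $\alpha_{\zeta,\gamma}(A_\lambda)=0$ for any fixed $1 < \gamma \leq \infty$. For finite $\Upsilon$, Theorem \ref{30r}(iii) then gives $\alpha_{\zeta,\gamma}(D^{p,p}_\Upsilon)=0$, placing $D^{p,p}_\Upsilon$ in $\mathfrak{D}_\zeta$. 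Combined with the norm convergence $D^{p,p}_\Upsilon \to D^{p,p}_\Lambda$ and the closed ideal property of $\mathfrak{D}_\zeta$ (Theorem \ref{martin}), this forces $D^{p,p}_\Lambda \in \mathfrak{D}_\zeta$ and hence $Sz(D^{p,p}_\Lambda) \leq \omega^\zeta = \sup_\lambda Sz(A_\lambda)$.

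For the $\alpha_{\xi,\gamma}$ identity, the lower bounds come from factorization: directly in case (ii), and after restriction to finite $\Upsilon \subset \Lambda$ via Theorem \ref{30r}(iii) followed by a supremum over $\Upsilon$ in case (i). For the upper bound in case (i) with $1 < \gamma \leq \infty$, I would use the Banach ideal $(\mathfrak{A}_{\xi,\gamma},\mathfrak{a}_{\xi,\gamma})$ of Theorem \ref{ideal}: assuming the right-hand $\ell_s$-sum is finite (otherwise there is nothing to prove), for finite $\Upsilon \subset \Upsilon'$ Theorem \ref{30r}(iii) gives
$$\mathfrak{a}_{\xi,\gamma}(D^{1,1}_{\Upsilon'}-D^{1,1}_\Upsilon) = \|D^{1,1}_{\Upsilon' \setminus \Upsilon}\| + \|(\alpha_{\xi,\gamma}(A_\lambda))_{\lambda \in \Upsilon' \setminus \Upsilon}\|_{\ell_s(\Upsilon' \setminus \Upsilon)},$$
and both summands vanish as $\Upsilon$ exhausts $\Lambda$, so the $\mathfrak{a}_{\xi,\gamma}$-Cauchy net $(D^{1,1}_\Upsilon)$ has ideal limit necessarily equal to $D^{1,1}_\Lambda$, and its $\alpha_{\xi,\gamma}$-values pass to the limit. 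In case (ii) I would argue directly from $\|D^{\infty,\infty}_\Lambda y\|_{F_\infty} = \sup_\lambda \|A_\lambda P_\lambda y\|$: given weakly null $(x_t)_{t \in \Gamma_{\xi,n}.D} \subset B_{E_\infty}$ and $(b_i) \in B_{\ell_\gamma^n}$, for $\varepsilon > 0$ pick finite $\Upsilon$ with $\|A_\lambda\| \leq \varepsilon$ off $\Upsilon$; for each $\lambda \in \Upsilon$, since $(P_\lambda x_t)$ is weakly null in $B_{X_\lambda}$, the set $\mathcal{E}_\lambda = \{t : \|A_\lambda \sum_i b_i P_\lambda z_i^t\| \leq \alpha_{\xi,\gamma,n}(A_\lambda) + \varepsilon'\}$ is eventual, so the finite intersection $\bigcap_{\lambda \in \Upsilon} \mathcal{E}_\lambda$ is eventual; off $\Upsilon$ the crude bound $\|A_\lambda \sum_i b_i P_\lambda z_i^t\| \leq n\varepsilon$ suffices. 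Sending $\varepsilon, \varepsilon' \to 0$ yields $\alpha_{\xi,\gamma,n}(D^{\infty,\infty}_\Lambda) \leq \sup_\lambda \alpha_{\xi,\gamma}(A_\lambda)$, and the endpoint $\gamma = 1$ in (i) is handled by an analogous per-coordinate stabilization.

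The main obstacle is upgrading the finite-$\Upsilon$ formula of Theorem \ref{30r}(iii) to infinite $\Lambda$: the Banach ideal completeness handles case (i) elegantly, while case (ii) hinges on the $c_0$-truncation of $\Lambda$ into a finite large-norm part and a small-norm tail, which is precisely what allows the intersection of eventual sets to remain eventual.
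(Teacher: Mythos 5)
Your proposal is correct and follows the same overall skeleton as the paper: non-decay of $(\|A_\lambda\|)_\lambda$ forces $D_\Lambda$ to fix a copy of $\ell_1$ (Remark \ref{nonasp}), while $c_0$-decay makes $D_\Lambda$ a norm limit of the finite truncations $D_\Upsilon$, whose Szlenk indices and $\alpha_{\xi,\gamma}$-values are controlled by Theorem \ref{30r}; the lower bounds come from the ideal property. Where you differ is in how the finite-$\Upsilon$ information is passed to the limit. The paper uses only the fixed-$n$ Lipschitz continuity of $\alpha_{\xi,\gamma,n}$ under the operator norm, which gives $\alpha_{\xi,\gamma,n}(D_\Lambda)\leqslant \sup_\Upsilon \alpha_{\xi,\gamma,n}(D_\Upsilon)$ uniformly for all $1\leqslant\gamma\leqslant\infty$ and both cases at once; you instead invoke completeness of the Banach ideal $(\mathfrak{A}_{\xi,\gamma},\mathfrak{a}_{\xi,\gamma})$ in case (i) and run a direct eventual-set argument in case (ii). Both work (your case (ii) argument is a nice self-contained alternative), but the ideal-completeness route costs you the endpoint $\gamma=1$, since Theorem \ref{ideal} is only stated for $1<\gamma\leqslant\infty$, and your promised ``analogous per-coordinate stabilization'' for that endpoint is left unspecified; the cleanest repair is exactly the paper's observation that $|\alpha_{\xi,\gamma,n}(A)-\alpha_{\xi,\gamma,n}(B)|\leqslant n\|A-B\|$ for every $\gamma$, which makes the completeness machinery unnecessary. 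Your derivation of the finite-case Szlenk bound (via $\alpha_{\zeta,\gamma}(A_\lambda)=0$ and Theorem \ref{30r}(iii) rather than quoting $Sz(D_\Upsilon)=\max_{\lambda\in\Upsilon}Sz(A_\lambda)$ directly) is a legitimate, slightly longer path to the same use of the closed ideal $\mathfrak{D}_\zeta$ from Theorem \ref{martin}.
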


\begin{proof} $(i)$ If $(\|A_\lambda\|)_{\lambda\in \Lambda}\in \ell_\infty(\Lambda)\setminus c_0(\Lambda)$, then $D^{1,1}_\Lambda$ preserves an isomorph of $\ell_1$ and $\sup_{\xi\in \textbf{Ord}}\theta_{\xi,1}(D^{1,1}_\Lambda)>0$. Now if $(\|A_\lambda\|)_{\lambda\in \Lambda}\in c_0(\Lambda)$, then $D^{1,1}_\Lambda $ lies in the norm closure of $\{D^{1,1}_\Upsilon: \Upsilon\subset \Lambda\text{\ finite}\}$.     By the norm closure of $\{B:E_p\to F_r: Sz(B)\leqslant \sup_{\lambda\in \Lambda} Sz(A_\lambda)\}$ and the fact that for any finite $\Upsilon\subset \Lambda$, $Sz(D^{1,1}_\Upsilon)= \max_{\lambda\in \Upsilon} Sz(A_\lambda)$, $Sz(D^{1,1}_\Lambda)\leqslant \sup_{\lambda\in \Lambda} Sz(A_\lambda)$. By the ideal property, the reverse inequality holds. Furthermore, if $1/\gamma+1/s=1$, Theorem \ref{30r} yields that for any finite subset $\Upsilon$ of $\Lambda$, $$\alpha_{\xi, \gamma}(D^{1,1}_\Upsilon)= \|(\alpha_{\xi, \gamma}(A_\lambda))_{\lambda\in \Upsilon}\|_{\ell_s(\Upsilon)}.$$  By the norm continuity of $\alpha_{\xi, \gamma, n}$, and since $D^{1,1}_\Lambda$ lies in the norm closure of $\{D^{1,1}_\Upsilon: \Upsilon\subset \Lambda\text{\ finite}\}$, \begin{align*} \alpha_{\xi, \gamma, n}(D^{1,1}_\Lambda) & \leqslant \sup \{\alpha_{\xi, \gamma, n}(D^{1,1}_\Upsilon):\Upsilon\subset \Lambda\text{\ finite}\} \\ & \leqslant \sup \{\alpha_{\xi, \gamma}(D^{1,1}_\Upsilon):\Upsilon\subset \Lambda\text{\ finite}\}= \|(\alpha_{\xi, \gamma}(A_\lambda))_{\lambda\in \Lambda}\|_{\ell_s(\Lambda)} .\end{align*} Taking the supremum over $n\in \nn$, we deduce that $$\alpha_{\xi, \gamma}(D^{1,1}_\Lambda)\leqslant \|(\alpha_{\xi, \gamma}(A_\lambda))_{\lambda\in \Lambda}\|_{\ell_s(\Lambda)}.$$  By the ideal property, $$\alpha_{\xi, \gamma}(D^{1,1}_\Lambda) \geqslant  \sup \{\alpha_{\xi, \gamma}(D^{1,1}_\Upsilon):\Upsilon\subset \Lambda\text{\ finite}\}= \|(\alpha_{\xi, \gamma}(A_\lambda))_{\lambda\in \Lambda}\|_{\ell_s(\Lambda)},$$ giving equality.

$(ii)$ We argue similarly, except Theorem \ref{30r} gives $$\alpha_{\xi, \gamma}(D^{\infty, \infty}_\Upsilon)= \|(\alpha_{\xi, \gamma}(A_\lambda))_{\lambda\in \Lambda}\|_{\ell_\infty(\Upsilon)}$$ for any finite subset of $\Upsilon$. Here, we are using that either $(A_\lambda)_{\lambda\in \Lambda}\in c_0(\Lambda)$, or $D_\Lambda^{\infty, \infty}$ preserves an isomorph of $\ell_1$.

\end{proof}

\begin{theorem} Suppose that $p,r$ are as in one of the cases (3) or (4).  If $r=0$, suppose $1\leqslant \gamma\leqslant \infty$  and if $1\leqslant r\leqslant \infty$, suppose $1\leqslant \gamma\leqslant r$.   Then for any ordinal $\zeta$, $$\alpha_{\zeta, \gamma}(D^{p,r}_\Lambda)\leqslant \sup_{\lambda\in \Lambda} \|A_\lambda\| + \sup_{\lambda\in \Lambda} \alpha_{\zeta, \gamma}(A_\lambda).$$

\label{button classic}
\end{theorem}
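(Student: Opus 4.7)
My plan is to mimic the proof of Theorem \ref{30r}(iii), adapting it to infinite $\Lambda$ by splitting the $\ell_r$-sum over $\lambda$ into a finite ``head'' $\Upsilon$ and an infinite ``tail'' $\Lambda \setminus \Upsilon$, and bounding each separately.

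Fix $\alpha < \alpha_{\zeta, \gamma, n}(D^{p,r}_\Lambda)$ and $\epsilon > 0$. I would first, by Lemma \ref{kimmy} (adapted via a diagonal stabilization on growing finite subsets of $\Lambda$), extract witnessing data: non-negative $(b_i) \in S_{\ell_\gamma^n}$, a directed set $D$, a weakly null $(x_t)_{t\in \Gamma_{\zeta,n}.D} \subset B_{E_p}$, functionals $(y^*_t) \subset B_{F_r^*}$, non-negative reals $c_i$ with $\sum b_i c_i > \alpha$, and, for each finite $\Upsilon \subset \Lambda$, stabilized scalars $(a^i_\lambda)_{\lambda \in \Upsilon}$ with $\|P_\lambda x_t\| \leqslant a^i_\lambda$ for $t \in \Lambda_{\zeta, n, i}.D$ and $\|(a^i_\lambda)_{\lambda\in\Upsilon}\|_{\ell_p(\Upsilon)} \leqslant 1$ for each $i$. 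The Minkowski split in $\ell_r$,
\[
\bigl\|D^{p,r}_\Lambda \textstyle\sum_i b_i z^t_i\bigr\|_{F_r} \leqslant \bigl\|D^{p,r}_\Upsilon \textstyle\sum_i b_i z^t_i\bigr\|_{F_r(\Upsilon)} + \bigl\|D^{p,r}_{\Lambda \setminus \Upsilon} \textstyle\sum_i b_i z^t_i\bigr\|_{F_r(\Lambda \setminus \Upsilon)},
\]
reduces the argument to handling each piece on an eventual set.

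For the head, Theorem \ref{30r}(iii) applied to the finite $\Upsilon$ (using $\gamma \leqslant r$ and $p \leqslant r$, or the $r=0$ variant) yields that $\|D^{p,r}_\Upsilon \sum b_i z^t_i\| \leqslant \sup_{\lambda \in \Upsilon} \alpha_{\zeta, \gamma}(A_\lambda) + \epsilon/2 \leqslant M + \epsilon/2$ holds on an eventual set, obtained as the intersection of the eventual sets $\mathcal{E}_\lambda$ for $\lambda\in\Upsilon$ produced by the coordinatewise estimate. For the tail, the coordinatewise operator-norm bound $\|A_\lambda P_\lambda \sum_i b_i z^t_i\| \leqslant R\sum_i b_i a^i_\lambda$ combined with the $\ell_r$-sum over $\lambda \in \Lambda \setminus \Upsilon$, together with $\|(a^i_\lambda)_\lambda\|_{\ell_p(\Lambda\setminus\Upsilon)} \leqslant 1$, $p \leqslant r$ and $\gamma \leqslant r$, should yield $\|D^{p,r}_{\Lambda \setminus \Upsilon} \sum b_i z^t_i\|_{F_r(\Lambda \setminus \Upsilon)} \leqslant R + \epsilon/2$ through an appeal to Fact \ref{f2}. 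Intersecting the two eventual sets and selecting any $t$ there produces $\alpha < R + M + \epsilon$, and letting $\epsilon \to 0$ finishes the argument; the $r=0$ case is analogous, replacing $\ell_r$ sums by suprema throughout.

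The main obstacle will be the tail estimate. A direct Minkowski bound
\[
\Bigl(\textstyle\sum_{\lambda \in \Lambda \setminus \Upsilon} (\sum_i b_i a^i_\lambda)^r\Bigr)^{1/r} \leqslant \textstyle\sum_i b_i \|(a^i_\lambda)_\lambda\|_{\ell_r(\Lambda\setminus\Upsilon)}
\]
only yields $R\sum_i b_i$, which grows like $n^{1-1/\gamma}$. Obtaining the tight bound $R\|(b_i)\|_{\ell_\gamma^n} = R$ requires exploiting that the stabilization of $(x_t)$ also forces the vectors $(P_\lambda z^t_i)$ to be approximately ``level-disjoint'' on a cofinal subtree, so that the $\ell_r$-sum in $\lambda$ decomposes cleanly by levels $i$ and Fact \ref{f2} applies with the matrix $(a^i_\lambda)_{i,\lambda}$ in the ball $B_{\ell_\infty^n(\ell_p(\Lambda\setminus\Upsilon))}$. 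Pinning this level-disjointness down through a suitable use of Lemma \ref{stabilize} (applied, for each finite portion of $\Lambda\setminus\Upsilon$, to the real-valued functions $t \mapsto \|P_\lambda x_t\|$ and then diagonalized) is the technically delicate step of the argument.
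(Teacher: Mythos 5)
You have correctly identified the crux --- the tail estimate over $\Lambda\setminus\Upsilon$ --- but the fix you propose does not work, and the paper resolves it differently. Your plan is to force the vectors $(P_\lambda z^t_i)$ to be approximately level-disjoint in $\lambda$ on a subtree via Lemma \ref{stabilize}. No stabilization can achieve this: take $X_\lambda=Y_\lambda=X$ fixed, $A_\lambda=I_X$, and $x_t=(c_\lambda w_t)_{\lambda\in\Lambda}$ for a fixed $(c_\lambda)\in S_{\ell_p(\Lambda)}$ with infinite support and $(w_t)$ weakly null in $B_X$. Every $x_t$ has the same (infinite) support profile, every subtree inherits it, and no level-disjointness ever appears; in the opposite direction, if the supports of the $x_t$ move around $\Lambda$, then no single finite $\Upsilon$ makes $\|P_{\Lambda\setminus\Upsilon}x_t\|$ uniformly small over the infinitely many $t$ in the tree. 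A one-shot head/tail split therefore cannot simultaneously control all $n$ levels, and the bound $\|D^{p,r}_{\Lambda\setminus\Upsilon}\sum_i b_i z^t_i\|\leqslant R+\ee/2$ is not attainable by your route.

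The paper's proof instead proceeds by induction on the number of levels $n$. Only the first level needs to be nearly finitely supported: one fixes a single maximal node $t$ of $\Lambda_{\zeta,n+1,1}.D$, and its convex block $\sum_{s\leqslant t}\mathbb{P}_{\zeta,n+1}(s)x_s$ is one fixed vector of $E_p$ (with $p<\infty$, or of the $c_0$-sum), hence is within $\delta$ of being supported on a finite $\Upsilon$. For the remaining $n$ levels, after stabilizing $\phi\|P_\Upsilon x_t\|\leqslant u_i$ and $\phi\|P_{\Lambda\setminus\Upsilon}x_t\|\leqslant v_i$ with $(u_i,v_i)\in B_{\ell_\infty(\ell_p^2)}$, the head is handled by the finite-sum Theorem \ref{30r}$(iii)$ with weights $b_iu_i$ and the tail by the \emph{inductive hypothesis} applied to the still-infinite sum $D^{p,r}_{\Lambda\setminus\Upsilon}$ with weights $b_iv_i$; the two contributions $(\beta+\alpha)\|(b_iu_i)\|_{\ell_\gamma}$ and $(\beta+\alpha)\|(b_iv_i)\|_{\ell_\gamma}$ recombine to $(\beta+\alpha)$ via Minkowski and $\|(u_i,v_i)\|_{\ell_p^2}\leqslant 1$, using $p\leqslant r$ and $\gamma\leqslant r$. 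This inductive peeling is exactly what replaces the level-disjointness you were hoping to extract, and without it (or an equivalent device) the argument has a genuine gap.
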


\begin{proof} We prove that $$\alpha_{\zeta, \gamma, n}(D^{p,r}_\Lambda)\leqslant \sup_{\lambda\in \Lambda} \|A_\lambda\| + \sup_{\lambda\in \Lambda} \alpha_{\zeta, \gamma}(A_\lambda)$$  by induction on $n\in\nn$. Let $\beta=\sup_{\lambda\in \Lambda} \|A_\lambda\|$, $\alpha=\sup_{\lambda\in \Lambda}\alpha_{\xi, \gamma}(A_\lambda)$.    

The $n=1$ case is trivial, since $\alpha_{\zeta, \gamma, 1}(D^{p,r}_\Lambda)\leqslant \beta\leqslant \beta+\alpha$. 

Assume the result holds for $n\in \nn$ and fix $\vartheta<\alpha_{\zeta, \gamma, n+1}(D^{p,r}_\Lambda)$.    Fix $(b_i)_{i=1}^{n+1}\in B_{\ell_\gamma^{n+1}}$ and a weakly null collection $(x_t)_{t\in \Gamma_{\zeta, n+1}.D}\subset B_{E_p}$ such that $\vartheta<\inf_{t\in MAX(\Gamma_{\zeta, n+1}.D)} \|D^{p,r}_\Lambda \sum_{i=1}^{n+1} b_i z^t_i\|$.   Fix $0<\phi<1$ such that $\vartheta < \inf_{t\in MAX(\Gamma_{\zeta, n+1}.D)} \|D^{p,r}_\Lambda \sum_{i=1}^{n+1} b_i \phi z^t_i\|$.  Fix $\delta>0$. Fix any $t\in MAX(\Lambda_{\zeta, n+1, 1}.D)$ and a finite subset $\Upsilon$ of $\Lambda$ such that $\|D^{p,r}_{\Lambda\setminus \Upsilon} \sum_{s\leqslant t}\mathbb{P}_{\zeta, n+1}(s) x_s\|<\delta$. Arguing as in Lemma \ref{kimmy}, we may stabilize, relabel, and assume that there exist $(u_i, v_i)_{i=2}^{n+1}\in B_{\ell_\infty^n(\ell_p^2)}$ such that for any $2\leqslant i\leqslant n+1$ and any $t\in \Lambda_{\xi, n+1, i}.D$, $\phi\|P_\Upsilon x_t\|< u_i$ and $\phi\|P_{\Lambda\setminus \Upsilon} x_t\|< v_i$. This may be done so that the property $$\vartheta < \inf_{t\in MAX(\Gamma_{\zeta, n+1}.D)} \|D^{p,r}_\Lambda \sum_{i=1}^{n+1} b_i \phi z^t_i\|$$ is retained. Let $u_1=1$ and $v_1=0$, so that $(u_i, v_i)_{i=1}^{n+1}\in B_{\ell_\infty^{n+1}(\ell_p^2)}$.

Now using the identification of $\Gamma_{\zeta, n}.D$ with $\{s\in \Gamma_{\zeta, n+1}.D: t<s\}$ and the fact that $$\alpha_{\zeta, \gamma}(D^{p,r}_\Upsilon)= \|(\alpha_{\zeta, \gamma}(A_\lambda))_{\lambda\in \Upsilon}\|_{\ell_\infty(\Upsilon)}\leqslant \alpha$$ by Theorem \ref{30r},  we deduce using the inductive hypothesis  that each of the sets \begin{align*} \mathcal{E}_1 & := \{s\in MAX(\Gamma_{\zeta, n+1}.D): t<s, \|D^{p,r}_\Upsilon\sum_{i=1}^{n+1} b_i \phi z^s_i\| \leqslant  \delta+(\beta+\alpha) \|(b_i u_i)_{i=1}^{n+1}\|_{\ell_\gamma^{n+1}} \} \\ & \supset \{s\in MAX(\Gamma_{\zeta, n+1}.D): t<s, \|D^{p,r}_\Upsilon\sum_{i=1}^{n+1} b_i \phi z^s_i\| \leqslant  \delta+\beta u_1 + \alpha \|(b_i u_i)_{i=2}^{n+1}\|_{\ell_\gamma^n} \} \\ & \supset \{s\in MAX(\Gamma_{\zeta, n+1}.D): t<s, \|D^{p,r}_\Upsilon\sum_{i=2}^{n+1} b_i \phi z^s_i\| \leqslant  \delta + \alpha \|(b_i u_i)_{i=2}^{n+1}\|_{\ell_\gamma^n} \} \\ & \supset \{s\in MAX(\Gamma_{\zeta, n+1}.D): t<s, \|D^{p,r}_\Upsilon\sum_{i=2}^{n+1} b_iu_i \phi \frac{P_\Upsilon z^s_i}{u_i}\| \leqslant  \delta + \alpha \|(b_i u_i)_{i=2}^{n+1}\|_{\ell_\gamma^n} \}, \end{align*}

\begin{align*} \mathcal{E}_2 & :=\{s\in MAX(\Gamma_{\zeta, n+1}.D):t<s,  \|D^{p,r}_{\Lambda\setminus \Upsilon}\sum_{i=1}^{n+1} b_i \phi z^s_i\|\leqslant 2\delta+ (\beta+\alpha) \|(b_iv_i)_{i=1}^{n+1}\|_{\ell_\gamma^{n+1}}\} \\ & \supset \{s\in MAX(\Gamma_{\zeta, n+1}.D):t<s,   \|D^{p,r}_{\Lambda\setminus \Upsilon}\sum_{i=2}^{n+1} b_i \phi z^s_i\|\leqslant \delta+ (\beta+\alpha) \|(b_iv_i)_{i=1}^{n+1}\|_{\ell_\gamma^{n+1}}\} \\ & = \{s\in MAX(\Gamma_{\zeta, n+1}.D):t<s,   \|D^{p,r}_\Lambda\sum_{i=2}^{n+1} b_iv_i \frac{P_{\Lambda\setminus \Upsilon} z^s_i}{v_i}\|\leqslant \delta+ (\beta+\alpha) \|(b_iv_i)_{i=1}^{n+1}\|_{\ell_\gamma^{n+1}}\} \end{align*} is eventual. Here we are using the fact that for any $t<s\in MAX(\Gamma_{\zeta, n+1}.D)$, $z^s_1=\sum_{u\leqslant t} \mathbb{P}_{\zeta, n+1}(u)x_u$, $\|D^{p,r}_\Upsilon z^s_1\|\leqslant \beta$,  and $\|D^{p,r}_{\Lambda\setminus \Upsilon} z^s_1\|<\delta$. 

If $r=0$, let $r_1=\infty$, and otherwise let $r_1=r$. Now for any $s\in \mathcal{E}_1\cap \mathcal{E}_2$,  Minkowski's inequality and the fact that $p\leqslant r_1$  give that  \begin{align*} \vartheta & < \|D^{p,r}_\Lambda \sum_{i=1}^{n+1} b_i \phi z_i\| = \Bigl\|\Bigl(\|D^{p,r}_\Upsilon\sum_{i=1}^{n+1} b_i \phi z^s_i\|, \|D^{p,r}_{\Lambda\setminus \Upsilon}\sum_{i=1}^{n+1} b_i \phi z^s_i\| \Bigr)\Bigr\|_{\ell_r^2} \\ & \leqslant 3\delta + (\beta+\alpha)\Bigl\|\Bigl(\|(b_iu_i)_{i=1}^{n+1}\|_{\ell_\gamma^{n+1}}, \|(b_iv_i)_{i=1}^{n+1}\|_{\ell_\gamma^{n+1}}\Bigr)\Bigr\|_{\ell_{r_1}^2} \\ & \leqslant 3\delta + (\beta+\alpha)\Bigl\|\Bigl( b_i \|(u_i, v_i)\|_{\ell_{r_1}^2}\Bigr)_{i=1}^{n+1}\Bigr\|_{\ell_\gamma^{n+1}} \\ & \leqslant 3\delta + (\beta+\alpha)\Bigl\|\Bigl( b_i \|(u_i, v_i)\|_{\ell_p^2}\Bigr)_{i=1}^{n+1}\Bigr\|_{\ell_\gamma^{n+1}} \leqslant 3\delta+ \beta+\alpha.\end{align*} Since $\delta>0$ and $\vartheta<\alpha_{\zeta, \gamma,n+1}(D^{p,r}_\Lambda)$ were arbitrary, we are done.

\end{proof}

\begin{corollary} In either of cases (3), (4), $$Sz(D^{p,r}_\Lambda)\leqslant \bigl(\sup_{\lambda\in \Lambda} Sz(A_\lambda)\bigr)\omega.$$

\end{corollary}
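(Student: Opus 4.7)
The plan is to apply Theorem \ref{button classic} to obtain $\alpha_{\xi,\gamma}(D^{p,r}_\Lambda)<\infty$ for a suitable ordinal $\xi$ and some $\gamma>1$, and then invoke Remark \ref{snit} to convert this bound on the ideal seminorm into the desired bound on the Szlenk index.

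First I would express $\sup_{\lambda\in\Lambda} Sz(A_\lambda)$ in the convenient form $\omega^\xi$. Since each $A_\lambda$ is Asplund, Corollary \ref{convex body} gives $Sz(A_\lambda)=\omega^{\zeta_\lambda}$ for some ordinal $\zeta_\lambda$. Setting $\xi:=\sup_{\lambda\in\Lambda}\zeta_\lambda$, the continuity and monotonicity of ordinal exponentiation yield $\sup_{\lambda\in\Lambda} Sz(A_\lambda)=\omega^\xi$. With this $\xi$ fixed, Theorem \ref{big theorem}(ii) applied to each $A_\lambda$ (whose Szlenk index is at most $\omega^\xi$) yields $\alpha_{\xi,\gamma,n}(A_\lambda)=0$ for every $n\in\nn$ and every $1\leqslant\gamma\leqslant\infty$, so in particular $\sup_{\lambda\in\Lambda}\alpha_{\xi,\gamma}(A_\lambda)=0$.

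Next, choose $\gamma$ with $1<\gamma$ and $\gamma\leqslant r$ in case (4), or any $1<\gamma\leqslant\infty$ in case (3); such a $\gamma$ exists because case (4) requires $r>1$. Theorem \ref{button classic} then gives
$$\alpha_{\xi,\gamma}(D^{p,r}_\Lambda)\leqslant \sup_{\lambda\in\Lambda}\|A_\lambda\| + \sup_{\lambda\in\Lambda}\alpha_{\xi,\gamma}(A_\lambda) = \sup_{\lambda\in\Lambda}\|A_\lambda\|<\infty.$$
Since $\gamma>1$, Remark \ref{snit} yields $Sz(D^{p,r}_\Lambda)\leqslant \omega^{\xi+1}=\omega^\xi\cdot\omega = \bigl(\sup_{\lambda\in\Lambda} Sz(A_\lambda)\bigr)\omega$, which is exactly the desired conclusion.

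The main obstacle, which is in fact rather minor once the machinery is in place, is recognizing that $\sup_{\lambda\in\Lambda} Sz(A_\lambda)$ has the form $\omega^\xi$ so that Theorem \ref{big theorem}(ii) kills $\alpha_{\xi,\gamma}(A_\lambda)$ for every $\lambda$ simultaneously. Once that identification is made, the interplay between Theorem \ref{button classic} (which passes control of $\alpha_{\xi,\gamma}$ from summands to the diagonal operator at the cost of an additive term involving $\sup_\lambda\|A_\lambda\|$) and Remark \ref{snit} (which converts finiteness of $\alpha_{\xi,\gamma}$ into the Szlenk bound $\omega^{\xi+1}$) is exactly what produces the factor of $\omega$ on the right-hand side.
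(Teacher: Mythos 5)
Your proof is correct and follows essentially the same route as the paper: write $\sup_\lambda Sz(A_\lambda)=\omega^\xi$, note that $\alpha_{\xi,\gamma}(A_\lambda)=0$ for all $\lambda$, apply Theorem \ref{button classic} to bound $\alpha_{\xi,\gamma}(D^{p,r}_\Lambda)$ by $\sup_\lambda\|A_\lambda\|$, and conclude via Remark \ref{snit}. The only (harmless) differences are your explicit justification that the supremum has the form $\omega^\xi$ and your slightly more flexible choice of $\gamma$, where the paper simply takes $\gamma=r$ (or $\gamma=\infty$ when $r=0$).
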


\begin{proof} If $r=0$, let $\gamma=\infty$, and otherwise let $\gamma=r$. Let $\sup_{\lambda\in \Lambda} Sz(A_\lambda)=\omega^\xi$. Then for every $\lambda\in \Lambda$, $\alpha_{\xi, \gamma, 1}(A_\lambda)=\alpha_{\xi, \gamma}(A_\lambda)=0$, whence $$\alpha_{\xi, \gamma}(D^{p,r}_\Lambda) \leqslant \sup_{\lambda\in \Lambda} \|A_\lambda\|<\infty$$ by Theorem \ref{button classic}.   By Remark \ref{snit}, $Sz(D^{p,r}_\Lambda)\leqslant \omega^{\xi+1}=\omega^\xi \omega$.

\end{proof}

\begin{corollary} Suppose $p,r$ are as in either case (3) or case (4).  For any ordinal $\zeta$, $$\underset{m\in \nn}{\inf} \underset{\lambda\in \Lambda}{\ \sup\ }  \theta_{\zeta, m}(A_\lambda)= \underset{m\in \nn}{\inf} \theta_{\zeta, m}(D^{p,r}_\Lambda)= \theta_{\zeta+1, 1}(D^{p,r}_\Lambda).$$   

In particular, for an ordinal $\xi>0$, $$\inf \{\theta_{\xi,1}(D^{p,r}_{\Lambda\setminus \Upsilon}): \Upsilon\subset \Lambda\text{\ finite}\}=0$$ if and only if for every $\ee>0$, there exist a finite subset $\Upsilon$ of $\Lambda$, an ordinal $\zeta<\xi$, and $m\in \nn$ such that $\sup_{\lambda\in \Upsilon} \theta_{\zeta, m}(A_\lambda)\leqslant \ee$ if and only if for every $\ee>0$, there exists a finite subset $\Upsilon$ of $\Lambda$ such that $\sup_{\lambda\in \Lambda\setminus \Upsilon} Sz(A_\lambda, \ee)<\omega^\xi$. 
\label{fir}
\end{corollary}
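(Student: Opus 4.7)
The rightmost equality follows directly from Proposition \ref{too easy}. For the inequality $\inf_m \sup_{\lambda \in \Lambda} \theta_{\zeta, m}(A_\lambda) \leqslant \inf_m \theta_{\zeta, m}(D^{p,r}_\Lambda)$, I would observe that each $A_\lambda$ factors through $D^{p,r}_\Lambda$ via the isometric coordinate inclusion $X_\lambda \to E_p$ and the norm-one projection $F_r \to Y_\lambda$; arguing as in the proof of Theorem \ref{martin} (where $\theta_{\xi, 1}$ was shown to be an ideal seminorm, the argument adapting verbatim to every $\theta_{\zeta, m}$), one gets $\theta_{\zeta, m}(A_\lambda) \leqslant \theta_{\zeta, m}(D^{p,r}_\Lambda)$, and the desired bound follows by taking $\sup_\lambda$ then $\inf_m$.

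The main obstacle is the reverse inequality $\theta_{\zeta+1, 1}(D^{p,r}_\Lambda) \leqslant \inf_m \sup_\lambda \theta_{\zeta, m}(A_\lambda)$, since Theorem \ref{30r}(ii) only controls $\theta_{\zeta, n}(D^{p,r}_\Upsilon)$ for \emph{finite} $\Upsilon \subset \Lambda$. To bridge this gap I would run the inductive scheme of the proof of Theorem \ref{button classic}. Fix $m_0 \in \nn$, $\ee > \sup_\lambda \theta_{\zeta, m_0}(A_\lambda)$, $\delta > 0$, and take a weakly null tree $(x_t)_{t \in \Gamma_{\zeta, n}.D} \subset B_{E_p}$ with $n$ large. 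Choose any $t_0 \in MAX(\Lambda_{\zeta, n, 1}.D)$; since $D^{p,r}_\Lambda z^{t_0}_1 \in F_r$ is an $\ell_r$- or $c_0$-summable tuple, pick finite $\Upsilon$ with $\|D^{p,r}_{\Lambda \setminus \Upsilon} z^{t_0}_1\| < \delta$. Stabilize the remaining levels in the sense of Lemma \ref{kimmy} to produce $(u_i, v_i)_{i=2}^n \in B_{\ell_\infty^{n-1}(\ell_p^2)}$ controlling $(\|P_\Upsilon x_t\|, \|P_{\Lambda \setminus \Upsilon} x_t\|)$ on level $i$. On the $\Upsilon$-piece apply Theorem \ref{30r}(ii) with $b_i = 1/n$: the error term $R m_0^{1/s} n^{1/r-1}$ is below $\delta$ for $n$ large. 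On the $(\Lambda \setminus \Upsilon)$-piece, renormalize by $v_i$ and invoke the same bound inductively on a tree of smaller depth, then recombine via Minkowski's inequality in $\ell_r^2$ (available because $p \leqslant r$ in cases (3) and (4)), exactly as in Theorem \ref{button classic}. This delivers $\theta_{\zeta, n}(D^{p,r}_\Lambda) \leqslant \ee + O(\delta)$ for $n$ large, whence $\theta_{\zeta+1, 1}(D^{p,r}_\Lambda) \leqslant \ee$.

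For the ``in particular'' statement (reading the $\Upsilon$ in condition (b) as the evident typo for $\Lambda \setminus \Upsilon$), the equivalence $(a) \Leftrightarrow (b)$ follows by applying the first part of the corollary to each $D^{p,r}_{\Lambda \setminus \Upsilon}$, together with Proposition \ref{too easy} to handle limit $\xi$ via $\theta_{\xi, 1} = \inf_{\zeta < \xi} \theta_{\zeta+1, 1}$. The equivalence $(b) \Leftrightarrow (c)$ is a translation via Remark \ref{alternative}: the inequality $\theta_{\zeta, m}(A_\lambda) \leqslant \ee$, combined with $\zeta < \xi$ and $m \in \nn$, implies $Sz(A_\lambda, \ee') < \omega^\zeta(m+1) < \omega^\xi$ for $\ee' \asymp \ee$; conversely, $\sup_{\lambda \in \Lambda \setminus \Upsilon} Sz(A_\lambda, \ee) < \omega^\xi$ for a suitable finite $\Upsilon$ yields, by the ordinal arithmetic of $\xi$ and a cofinality argument, a single $\zeta < \xi$ and $m \in \nn$ with $\omega^\zeta m$ dominating all those Szlenk indices, and Remark \ref{alternative} then bounds the corresponding $\theta_{\zeta, m}(A_\lambda)$ uniformly.
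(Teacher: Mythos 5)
Your reduction of the easy inequality to the ideal seminorm property, your use of Proposition \ref{too easy} for the rightmost equality, and your reading of the ``in particular'' statement (including the typo $\Upsilon$ for $\Lambda\setminus\Upsilon$) all match the paper, which disposes of these points in one line each. For the key inequality $\inf_m\theta_{\zeta,m}(D^{p,r}_\Lambda)\leqslant\inf_m\sup_\lambda\theta_{\zeta,m}(A_\lambda)$, however, you take a genuinely different route. The paper does not induct on depth: it works directly on a tree indexed by $\Gamma_{\zeta,n^2}.D$, recursively chooses nodes $t_1<\dots<t_n$ with $t_i\in MAX(\Lambda_{\zeta,n^2,in}.D)$ together with an increasing chain of finite sets $\varnothing=\Upsilon_0\subset\Upsilon_1\subset\dots\subset\Upsilon_n$ so that on the $i$-th block of $n$ levels the ``old coordinates'' piece $D^{p,r}_{\Upsilon_{i-1}}$ is small by Theorem \ref{30r}(ii) and the ``tail'' piece $D^{p,r}_{\Lambda\setminus\Upsilon_i}$ is small by weak nullity, and then controls the remaining ``new coordinates'' piece $\|n^{-1}\sum_iP_{F_i}z_i\|\leqslant R/n^{1/s}$ by disjointness of the supports $F_i=\Upsilon_i\setminus\Upsilon_{i-1}$ in the $\ell_r$ (or $c_0$) sum. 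Your scheme replaces this blocking by a one-level-at-a-time induction modeled on Theorem \ref{button classic}; this is viable, and arguably more systematic, but it forces one adjustment you do not make explicit: the quantity $\theta_{\zeta,n}$ cannot itself serve as the inductive statement, because after peeling the first level the surviving coefficients $b_iv_i$ on the smaller tree are no longer uniform. You must instead prove, by induction on $n$, the infinite-$\Lambda$ analogue of Theorem \ref{30r}(ii) for arbitrary non-negative $(b_i)_{i=1}^n$, namely $\inf_t\|D^{p,r}_\Lambda\sum_ib_iz^t_i\|\leqslant\ee\|(b_i)\|_{\ell_1^n}+Rm^{1/s}\|(b_i)\|_{\ell_r^n}+C_n\delta$, and correspondingly the $\Upsilon$-piece must be estimated through the renormalized vectors $P_\Upsilon z^t_i/u_i$ so that it is bounded by $\ee\|(b_iu_i)\|_{\ell_1}+Rm^{1/s}\|(b_iu_i)\|_{\ell_r}$ rather than by the same expression in $(b_i)$ alone; only then does the Minkowski recombination in $\ell_r^2$ against the $\ell_p^2$-normalized pairs $(u_i,v_i)$ return the bound with $\|(b_i)\|$ and close the induction. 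This is exactly the reason Theorem \ref{button classic} is phrased for $\alpha_{\zeta,\gamma,n}$ over all coefficient sequences, so the fix is within the framework you invoke; specializing to $b_i=1/n$ at the end gives $\theta_{\zeta,n}(D^{p,r}_\Lambda)\leqslant\ee+Rm^{1/s}n^{-1/s}$, as you claim. Your sketch of the $(b)\Leftrightarrow(c)$ translation is fine, though the upper bound on $\theta_{\zeta,m}(A_\lambda)$ from a uniform Szlenk estimate comes from Proposition \ref{downtown abbey} (as in the proof of Theorem \ref{db}) rather than from Remark \ref{alternative}, which only supplies the lower-bound direction.
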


\begin{proof} If $r=0$, let $\gamma=\infty$, and otherwise let $\gamma=r$. Let $1/\gamma+1/s=1$. Note that since $r\neq 1$, $\gamma\neq 1$, so $s<\infty$.    It follows from Proposition \ref{too easy} that $\underset{m\in \nn}{\inf} \theta_{\zeta, m}(D^{p,r}_\Lambda)= \theta_{\zeta+1, 1}(D^{p,r}_\Lambda)$, and it follows from the fact that $\theta_{\zeta, m}$ is an ideal seminorm that $\underset{m\in \nn}{\inf} \underset{\lambda\in \Lambda}{\ \sup\ }  \theta_{\zeta, m}(A_\lambda)\leqslant  \underset{m\in \nn}{\inf} \theta_{\zeta, m}(D^{p,r}_\Lambda)$. Fix $\ee_1>\inf_m  \underset{\lambda\in \Lambda}{\sup} \theta_{\zeta, m}(A_\lambda)$. Let $m\in \nn$ and $\ee_2, \ee$ be such that $\ee_1>\ee_2>\ee>\theta_{\zeta, m}(A_\lambda)$ for all $\lambda\in \Lambda$.  Let $R=\sup_{\lambda\in \Lambda}\|A_\lambda\|$ and fix $n\in \nn$, $\delta>0$ such that $Rm^{1/s}/n^{1/s}<\ee_2-\ee$,  $ \delta+R/n^{1/s}<\ee_1-\ee_2$, and $\delta+\ee+Rm^{1/s}/n^{1/s}<\ee_2$.   

Fix a weakly null collection $(x_t)_{t\in \Gamma_{\zeta, n^2}.D}\subset B_{E_p}$. Let $\Upsilon_0=\varnothing$, $t_0=\varnothing$, and fix any $t_1\in MAX(\Lambda_{\zeta, n^2, n}.D)$.  Now suppose we have chosen $t_1<\ldots <t_{k-1}$ and finite sets $\Upsilon_1 \subset \ldots \subset \Upsilon_{k-1}\subset \Lambda$ such that for each $1\leqslant i<k$, $t_i\in MAX(\Lambda_{\zeta, n^2, in}.D)$, $$\|D^{p,r}_{\Lambda\setminus \Upsilon_i}\sum_{j=(i-1)n+1}^{in} \sum_{t_i\geqslant s\in \Lambda_{\zeta, n^2, j}.D} \mathbb{P}_{\zeta, n^2}(s) x_s\|<\delta$$ and $$\|D^{p,r}_{\Upsilon_{i-1}} n^{-1}\sum_{j=(i-1)n+1}^{in} \sum_{t_i\geqslant s\in \Lambda_{\zeta, n^2, j}.D} \mathbb{P}_{\zeta, n^2}(s) x_s\|<\ee_2.$$  Now since we may identify $$\{s\in \cup_{j=(k-1)n+1}^{kn} \Lambda_{\zeta, n^2, j}.D: t_{k-1}<s\}$$ with $\Gamma_{\zeta, n}.D$, by Theorem \ref{30r}, there exists $t_k\in MAX(\Lambda_{\zeta, n^2, kn}.D)$ such that \begin{align*} \|D^{p,r}_{\Upsilon_{k-1}} \sum_{j=(k-1)n+1}^{kn} \sum_{t_k\geqslant s\in \Lambda_{\zeta, n^2, j}.D}n^{-1}\mathbb{P}_{\zeta, n^2}(s) x_s\| & \leqslant  \delta + \ee\|(1/n)_{j=1}^n\|_{\ell_1^n} + Rm^{1/s}\|(1/n)_{j=1}^n\|_{\ell_\gamma^n} \\ & = \delta+\ee +R\frac{m^{1/s}}{n^{1/s}}<\ee_2.\end{align*}  Here we are using that $\ell_r^n=\ell_\gamma^n$.    Fix $\Upsilon_k\subset \Lambda$ finite such that $\Upsilon_{k-1}\subset \Upsilon_k$ and $$\|D^{p,r}_{\Lambda\setminus \Upsilon_k} \sum_{j=(k-1)n+1}^{kn} \sum_{t_k\geqslant s\in \Lambda_{\zeta, n^2, j}}n^{-1}\mathbb{P}_{\zeta, n^2}(s) x_s\|<\delta.$$  These completes the recursive construction. Let $t=t_n$ and for each $1\leqslant i\leqslant n$, let $F_i=\Upsilon_i\setminus \Upsilon_{i-1}$.  For each $1\leqslant i\leqslant n$, let $z_i=n^{-1}\sum_{t_{i-1}<s\leqslant t_i} \mathbb{P}_{\zeta, n^2}(s)x_s$.   Then \begin{align*}\|D^{p,r}_\Lambda \sum_{j=1}^{n^2} n^{-2}z^t_i\| & = \|D^{p,r}_\Lambda n^{-1}\sum_{i=1}^n z_i\| \\ & = \|D^{p,r}_\Lambda n^{-1}\sum_{i=1}^n [P_{\Upsilon_{i-1}}+P_{F_i}+P_{\Lambda\setminus \Upsilon_i}]z_i\|  \\ & \leqslant \ee_2 + \|n^{-1}\sum_{i=1}^n P_{F_i} z_i\|+ \delta \\ & \leqslant \ee_2+ R/n^{1/s}+\delta<\ee_1.  \end{align*}  From this it follows that $$\inf \{\theta_{\zeta, k}(D^{p,r}_\Lambda): k\in \nn\} \leqslant \theta_{\zeta, n^2}(D^{p,r}_\Lambda)<\ee_1.$$  Since $\ee_1>\underset{k\in\nn}{\ \inf\ } \underset{\lambda\in \Lambda}{\ \sup\ } \theta_{\zeta, k}(A_\lambda)$ was arbitrary, $$\underset{m\in \nn}{\inf} \underset{\lambda\in \Lambda}{\ \sup\ }  \theta_{\zeta, m}(A_\lambda)\geqslant  \underset{m\in \nn}{\inf} \theta_{\zeta, m}(D^{p,r}_\Lambda).$$

The last statement follows immediately from the previous statements.

\end{proof}

We now eliminate a trivial case. The proof is obvious, so we omit it.

\begin{proposition} Suppose that $\sup_{\lambda\in \Lambda}Sz(A_\lambda)=1$.  Assume that $p,r$ are as in one of cases (3),(4) above. Then $D^{p,r}_\Lambda$ is compact, $\theta_{0,1}(D^{p,r}_\Lambda)=\alpha_{0, \gamma}(D^{p,r}_\Lambda)=0$ for all $1\leqslant \gamma\leqslant \infty$, and  $Sz(D^{p,r}_\Lambda)=1$ if and only  if $(\|A_\lambda\|)_{\lambda\in \Lambda}\in c_0(\Lambda)$. If $(\|A_\lambda\|)_{\lambda\in \Lambda}\in \ell_\infty(\Lambda)\setminus c_0(\Lambda)$, then $Sz(D^{p,r}_\Lambda)=\omega$, $$\theta_{0,1}(D^{p,r}_\Lambda)=\inf\{\|D^{p,r}_{\Lambda\setminus \Upsilon}\|: \Upsilon\subset \Lambda\text{\ finite}\}>0$$ and \begin{enumerate}[(i)]\item if $r=0$ or $r=\infty$, $\alpha_{0,\gamma}(D^{p,r}_\Lambda)=\theta_{0,1}(D^{p,r}_\Lambda)<\infty$ for any $1\leqslant \gamma\leqslant \infty$, \item if $1\leqslant p\leqslant r<\infty$ and $1<r$, $\alpha_{0, \gamma}(D^{p,r}_\Lambda)=\theta_{0,1}(D^{p,r}_\Lambda)$ for all $1\leqslant \gamma\leqslant r$ and $\alpha_{0,\gamma}(D^{p,r}_\Lambda)=\infty$ for all $r<\gamma\leqslant \infty$. \end{enumerate}

\end{proposition}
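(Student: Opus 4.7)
I would begin by observing that $\sup_{\lambda \in \Lambda} Sz(A_\lambda) = 1$ gives $s_\ee(A^*_\lambda B_{Y_\lambda^*}) = \varnothing$ for every $\lambda$ and every $\ee > 0$, so Remark \ref{daniel} forces each $A_\lambda$ to be compact. By Theorem \ref{big theorem}(ii) and Theorem \ref{big theorem 2}(i) this gives $\alpha_{0,\gamma,n}(A_\lambda) = \theta_{0,n}(A_\lambda) = 0$ for every $\lambda$, $n$, and $\gamma$. I would then split the argument on whether $(\|A_\lambda\|)_{\lambda \in \Lambda} \in c_0(\Lambda)$: in the affirmative case, $D^{p,r}_\Lambda$ is a norm limit of the compact operators $D^{p,r}_\Upsilon$ over finite $\Upsilon \subset \Lambda$, hence compact, whence $Sz(D^{p,r}_\Lambda) = 1$ and $\theta_{0,1}(D^{p,r}_\Lambda) = \alpha_{0,\gamma}(D^{p,r}_\Lambda) = 0$.

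Otherwise pick $c > 0$, distinct $\lambda_n$ with $\|A_{\lambda_n}\| \geqslant c$, and $x_n \in B_{X_{\lambda_n}}$ with $\|A_{\lambda_n}x_n\| > c/2$. These show $D^{p,r}_\Lambda$ is not compact (hence $Sz(D^{p,r}_\Lambda) > 1$) and $\|D^{p,r}_{\Lambda \setminus \Upsilon}\| \geqslant c/2$ for every finite $\Upsilon$. Applying Theorem \ref{button classic} with an admissible $\gamma > 1$ (which exists in every case, and using $\alpha_{0,\gamma}(A_\lambda) = 0$) gives $\alpha_{0,\gamma}(D^{p,r}_\Lambda) \leqslant \sup_\lambda \|A_\lambda\| < \infty$, so Remark \ref{snit} forces $Sz(D^{p,r}_\Lambda) \leqslant \omega$ and Corollary \ref{convex body} promotes this to $Sz(D^{p,r}_\Lambda) = \omega$. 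For $\theta_{0,1}(D^{p,r}_\Lambda)$, subadditivity of $\theta_{0,1}$ (from the proof of Theorem \ref{martin}) and $\theta_{0,1}(D^{p,r}_\Upsilon) = 0$ give $\theta_{0,1}(D^{p,r}_\Lambda) \leqslant \|D^{p,r}_{\Lambda \setminus \Upsilon}\|$ for every finite $\Upsilon$; the reverse inequality comes from producing, for each weak neighborhood $v$ of $0$ in $E_p$, a vector $w_v \in v \cap B_{E_p}$ supported in a single $X_\lambda$-summand with $\|A_\lambda\|$ close to $\sup_\mu \|A_\mu\|$ and with $\|D^{p,r}_\Lambda w_v\|$ near $\|D^{p,r}_{\Lambda \setminus \Upsilon}\|$.

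For $\alpha_{0,\gamma}(D^{p,r}_\Lambda)$, the equality $\alpha_{0,\gamma,1} = \theta_{0,1}$ gives $\alpha_{0,\gamma} \geqslant \theta_{0,1}$. In case (i) and in case (ii) with $\gamma \leqslant r$, the reverse inequality for every $n$ follows from the splitting $D^{p,r}_\Lambda = D^{p,r}_\Upsilon + D^{p,r}_{\Lambda \setminus \Upsilon}$, Theorem \ref{30r} (giving $\alpha_{0,\gamma}(D^{p,r}_\Upsilon) = 0$), and Theorem \ref{button classic} applied to $D^{p,r}_{\Lambda \setminus \Upsilon}$ (giving $\alpha_{0,\gamma}(D^{p,r}_{\Lambda \setminus \Upsilon}) \leqslant \|D^{p,r}_{\Lambda \setminus \Upsilon}\|$); taking the infimum over $\Upsilon$ produces $\alpha_{0,\gamma}(D^{p,r}_\Lambda) \leqslant \theta_{0,1}(D^{p,r}_\Lambda)$. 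For case (ii) with $\gamma > r$, where Theorem \ref{button classic} is unavailable, I would pick $N$ distinct $\lambda_i$ with $\|A_{\lambda_i}\| \geqslant c$ and form a weakly null $\Gamma_{0,N}.D$-tree in $E_p$ whose nodes on the $i$-th level live in the $X_{\lambda_i}$-summand with image norm exceeding $c/2$; with $a_i = N^{-1/\gamma}$ the image has norm at least $(c/2) N^{1/r}$ in $F_r$, giving $\alpha_{0,\gamma,N}(D^{p,r}_\Lambda) \geqslant (c/2) N^{1/r - 1/\gamma} \to \infty$. The main obstacle is producing the weakly null witnesses when $p = 1$: disjointly supported unit vectors in $E_1$ are $\ell_1$-basis-like and not weakly null. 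However, any prescribed weak neighborhood of $0$ in $E_1$ involves only finitely many bounded dual functionals, so choosing vectors within a single $X_\lambda$-summand that are near-orthogonal to those functionals by pigeonhole (exploiting compactness of $A_\lambda$ to respect the image-norm estimate) circumvents this difficulty.
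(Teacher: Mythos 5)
Most of your argument is sound: the reduction to compact $A_\lambda$, the $c_0$ dichotomy, the identification $Sz(D^{p,r}_\Lambda)=\omega$ via Theorem \ref{button classic} and Remark \ref{snit}, the upper bounds $\theta_{0,1}(D^{p,r}_\Lambda)\leqslant\|D^{p,r}_{\Lambda\setminus\Upsilon}\|$ and $\alpha_{0,\gamma}(D^{p,r}_\Lambda)\leqslant\inf_\Upsilon\|D^{p,r}_{\Lambda\setminus\Upsilon}\|$ by subadditivity, and the inequality $\alpha_{0,\gamma}\geqslant\alpha_{0,\gamma,1}=\theta_{0,1}$ are all fine. Two of your constructions, however, do not work as described. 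First, for $\gamma>r$ you build a tree ``whose nodes on the $i$-th level live in the $X_{\lambda_i}$-summand with image norm exceeding $c/2$.'' This is impossible for every $p$: the successors of any fixed node form a net which must be weakly null, and if that net lies in $B_{X_{\lambda_i}}$ then, since $A_{\lambda_i}$ is compact, Remark \ref{daniel} forces $\|A_{\lambda_i}x_{t\smallfrown(\zeta,u)}\|\to 0$, contradicting the lower bound $c/2$. The level-$i$ nodes must instead range over infinitely many summands as the direction parameter varies (a different $\lambda$ for each successor), which is straightforward for $1<p<\infty$ because disjointly supported vectors in $B_{E_p}$ are then weakly null.

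Second, and more seriously, the lower bound $\theta_{0,1}(D^{p,r}_\Lambda)\geqslant\inf_\Upsilon\|D^{p,r}_{\Lambda\setminus\Upsilon}\|$ genuinely fails at $p=1$, and the pigeonhole repair you sketch cannot close the gap: pigeonhole on finitely many functionals produces vectors of the form $\frac12(w_\lambda-w_\mu)$ supported on two summands, whose image in $F_r$ has norm about $2^{\frac1r-1}\sup_\lambda\|A_\lambda\|$ rather than $\sup_\lambda\|A_\lambda\|$, and this loss is unavoidable. Take $\Lambda=\nn$, $X_n=Y_n=\rr$, $A_n=\mathrm{id}$, $p=1$, $1<r<\infty$ (real scalars), so that $D^{1,r}_\Lambda$ is the formal identity $\ell_1\to\ell_r$ and $\inf_\Upsilon\|D^{1,r}_{\Lambda\setminus\Upsilon}\|=1$. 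If $(x_u)\subset B_{\ell_1}$ is weakly null, testing against $\mathbf{1}\in\ell_\infty$ gives eventually $|\sum_i(x_u)_i|<\delta$, whence $\|x_u^+\|_1,\|x_u^-\|_1\leqslant(1+\delta)/2$ and $\|x_u\|_r^r\leqslant\|x_u^+\|_1^r+\|x_u^-\|_1^r\leqslant 2\bigl((1+\delta)/2\bigr)^r$; therefore $\theta_{0,1}(D^{1,r}_\Lambda)\leqslant 2^{\frac1r-1}<1$. So this clause of the statement can only be established (and indeed only holds) for $1<p<\infty$, where single-summand vectors $w_{\lambda(v)}\in B_{X_{\lambda(v)}}$ with $\|A_{\lambda(v)}w_{\lambda(v)}\|$ close to $\inf_\Upsilon\sup_{\lambda\notin\Upsilon}\|A_\lambda\|$ and with $\lambda(v)$ escaping every finite set do form a weakly null net; your proof should either restrict to that range or address the $p=1$ case separately.
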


The next theorem incorporates Brooker's result about the Szlenk index, but also includes new information regarding the behavior of the $\theta_{\zeta, m}(A_\lambda)$ quantities, which are only indirectly related to $Sz(A_\lambda, \ee)$. 

\begin{theorem} Suppose $p,r$ are as in either case (3) or case (4) and  $\sup_{\lambda\in \Lambda}Sz(A_\lambda)=\omega^\xi>1$.  The following are equivalent. \begin{enumerate}[(i)]\item $Sz(D^{p,r}_\Lambda)=\omega^\xi$. \item For every $\ee>0$, $\sup_{\lambda\in \Lambda} Sz(A_\lambda, \ee)<\omega^\xi$. \item $\inf\{\sup_{\lambda\in \Lambda} \theta_{\zeta, m}(A_\lambda): \zeta<\xi, m\in \nn\}=0$. . \end{enumerate} 

\label{db}

\end{theorem}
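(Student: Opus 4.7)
The plan is to route both equivalences through the single quantity $\theta_{\xi,1}(D^{p,r}_\Lambda)$, using Proposition \ref{too easy}, Corollary \ref{fir}, and Theorem \ref{big theorem 2}.

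First I would establish (i) $\Leftrightarrow$ (iii). Since $\mathfrak{D}_\xi$ is an ideal (Theorem \ref{martin}) and each $A_\lambda$ factors through $D^{p,r}_\Lambda$ via the norm-one pair $P_\lambda, Q_\lambda$, we have $\omega^\xi = \sup_\lambda Sz(A_\lambda) \leqslant Sz(D^{p,r}_\Lambda)$, so (i) reduces to $Sz(D^{p,r}_\Lambda) \leqslant \omega^\xi$, which by Theorem \ref{big theorem 2}(i) is equivalent to $\theta_{\xi,1}(D^{p,r}_\Lambda)=0$. On the other hand, Proposition \ref{too easy} gives
$$\theta_{\xi,1}(D^{p,r}_\Lambda) = \inf\{\theta_{\zeta+1,1}(D^{p,r}_\Lambda):\zeta<\xi\}$$
(in the successor case $\xi=\eta+1$ the infimum is attained at $\zeta=\eta$; in the limit case this is the stated continuity), and Corollary \ref{fir} identifies each $\theta_{\zeta+1,1}(D^{p,r}_\Lambda)$ with $\inf_m \sup_\lambda \theta_{\zeta,m}(A_\lambda)$. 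Combining these expresses $\theta_{\xi,1}(D^{p,r}_\Lambda)=0$ precisely as condition (iii).

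For (i) $\Leftrightarrow$ (ii) I would exploit the decomposition $D^{p,r}_\Lambda = D^{p,r}_\Upsilon + D^{p,r}_{\Lambda\setminus\Upsilon}$ for finite $\Upsilon\subset\Lambda$. The argument of Theorem \ref{martin} shows $\theta_{\xi,1}$ is a seminorm on $\mathfrak{L}(E_p,F_r)$, and $D^{p,r}_\Upsilon$ factors through the finite direct sum $\oplus_{\lambda\in\Upsilon}A_\lambda$, which lies in $\mathfrak{D}_\xi$ by ideal closure under finite sums; hence $\theta_{\xi,1}(D^{p,r}_\Upsilon)=0$. Subadditivity then yields
$$\theta_{\xi,1}(D^{p,r}_\Lambda) = \inf\{\theta_{\xi,1}(D^{p,r}_{\Lambda\setminus\Upsilon}):\Upsilon\subset\Lambda\text{ finite}\},$$
and the final chain of equivalences in Corollary \ref{fir} translates the vanishing of this infimum into the condition that for every $\ee>0$ there is a finite $\Upsilon\subset\Lambda$ with $\sup_{\lambda\in\Lambda\setminus\Upsilon}Sz(A_\lambda,\ee)<\omega^\xi$.

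The final step is to sharpen this to (ii). Since $\xi\geqslant 1$, the ordinal $\omega^\xi$ is a limit ordinal, whereas $Sz(A_\lambda,\ee)$ is always a successor ordinal: at a limit $\alpha$, the weak${}^*$-closed decreasing intersection defining $s^\alpha_\ee(A^*_\lambda B_{Y^*_\lambda})$ cannot first become empty, by compactness. Therefore $Sz(A_\lambda,\ee)<\omega^\xi$ for each individual $\lambda$, and since $\Upsilon$ is finite the sup over $\Upsilon$ is also strictly below $\omega^\xi$, whence $\sup_{\lambda\in\Lambda}Sz(A_\lambda,\ee)<\omega^\xi$. The main obstacle is keeping the three forms of the "vanishing at level $\xi$" condition matched with the right invocation of Corollary \ref{fir}; the essential structural point is that the seminorm $\theta_{\xi,1}$ on $D^{p,r}_\Lambda$ is insensitive to finitely many coordinates, so information about tails of $\Lambda$ suffices.
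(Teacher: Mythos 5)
Your proof is correct, and while the $(i)\Leftrightarrow(iii)$ part coincides with the paper's argument (the paper splits it into two implications with a successor/limit case distinction, but the content --- Proposition \ref{too easy} plus Corollary \ref{fir} plus the ideal seminorm property of $\theta_{\zeta,m}$ --- is the same), your treatment of condition $(ii)$ is genuinely different. The paper proves $(ii)\Leftrightarrow(iii)$ directly: for $(ii)\Rightarrow(iii)$ it runs a fresh counting argument showing that a uniform bound $Sz(A_\lambda,\ee/3)\leqslant\omega^\zeta m$ forces $\theta_{\zeta,n}(A_\lambda)\leqslant\ee$ for large $n$, and for $(iii)\Rightarrow(ii)$ it uses the estimate $\theta_{\zeta,m}(K)\geqslant\ee/4$ whenever $Sz(K,\ee)>\omega^\zeta m$ from Remark \ref{alternative}; note that the paper's route yields a single uniform bound $\omega^\zeta m$ valid for \emph{all} $\lambda$, so no exceptional set arises. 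You instead reach $(ii)$ through the tail condition in the last chain of equivalences of Corollary \ref{fir}, using that $\theta_{\xi,1}$ is a seminorm vanishing on each $D^{p,r}_\Upsilon$ with $\Upsilon$ finite (correct, since $D^{p,r}_\Upsilon$ is a finite sum of operators in $\mathfrak{D}_\xi$ and $\theta_{\xi,1}$ is subadditive), and then upgrade the tail statement to the full supremum via the observation that $Sz(A_\lambda,\ee)$ is always a successor ordinal while $\omega^\xi$ is a limit, so each of the finitely many removed indices automatically satisfies $Sz(A_\lambda,\ee)<\omega^\xi$. That compactness observation is sound and is a nice touch the paper does not need. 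The trade-off is that your argument black-boxes the quantitative content inside the final statement of Corollary \ref{fir} (whose proof in the paper is only asserted to "follow immediately" and itself rests on the same $\ee/4$-type estimates), whereas the paper's proof of Theorem \ref{db} makes that content explicit; your version is more economical but only as solid as that corollary's last chain.
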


\begin{proof}$(i)\Rightarrow (iii)$ If $Sz(D^{p,r}_\Lambda)=\omega^\xi$, then by Proposition \ref{too easy} and the fact that $\theta_{\zeta, m}$ is an ideal seminorm, $$0= \theta_{\xi, 1}(D^{p,r}_\Lambda)= \inf \{\theta_{\zeta, m}(D^{p,r}_\Lambda): \zeta<\xi, m\in \nn\} \geqslant \inf\{\sup_{\lambda\in \Lambda} \theta_{\zeta, m}(A_\lambda): \zeta<\xi, m\in \nn\}\geqslant 0.$$

$(iii)\Rightarrow (i)$ We first remark that $Sz(D^{p,r}_\Lambda)\geqslant \omega^\xi$. Indeed, for any $\zeta<\xi$, there exists $\lambda\in \Lambda$ such that $Sz(A_\lambda)>\omega^\zeta$, so $\theta_{\zeta, 1}(D^{p,r}_\Lambda)\geqslant \theta_{\zeta, 1}(A_\lambda)>0$, and $Sz(D^{p,r}_\Lambda)>\omega^\zeta$. Since $Sz(D^{p,r}_\Lambda)$ must be of the form $\omega^\gamma$ for some $\gamma$, it follows that $Sz(D^{p,r}_\Lambda)\geqslant \omega^\xi$.

If $\xi$ is a successor, say $\xi=\eta+1$, then by Proposition \ref{too easy} and Corollary \ref{fir},  \begin{align*} 0=\inf\{\sup_{\lambda\in \Lambda} \theta_{\zeta, m}(A_\lambda): \zeta<\xi, m\in \nn\} & = \inf\{\sup_{\lambda\in \Lambda}\theta_{\eta, m}(A_\lambda): m\in \nn\} = \theta_{\eta+1, 1}(D^{p,r}_\Lambda)=\theta_{\xi, 1}(D^{p,r}_\Lambda), \end{align*} so $Sz(D^{p,r}_\lambda)\leqslant \omega^\xi$.

Now if $\xi$ is a limit, then by Proposition \ref{too easy}, for any $\ee>0$, there exist $\zeta<\xi, m\in \nn$ such that $\sup_{\lambda\in \Lambda} \theta_{\zeta, m}(A_\lambda)<\ee$.  Then by Corollary \ref{fir}, and Proposition \ref{too easy}, $\theta_{\xi, 1}(D^{p,r}_\Lambda)\leqslant \theta_{\zeta+1, 1}(D^{p,r}_\Lambda) \leqslant \ee$.   Since this holds for any $\ee>0$, $\theta_{\xi,1}(D^{p,r}_\Lambda)=0$.

$(ii)\Rightarrow (iii)$ Fix $\ee>0$ and $\zeta<\xi$, $m\in \nn$ such that $Sz(A_\lambda, \ee/3)\leqslant \omega^\zeta m$ for all $\lambda\in \Lambda$.  Fix $n\in \nn$ such that $Rm/n<2\ee/3$, where $R=\sup_{\lambda\in \Lambda}\|A_\lambda\|$. Now if $\lambda\in \Lambda$ is such that $\theta_{\zeta, n}(A_\lambda)>\ee$, there exist a directed set $D$, a weakly null collection $(x_t)_{t\in \Gamma_{\zeta, n}.D}\subset B_{E_p}$, functionals $(y^*_t)_{t\in MAX(\Gamma_{\zeta, n}.D)}\subset B_{Y^*_\lambda}$, and real numbers $b_1, \ldots, b_n$ such that $\sum_{i=1}^n \frac{b_i}{n}>\ee$ and for each $1\leqslant i\leqslant n$ and $\Lambda_{\zeta, n, i}.D\ni s\leqslant t\in MAX(\Gamma_{\zeta, n}.D)$, $\text{Re\ }y^*_t(x_s)\geqslant b_i$.   Let $T=\{i\leqslant n: b_i>2\ee/3\}$ and fix $1/2<\psi<1$.  We claim that $|T|<m$. If it were not so, we could find a level map $d:\Gamma_{\zeta, m}.D\to \Gamma_{\zeta, n}.D$ such that $(x_{d(t)})_{t\in \Gamma_{\zeta, m}.D}$ is weakly null and  $d(\Lambda_{\zeta, m, i})\subset \Lambda_{\zeta, n, l_i}.D$, where $l_1<\ldots<l_m$, $l_i\in T$. Then if $e$ is any extension of $d$, the collections $(x_{d(t)})_{t\in \Gamma_{\zeta, m}.D}$, $(y^*_{e(t)})_{t\in MAX(\Gamma_{\zeta, m}.D)}$ witness that $$\varnothing\neq s^{\omega^\zeta}_{\psi\ee_{l_m}}\ldots s^{\omega^\zeta}_{\psi\ee_{l_1}}(A^*_\lambda B_{Y^*_\lambda}) \supset s^{\omega^\zeta}_{\psi 2\ee/3}(A^*_\lambda B_{Y^*_\lambda})\supset s^{\omega^\zeta m}_{\ee/3}(A^*_\lambda B_{Y^*_\lambda}),$$ a contradiction. Thus $\sup_{\lambda\in \Lambda} \theta_{\zeta, n}(A_\lambda)\leqslant \ee$, whence $$\inf \{\sup_{\lambda\in \Lambda} \theta_{\zeta, m}(A_\lambda): \zeta<\xi, m\in \nn\}=0.$$

$(iii)\Rightarrow (ii)$ We recall that  if for some $\lambda\in \Lambda$, $\ee>0$, $\zeta\in \textbf{Ord}$, and $m\in \nn$,  $Sz(A_\lambda, \ee)>\omega^\zeta m$, then $\theta_{\zeta, m}(A_\lambda)\geqslant\ee/4$. From this it follows that since $\inf\{\sup_{\lambda\in \Lambda}(A_\lambda): \zeta<\xi, m\in \nn\}<\ee/4$, there exist $\zeta<\xi$ and $m\in\nn$ such that $Sz(A_\lambda, \ee)\leqslant \omega^\zeta m<\omega^\xi$ for all $\lambda\in \Lambda$.

\end{proof}

We next elucidate the behavior of $\alpha_{\xi, \gamma}(D^{p,r}_\Lambda)$ for cases (3) and (4).

\begin{theorem} Suppose that $p,r$ are as in either case (3) or case (4). Suppose that $Sz(D^{p,r}_\Lambda)=\omega^{\xi+1}$ and $\sup_{\lambda\in \Lambda} Sz(A_\lambda)=\omega^\zeta$.

If $\zeta=\xi$, $\alpha_{\xi, \gamma}(D^{p,r}_\Lambda)<\infty$ if and only if either \begin{enumerate}[(i)]\item $r=0$,  or \item $1\leqslant \gamma\leqslant r$. \end{enumerate}

If $\zeta=\xi+1$, $\alpha_{\xi, \gamma}(D^{p,r}_\Lambda)<\infty$ if and only if either  \begin{enumerate}[(i)]\item $r=0$ and $(\alpha_{\xi, \gamma}(A_\lambda))_{\lambda\in \Lambda}\in \ell_\infty(\Lambda)$, \item $1\leqslant \gamma\leqslant r\leqslant \infty$ and $(\alpha_{\xi, \gamma}(A_\lambda))_{\lambda\in \Lambda}\in \ell_\infty(\Lambda)$, or \item $1\leqslant r<\gamma\leqslant \infty$, $\inf\{\theta_{\xi, 1}(D^{p,r}_{\Lambda\setminus \Upsilon}: \Upsilon\subset\Lambda\text{\ \emph{finite}}\}=0,$ and $(\alpha_{\xi, \gamma}(A_\lambda))_{\lambda\in \Lambda}\in \ell_s(\Lambda)$, where $1/\gamma+1/s=1/r$. \end{enumerate}

\label{zg}

\end{theorem}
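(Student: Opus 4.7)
The plan is to treat the cases $\zeta=\xi$ and $\zeta=\xi+1$ in parallel, splitting each according to whether $r=0$, $1\leq\gamma\leq r$, or $1\leq r<\gamma\leq\infty$.  The $r=0$ and $\gamma\leq r$ branches are essentially handed over by Theorem \ref{button classic}, which gives $\alpha_{\xi,\gamma}(D^{p,r}_\Lambda)\leq \sup_\lambda\|A_\lambda\|+\sup_\lambda\alpha_{\xi,\gamma}(A_\lambda)$.  When $\zeta=\xi$, Theorem \ref{big theorem}(ii) makes every $\alpha_{\xi,\gamma}(A_\lambda)$ vanish, so the bound is automatically finite.  When $\zeta=\xi+1$, finiteness is equivalent to the $\ell_\infty$ condition, with necessity coming from the ideal-property factorization $A_\lambda=Q_\lambda D^{p,r}_\Lambda P_\lambda$ through the norm-one coordinate maps.

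In the $1\leq r<\gamma\leq\infty$ branch, with $1/\gamma+1/s=1/r$, necessity of the $\ell_s$ condition is immediate from Theorem \ref{30r}(iii) on finite $\Upsilon\subset\Lambda$ together with the ideal property, which forces $\|(\alpha_{\xi,\gamma}(A_\lambda))_{\lambda\in\Upsilon}\|_{\ell_s(\Upsilon)}\leq \alpha_{\xi,\gamma}(D^{p,r}_\Lambda)$.  The tail condition $\inf_\Upsilon\theta_{\xi,1}(D^{p,r}_{\Lambda\setminus\Upsilon})=0$ is handled by the following tree-interleaving construction, which also disposes of the $\zeta=\xi$ case.  Suppose there is an $\ee_0>0$ with $\theta_{\xi,1}(D^{p,r}_{\Lambda\setminus\Upsilon})\geq \ee_0$ for every finite $\Upsilon$.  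Working in a countable sub-collection of $\Lambda$, I will build $(x_t)_{t\in\Gamma_{\xi,n}.D}\subset B_{E_p}$ inductively along the tree: at each node $t$, let $U_t\subset\Lambda$ be the finite union of finitely-supported approximations to $x_s$ along the (finite) chain from $\varnothing$ to $t$; at the children of $t$, use the hypothesis $\theta_{\xi,1}(D^{p,r}_{\Lambda\setminus U_t})\geq \ee_0$ together with Theorem \ref{big theorem 2}(iv) to insert a weakly null subtree whose vectors are supported in $\Lambda\setminus U_t$, compressing each vector to a finite support via Lemma \ref{stabilize} so the newly used coordinates form a finite set.  Along each branch the supports $F_{s_0},\ldots,F_{s_n}$ are then pairwise disjoint, hence the convex combinations $z^t_1,\ldots,z^t_n$ have pairwise disjoint supports in $\Lambda$, giving the exact codomain identity $\|D^{p,r}_\Lambda\sum_i a_i z^t_i\|_{F_r}^r=\sum_i|a_i|^r\|D^{p,r}_\Lambda z^t_i\|^r\geq (\ee_0/3)^r\|(a_i)\|_{\ell_r^n}^r$.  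Choosing $a_i=n^{-1/\gamma}$ gives $\alpha_{\xi,\gamma,n}(D^{p,r}_\Lambda)\geq (\ee_0/3)\,n^{1/r-1/\gamma}\to\infty$.  In the $\zeta=\xi$ case the hypothesis $\theta_{\xi,1}(D^{p,r}_{\Lambda\setminus\Upsilon})\geq\ee_0$ is automatic, because sub-additivity of $\theta_{\xi,1}$ together with $\theta_{\xi,1}(D^{p,r}_\Upsilon)=0$ on finite $\Upsilon$ (since each $Sz(A_\lambda)\leq\omega^\xi$ forces $Sz(D^{p,r}_\Upsilon)\leq\omega^\xi$) yields $\theta_{\xi,1}(D^{p,r}_{\Lambda\setminus\Upsilon})\geq\theta_{\xi,1}(D^{p,r}_\Lambda)>0$ for every finite $\Upsilon$.

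Sufficiency in case (iii) of $\zeta=\xi+1$ is the main obstacle.  The plan is to prove by induction on $n$ that, for every $\Lambda'\subset\Lambda$,
\[ \alpha_{\xi,\gamma,n}(D^{p,r}_{\Lambda'})\;\leq\; \bigl\|(\alpha_{\xi,\gamma}(A_\lambda))_{\lambda\in\Lambda'}\bigr\|_{\ell_s(\Lambda')}+C_n\,\theta_{\xi,1}(D^{p,r}_{\Lambda'}),\]
where $C_n$ depends only on $n$ and $\sup_\lambda\|A_\lambda\|$; applying this with $\Lambda'=\Lambda$ and letting $\Upsilon$ grow so $\theta_{\xi,1}(D^{p,r}_{\Lambda\setminus\Upsilon})\to 0$, combined with the decomposition $\alpha_{\xi,\gamma}(D^{p,r}_\Lambda)\leq \alpha_{\xi,\gamma}(D^{p,r}_\Upsilon)+\alpha_{\xi,\gamma}(D^{p,r}_{\Lambda\setminus\Upsilon})$ and the finite-$\Upsilon$ identity of Theorem \ref{30r}(iii), then yields $\alpha_{\xi,\gamma}(D^{p,r}_\Lambda)\leq \|(\alpha_{\xi,\gamma}(A_\lambda))\|_{\ell_s(\Lambda)}<\infty$.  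The inductive step is modeled on the proof of Theorem \ref{button classic}: fix $t_1\in MAX(\Lambda_{\xi,n+1,1}.D)$, choose finite $\Upsilon\subset\Lambda'$ with $\|D^{p,r}_{\Lambda'\setminus\Upsilon}z^{t_1}\|<\delta$, stabilize to get $\|P_\Upsilon x_t\|\leq u_i$ and $\|P_{\Lambda'\setminus\Upsilon}x_t\|\leq v_i$ with $(u_i,v_i)\in B_{\ell_\infty^{n+1}(\ell_p^2)}$, bound the $\Upsilon$-piece by Theorem \ref{30r}(iii) (an $\ell_s$-norm in $\lambda$) and the $(\Lambda'\setminus\Upsilon)$-piece by the inductive hypothesis on the smaller index set, and combine via the codomain $\ell_r$-identity together with Fact \ref{f2} applied in the $\gamma>r$ regime (so as to turn the $\ell_\gamma$-constraint on $(b_i)$ into a bound on $\|(b_i\|(u_i,v_i)\|_{\ell_p^2})_i\|_{\ell_\gamma^{n+1}}$).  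The delicate point—and the principal obstacle—is to arrange the $\ell_s$-Minkowski bookkeeping so the tail coefficient $C_n$ multiplies only $\theta_{\xi,1}(D^{p,r}_{\Lambda'})$ (which will be sent to $0$) and does not corrupt the $\ell_s$ term; the cruder $\ell_\infty$ accounting used in Theorem \ref{button classic} is insufficient here, and one needs to track the two-component $\ell_p^2$ norm of $(u_i,v_i)$ through an $\ell_s$-in-$i$ exchange.
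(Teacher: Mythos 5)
Most of your outline tracks the paper's proof: Theorem \ref{button classic} plus the ideal property handle the $r=0$ and $1\leqslant\gamma\leqslant r$ branches, Theorem \ref{30r}(iii) plus the ideal property give necessity of the $\ell_s$ condition, and a disjoint-support tree of height $n$ gives a lower bound of order $n^{1/s}$ when the tail condition fails. Your reduction of the $\zeta=\xi$, $r<\gamma$ case to that same construction (via subadditivity of $\theta_{\xi,1}$ and $\theta_{\xi,1}(D^{p,r}_\Upsilon)=0$ for finite $\Upsilon$, so that $\theta_{\xi,1}(D^{p,r}_{\Lambda\setminus\Upsilon})\geqslant\theta_{\xi,1}(D^{p,r}_\Lambda)>0$ automatically) is a legitimate variant: the paper instead uses Theorem \ref{db} to partition $\Lambda$ into $n$ pieces $\Lambda_1,\ldots,\Lambda_n$ with $\theta_{\xi,1}(D^{p,r}_{\Lambda_i})>\ee$ and supports level $i$ of the tree in $\Lambda_i$. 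Both yield the same $\ee n^{1/s}$ lower bound.

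The genuine gap is the sufficiency direction of case (iii) for $\zeta=\xi+1$, which you correctly identify as the principal obstacle but leave unresolved, proposing an induction on $n$ modeled on Theorem \ref{button classic} with ``delicate $\ell_s$-Minkowski bookkeeping.'' No such induction is needed, because the inequality you are after holds trivially with $C_n=n$: since $\alpha_{\xi,\gamma,1}=\theta_{\xi,1}$ and $\alpha_{\xi,\gamma,n}\leqslant n\,\alpha_{\xi,\gamma,1}$, the triangle inequality for $\alpha_{\xi,\gamma,n}$ (Theorem \ref{ideal}) and Theorem \ref{30r}(iii) give, for every $n$ and every finite $\Upsilon\subset\Lambda$, $$\alpha_{\xi,\gamma,n}(D^{p,r}_\Lambda)\leqslant\alpha_{\xi,\gamma,n}(D^{p,r}_\Upsilon)+\alpha_{\xi,\gamma,n}(D^{p,r}_{\Lambda\setminus\Upsilon})\leqslant\bigl\|(\alpha_{\xi,\gamma}(A_\lambda))_{\lambda\in\Lambda}\bigr\|_{\ell_s(\Lambda)}+n\,\theta_{\xi,1}(D^{p,r}_{\Lambda\setminus\Upsilon}).$$ The only delicate point is the order of limits: for each fixed $n$ take the infimum over $\Upsilon$, which kills the second term by hypothesis, and only then take the supremum over $n$; the $n$-dependence of the constant is then harmless. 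Your write-up gets this order wrong in one place --- applying your inductive inequality ``with $\Lambda'=\Lambda$'' leaves a term $C_n\,\theta_{\xi,1}(D^{p,r}_\Lambda)$, which is a fixed positive quantity and does not tend to zero; the inequality must be applied to $\Lambda'=\Lambda\setminus\Upsilon$ with $\inf_\Upsilon$ taken before $\sup_n$. Once this is repaired, the entire induction, together with its unresolved bookkeeping, can be discarded.
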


\begin{proof} First suppose that $Sz(D^{p,r}_\Lambda)=\omega^{\xi+1}$ and $\sup_{\lambda\in \Lambda} Sz(A_\lambda)=\omega^\xi$.  By Theorem \ref{button classic},  if $r=0$ or $1\leqslant p\leqslant r\leqslant \infty$ and $1\leqslant \gamma\leqslant r$, $$\alpha_{\xi, \gamma} (D^{p,r}_\Lambda)\leqslant \sup_{\lambda\in \Lambda} \|A_\lambda\|+ \sup_{\lambda\in \Lambda}\alpha_{\xi, \gamma}(A_\lambda)= \sup_{\lambda\in \Lambda}\|A_\lambda\|<\infty,$$ since $\alpha_{\xi, \gamma}(A_\lambda)=0$ for each $\lambda\in \Lambda$. Now suppose that $1\leqslant p\leqslant r<\gamma$ and let $1/\gamma+1/s=1/r$. By Theorem \ref{db}, since $Sz(D^{p,r}_\Lambda)>\sup_{\lambda\in \Lambda} Sz(A_\lambda)$, this means $$\inf \{\sup_{\lambda\in \Lambda}\theta_{\eta, m}(A_\lambda): \eta<\xi, m\in \nn\}>\ee>0.$$    Then for any $n\in \nn$, we may partition $\Lambda$ into sets $\Lambda_1, \ldots, \Lambda_n$ such that for each $1\leqslant i\leqslant n$, $\inf \{\sup_{\lambda\in \Lambda_i}\theta_{\eta, m}(A_\lambda): \eta<\xi, m\in\nn\}>\ee$, whence $\theta_{\xi,1}(D^{p,r}_{\Lambda_i})>\ee$ for each $1\leqslant i\leqslant n$. Then we may find $(x_t)_{t\in \Gamma_{\xi, n}.D}\subset B_{E_p}$ weakly null such that for any $1\leqslant i\leqslant n$, $\inf_{t\in MAX(\Gamma_{\xi,n}.D)} \|D^{p,r}_{\Lambda_i}\sum_{t\geqslant s\in \Lambda_{\xi, n,i}.D} \mathbb{P}_{\xi, n}(s) x_s\|>\ee$ and for any $t\in \Lambda_{\xi,n,i}.D$, $x_t=P_{\Lambda_i} x_t$.   Then $$\alpha_{\xi, \gamma, n}(D^{p,r}_\Lambda)\geqslant \inf_{t\in MAX(\Gamma_{\xi,n}.D)} \|D^{p,r}_\Lambda\sum_{i=1}^n z^t_i\| \geqslant \ee n^{1/s}.$$  This yields that $\alpha_{\xi, \gamma}(D^{p,r}_\Lambda)=\infty$ if $1\leqslant p\leqslant r<\gamma$.

Now suppose $Sz(D^{p,r}_\Lambda)=\sup_{\lambda\in \Lambda} Sz(A_\lambda)=\omega^{\xi+1}$.   Then if either $r=0$ or $1\leqslant p\leqslant r\leqslant \infty$ and $1\leqslant \gamma\leqslant r$,  by Theorem \ref{button classic}, $$\sup_{\lambda\in \Lambda} \alpha_{\xi, \gamma}(A_\lambda)\leqslant \alpha_{\xi, \gamma}(D^{p,r}_\Lambda)\leqslant \sup_{\lambda\in \Lambda} \|A_\lambda\|+\sup_{\lambda\in \Lambda} \alpha_{\xi, \gamma}(A_\lambda),$$ so that $\alpha_{\xi, \gamma}(D^{p,r}_\Lambda)<\infty$ if and only if $\sup_{\lambda\in \Lambda}\alpha_{\xi, \gamma}(A_\lambda)<\infty$.

We now assume that $1\leqslant p\leqslant r<\gamma\leqslant \infty$ and $1/\gamma+1/s=1/r$.  We will show that $\alpha_{\xi, \gamma}(D^{p,r}_\Lambda)<\infty$ if and only if $(\alpha_{\xi, \gamma}(A_\lambda))_{\lambda\in \Lambda}\in \ell_s(\Lambda)$ and $\inf\{\theta_{\xi, 1}(D^{p,r}_{\Lambda\setminus \Upsilon}): \Upsilon\subset \Lambda\text{\ finite}\}=0$.   First suppose that $(\alpha_{\xi, \gamma}(A_\lambda))_{\lambda\in \Lambda}\in \ell_s(\Lambda)$ and $\inf\{\theta_{\xi, 1}(D^{p,r}_{\Lambda\setminus \Upsilon}: \Upsilon\subset \Lambda\text{\ finite}\}=0$.  Then for any finite subset $\Upsilon$ of $\Lambda$ and $n\in \nn$,  by Theorem \ref{30r}, \begin{align*}  \alpha_{\xi, \gamma}(D^{p,r}_\Lambda) & \leqslant \alpha_{\xi, \gamma,n}(D^{p,r}_\Upsilon) + \alpha_{\xi, \gamma,n}(D^{p,r}_{\Lambda\setminus \Upsilon}) \\ & \leqslant \|(\alpha_{\xi, \gamma}(A_\lambda))_{\lambda\in \Upsilon}\|_{\ell_s(\Upsilon)} + n \alpha_{\xi, \gamma, 1}(D^{p,r}_{\Lambda\setminus \Upsilon}) \\ & \leqslant \|(\alpha_{\xi, \gamma}(A_\lambda))_{\lambda\in \Lambda}\|_{\ell_s(\Lambda)} + n \theta_{\xi, \gamma, 1}(D^{p,r}_{\Lambda\setminus \Upsilon}). \end{align*} Taking the infimum over $\Upsilon\subset \Lambda$ and then the supremum over $n\in \nn$ yields that $\alpha_{\xi, \gamma,}(D^{p,r}_\Lambda)\leqslant \|(\alpha_{\xi, \gamma}(A_\lambda))_{\lambda\in \Lambda}\|_{\ell_s(\Lambda)}<\infty$.

Now assume that one of the conditions $(\alpha_{\xi, \gamma}(A_\lambda))_{\lambda\in \Lambda}\in \ell_s(\Lambda)$ and $\inf\{\theta_{\xi, 1}(D^{p,r}_{\Lambda\setminus \Upsilon}): \Upsilon\subset \Lambda\text{\ finite}\}=0$ fails. If $(\alpha_{\xi, \gamma}(A_\lambda))_{\lambda\in \Lambda}\notin \ell_s(\Lambda)$, then by Theorem \ref{30r}, $$\alpha_{\xi, \gamma}(D^{p,r}_\Lambda)   \geqslant \sup\{\alpha_{\xi, \gamma}(D^{p,r}_\Upsilon): \Upsilon\subset \Lambda\text{\ finite}\} = \|(\alpha_{\xi, \gamma}(A_\lambda))_{\lambda\in \Lambda}\|_{\ell_s(\Lambda)}=\infty.$$

Now assume that $\inf\{\theta_{\xi, 1}(D^{p,r}_{\Lambda\setminus \Upsilon}): \Upsilon\subset \Lambda\text{\ finite}\}>2\ee>0$. Fix $n\in \nn$, a weak neighborhood basis $D$ at $0$ in $E_p$, and $\delta>0$ such that $n\delta<\ee n^{1/r}$, where $1/r+1/r'=1$.  Let $\Upsilon_\varnothing=\varnothing$.   Since $\theta_{\xi, 1}(D^{p,r}_\Lambda)>2\ee$, we may, by identifying $\Lambda_{\xi, n,1}.D$ with $\Gamma_{\xi, 1}.D$, fix a weakly null collection $(x_t)_{t\in \Lambda_{\xi, n,1}.D}\subset B_{E_p}$ such that $\inf_{t\in MAX(\Lambda_{\xi, n, 1}.D)} \|D^{p,r}_\Lambda \sum_{s\leqslant t} \mathbb{P}_{\xi,n}(s)x_s\|>2\ee$. For each $t\in MAX(\Lambda_{\xi, n,1}.D)$, fix a finite set $\Upsilon_t$ such that $\|D^{p,r}_{\Lambda\setminus \Upsilon_t} \sum_{s\leqslant t} \mathbb{P}_{\xi,n}(s)x_s\|<\ee$.         Now assume that for some $i<n$, a weakly null collection $(x_t)_{t\in \cup_{j=1}^i \Lambda_{\xi, n, j}.D}$ has been chosen. Suppose also that for each $1\leqslant j\leqslant i$ and each $t\in MAX(\Lambda_{\xi, n,j}.D)$, a finite subset $\Upsilon_t$ has been chosen such that if $t>s\in MAX(\Lambda_{\xi, n, k}.D)$ for some $1\leqslant k<j$, $\Upsilon_s\subset \Upsilon_t$, and $\|D^{p,r}_{\Lambda\setminus \Upsilon_t} \sum_{\Lambda_{\xi, n,j}.D\ni u\leqslant t} \mathbb{P}_{\xi,n}(u)x_u\|<\ee$.    For each $t\in MAX(\Lambda_{\xi, n,i}.D)$, since $\theta_{\xi,1}(D^{p,r}_{\Lambda\setminus \Upsilon_t})>2\ee$, by identifying $\Gamma_{\xi,1}.D$ with $\{s\in \Lambda_{\xi, n,i+1}.D: t<s\}$, we may fix a collection $(x_s)_{t<s\in \Lambda_{\xi,n,i+1}.D}\subset B_{E_p}$ such that $P_{\Lambda\setminus \Upsilon_t}x_s=x_s$ and $$\inf \{\|D^{p,r}_\Lambda\sum_{t<u\leqslant s}\mathbb{P}_{\xi,n}(u)x_u\|: t<s\in MAX(\Lambda_{\xi, n, i+1})\}>2\ee.$$  Now for each $s\in MAX(\Lambda_{\xi, n, i+1})$ such that $t<s$, fix a finite set $\Upsilon_s\subset \Lambda$ such that $\Upsilon_t\subset \Upsilon_s$ and $$\|D^{p,r}_{\Lambda\setminus \Upsilon_s}\sum_{t<u\leqslant s} \mathbb{P}_{\xi, n}(u)x_u\|<\ee.$$    This completes the recursive choice of a weakly null collection $(x_t)_{t\in \Gamma_{\xi, n}.D}\subset B_{E_p}$.  Now for any $t\in MAX(\Gamma_{\xi, n}.D)$, let $\varnothing=t_0<t_1<\ldots <t_n$ be such that $t_i\in MAX(\Lambda_{\xi, n, i}.D)$ for each $1\leqslant i\leqslant n$ and let $\varnothing\subset \Upsilon_{t_1}\subset \ldots <\Upsilon_{t_n}$ be as in the choice of $(x_s)_{s\in \Gamma_{\xi, n}.D}$.    Now let $F_i=\Upsilon_{t_i}\setminus \Upsilon_{t_{i-1}}$, $z_i=\sum_{t\geqslant s\in \Lambda_{\xi, n,i}.D}\mathbb{P}_{\xi,n}(s)x_s$,  and note that \begin{align*}\|D^{p,r}_\Lambda \sum_{i=1}z_i \| & \geqslant \|D^{p,r}_\Lambda \sum_{i=1}^n P_{F_i}z_i\|-\sum_{i=1}^n\|D^{p,r}_{\Lambda\setminus \Upsilon_{t_i}}z_i\| \geqslant 2\ee n^{1/r} - n\delta>\ee n^{1/r}. \end{align*} From this it follows that $$\alpha_{\xi, \gamma}(D^{p,r}_\Lambda) \geqslant \sup_n \ee n^{\frac{1}{r}-\frac{1}{\gamma}}=\infty. $$

\end{proof}

\end{document}